\documentclass[a4paper,11pt,openright,twoside]{book}
\usepackage[english]{babel}
\usepackage[utf8]{inputenc}
\usepackage[T1]{fontenc}
\usepackage{lmodern}
\usepackage{latexsym}
\usepackage{a4}
\usepackage{amsfonts}
\usepackage{bm}
\usepackage{amssymb}
\usepackage{amsthm} 
\usepackage{color}
\usepackage{eucal}
\usepackage{amscd}
\usepackage{amsmath,mathtools}
\usepackage{array}
\usepackage{setspace}
\usepackage{listings}
\usepackage{graphicx}
\usepackage{xifthen}
\usepackage{todonotes}
\usepackage{caption}
\usepackage{subcaption}
\usepackage{csquotes}
\usepackage{cases}
\usepackage{multirow}

\usepackage{pgf,tikz}
\usetikzlibrary{matrix, calc, arrows,shapes,decorations}
\usepackage{pgfplots} 
\usepackage{circuitikz}
\usetikzlibrary{decorations.markings}
\tikzset{middlearrow/.style={
		decoration={markings,
			mark= at position 0.5 with {\arrow{#1}} ,
		},
		postaction={decorate}
	}
}

\newcommand{\sqdiamond}[1][fill=black]{\tikz [x=1ex,y=1ex,line width=.1ex,line join=round, yshift=1ex, scale=2] \draw  [#1]  (1,1) -- (.5,1.5) -- (1,2) -- (1.5,1.5) -- (1,1) -- cycle;}%
\newcommand{\MyDiamond}[1][fill=black]{\mathop{\raisebox{-0.275ex}{$\sqdiamond[#1]$}}}

\newcommand{\tikzcircle}[2][red,ultra thick]{\tikz[baseline=-.7ex]\draw[#1,radius=#2] (0,0) circle (.15) ;}%

\newcommand{\tikzdot}[2][fill=black]{\tikz[baseline=-.7ex]\draw[#1,radius=#2] (0,0) circle (0.04) ;}%

\newcommand{\arro}[1][fill=black]{\tikz [x=1ex,y=1ex,line width=.1ex,line join=round, yshift=1ex, scale=2]\draw[#1] (-0.5,0.5) to (0.5,-0.5);}%
\newcommand{\SEARROW}[1][fill=black]{\mathop{\raisebox{-0.275ex}{$\arro[#1]$}}}

\DeclareMathAlphabet{\mathcal}{OMS}{cmsy}{m}{n}

\usepackage[
backend=biber,
style=numeric,
sorting=nyt,
maxbibnames=4]{biblatex}
\addbibresource{bibliography.bib} 

\newboolean{english}
\setboolean{english}{true} 
\newboolean{corelatore}
\setboolean{corelatore}{false} 

\usepackage[left=3cm,
top = 3cm, 
bottom = 3cm,
right= 3cm,
]{geometry} 
\usepackage{fancyhdr}
\usepackage{emptypage}
\pagestyle{fancy}
\fancyhead{}\fancyfoot{}
\fancyhead[RE]{\nouppercase \leftmark}
\fancyhead[LO]{\nouppercase \rightmark}
\fancyfoot[C]{\thepage}
\setlength{\headheight}{13.59999pt}
\addtolength{\topmargin}{-1.59999pt}

\usepackage{textcomp} 
\usepackage{enumitem}
\usepackage{accents}

\pgfplotsset{
	discard if not/.style 2 args={
		x filter/.code={
			\edef\tempa{\thisrow{#1}}
			\edef\tempb{#2}
			\ifx\tempa\tempb--
			\else
			
			\fi
		}
	}
}
\pgfplotsset{compat=1.14}
\usepackage[lined,boxed,commentsnumbered]{algorithm2e}

\usepackage{hyperref}
\hypersetup{colorlinks = true,
	linkcolor = black,
	urlcolor  = black,
	citecolor = black,
	anchorcolor = black,
	hypertexnames=false}

\allowdisplaybreaks[4]

\providecommand{\red}{\textcolor{red}}
\definecolor{myblue}{rgb}{0,0,0.6}

\definecolor{pastelgreen}{rgb}{0.47, 0.87, 0.47}

\definecolor{amcol}{rgb}{0.8,0,0}

\definecolor{rvcol}{rgb}{1,0,0}

\definecolor{lmigcol}{rgb}{0.8,0.3,0}

\newcommand*{\N}[1]{\left\|#1\right\|}
\newcommand*{\abs}[1]{\left|#1\right|}

\newcommand{\jmp}[1]{[\![#1]\!]}    
\newcommand{\mvl}[1]{\{\!\!\{#1\}\!\!\}}

\newcommand{\Uu}[1]{{\mathbf{#1}}}
\newcommand{\IN}{\mathbb{N}}\newcommand{\IP}{\mathbb{P}}
\newcommand{\IR}{\mathbb{R}}

\newcommand{\be}{{\Uu e}}\newcommand{\bn}{{\Uu n}}
\newcommand{\bx}{{\Uu x}}
\newcommand{\by}{{\Uu y}}\newcommand{\bz}{{\Uu z}}

\newcommand{\calA}{{\mathcal A}}
\newcommand{\calE}{{\mathcal E}}
\newcommand{\calF}{{\mathcal F}}
\newcommand{\calL}{{\mathcal L}}
\newcommand{\calO}{{\mathcal O}}
\newcommand{\calT}{{\mathcal T}}
\newcommand{\calM}{{\mathcal M}}

\newcommand{\tand}{\text{ and }}
\newcommand{\tfor}{\text{ for }}

\newcommand{\Th}{{(\calT_h)}}

\newcommand{\mi}{{\boldsymbol{i}}}
\newcommand{\mj}{{\boldsymbol{j}}}
\newcommand{\mk}{{\boldsymbol{k}}}

\newcommand\refb{b_J}
\newcommand{\QT}{{\mathbb{Q\!T}}}

\newcommand{\bw}{{\bm{w}}}
\newcommand{\bbeta}{{\boldsymbol \beta}}
\newcommand{\bk}{{\bm{K}}}
\newcommand{\bi}{{\bm{i}}}
\newcommand{\bj}{{\bm{j}}}
\newcommand{\br}{{\bm{r}}}

\newcommand{\tayt}{{\mathsf{T}}}

\allowdisplaybreaks[4]

\DeclareMathOperator*{\esssup}{sup\, ess}

\newtheorem{theorem}{Theorem}[section]
\newtheorem{lemma}[theorem]{Lemma}
\newtheorem{prop}[theorem]{Proposition}
\newtheorem{rem}[theorem]{Remark}

\theoremstyle{definition}
\newtheorem{Def}{Definition}[section]

\newcommand{\vertiii}[1]{{\left\vert\kern-0.25ex\left\vert\kern-0.25ex\left\vert #1 
		\right\vert\kern-0.25ex\right\vert\kern-0.25ex\right\vert}}

\newenvironment{flushleftequation*}
{\begin{equation*}\begin{lrbox}{\flusheqbox}$\displaystyle}
		{$\end{lrbox}\makebox[\displaywidth][l]{\usebox{\flusheqbox}}\end{equation*}\ignorespacesafterend}
\newsavebox{\flusheqbox}

\usepackage{comment}
\usepackage{floatrow}
\usepackage{csvsimple} 
\usepackage{siunitx} 
\usepackage{tabularx}
\usepackage{cleveref} 
\usepackage{booktabs}

\definecolor{pscol}{rgb}{0,0.6,0}



\pgfplotscreateplotcyclelist{paulcolors}{%
	teal,every mark/.append style={solid,fill=teal},mark=*,very thick,mark size=3pt\\%
	orange,every mark/.append style={solid,fill=orange},mark=square*,very thick,mark size=3pt\\%
	violet,every mark/.append style={solid,fill=violet},mark=dd*,very thick,mark size=3pt\\%
	teal,densely dashed,every mark/.append style={solid,fill=teal},mark=*,very thick,mark size=3pt\\%
	orange,densely dashed,every mark/.append style={solid,fill=orange},mark=square*,very thick,mark size=3pt\\%
	violet,densely dashed,every mark/.append style={solid,fill=violet},mark=diamond*,very thick,mark size=3pt\\%
	teal,dash dot,every mark/.append style={solid,fill=teal},mark=*,very thick,mark size=3pt\\%
	orange,dash dot,every mark/.append style={solid,fill=orange},mark=square*,very thick,mark size=3pt\\%
	violet,dash dot,every mark/.append style={solid,fill=violet},mark=diamond*,very thick,mark size=3pt\\%
}
\pgfplotsset{
	discard if not/.style 2 args={
		x filter/.code={
			\edef\tempa{\thisrow{#1}}
			\edef\tempb{#2}
			\ifx\tempa\tempb
			\else
			
			\fi
		}
	}
}
\pgfplotsset{compat=1.14}

\newcommand{\markchapterintro}[1]{\markboth{#1}{}}

\begin{document}
	
	\thispagestyle{empty}
	\space
	\begin{center}
		\Large{Università degli Studi di Pavia}\\
		\vspace{0.2cm}
		\normalsize{Dipartimento di Matematica “Felice Casorati”\\
			\vspace{0.2cm}
			Laurea magistrale in Matematica}
	\end{center}
	\[\]
	\begin{center}
		\includegraphics[width=0.35\textwidth]{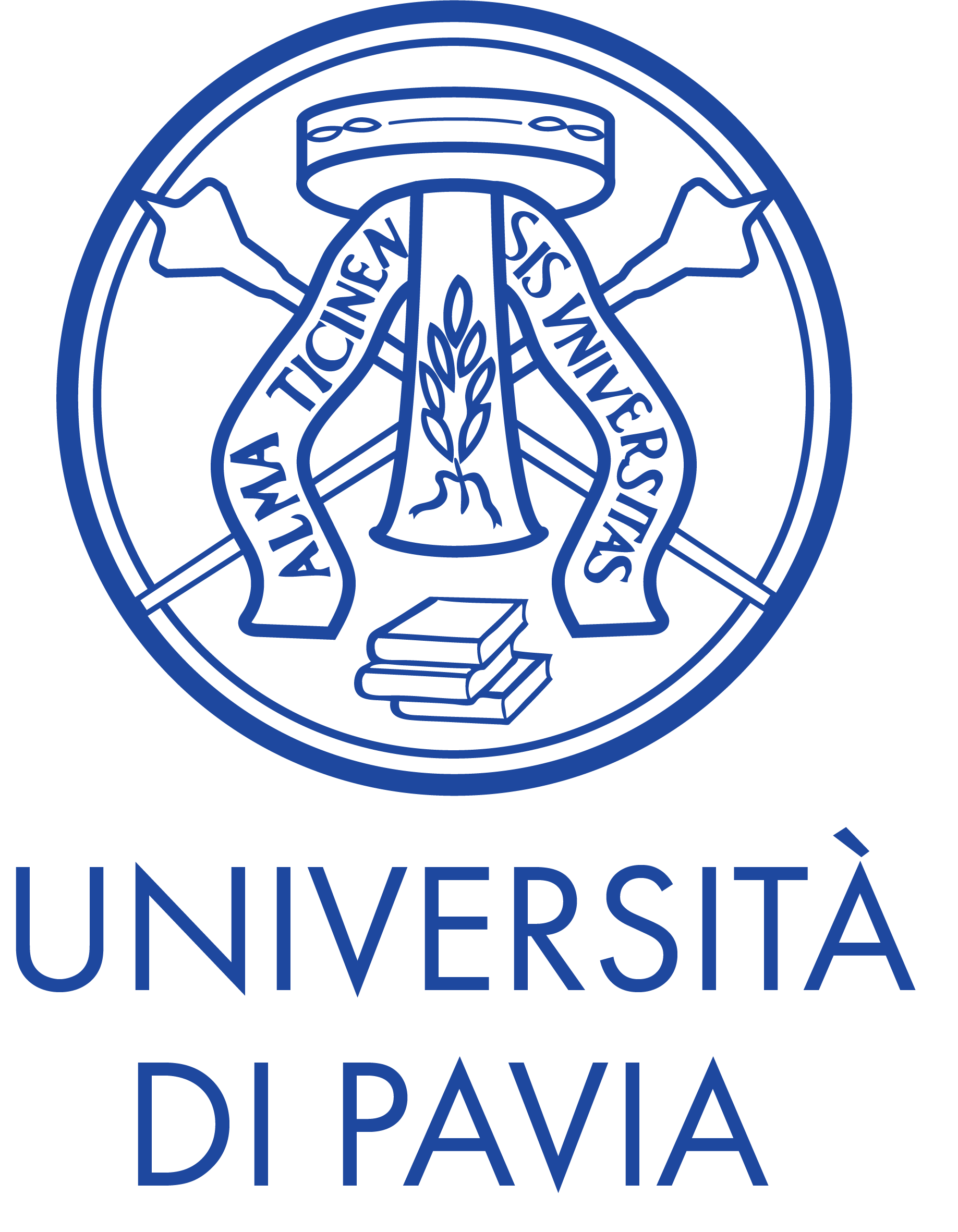}
	\end{center}
	\[\]
	\space
	\begin{center}
		
		{\textbf{\LARGE  A quasi-Trefftz discontinuous Galerkin method}}
		
		\vspace{0.3cm}
		{\textbf{\LARGE 	for the homogeneous diffusion-advection-reaction}}
		
		\vspace{0.3cm}
		{\textbf{\LARGE  equation 
				with piecewise-smooth coefficients}}
		
	\end{center}
	\singlespace
	\[\]
	\[\]\[\]\[\]
	\begin{flushleft}
		\ifthenelse{\boolean{english}}{Relatore:\\}{Relatore:\\}
		\textbf{Prof. Andrea Moiola}
		\ifthenelse{\boolean{corelatore}}{
			\ifthenelse{\boolean{english}}{\\Correlatore (Co-Supervisor):\\}{\\Correlatore:\\}
			\textbf{Nome Prof}
		}
		{}
	\end{flushleft}
	\[\]\[\]
	\begin{flushright}
		Tesi di Laurea di \\
		\textbf{Chiara Perinati}\\
		Matricola 508357
	\end{flushright}
	\[\]\[\]\[\]
	\begin{center}
		Anno Accademico 2022/2023
	\end{center}

	\newpage\null\thispagestyle{empty}
	
	\newpage\pagenumbering{roman}
	
	\newpage
	\thispagestyle{plain}
	\vspace*{1cm}
	\begin{center}
		\textbf{\Large Abstract}
	\end{center}
	We describe and analyse a quasi-Trefftz discontinuous Galerkin method for solving boundary value problems for the homogeneous diffusion-advection-reaction equation with piece\-wise\--smooth coefficients.
	
	The discontinuous Galerkin (DG) weak formulation is derived including an interior penalty parameter and using the classical upwind numerical fluxes. We compare three different formulations which arise from different choices of the symmetrization parameter, including the Symmetric Interior Penalty Galerkin (SIPG) one. DG methods may require a higher number of degrees of freedom compared to continuous ones, especially in the case of high-order schemes; combining them with quasi-Trefftz functions allows for a reduction in the number of degrees of freedom.
	
	Trefftz schemes are high-order Galerkin methods whose discrete functions satisfy exactly the underlying  partial differential equation (PDE) in each mesh element (for example, harmonic polynomials for the Laplace equation).
	Since a family of local exact solutions is needed,
	Trefftz basis functions can be easily computed  for many PDEs that are linear, homogeneous and with piecewise-constant coefficients. However, if the equation has varying coefficients, in general, exact solutions are unavailable, hence  the construction of  discrete Trefftz spaces is not possible.
	
	The quasi-Trefftz methods have been introduced to overcome this limitation. They rely on  discrete spaces of functions that are not exact solutions but elementwise “approximate solutions” of the PDE in the sense of Taylor polynomials:  the Taylor polynomial of a given order of their image under the partial differential operator vanishes at the barycentre of a mesh element. 
	
	A space-time quasi-Trefftz DG method has recently been studied in \cite{imbert2023space}, extending
	to the case of smoothly varying coefficients the space-time Trefftz DG scheme for the acoustic wave equation of \cite{moiola2018space}.
	Since it has shown excellent results, we propose a related method that can be applied to the second-order diffusion-advection-reaction elliptic equation.
	
	We choose polynomial quasi-Trefftz basis functions, whose coefficients can be computed with a simple algorithm, which is initialized assigning a  sort of  Cauchy conditions and is based on the Taylor expansion of the coefficients of the PDE. 
	The main advantage of Trefftz and quasi-Trefftz schemes over more classical ones is the higher accuracy for comparable numbers of degrees of freedom. 
	We prove that the dimension of the quasi-Trefftz space is smaller than the dimension of the polynomial space of the same degree and that yields the same optimal convergence rates as the full polynomial space.
	The quasi-Trefftz DG method is well-posed, consistent and stable and we prove its high-order convergence.
	We present some numerical experiments in two dimensions that show excellent properties in terms of approximation and convergence rate.
	\newpage
	\thispagestyle{plain}
	\vspace*{1cm}
	\begin{center}
		\textbf{\Large Sommario}
	\end{center}
	In questo elaborato descriviamo e analizziamo un metodo quasi-Trefftz discontinuous Galerkin per risolvere problemi al contorno per l'equazione di diffusione-trasporto-reazione omogenea con coefficienti  lisci a tratti. 
	
	La formulazione debole discontinuous Galerkin (DG) include  i classici flussi numerici upwind, un parametro di interior penalty e un parametro di simmetrizzazione che porta a tre diverse formulazioni, tra cui quella del metodo Symmetric Interior Penalty Galerkin (SIPG). I metodi DG spesso richiedono un maggior numero di gradi di libertà rispetto a quelli continui, soprattutto nel caso di metodi di alto ordine; combinandoli con funzioni quasi-Trefftz otteniamo una riduzione del numero di gradi di libertà. 
	
	I metodi Trefftz sono metodi di alto ordine di tipo Galerkin  le cui funzioni test e trial  sono soluzioni esatte dell'equazione alle derivate parziali (EDP) in ogni elemento della mesh (ad esempio, polinomi armonici per l'equazione di Laplace). Poiché sono necessarie soluzioni locali esatte, le funzioni Trefftz di base possono essere facilmente calcolate per molte EDP che sono lineari, omogenee e con coefficienti costanti a tratti. Tuttavia, se l'equazione ha coefficienti variabili, in generale, le soluzioni esatte non sono disponibili, quindi la costruzione di spazi discreti Trefftz non è possibile.
	
	I metodi quasi-Trefftz sono stati introdotti per superare questa limitazione. Si basano su spazi discreti di funzioni che non sono soluzioni esatte ma “soluzioni approssimate” dell'EDP in ogni elemento. Più precisamente, il polinomio di Taylor di un dato ordine della loro immagine tramite l'operatore differenziale parziale si annulla nel baricentro di un elemento della mesh. 
	
	Un metodo spazio-tempo quasi-Trefftz DG  è stato recentemente studiato in \cite{imbert2023space}, estendendo al caso di coefficienti variabili il metodo spazio-tempo Trefftz DG per l'equazione delle onde acustiche analizzato in \cite{moiola2018space}. Poiché ha mostrato eccellenti risultati, proponiamo un metodo correlato per l'equazione ellittica del secondo-ordine di diffusione-trasporto-reazione.
	
	Scegliamo delle funzioni quasi-Trefftz di base polinomiali, i cui coefficienti possono essere calcolati con un semplice algoritmo, che viene inizializzato assegnando una sorta di condizioni di Cauchy e si basa sull'espansione di Taylor dei coefficienti dell'EDP. Il principale vantaggio dei metodi Trefftz e quasi-Trefftz rispetto a quelli più classici è la maggiore accuratezza per un numero comparabile di gradi di libertà. Dimostriamo che la dimensione dello spazio quasi-Trefftz è inferiore alla dimensione dello spazio polinomiale dello stesso grado e che si ottengono gli stessi ordini di convergenza ottimali dello spazio polinomiale completo. Il metodo quasi-Trefftz  DG è ben posto, consistente e stabile e dimostriamo la convergenza di ordine alto. Presentiamo alcuni esperimenti numerici in due dimensioni che mostrano ottime proprietà in termini di approssimazione e ordini di convergenza.
	
	\tableofcontents
	
	\clearpage
	\pagenumbering{arabic}
	\addcontentsline{toc}{chapter}{Introduction} 
	
	\chapter*{Introduction}
	\markchapterintro{Introduction}
	
	Trefftz methods are numerical schemes for approximating solutions of boundary value problems. 
	While Standard Finite Element methods seek to approximate a solution of a partial differential equation (PDE) by piecewise polynomials, Trefftz methods rely on problem-dependent bases,  incorporating some properties of the PDE into the test and trial spaces, hence also into the discrete solution.
	Trefftz schemes were proposed in 1926 by  Erich Trefftz \cite{trefftz1926gegenstuck} and are high-order Galerkin methods whose trial and test functions are exact solutions of the governing PDE in each mesh element. For example, if we consider the Laplace equation, instead of piecewise polynomials, a natural choice of approximating functions is given by piecewise harmonic polynomials. 
	
	Since local exact solutions are required, in general the PDEs considered are  linear, homogeneous, i.e., with no source term, and with piecewise-constant coefficients, for which Trefftz basis functions can be easily computed. 
	However, if the equation has varying coefficients, in general, exact solutions are unavailable, making the construction of discrete Trefftz spaces impossible. 
	
	Quasi-Trefftz methods have been introduced to solve this problem: they use discrete spaces of functions that are not exact solutions but elementwise “approximate solutions” of the PDE. More precisely, quasi-Trefftz functions satisfy the following property: the Taylor polynomial of a given order of their image under the partial differential operator vanishes at the barycentre of a mesh element. 
	
	Trefftz methods proved successful especially for time-harmonic wave propagation (Helm\-holtz equation) using plane waves as basis functions \cite{hiptmair2016survey} and have been extended in \cite{imbert2014generalized} to the case of smoothly varying coefficients using Generalized Plane Waves (GPWs), which are approximate solutions of the Helmholtz equation in the sense of Taylor polynomials.
	
	An extension of a space-time Trefftz DG scheme for the acoustic wave equation of \cite{moiola2018space} to the case of smoothly varying coefficients has recently been studied in \cite{imbert2023space} using the quasi-Trefftz method: 
	since it has shown excellent results, we propose a related method that can be applied to the second-order elliptic equation.
	In this thesis we describe and analyse a quasi-Trefftz discontinuous Galerkin method for solving boundary value problems for the homogeneous diffusion-advection-reaction equation with piecewise-smooth coefficients.
	
	We choose polynomial quasi-Trefftz basis functions, whose coefficients can be computed with a simple algorithm, which is initialized assigning a sort of  Cauchy conditions and is based on the Taylor expansion of the coefficients of the PDE. 
	
	The main advantage of Trefftz and quasi-Trefftz schemes over more classical ones is the higher accuracy for comparable numbers of degrees of freedom.
	The quasi-Trefftz space, compared to the full polynomial space of the same degree, provides the same convergence rates with respect to the mesh size and has a much smaller dimension, especially for large polynomial degrees.
	
	Trefftz and quasi-Trefftz methods are often formulated in a discontinuous Galerkin (DG) framework. 
	DG methods are based on discontinuous piecewise polynomial functions and they can be easily combined with Trefftz and quasi-Trefftz functions due to the decoupling of the basis construction of elements.
	DG schemes often require a higher number of degrees of freedom compared to continuous ones, especially in the case of high-order schemes and combining them with quasi-Trefftz functions allows for a reduction in the number of degrees of freedom.
	
	For PDEs with diffusion, DG methods originated from the use of an interior penalty to weakly enforce continuity conditions  as in \cite{arnold1982interior}. See \cite{arnold2002unified} for a unified analysis of DG methods for the Laplace equation.
	For advection–reaction problems, it is common to use upwind numerical fluxes in the DG formulation \cite{brezzi2004discontinuous}.
	DG scheme for diffusion-advection-reaction problems that combine these two approaches have been studied in \cite{ houston2002discontinuous, ayuso2009discontinuous,di2011mathematical}. 
	
	We now outline the structure of the thesis.
	
	In Chapter \ref{Chapter1} we  describe the boundary value problem for the diffusion-advection-reaction equation and analyse the classical variational formulation and its well-posedness. 
	We introduce the abstract framework for the analysis of the discontinuous Galerkin methods, in particular the non-conforming error analysis based on \cite[Chapter 1]{di2011mathematical}, which relies on three key ingredients: consistency, discrete coercivity and boundedness.
	
	The discrete setting is introduced in Chapter \ref{Chapter2} by stating assumptions on the mesh sequences and defining the typical tools of DG approximations such as averages, jumps and broken function spaces.
	
	We derive the discontinuous Galerkin variational formulation in Chapter \ref{Chapter3}. 
	We consider the interior penalty method, in its symmetric (SIPG), incomplete (IIPG) and non-symmetric (NIPG) variants
	to handle the diffusion term
	and a DG scheme with upwind numerical fluxes to handle the advection-reaction terms, following mainly \cite{di2011mathematical} and \cite{riviere2008discontinuous}.
	
	This method is consistent by construction and in Chapter \ref{Chapter4} we prove the discrete coercivity and the boundedness with respect to a mesh-dependent norm. 
	These results imply the well-posedness of the DG method and the quasi-optimality in such norm for any  discrete space that is a subspace of the full polynomial space.
	Furthermore, we study the convergence analysis for the full polynomial space.
	
	In Chapter \ref{Chapter5} we define  the polynomial quasi-Trefftz space for a general linear homogeneous PDE with sufficiently smooth coefficients and prove its optimal approximation properties. 
	We focus on the quasi-Trefftz space for the homogeneous diffusion-advection-reaction equation and prove the $h$-convergence of the quasi-Trefftz DG scheme.
	We describe an algorithm for the construction of a basis  in dimension $d\in\mathbb{N}$, $d\ge1$.
	
	The quasi-Trefftz DG method has been implemented in the two-dimensional case and some numerical experiments are illustrated in Chapter \ref{Chapter6} to validate the theoretical results, compare the quasi-Trefftz space with the full polynomial space and assess some other properties of the method.
	
	We draw some conclusions and outline some future developments in Chapter \ref{Chapter7}.

	\chapter{Model problem}\label{Chapter1}
	In this chapter we present the boundary value problem for the diffusion-advection-reaction equation and we introduce the abstract framework to build discontinuous Galerkin approximations of the model problem.
	In Section \ref{abstractsetting} we describe the continuous variational setting and we consider the Galerkin method and non-conforming methods  for approximating a variational problem. We follow Di Pietro and Ern \cite{di2011mathematical} for the non-conforming analysis.
	In Section \ref{problema} we describe the model problem, its classical variational formulation  and prove its well-posedness.
	\section{Abstract setting}\label{abstractsetting}
	In this section we introduce the abstract framework for the analysis of the discontinuous Galerkin methods.
	We start with the variational setting, then we consider the Galerkin method and non-conforming methods and then we  recall some classical function spaces that we will use in the analysis.
	\subsection{Variational problems and Galerkin}
	Let $V$ be a real Hilbert space equipped with the norm $\N{\cdot}_V$, let $\calA: V\times V\to\mathbb{R}$ be a bilinear form on $V$ and $\calF: V\to \mathbb{R}$ be a linear functional on $V$.
	Consider the following abstract variational problem:
	\begin{equation}\label{abstractproblem}
		\text{Find } u\in V \text{ such that } \calA(u,v)=\calF(v) \text{ for all } v\in V.
	\end{equation}
	We recall the definitions of coercivity and continuity of a bilinear form $\calA$ and of  continuity of a linear form $\calF$.
	\begin{Def}[Coercivity]
		A bilinear form $\calA$ defined on $\left(V,\N{\cdot}_V\right)$ is \textit{coercive} if there exists $\alpha>0$ such that
		\begin{equation*}
			\calA(v,v)\ge \alpha\N{v}_V^2 \quad 	\forall v\in V.
		\end{equation*}
	\end{Def}
	\begin{Def}[Continuity]
		A bilinear form $\calA$ defined on $\left(V,\N{\cdot}_V\right)$ is \textit{continuous} if there exists $M>0$ such that
		\begin{equation*}
			\abs{\calA(v,w)}\leq M\N{v}_V\N{w}_V \quad \forall v,w\in V.
		\end{equation*}
	\end{Def}
	\begin{Def}[Continuity]
		A linear form $\calF$ defined on $\left(V,\N{\cdot}_V\right)$ is \textit{continuous} if there exists $M>0$ such that
		\begin{equation*}
			\abs{\calF(v)}\leq M\N{v}_V \quad \forall v\in V.
		\end{equation*}
	\end{Def}
	Sufficient conditions for the well-posedness of the problem \eqref{abstractproblem} are given by the Lax--Milgram Theorem:
	\begin{theorem}[Lax--Milgram]\label{Lax-Milgram}
		Given the problem \eqref{abstractproblem}, assume that the bilinear form $\calA$ is continuous and coercive on $V$ and that the linear functional $\calF$ is continuous on $V$.
		Then, there exists a unique solution $u\in V$ of the variational problem \eqref{abstractproblem}.
	\end{theorem}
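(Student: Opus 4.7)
The plan is to reduce the variational problem to an operator equation via the Riesz representation theorem, and then to establish bijectivity of the resulting operator using coercivity and continuity. First, since $\calF$ is continuous on the Hilbert space $V$, Riesz representation yields a unique $f\in V$ with $\calF(v)=(f,v)_V$ for all $v\in V$. Next, for each fixed $u\in V$ the map $v\mapsto \calA(u,v)$ is linear and, by continuity of $\calA$, bounded with norm at most $M\N{u}_V$; Riesz again gives a unique element, call it $Au\in V$, with $\calA(u,v)=(Au,v)_V$ for every $v\in V$. Linearity of $\calA$ in the first slot shows $A\colon V\to V$ is a bounded linear operator with operator norm at most $M$. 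The variational problem \eqref{abstractproblem} is thus equivalent to the operator equation $Au=f$.

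To produce a unique solution I would argue by a contraction-mapping fixed point, which is the cleanest route. For a parameter $\rho>0$ define $T_\rho\colon V\to V$ by $T_\rho(u)=u-\rho(Au-f)$; clearly $u$ solves $Au=f$ if and only if $u$ is a fixed point of $T_\rho$. Expanding the squared norm gives
\begin{equation*}
\N{T_\rho(u_1)-T_\rho(u_2)}_V^2=\N{u_1-u_2}_V^2-2\rho\,(A(u_1-u_2),u_1-u_2)_V+\rho^2\N{A(u_1-u_2)}_V^2,
\end{equation*}
and the middle term is exactly $2\rho\,\calA(u_1-u_2,u_1-u_2)$. Applying the coercivity bound $\calA(v,v)\ge\alpha\N{v}_V^2$ and the boundedness $\N{Av}_V\le M\N{v}_V$ yields
\begin{equation*}
\N{T_\rho(u_1)-T_\rho(u_2)}_V^2\le (1-2\rho\alpha+\rho^2M^2)\,\N{u_1-u_2}_V^2.
\end{equation*}
For any $\rho\in(0,2\alpha/M^2)$ the factor on the right lies in $[0,1)$, so $T_\rho$ is a strict contraction on the complete space $V$ and the Banach fixed point theorem delivers a unique $u\in V$ with $T_\rho(u)=u$, equivalently $Au=f$, equivalently $\calA(u,v)=\calF(v)$ for every $v\in V$.

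The main obstacle is really only the contraction estimate above, where one has to recognise that the cross term $(A(u_1-u_2),u_1-u_2)_V$ coincides with $\calA(u_1-u_2,u_1-u_2)$ so that coercivity enters the bound. The remaining steps, namely the two invocations of Riesz representation and the verification of linearity and boundedness of $A$, are essentially bookkeeping and rely only on the continuity hypotheses already assumed. As a by-product the argument also gives the stability estimate $\N{u}_V\le \alpha^{-1}\N{\calF}_{V'}$, which will be the starting point for the quasi-optimality analysis of the Galerkin and non-conforming discretisations treated in the subsequent chapters.
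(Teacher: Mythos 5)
Your proof is correct: it is the standard argument for Lax--Milgram via the Riesz representation theorem (to pass to the operator equation $Au=f$) followed by the Banach fixed point theorem applied to $T_\rho(u)=u-\rho(Au-f)$ with $\rho\in(0,2\alpha/M^2)$, and the contraction estimate $1-2\rho\alpha+\rho^2M^2\in[0,1)$ is verified correctly (its nonnegativity is automatic since $\alpha\le M$). Note that the paper itself states this classical theorem without proof, so there is no internal argument to compare against; your route is the textbook one, and the closing stability bound $\N{u}_V\le\alpha^{-1}\N{\calF}_{V'}$ indeed follows, though directly from coercivity applied to $\calA(u,u)=\calF(u)$ rather than from the contraction itself.
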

	
	In order to approximate a variational problem one can use the Galerkin method, which consists in restricting the variational problem \eqref{abstractproblem} to a finite-dimensional subspace of $V$. Let $V_h\subseteq V$ be a finite-dimensional subspace of $V$, called \textit{discrete space}, the Galerkin method has the following form:
	\begin{equation}\label{abstractproblemdiscrete}
		\text{Find } u_h\in V_h \text{ such that } \calA(u_h,v_h)=\calF(v_h) \text{ for all } v_h\in V_h.
	\end{equation}
	Note that the equation is the same, the only differences are that the discrete solution $u_h$ is searched only in $V_h$ and that the variational equation is satisfied only for test functions $v_h\in V_h$. 
	Observe that, if the bilinear form $\calA$ is coercive and continuous on $ V$ and the linear functional $ \calF$ is continuous on $V$, then they satisfy these properties also on $V_h$, since $V_h\subseteq V$.
	Hence, if the continuous problem \eqref{abstractproblem} is well-posed due to the Lax--Milgram Theorem, then also the discrete one admits a unique solution.
	We can now study the error between the exact solution and the approximate one.
	An error estimate is provided by the Céa's Lemma:
	\begin{lemma}[Céa]
		Under the assumption of the Lax--Milgram Theorem, let $u$ be the solution of \eqref{abstractproblem} and $u_h$ be the solution of  \eqref{abstractproblemdiscrete}. Then, the following error bound holds
		\begin{equation*}
			\N{u-u_h}_V\leq \frac{M}{\alpha} \inf_{v_h\in V_h}\N{u-v_h}_V.
		\end{equation*}
	\end{lemma}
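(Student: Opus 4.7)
The plan is to combine coercivity, continuity, and the subspace inclusion $V_h\subseteq V$ through the standard \emph{Galerkin orthogonality} trick, and then optimise over the test function. First I would exploit the fact that the exact solution $u$ also belongs to $V_h$'s ambient space, so the continuous equation $\calA(u,v_h)=\calF(v_h)$ is valid for every $v_h\in V_h$. Subtracting the discrete equation $\calA(u_h,v_h)=\calF(v_h)$ yields the Galerkin orthogonality relation
\begin{equation*}
\calA(u-u_h,v_h)=0\qquad\forall v_h\in V_h.
\end{equation*}

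Next I would fix an arbitrary $v_h\in V_h$ and apply coercivity to the error $u-u_h$:
\begin{equation*}
\alpha\N{u-u_h}_V^2\le \calA(u-u_h,u-u_h).
\end{equation*}
The key manipulation is to split the second argument as $u-u_h=(u-v_h)+(v_h-u_h)$; since $v_h-u_h\in V_h$, Galerkin orthogonality annihilates the contribution $\calA(u-u_h,v_h-u_h)$, leaving
\begin{equation*}
\alpha\N{u-u_h}_V^2\le \calA(u-u_h,u-v_h).
\end{equation*}
Continuity of $\calA$ then gives $\calA(u-u_h,u-v_h)\le M\N{u-u_h}_V\N{u-v_h}_V$.

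Finally, if $u=u_h$ the estimate is trivial; otherwise I would divide both sides by $\alpha\N{u-u_h}_V>0$ to obtain $\N{u-u_h}_V\le\tfrac{M}{\alpha}\N{u-v_h}_V$, and conclude by taking the infimum over $v_h\in V_h$. There is essentially no hard step here: the argument is short and the only subtlety is that Galerkin orthogonality relies crucially on the \emph{conforming} hypothesis $V_h\subseteq V$. In the discontinuous Galerkin setting developed in the later chapters this inclusion fails, which is presumably why Céa's Lemma is stated here only as a benchmark, to be replaced by the consistency/discrete-coercivity/boundedness analysis announced in the introduction.
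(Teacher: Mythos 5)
Your proof is correct, and it is the classical argument: the paper itself states Céa's Lemma without proof, so there is nothing to clash with. The steps you use — Galerkin orthogonality from $V_h\subseteq V$, coercivity applied to $u-u_h$, the splitting $u-u_h=(u-v_h)+(v_h-u_h)$, continuity, division by $\N{u-u_h}_V$ (with the trivial case $u=u_h$ handled), and the infimum — are all in order. Your closing remark is also on point, and it is exactly what distinguishes your argument from the paper's proof of the non-conforming analogue, Theorem \ref{errortheorem}: there the error $u-u_h$ no longer lies in a space where discrete coercivity is available, so the paper instead starts from the triangle inequality $\vertiii{u-u_h}\leq\vertiii{u-v_h}+\vertiii{v_h-u_h}$ and applies coercivity only to the discrete difference $v_h-u_h$, paying for this with the slightly weaker constant $1+\frac{M}{\alpha}$ (and with the stronger norm $\vertiii{\cdot}_{*}$ needed for boundedness), whereas your conforming argument yields the sharper constant $\frac{M}{\alpha}$ in the single norm $\N{\cdot}_V$.
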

	This inequality is known as the \textit{quasi-optimality inequality}.
	\subsection{Non-conforming methods}\label{nonconforming}
	Another way of approximating the problem \eqref{abstractproblem} is with non-conforming methods. We will consider discontinuous Galerkin approximations that belong to this class of schemes.
	For this part we rely on the abstract non-conforming error analysis presented in \cite[Section 1.3]{di2011mathematical}.
	In the case of non-conforming methods the discrete space $V_h$ is a finite-dimensional space that is not contained in $V$.
	The discrete  problem has this form:
	\begin{equation}\label{abstractproblemdg}
		\text{Find } u_h\in V_h \text{ such that } \calA_h(u_h,v_h)=\calF_h(v_h) \text{ for all } v_h\in V_h,
	\end{equation}
	with $V_h \not \subseteq V$, a discrete bilinear form $\calA_h: V_h\times V_h\to \mathbb{R}$ and a discrete linear form $\calF_h:V_h\to\mathbb{R}$.
	Observe that a sufficient condition for the well-posedness of the discrete problem \eqref{abstractproblemdg} is the coercivity of the bilinear form $\calA_h$ on $V_h$. 
	
	\begin{Def}[Discrete coercivity]
		Let $\vertiii{\cdot}$ be a norm defined on $V_h$.
		The discrete bilinear form $\calA_h$ enjoys \textit{discrete coercivity} if it is coercive on $V_h$, i.e., if there exists $\alpha>0$ such that
		\begin{equation*}
			\calA_h(v_h,v_h)\ge \alpha\vertiii{v_h}^2 \quad 	\forall v\in V_h.
		\end{equation*}
	\end{Def}
	In order to have an error estimate analogous to the Céa's Lemma we will need three properties satisfied: consistency, discrete coercivity and boundedness. 
	
	Consistency means that the exact solution $u$ of \eqref{abstractproblem} satisfies the discrete variational equation \eqref{abstractproblemdg}.
	To check this property we need to evaluate $\calA_h$ on the exact solution $u$ but the discrete bilinear form is defined only on $V_h\times V_h$.
	For this reason we assume that exists a subspace $V_{*}\subseteq V$ such that $u\in V_{*}$ and such that we can extend the bilinear form $\calA_h$ to $V_{*}\times V_h$.
	\begin{Def}[Consistency]
		The discrete problem \eqref{abstractproblemdg} is \textit{consistent} if the exact solution $u\in V_{*}$ satisfies
		\begin{equation*}
			\calA_h(u,v_h)=\calF_h(v_h) \text{ for all } v_h\in V_h.
		\end{equation*}
	\end{Def}
	We note that consistency is equivalent to the \textit{Galerkin orthogonality property}:
	\begin{equation*}
		\calA_h(u-u_h,v_h)=0 \text{ for all } v_h\in V_h.
	\end{equation*}
	
	The error $u-u_h$ belongs to the space $V_{*h}:=V_{*}+V_h$.
	We assume that the norm $\vertiii{\cdot}$ can be extended to $V_{*h}$, since we want to measure the error in the same norm in which the discrete coercivity holds.
	We also need the boundedness in $V_{*h}\times V_h$, but, in general, is not achievable only with the norm  $\vertiii{\cdot}$ and a stronger norm $\vertiii{\cdot}_{*}$ on $V_{*h}$ is needed.
	\begin{Def}[Boundedness]
		Let $\vertiii{\cdot}$ and $\vertiii{\cdot}_{*}$ be two norms defined on $V_{*h}$ such that $\vertiii{v}\leq \vertiii{v}_{*}$ for all $v \in V_{*h}$.
		The discrete bilinear form $\calA_h$ is \textit{bounded} in $V_{*h}\times V_h$ if there exists $M>0$ such that
		\begin{equation*}
			\abs{\calA_h(v,w_h)}\leq M\vertiii{v}_{*}\vertiii{w_h} \quad \forall (v,w_h)\in V_{*h}\times V_h.
		\end{equation*}
	\end{Def}
	\begin{theorem}[Abstract error estimate]\label{errortheorem}
		Let $u$ solve \eqref{abstractproblem} and let $u_h $ solve \eqref{abstractproblemdg}.
		Assume that $u\in V_{*}\subseteq V$ and that the discrete bilinear form $\calA_h$ can be extended to $V_{*h}\times V_h$, where $V_{*h}=V_{*}+V_h$. Let $\vertiii{\cdot}$ and $\vertiii{\cdot}_{*}$ be two norms defined on $V_{*h}$ such that $\vertiii{v}\leq \vertiii{v}_{*}$ for all $v \in V_{*h}$.
		Assume consistency, discrete coercivity and boundedness. Then, the following error estimate holds true:
		\begin{equation*}
			\vertiii{u-u_h}\leq \left(1+\frac{M}{\alpha}\right) \inf_{v_h\in V_h}\vertiii{u-v_h}_{*}.
		\end{equation*}
	\end{theorem}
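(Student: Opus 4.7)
The plan is to follow the standard Second Strang lemma argument: decompose the error using an auxiliary element $v_h\in V_h$, control the discrete component via the discrete coercivity and Galerkin orthogonality, then close with a triangle inequality.

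First I would fix an arbitrary $v_h\in V_h$ and write $u-u_h=(u-v_h)+(v_h-u_h)$. Since the second summand lies in $V_h$, I may apply the discrete coercivity assumption on the norm $\vertiii{\cdot}$ to obtain $\alpha\vertiii{v_h-u_h}^2\leq \calA_h(v_h-u_h,v_h-u_h)$. Next I would exploit consistency, which gives the Galerkin orthogonality $\calA_h(u-u_h,v_h-u_h)=0$; inserting this inside the right-hand side of the coercivity bound yields
\begin{equation*}
\alpha\vertiii{v_h-u_h}^2\leq \calA_h(v_h-u_h,v_h-u_h)=\calA_h(v_h-u,v_h-u_h).
\end{equation*}
Note the extension of $\calA_h$ to $V_{*h}\times V_h$ is exactly what makes the right-hand side meaningful, since $v_h-u\in V_{*h}$.

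Then I would invoke the boundedness hypothesis to estimate $|\calA_h(v_h-u,v_h-u_h)|\leq M\vertiii{v_h-u}_{*}\vertiii{v_h-u_h}$. Dividing by $\vertiii{v_h-u_h}$ (assuming it is nonzero; otherwise the inequality is trivial) gives $\vertiii{v_h-u_h}\leq \tfrac{M}{\alpha}\vertiii{u-v_h}_{*}$. A triangle inequality in $\vertiii{\cdot}$, combined with the norm comparison $\vertiii{u-v_h}\leq\vertiii{u-v_h}_{*}$, yields
\begin{equation*}
\vertiii{u-u_h}\leq \vertiii{u-v_h}+\vertiii{v_h-u_h}\leq \left(1+\frac{M}{\alpha}\right)\vertiii{u-v_h}_{*}.
\end{equation*}
Finally, since $v_h\in V_h$ was arbitrary, I would take the infimum over $V_h$ to conclude.

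There is no real obstacle here: the argument is a routine three-line adaptation of Céa's lemma, and the only subtlety is making sure at every step that whichever form of $\calA_h$ is being evaluated has arguments in its legal domain. In particular, one must be careful that boundedness is applied with the first argument in $V_{*h}$ and the second in $V_h$ (not the other way around), which is precisely matched by the split $\calA_h(v_h-u,v_h-u_h)$ used above.
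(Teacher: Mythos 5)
Your proposal is correct and follows essentially the same argument as the paper: split $u-u_h=(u-v_h)+(v_h-u_h)$, use discrete coercivity together with Galerkin orthogonality to bound $\vertiii{v_h-u_h}$ via boundedness by $\frac{M}{\alpha}\vertiii{u-v_h}_{*}$, then conclude with the triangle inequality, the norm comparison, and the infimum over $v_h$. Your added remarks about the trivial case $\vertiii{v_h-u_h}=0$ and about which argument of $\calA_h$ lies in which space are fine but do not change the route.
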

	\begin{proof}
		Let $v_h\in V_h$. Owing to the triangle inequality, we get
		$$
		\vertiii{u-u_h}\leq \vertiii{u-v_h}+\vertiii{v_h-u_h}.
		$$
		For the first term we simply have
		$
		\vertiii{u-v_h}\leq \vertiii{u-v_h}_{*}.
		$
		For the second term, 
		the discrete coercivity and the consistency imply that
		$$
		\alpha \vertiii{v_h-u_h}^2\leq  
		\calA_h(v_h-u_h,v_h-u_h)= 
		\calA_h(v_h-u,v_h-u_h),
		$$
		and the boundedness yields
		$$
		\vertiii{v_h-u_h}\leq  \frac{M}{\alpha} \vertiii{v_h-u}_{*}.
		$$
		Combining the previous bounds we arrive at 
		$$
		\vertiii{u-u_h}\leq \vertiii{u-v_h}_{*}+\frac{M}{\alpha}
		\vertiii{v_h-u}_{*},
		$$
		and we conclude the proof by taking the infimum over $v_h\in V_h$.
	\end{proof}
	\subsection{Function spaces}\label{functionspaces}
	In this section we recall some preliminary concepts for presenting in Section \ref{problema} the classical variational formulation of the model problem.
	
	We denote by $\mathbb{N}$ the set of the natural numbers $\{0, 1, 2, \dots\}$ including the zero.
	Let $ d \in\IN$, $d \ge 1 $ be the space dimension and let $\Omega\subseteq\mathbb{R}^d $ be an open, bounded, Lipschitz subset of $\mathbb{R}^d$.
	
	We introduce two important classes of function spaces, namely Lebesgue and Sobolev spaces. 
	\begin{Def}[Lebesgue space]
		Let $1\leq p\leq \infty$. We set
		$$
		\N{v}_{L^p(\Omega)}:=\left(\int_{\Omega}\abs{v}^p\right)^{\frac{1}{p}} \qquad 1\leq p < \infty,
		$$
		and 
		\begin{equation*}
			\N{v}_{L^{\infty}(\Omega)}:=\esssup_{x\in\Omega} \abs{v(x)} =\inf \{ C>0 \mid \abs{v(x)} \leq C   \text{ for a.e. } x\in \Omega\}.
		\end{equation*}
		We define the \textit{Lebesgue space}
		$$
		L^p(\Omega):=\left\{v:\Omega\to\mathbb{R} \text{ Lebesgue measurable } \mid \N{v}_{L^p(\Omega)}< \infty \right\}.
		$$
	\end{Def}
	We recall that $L^p(\Omega)$ is a vector space and that $\N{\cdot}_{L^p(\Omega)}$ is a norm  for all $1\leq p\leq \infty$  \cite[Theorem 4.7]{brezis2011functional}. Moreover, $\left(L^p(\Omega),\N{\cdot}_{L^p(\Omega)}\right)$ is a Banach space for all $1\leq p\leq \infty$ \cite[Theorem 4.8]{brezis2011functional}. 
	We denote by $C^{\infty}_c(\Omega)$  the space of infinitely differentiable functions with compact support in $\Omega$, i.e.,
	$C^{\infty}_c(\Omega):=\{v\in C^{\infty}(\Omega)\mid v(\bx)=0 \quad \forall  \bx\in \Omega\setminus K, \text{ where $K \subseteq \Omega$ is compact}\}$.
	We also recall that 
	$C^{\infty}_{c}(\Omega)$ is dense in $L^p(\Omega)$ for all $1\leq p < \infty$ \cite[Corollary 4.23]{brezis2011functional}.
	In the particular case $p = 2$, $ L^2(\Omega)$ is a real Hilbert space equipped with the scalar product
	$$
	(w,v)_{L^2(\Omega)}:= \int_{\Omega}w  v.
	$$
	
	We now recall two useful inequalities: the Cauchy--Schwarz inequality and the Young's inequality.
	\begin{prop}[Cauchy--Schwarz inequality]
		For all $ w,v \in L^2(\Omega)$, then $wv\in L^1(\Omega)$ and
		\begin{equation*}
			\abs{(w,v)_{L^2(\Omega)}}\leq \N{w}_{L^2(\Omega)}  \N{v}_{L^2(\Omega)} \quad \forall w,v\in L^2(\Omega).
		\end{equation*}
	\end{prop}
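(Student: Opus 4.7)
The plan is to first establish that $wv \in L^1(\Omega)$ and then to prove the bound by a standard quadratic (or normalization-plus-Young) trick, so that no further structural assumptions on $\Omega$ are needed beyond what is already in the excerpt.

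For the integrability claim, I would invoke the pointwise inequality $|wv| \leq \tfrac{1}{2}(w^2 + v^2)$, which holds for a.e.\ $\bx \in \Omega$ by expanding $(|w(\bx)|-|v(\bx)|)^2 \geq 0$. Since $w, v \in L^2(\Omega)$, both $w^2$ and $v^2$ lie in $L^1(\Omega)$, and monotonicity of the Lebesgue integral then gives $wv \in L^1(\Omega)$, with
\begin{equation*}
\int_\Omega |wv| \leq \tfrac{1}{2}\N{w}_{L^2\OO}^2 + \tfrac{1}{2}\N{v}_{L^2\OO}^2.
\end{equation*}
This already yields a nonoptimal bound, and its sharpening is exactly what the Cauchy--Schwarz estimate refines.

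For the inequality itself I would proceed by the textbook quadratic argument. If either $w$ or $v$ is the zero element of $L^2(\Omega)$ the claim is trivial, so assume both have nonzero norm. For every $t \in \mathbb{R}$ the function $w - t v$ belongs to $L^2(\Omega)$ and satisfies
\begin{equation*}
0 \leq \N{w - t v}_{L^2\OO}^2 = \N{w}_{L^2\OO}^2 - 2 t\, (w,v)_{L^2\OO} + t^2 \N{v}_{L^2\OO}^2.
\end{equation*}
This is a nonnegative quadratic in $t$; its discriminant must therefore be nonpositive, giving $(w,v)_{L^2\OO}^2 \leq \N{w}_{L^2\OO}^2 \N{v}_{L^2\OO}^2$, and taking square roots concludes the proof. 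Equivalently, one can normalize $\widehat w := w/\N{w}_{L^2\OO}$ and $\widehat v := v/\N{v}_{L^2\OO}$ and apply the pointwise Young inequality $|\widehat w \widehat v| \leq \tfrac{1}{2}(\widehat w^2 + \widehat v^2)$, then integrate over $\Omega$ to get $|(\widehat w,\widehat v)_{L^2\OO}| \leq 1$, which rescales to the claim.

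There is no real obstacle here: both integrability and the inequality rely only on elementary pointwise bounds and the linearity and monotonicity of the Lebesgue integral on $\Omega$. The only small point to be careful about is handling the degenerate case $\N{v}_{L^2\OO} = 0$ (or $\N{w}_{L^2\OO} = 0$) separately, which is why I would dispose of it at the start of the quadratic argument.
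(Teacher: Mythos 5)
Your proposal is correct and complete: the pointwise bound $|wv|\le \tfrac{1}{2}(w^2+v^2)$ settles the $L^1$ integrability, and the nonnegative-quadratic (discriminant) argument, with the degenerate case $\N{v}_{L^2(\Omega)}=0$ treated separately, gives the inequality. Note that the paper does not prove this proposition at all; it is merely recalled as a classical fact about $L^2(\Omega)$ alongside Young's inequality, so there is no in-paper argument to compare against --- yours is the standard textbook proof and would serve as a valid justification.
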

	
	\begin{prop}[Young’s inequality] For all $a,b\in\mathbb{R}$
		\begin{equation*}
			ab\leq \frac{a^2}{2}+\frac{b^2}{2}. 
		\end{equation*}
	\end{prop}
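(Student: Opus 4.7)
The statement is the classical Young's inequality in its simplest form (with exponent $p=q=2$ and no weighting parameter), so I would expect a very short proof relying only on the fact that squares of real numbers are nonnegative. The plan is to start from the trivial inequality $(a-b)^2\ge 0$, which holds for every pair $a,b\in\mathbb{R}$ because the square of a real number is always nonnegative.

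Expanding this square I would obtain $a^2 - 2ab + b^2\ge 0$, which rearranges immediately to $2ab \le a^2+b^2$. Dividing both sides by $2$ yields the desired estimate $ab\le \frac{a^2}{2}+\frac{b^2}{2}$. No hypothesis on the sign of $a$ or $b$ is required, since the starting inequality holds for all real numbers.

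There is essentially no obstacle here: the proof is a one-line algebraic manipulation, and the only subtlety worth mentioning is that equality holds precisely when $a=b$, which corresponds to the vanishing of the square $(a-b)^2$. If one wanted a slightly more general weighted version (useful later for absorbing boundary terms in the DG analysis), one could instead start from $(\varepsilon^{1/2} a - \varepsilon^{-1/2} b)^2 \ge 0$ for any $\varepsilon>0$, obtaining $ab\le \tfrac{\varepsilon}{2} a^2 + \tfrac{1}{2\varepsilon} b^2$, but for the statement as written the unweighted argument is enough.
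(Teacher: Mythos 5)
Your proof is correct: the standard argument from $(a-b)^2\ge 0$, expanded and divided by $2$, is exactly what is needed, and your remark about the weighted variant is a sensible aside. The paper simply recalls this inequality as a classical fact without giving a proof, so there is no alternative argument to compare against.
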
 
	
	\begin{Def}[Sobolev space]
		Let $m\in\mathbb{N}$ and let $1\leq p\leq \infty$. 
		We define the \textit{Sobolev space}
		\begin{equation*}
			W^{m,p}(\Omega):=	\left\{ v\in L^p(\Omega) \mid
			\begin{aligned}
				&	\forall \bi, \abs{\bi}\leq m, \exists g_{\bi}\in L^p(\Omega)  \text{ such that }\\
				&	\int_{\Omega} u D^{\bi} \phi= (-1)^{\bi} \int_{\Omega}g_{\bi} \phi \quad \forall \phi\in C^{\infty}_c(\Omega)
			\end{aligned}
			\right\},
		\end{equation*}
		where we use the  notation $\bi=(i_1,i_2,\dots,i_d)\in \mathbb{N}^d$ for multi-indices with length $\abs{\bi}=\sum_{j=1}^d i_j$ and the notation for partial derivatives $$D^{\bi}=\frac{\partial^{\abs{\bi}}\phi}{\partial  x_1^{i_1} \dots \partial x_d^{i_d} }.$$
	\end{Def}
	For a function $v\in W^{m,p}(\Omega)$, the functions $g_{\bi}\in L^p(\Omega)$ are its distributional derivatives and we denote them with  $D^{\bi}v$.
	With the norm
	$$
	\N{v}_{W^{m,p}(\Omega)}:=\sum_{0\leq \abs{\bi}\leq m} \N{D^{\bi}v}_{L^p(\Omega)},
	$$
	the space $W^{m,p}(\Omega)$ is a Banach space.
	If $ p = 2$, the space $W^{m,2}(\Omega)$ is denoted by $H^m(\Omega)$ and is a Hilbert space with the scalar product
	$$
	(w,v)_{H^{m}(\Omega)}:=\sum_{0\leq \abs{\bi}\leq m} (D^{\bi}w,D^{\bi}v)_{L^{2}(\Omega)}.
	$$
	In particular, we will focus on the space 
	$$
	H^1(\Omega):=\left\{ v\in L^2(\Omega)\mid \nabla v \in [L^p(\Omega)]^d \right\},
	$$
	with the norm 
	$$\N{\cdot}_{H^1(\Omega)}^2=\N{\cdot}_{L^2(\Omega)}^2+\abs{\cdot}_{H^1(\Omega)}^2,$$
	where $\abs{\cdot}_{H^1(\Omega)}$ denotes the seminorm $\N{\nabla v}_{L^2(\Omega)}$.
	For simplicity of notation,  we have written the norm of $L^2(\Omega)$ instead of $[L^2(\Omega)]^d$, this is done for all vector quantities.
	
	A fundamental difference between $L^p(\Omega)$ and $W^{m,p}(\Omega)$ is that the functions in $L^p(\Omega)$ do not have a trace on the boundary $\partial \Omega$.
	Instead, the trace of a function $v$ of $W^{m,p}(\Omega)$ on $\partial \Omega$ is well-defined and we denote it with the restriction $v_{|_{\partial \Omega}}$.
	We recall the kernel of the trace operator in the case $p=2$:
	$$
	H^1_0(\Omega):=\left\{v\in H^1(\Omega) \mid v_{|_{\partial\Omega}} =0\right\}.
	$$
	For a sufficiently smooth subset $\Gamma$ of $\partial \Omega$, we define
	$$
	H^1_{\Gamma}(\Omega):=\left\{v\in H^1(\Omega) \mid v_{|_{\Gamma}} =0\right\}.
	$$
	In these spaces an important inequality is valid:
	\begin{prop}[Poincaré's inequality] There is a constant $C_p>0$ such that
		\begin{equation}\label{Poincare}
			\N{v}_{L^2(\Omega)}\leq C_p  \N{\nabla v}_{L^2(\Omega)} \quad \forall v \in H^1_0(\Omega).
		\end{equation}
		This inequality holds true also for all $v\in H^1_{\Gamma}(\Omega)$.
	\end{prop}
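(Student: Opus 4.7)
The plan is to establish the estimate first on the dense subspace $C^\infty_c(\Omega)$ and then extend by continuity to $H^1_0(\Omega)$. Since $\Omega$ is bounded, there exist $a,b\in\IR$ with $a<b$ such that $\Omega\subset (a,b)\times\IR^{d-1}$. For $v\in C^\infty_c(\Omega)$, extended by zero outside $\Omega$, the fundamental theorem of calculus gives
$$v(\bx)=\int_a^{x_1}\dx v(t,x_2,\dots,x_d)\,\di t.$$
Applying the Cauchy--Schwarz inequality to the one-dimensional integral yields $\abs{v(\bx)}^2\le (b-a)\int_a^b \abs{\dx v(t,x_2,\dots,x_d)}^2\,\di t$; integrating in $\bx$ over $(a,b)\times\IR^{d-1}$ and using that $v$ and $\nabla v$ are supported in $\Omega$, one obtains
$$\N{v}_{\LtO}^2\le (b-a)^2 \N{\dx v}_{\LtO}^2\le (b-a)^2 \N{\nabla v}_{\LtO}^2,$$
so one can take $C_p=b-a\le \diam(\Omega)$.

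The second step is to pass from $C^\infty_c(\Omega)$ to $H^1_0(\Omega)$ by density. By the (standard) characterisation of $H^1_0(\Omega)$ as the closure of $C^\infty_c(\Omega)$ in the $H^1$-norm, any $v\in H^1_0(\Omega)$ is the $H^1$-limit of a sequence $(v_n)\subset C^\infty_c(\Omega)$; passing to the limit in $\N{v_n}_{\LtO}\le C_p\N{\nabla v_n}_{\LtO}$ delivers the desired estimate.

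For the extension to $H^1_\Gamma(\Omega)$ the slab trick is not directly applicable, because $v$ need not vanish in a neighbourhood of $\partial\Omega\setminus\Gamma$. My plan is to argue by contradiction and compactness: assuming $\Gamma$ has positive $(d{-}1)$-dimensional measure on each connected component of $\Omega$ it touches, suppose the inequality fails. Then there exists a sequence $(v_n)\subset H^1_\Gamma(\Omega)$ with $\N{v_n}_{\LtO}=1$ and $\N{\nabla v_n}_{\LtO}\to 0$. This sequence is bounded in $H^1(\Omega)$, so by the Rellich--Kondrachov theorem (available thanks to the Lipschitz regularity of $\deO$) a subsequence converges in $L^2(\Omega)$ to some $v$. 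The gradient condition forces $\nabla v=0$ in the sense of distributions, so $v$ is constant on each connected component of $\Omega$; continuity of the trace operator and the closedness of $H^1_\Gamma(\Omega)$ give $v\in H^1_\Gamma(\Omega)$, and the assumption on $\Gamma$ then forces $v\equiv 0$, contradicting $\N{v}_{\LtO}=1$.

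The main obstacle is precisely this $H^1_\Gamma$ case: the explicit constant produced by the slab computation is lost, and one must rely on the Lipschitz regularity of $\partial\Omega$ (to invoke Rellich--Kondrachov and the continuity of the trace) together with a geometric assumption ensuring that $\Gamma$ meets each relevant connected component in a set of positive surface measure, so that the only admissible locally constant limit is zero.
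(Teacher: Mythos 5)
Your proof is correct; note, however, that the paper does not prove this proposition at all — it is recalled in the preliminaries as a classical result (alongside the Lebesgue/Sobolev facts cited from Brezis), so there is no internal argument to compare against. What you supply is the standard self-contained route: the Friedrichs slab argument plus Cauchy--Schwarz on $C^\infty_c(\Omega)$, giving the explicit constant $C_p=b-a\le\diam(\Omega)$, extended to $H^1_0(\Omega)$ by density, and a compactness/contradiction argument via Rellich--Kondrachov for $H^1_\Gamma(\Omega)$. Two points deserve flagging. First, the paper defines $H^1_0(\Omega)$ as the kernel of the trace operator, so your density step silently uses the (true, but nontrivial) identification of this kernel with the closure of $C^\infty_c(\Omega)$ in $H^1(\Omega)$ on Lipschitz domains; alternatively, your compactness argument covers the $H^1_0$ case as well and avoids this. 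Second, your hypothesis for the mixed case is phrased as ``$\Gamma$ has positive $(d-1)$-measure on each connected component it touches'', which is not quite enough: if some connected component of $\Omega$ is not touched by $\Gamma$ at all, a nonzero constant on that component defeats the inequality, so one needs $\Gamma$ to meet the boundary of \emph{every} component in a set of positive surface measure (or simply $\Omega$ connected and $\abs{\Gamma}>0$, which is the setting the paper implicitly uses, since $\Gamma=\GD\neq\emptyset$ and the mesh domain is a connected polytope). With that hypothesis stated correctly your limiting argument is sound — in fact, since $\N{v_n-v}_{\LtO}\to0$ and $\N{\nabla v_n}_{\LtO}\to0$ force convergence in the full $H^1$-norm, the membership $v\in H^1_\Gamma(\Omega)$ follows from closedness of that subspace without any appeal to weak convergence. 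The trade-off relative to the slab computation is the expected one: the compactness route needs the Lipschitz regularity of $\deO$ and yields no explicit value of $C_p$, but it handles the partial-boundary condition, which the direct one-dimensional integration cannot.
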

	
	We denote the gradient of  a function $v\in W^{m,p}(\Omega)$, with $m>0$, and the divergence of  $\bw\in [W^{m,p}(\Omega)]^d$ , with $m>0$, as
	$$
	\nabla v:=\left(\frac{\partial v}{\partial x_1},\dots,\frac{\partial v}{\partial x_d} \right), \qquad \text{div}(\bw):=\sum_{i=1}^d \frac{\partial \bw_i}{\partial x_i},
	$$
	respectively.
	We define the space $H(\text{div};\Omega)$, which is relevant in the analysis:
	\begin{Def}
		\begin{equation}\label{Hdivdef}
			H(\text{div};\Omega):=\{\bm{w} \in[ L^{2}(\Omega)] ^{d} \mid \text{div}(\bm{w}) \in L^{2}(\Omega) \}
		\end{equation}
	\end{Def}
	Moreover, we recall the divergence theorem:
	\begin{theorem}[Divergence theorem]
		For all $v\in H^1(\Omega)$, for all $\bm{w}\in [H^1(\Omega)]^d$
		$$
		\int_{\Omega}\mathrm{div}\left(\bm{w}\right)v+\int_{\Omega} \bm{w}\cdot\nabla v=\int_{\partial \Omega}\left(\bm{w}\cdot \bn\right)v
		$$
		where $\bn$ is the outward unit normal vector to $\partial \Omega$.
	\end{theorem}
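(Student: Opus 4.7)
The plan is to establish the identity first for smooth functions on $\overline{\Omega}$ and then extend to $H^1(\Omega)$ by density and by continuity of the trace operator. The Lipschitz regularity of $\partial\Omega$, already assumed in the setting of Section \ref{functionspaces}, is the reason this strategy works: it guarantees both that $C^\infty(\overline{\Omega})$ is dense in $H^1(\Omega)$ and that there is a bounded linear trace operator $\gamma : H^1(\Omega) \to L^2(\partial\Omega)$ (actually into $H^{1/2}(\partial\Omega)$), so that the right-hand side is well defined.

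First I would treat the smooth case $v\in C^\infty(\overline{\Omega})$, $\bm{w}\in [C^\infty(\overline{\Omega})]^d$. The product rule yields
$$
\mathrm{div}(v\bm{w}) = v\,\mathrm{div}(\bm{w}) + \bm{w}\cdot\nabla v,
$$
and the classical Gauss--Green theorem on bounded Lipschitz domains (for a single smooth vector field) gives $\int_\Omega \mathrm{div}(v\bm{w}) = \int_{\partial\Omega}(v\bm{w})\cdot\bn$. Combining these two identities produces the desired formula in the smooth case, with all integrands understood in the classical pointwise sense.

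Next I would upgrade to $H^1$ by approximation. Given $v\in H^1(\Omega)$ and $\bm{w}\in [H^1(\Omega)]^d$, choose sequences $v_n\to v$ and $\bm{w}_n\to \bm{w}$ with $v_n\in C^\infty(\overline{\Omega})$, $\bm{w}_n\in[C^\infty(\overline{\Omega})]^d$, and convergence in the respective $H^1$ norms (density of smooth functions up to the boundary for Lipschitz $\Omega$). The two volume integrals pass to the limit by the Cauchy--Schwarz inequality, because $v_n\to v$, $\nabla v_n\to\nabla v$, $\bm{w}_n\to\bm{w}$ and $\mathrm{div}(\bm{w}_n)\to\mathrm{div}(\bm{w})$ all in $L^2$. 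For the boundary integral, continuity of the trace gives $\gamma v_n \to \gamma v$ and $\gamma \bm{w}_n \to \gamma \bm{w}$ in $L^2(\partial\Omega)$, so Cauchy--Schwarz again lets me pass to the limit in $\int_{\partial\Omega}(v_n\bm{w}_n)\cdot\bn$. Writing the smooth identity for $(v_n,\bm{w}_n)$ and taking $n\to\infty$ yields the stated formula.

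The main technical obstacle is precisely that of trace and density for Lipschitz, not $C^1$, boundaries: both are classical but non-trivial results that I would just quote rather than reprove, since the thesis has already recalled the trace operator and density of $C^\infty_c(\Omega)$ in $L^p(\Omega)$ in the same section. An alternative, slightly slicker route would be to invoke the formula as already proved in standard references (e.g.\ \cite{brezis2011functional}) for $H^1\times [H^1]^d$, in which case the statement here serves only to fix notation and the orientation of $\bn$.
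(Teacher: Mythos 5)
Your argument is correct, and it is the standard proof: establish the identity for $v,\bm{w}$ smooth up to the boundary via the product rule and the classical Gauss--Green theorem, then pass to $H^1(\Omega)\times[H^1(\Omega)]^d$ using density of $C^\infty(\overline{\Omega})$ in $H^1(\Omega)$ on a Lipschitz domain together with the boundedness of the trace operator into $L^2(\partial\Omega)$; the limit passages by Cauchy--Schwarz are all legitimate. Note that the thesis itself does not prove this statement at all --- it is merely recalled as a classical result in the section on function spaces --- so your proof (or your suggested alternative of simply citing a standard reference) is exactly what fills that gap; the only point worth making explicit is that the boundary integrand $(\bm{w}\cdot\bn)v$ is a product of two $L^2(\partial\Omega)$ traces and is therefore only in $L^1(\partial\Omega)$, which is all that is needed and is precisely what your Cauchy--Schwarz limit argument uses.
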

	
	\section{Diffusion-advection-reaction problem}\label{problema}
	Let $d \in \mathbb{N}$, $d\ge1$ be the space dimension and let $\Omega$ be an open, bounded, Lipschitz subset of $\mathbb{R}^
	d$.
	Denote by $\Gamma:=\partial\Omega$ the boundary of the domain $\Omega$.
	
	We define the diffusion-advection-reaction operator $\calL$ applied to a function  $v:\Omega\to \mathbb{R}$ as
	\begin{equation*}
		\mathcal{L}v:=	\text{div}\left(-\bm{K}  \nabla v  +\bm{\beta}  v \right) +\sigma v,
	\end{equation*}
	with coefficients $\bk:\Omega\to \mathbb{R}^{d\times d}$, $\bbeta:\Omega\to \mathbb{R}^d$ and $\sigma:\Omega\to \mathbb{R}$.
	$\calL$ is a linear partial differential operator of the second order in which we can distinguish three terms: $\text{div}\left(-\bm{K}  \nabla v\right)$ is the diffusion term, $\text{div}\left(\bm{\beta}  v \right) $  is the advection term and  $\sigma v$  is the reaction term.
	
	Let $\Gamma_D$ and $\Gamma_N$ be sufficiently regular subsets of the boundary such that $\Gamma_D\neq\emptyset$, $\Gamma=\Gamma_D \cup \Gamma_N$ and  $\Gamma_D\cap\Gamma_N=\emptyset$. Dirichlet and Neumann boundary conditions are imposed on $\Gamma_D$ and $\Gamma_N$, respectively.
	Let $\bn(\bx)$ be the outward unit normal vector to the boundary at $\bx\in\Gamma$.
	
	Let $f\in L^2(\Omega)$, $g_D\in H^{\frac{1}{2}}(\Gamma_{D})$ and $g_N\in L^2(\Gamma_N)$,
	we consider the following boundary value problem for the diffusion-advection-reaction equation:
	\begin{alignat}{2}
		\text{div}(-\bm{K} \nabla u +\bm{\beta} u) +\sigma u &= f  && \quad \text{in  $\Omega$,}\label{eq}\\
		u&=g_{D} && \quad\text{on $ \Gamma_{D}$,} \label{Dirichlet}\\
		- \bm{K} \nabla u  \cdot \mathbf{n} &=g_{N} && \quad \text{on $ \Gamma_{N}$.}\label{Neumann}
	\end{alignat}
	We make the following assumptions on the data:
	\begin{equation}\label{assumptionbeta}
		\bk\in\left[L^{\infty}(\Omega)\right]^{d\times d},\quad \bbeta\in \left[W^{1,\infty}(\Omega)\right]^{d},\quad \sigma \in L^{\infty}(\Omega).
	\end{equation}
	In particular, this implies the regularity of $\bbeta$:
	$ 	\bbeta\in \left[H^1(\Omega)\right]^d$ and  $\bbeta\in H(\mathrm{div};\Omega).$
	We also assume that the following condition of ellipticity is satisfied.
	\begin{Def}[Ellipticity condition]
		The \textit{ellipticity condition} yields if there exists a constant $k_{min}>0$ such that
		\begin{equation}\label{ellipticity}
			\bm{\xi}^{T}\bk(\bx) \bm{\xi} \ge k_{min} \N{\bm{\xi}}^2\quad \forall \bm{\xi}\in \mathbb{R}^d, \forall \bx\in \Omega,
		\end{equation}
		where $ \N{\cdot}$ denotes the Euclidean norm in $\mathbb{R}^d$.
	\end{Def}
	Choosing $\bm{\xi}=(1,0,\dots,0)$, we deduce from the ellipticity condition \eqref{ellipticity}
	that 
	\begin{equation}\label{assalg}
		\bk_{11}(\bx)\ge k_{min}>0\quad  \text{for all  $\bx\in\Omega$}.
	\end{equation}
	We observe that, if the diffusion term vanishes, then the ellipticity condition is not satisfied, so this case is not included in our analysis.
	Moreover, we make the following assumption: if at least one between $\bbeta$ and $\sigma$ is not null, then there exists a constant $\sigma_0>0$ such that 
	\begin{equation}\label{assumptiondiv}
		\sigma(\bx)+\frac{1}{2}\text{div}\left(\bbeta(\bx)\right)\ge \sigma_{0}  \quad \text{ a.e. } \bx \in\Omega.
	\end{equation}
	Under these hypothesis is always present the diffusion term while the advection and reaction terms can both vanish,
	but if we have at least one of them, they need to satisfy the assumption \eqref{assumptiondiv}.
	
	We also distinguish between the inflow and outflow parts of the boundary $\Gamma$ defined as 
	\begin{equation}\label{inflowoutflow}
		\Gamma_{-}:=\{\bx\in\Gamma \mid \bm{\beta}(\bx) \cdot \mathbf{n}(\bx)<0 \}, \quad
		\Gamma_{+}:=\{\bx\in\Gamma \mid \bm{\beta}(\bx) \cdot \mathbf{n}(\bx)\geq0 \},
	\end{equation}
	respectively.
	We assume that, 
	when the advection term $\bbeta$ is present, $\bbeta\cdot \bn\ge 0$ on $\Gamma_N$ and that $\bbeta\cdot \bn < 0$ on $\Gamma_D$.
	We are assuming
	\begin{equation}\label{gammaDgammaN}
		\Gamma_D=\Gamma_{-},\qquad  \Gamma_N=\Gamma_{+}.
	\end{equation}
	This is done for simplicity but it it also physically reasonable, for example, if we want to model the movement of a substance knowing its concentration at the entrance but  not at the exit.
	
	We now consider the classical weak variational formulation of the model problem.
	Assume that  for all $ g_D \in H^{\frac{1}{2}}(\Gamma_D)$
	there exists an extension $\bar{g}_D \in H^{\frac{1}{2}}(\partial \Omega)$ such that $\bar{g_D}_{|_{\Gamma_D}}=g_D$ and that there is a constant $C_1>0$ such that 
	$\N{\bar{g}_D}_{ H^{\frac{1}{2}} (\partial \Omega)} \leq C_1 \N{g_D}_{ H^{\frac{1}{2} }(\Gamma_D)}$.
	Then, we use the lifting of $\bar{g}_D$ in $H^1(\Omega)$, i.e., a function $u_{g_D} \in H^1(\Omega)$ such that $u_{g_D} = \bar{g}_D$ on $\partial \Omega$ and that
	$\N{u_{g_D}}_{ H^1 ( \Omega)} \leq C_2 \N{\bar{g}_D}_{ H^{\frac{1}{2} }(\partial \Omega)}$ for some constant $C_2>0$.
	Hence, we have a function $u_{g_D}$ in $H^1(\Omega)$ such that $u_{g_D} = g_D$ on $\Gamma_D$ and that
	$\N{u_{g_D}}_{ H^1 ( \Omega)} \leq C \N{g_D}_{ H^{\frac{1}{2} }(\Gamma_D)}$ for some constant $C>0$.
	
	The classical weak variational formulation of the problem \eqref{eq}-\eqref{Neumann} is:
	\begin{equation}\label{variationalcontinuous}
		\begin{cases}
			\text{Find } u\in H^1(\Omega) \text{ such that}\\
			u=u_{g_D}+u_0, \quad u_0\in H^1_{\Gamma_D}(\Omega),\\
			a(u_0,v)=L(v)-a(u_{g_D},v) \text{ for all } v\in H^1_{\Gamma_D}(\Omega),
		\end{cases}
	\end{equation}
	with the bilinear form 
	$$
	a(u,v):=\int_{\Omega}\bm{K} \nabla u \cdot \nabla v +\int_{\Omega} 	\left(\bbeta \cdot \nabla u\right)v+\int_{\Omega}\left(\sigma +\text{div}(\bm{\beta})\right)u v$$
	and the linear functional 
	$$
	L(v):= \int_{\Omega} f v
	-\int_{\Gamma_N} g_N \cdot \bn v.
	$$
	
	We prove the equivalence between the model problem \eqref{eq}–\eqref{Neumann} and the variational formulation \eqref{variationalcontinuous} in the following proposition.
	\begin{prop}
		If $u$ is a weak solution of the problem \eqref{eq}–\eqref{Neumann}, then $u$ satisfies the variational problem \eqref{variationalcontinuous}. Conversely, if $u$ satisfies  \eqref{variationalcontinuous}, then $u$ solves the problem \eqref{eq}–\eqref{Neumann}.
	\end{prop}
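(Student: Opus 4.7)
The plan is to prove the two implications separately, using the integration-by-parts identity that links the differential operator $\calL$ to the bilinear form $a(\cdot,\cdot)$.

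\textbf{Forward direction.} Starting from $u\in H^1(\Omega)$ solving \eqref{eq}--\eqref{Neumann}, I would first observe that $u_{|_{\Gamma_D}}=g_D=u_{g_D}{}_{|_{\Gamma_D}}$ in the trace sense, hence $u_0:=u-u_{g_D}\in H^1_{\Gamma_D}(\Omega)$ and the decomposition $u=u_{g_D}+u_0$ in \eqref{variationalcontinuous} is valid. Next I would multiply \eqref{eq} by an arbitrary $v\in H^1_{\Gamma_D}(\Omega)$, integrate over $\Omega$, and apply the divergence theorem to the diffusion term:
$$\int_{\Omega}\mathrm{div}(-\bk\nabla u)\,v=\int_{\Omega}\bk\nabla u\cdot\nabla v-\int_{\partial\Omega}(\bk\nabla u\cdot\bn)\,v.$$
For the advection term I would use the product rule $\mathrm{div}(\bbeta u)=(\mathrm{div}\,\bbeta)\,u+\bbeta\cdot\nabla u$, which holds because $\bbeta\in H(\mathrm{div};\Omega)$ by \eqref{assumptionbeta}. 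Combining the contributions and using $v_{|_{\Gamma_D}}=0$ together with $-\bk\nabla u\cdot\bn=g_N$ on $\Gamma_N$, the boundary integral reduces to $\int_{\Gamma_N}g_N v$, yielding $a(u,v)=L(v)$. Substituting $u=u_{g_D}+u_0$ gives the third equation of \eqref{variationalcontinuous}.

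\textbf{Reverse direction.} Assume $u=u_{g_D}+u_0$ satisfies \eqref{variationalcontinuous}. The Dirichlet condition \eqref{Dirichlet} is immediate since $u_0\in H^1_{\Gamma_D}(\Omega)$ and $u_{g_D}{}_{|_{\Gamma_D}}=g_D$. To recover \eqref{eq}, I would first test the variational equation with $v\in C^{\infty}_c(\Omega)\subset H^1_{\Gamma_D}(\Omega)$. Reversing the integration by parts (now with no boundary contributions, since $v$ has compact support) gives
$$\int_{\Omega}\bigl[\mathrm{div}(-\bk\nabla u+\bbeta u)+\sigma u-f\bigr]v=0\qquad \forall v\in C^{\infty}_c(\Omega),$$
which, by density of $C^{\infty}_c(\Omega)$ in $L^2(\Omega)$ and the fundamental lemma of the calculus of variations, yields \eqref{eq} a.e.\ in $\Omega$. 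In particular, because $f\in L^2(\Omega)$, the flux $-\bk\nabla u+\bbeta u$ belongs to $H(\mathrm{div};\Omega)$, so its normal trace on $\partial\Omega$ is well-defined in $H^{-1/2}(\partial\Omega)$. Finally, testing \eqref{variationalcontinuous} with a general $v\in H^1_{\Gamma_D}(\Omega)$, applying the divergence theorem, and subtracting the contribution of \eqref{eq}, all interior terms cancel and one is left with $\langle-\bk\nabla u\cdot\bn-g_N,v\rangle_{\Gamma_N}=0$ for every $v$ whose trace on $\Gamma_N$ is arbitrary in $H^{1/2}_{00}(\Gamma_N)$; by surjectivity of the trace this implies \eqref{Neumann}.

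\textbf{Main obstacle.} The delicate point is the recovery of the Neumann condition: one must justify that $\bk\nabla u\cdot\bn$ makes sense as a trace on $\Gamma_N$ and that the duality pairing vanishes against a sufficiently rich set of test traces. The first point requires using the PDE \eqref{eq} (already recovered in $L^2$) to obtain $\bk\nabla u\in H(\mathrm{div};\Omega)$, and the second relies on the fact that traces of $H^1_{\Gamma_D}(\Omega)$-functions cover the space $H^{1/2}_{00}(\Gamma_N)$, which is dense in $L^2(\Gamma_N)$ where the datum $g_N$ lives. The other computations are routine applications of the divergence theorem and the product rule, made legitimate by the regularity assumptions \eqref{assumptionbeta}.
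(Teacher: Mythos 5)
Your proposal is correct and follows essentially the same route as the paper: multiply by a test function in $H^1_{\Gamma_D}(\Omega)$ and integrate by parts for the forward direction, then recover the PDE via $C^\infty_c(\Omega)$ test functions and density, the Dirichlet condition via the lifting, and the Neumann condition by comparing the variational identity with the recovered equation. Your extra care in justifying the normal trace of the flux through $H(\mathrm{div};\Omega)$ and the surjectivity of the trace operator onto $\Gamma_N$ is a welcome refinement of the paper's terser treatment of that last step, but it does not change the argument's structure.
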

	\begin{proof}
		First, we prove that if $u$ is solution of \eqref{eq}–\eqref{Neumann}, then it also solves  \eqref{variationalcontinuous}.
		Multiply \eqref{eq} by a test function $v\in H^{1}_{\Gamma_D}(\Omega)$ and integrate over the domain $\Omega$:
		$$
		\int_{\Omega}	\text{div}(-\bm{K} \nabla u) v +\int_{\Omega} \left(	\bbeta \cdot \nabla u\right)v+\int_{\Omega}\left(\sigma +\text{div}(\bm{\beta})\right)u v= \int_{\Omega} f v.
		$$
		Integrating by parts the first term, we get
		$$
		\int_{\Omega}\bm{K} \nabla u \cdot \nabla v -\int_{\partial \Omega } \bm{K} \nabla u \cdot \bn v  +\int_{\Omega} \left(	\bbeta \cdot \nabla u\right)v+\int_{\Omega}\left(\sigma +\text{div}(\bm{\beta})\right)u v= \int_{\Omega} f v.
		$$
		Owing to the fact that $v=0$ on $\Gamma_D$ and to the Neumann boundary condition \eqref{Neumann} we arrive at
		$$
		\int_{\Omega}\bm{K} \nabla u \cdot \nabla v +\int_{\Omega} \left(	\bbeta \cdot \nabla u\right)v+\int_{\Omega}\left(\sigma +\text{div}(\bm{\beta})\right)u v= \int_{\Omega} f v
		-\int_{\Gamma_N} g_N \cdot \bn v.
		$$
		We have reached $a(u,v)=L(v)$ and we obtain the thesis  separating $u_0$ and $u_{g_D}$.
		
		Conversely, let $u$ be a solution to  \eqref{variationalcontinuous}. Consider $\phi\in C^{\infty}_c(\Omega)$ as a test function, we obtain
		$$	\int_{\Omega}\left(	\text{div}(-\bm{K} \nabla u +\bm{\beta} u) +\sigma u \right)\phi 
		= \int_{\Omega} f \phi.$$
		This implies that $\calL u = f$ in $L^2(\Omega)$, since $C^{\infty}_c(\Omega)$ is dense in $L^2(\Omega)$.
		Hence, the equation \eqref{eq} holds a.e. in $\Omega$.
		The Dirichlet boundary condition \eqref{Dirichlet} follows from the fact that $u_0=0$ on $\Gamma_D$ since $u_0\in H^1_{\Gamma_D}(\Omega)$ and $u_{g_D}=g_D$ on $\Gamma_D$ for the lifting.
		Moreover, let $\phi\in H^1_{\Gamma_D}(\Omega)$, now that we have $\calL u=f$ a.e. in $\Omega$, we obtain
		$$	-\int_{\partial \Omega}\bk\nabla u\cdot \bn \phi 
		= -\int_{\Gamma_N} g_N \cdot \bn \phi.$$
		From this equality we recover the Neumann boundary condition \eqref{Neumann}.
	\end{proof}			
	
	We prove that the bilinear form $a$ is coercive and continuous on $V=H^1_{\Gamma_D}(\Omega)$ with respect to the $\N{\cdot}_{H^1(\Omega)}$-norm and that the linear functional  $L$ is continuous on $V=H^1_{\Gamma_D}(\Omega)$ with respect to the $\N{\cdot}_{H^1(\Omega)}$-norm.
	Hence, for the Lax--Milgram Theorem the problem  \eqref{variationalcontinuous} is well-posed.
	
	For all $w, v\in H^1_{\Gamma_D}(\Omega)$, the Cauchy--Schwarz inequality yields
	\begin{align*}
		\abs{a(w,v)}\leq& 	\N{\bk}_{L^{\infty}(\Omega)}   \N{\nabla w}_{L^2(\Omega)}     \N{\nabla v }_{L^2(\Omega)} +\N{\bbeta}_{L^{\infty}(\Omega)}  \N{\nabla w}_{L^2(\Omega)}   \N{ v }_{L^2(\Omega)} \\ &
		+\left(\N{\sigma}_{L^{\infty}(\Omega)} + \N{\text{div}(\bbeta)}_{L^{\infty}(\Omega)}  \right) \N{w}_{L^2(\Omega)}     \N{v }_{L^2(\Omega)} \\
		\leq &\left(\N{\bk}_{L^{\infty}(\Omega)} +\N{\bbeta}_{L^{\infty}(\Omega)}
		+\N{\sigma}_{L^{\infty}(\Omega)} + \N{\text{div}(\bbeta)}_{L^{\infty}(\Omega)}  \right) \N{w}_{H^1(\Omega)}     \N{v }_{H^1(\Omega)}.
	\end{align*}
	The bilinear form $a$ is continuous with the constant $M=	\N{\bk}_{L^{\infty}(\Omega)} +\N{\bbeta}_{L^{\infty}(\Omega)}
	+\N{\sigma}_{L^{\infty}(\Omega)} + \N{\text{div}(\bbeta)}_{L^{\infty}(\Omega)}  $.
	
	For the coercivity, we have to consider the quantity  $
	a(v,v)=\int_{\Omega}\bm{K} \nabla v\cdot \nabla v + \int_{\Omega} 	\left(\bbeta \cdot \nabla v \right)v+\int_{\Omega}\left(\sigma +\text{div}(\bm{\beta})\right)v^2$.
	For the first term $	\int_{\Omega}\bm{K} \nabla v\cdot \nabla v$,
	owing to the ellipticity condition \eqref{ellipticity} and to the Poincaré's inequality \eqref{Poincare}, we have
	\begin{align*}
		\int_{\Omega}\bm{K} \nabla v\cdot \nabla v
		&\ge k_{min}   \N{\nabla v}_{L^2(\Omega)}^2   
		= \frac{k_{min}}{2}   \N{\nabla v}_{L^2(\Omega)}^2  +\frac{k_{min}}{2}   \N{\nabla v}_{L^2(\Omega)}^2 \\&
		\ge \frac{k_{min}}{2}   \N{\nabla v}_{L^2(\Omega)}^2  +\frac{k_{min}}{2 C_p^2}   \N{ v}_{L^2(\Omega)}^2 \\&
		\ge\frac{k_{min}}{2} \min{\biggl\{1 ,\frac{1}{C_p^2}\biggl\}}  \N{ v}_{H^1(\Omega)}^2 .
	\end{align*}
	The second term can be rewritten using integration by parts as 
	$$
	\int_{\Omega} 	(\bbeta \cdot \nabla v)v=\int_{\Omega} 	\bbeta \cdot \left(\nabla \frac{v^2}{2}\right)=
	- \int_{\Omega} 	\text{div}(\bbeta) \frac{v^2}{2}+\int_{\partial \Omega}\bbeta \cdot \bn \frac{v^2}{2}.
	$$
	Using the hypothesis \eqref{assumptiondiv}, the fact that $v=0$ on $\Gamma_D$ and that $\bbeta\cdot \bn\ge0$ on $\Gamma_N$ \eqref{gammaDgammaN}:
	$$
	\int_{\partial \Omega}\bbeta \cdot \bn \frac{v^2}{2}+\int_{\Omega}\left(\sigma +\frac{\text{div}(\bm{\beta})}{2}\right)v^2\ge 0+ \sigma_0 \N{v}_{L^2(\Omega)}^2\ge0.
	$$
	The bilinear form $a$ is coercive  in $H^1_{\Gamma_D}(\Omega)$ with the constant $\alpha=\frac{k_{min}}{2} \min{\big\{1 ,\frac{1}{C_p^2}\big\}}$.
	
	Owing to Cauchy--Schwarz inequality and to the continuity of the Neumann trace we obtain the continuity of the linear functional $L$:
	$$
	\abs{ L(v)}\leq \N{f}_{L^2(\Omega)} \N{v}_{L^2(\Omega)}+ \N{g_N}_{L^2(\Gamma_N)}\N{v}_{L^2(\Gamma_N)}\leq \left(\N{f}_{L^2(\Omega)} + C\N{g_N}_{L^2(\Gamma_N)}\right) \N{v}_{H^1(\Omega)}.
	$$
	As a consequence of the inequalities that we have shown and Lax--Milgram Theorem, the variational problem \eqref{variationalcontinuous} admits a unique solution.
	
	\chapter{The discrete setting}\label{Chapter2}
	In this chapter we introduce the discrete setting for the discontinuous Galerkin methods.
	In Section \ref{sec:domain} we fix the notation, give some definitions and list the assumptions on the mesh of the domain $\Omega$.
	In Section \ref{DGtoools} we define the typical tools of DG methods such as jumps, averages and broken functional spaces together with important properties that will be  used later in the analysis.
	We follow \cite[Chapter 1]{di2011mathematical}.
	
	\section{Domain and mesh sequences}\label{sec:domain}
	We assume that the domain $\Omega$ is an open, bounded, Lipschitz polytope in $\mathbb{R}^d$, with $d \in \mathbb{N}$, $d\ge1$.
	
	In order to define  the polytopes, which are a generalization of 2D polygons and 3D polyhedrons, we proceed by induction on the dimension $d$.
	In one dimension a polytope is an open, bounded segment. In two dimensions is an open, bounded, connected and Lipschitz subset of $\mathbb{R}^2$ such that its boundary is a finite union of closures of one dimensional polytopes.
	In three dimensions a polytope is an open, bounded, connected and  Lipschitz subset of $\mathbb{R}^3$ such that its boundary is a finite union of closures of two dimensional polytopes.
	\begin{Def}[Polytope, facet]
		A $0$-dimensional polytope is a subset of $\mathbb{R}^d$ containing a single point.
		Let	$ n\in \mathbb{N}$, $1\leq n\leq d$.
		A \textit{$n$-dimensional polytope} of $\mathbb{R}^d$ is a relatively open, bounded, connected and Lipschitz subset of a $n$-dimensional affine subspace of $\mathbb{R}^d$ such that its relative boundary is a finite union of \textit{$(n-1)$-facets}, i.e., closures of $(n-1)$-dimensional polytopes.
	\end{Def}
	
	We discretize the domain $\Omega$ using a polytopic mesh.
	\begin{Def}[Polytopic mesh]
		A \textit{polytopic mesh} $\calT$ of the domain $\Omega$ is a finite family of disjoint  $d$-dimensional polytopes  $\calT=\{T\}_{T\in\calT}$ such that
		$$
		\bar{\Omega}=\bigcup_{T\in\calT}\bar{T}.
		$$
		Each $T\in\calT$ is called a \textit{mesh element}.
	\end{Def}
	The advantage of assuming that $\Omega $ is a polytope is that it can be exactly covered by a mesh consisting of polytopic elements.
	We work with a conforming mesh.
	\begin{Def}[Conforming mesh]
		$\calT$ is a \textit{conforming mesh} if, for all $T, T'\in\calT$, $T\neq T'$, the intersection $\partial T\cap\partial T'$ is either empty or a common $n$-dimensional facet with $n\leq d-1$.
	\end{Def}
	We now fix the notation and give some definitions about the mesh.
	\begin{Def}[Element diameter, meshsize]\label{meshsizEE}
		Let $\calT$ be a mesh of the domain $\Omega$. For all $T\in\calT$, the \textit{diameter} of the element $T$ is defined by 
		$$
		h_T := \sup_{\bx,\by\in T} \abs{\bx-\by},
		$$
		and the \textit{meshsize} $h$ is defined as  
		$$
		h:= \sup_{T\in\calT}h_T.
		$$
	\end{Def}
	In order to  analyse the convergence of the discontinuous Galerkin method as the meshsize $h$ goes to zero, we will denote by $\calT_h$ a mesh with meshsize $h$ and we will consider a mesh sequence $\calT_{\mathcal{H}}:=\{\calT_h\}_{h\in\mathcal{H}}$ where $\mathcal{H}$ is a countable subset of $\{h\in\mathbb{R}\mid h>0\}$ having $0$ as only accumulation point.
	
	\begin{Def}[Radius, barycentre, measure]
		For any element $ T\in\calT_h$, we denote by $\rho_T$ the \textit{radius} of the largest ball inscribed in $T$, by $\bx_T$ the \textit{barycentre} of the element $T$ and by $\abs{T}$ its $d$-dimensional \textit{measure}.
		$\partial T $ denotes the boundary of $T$ which is the union of $ (d - 1)$-dimensional facets
		of $T$ and $\abs{\partial T}$ denotes its $(d-1)$-dimensional measure.
	\end{Def}
	
	\begin{Def}[Element outward normal]
		Let $T\in\calT_h$, we define $\bn_T$ on  $\partial T$ as the unit \textit{outward normal} vector to the element $T$.
	\end{Def}
	
	We now fix the notation about the mesh facets of a subdivision $\calT_h$.
	\begin{Def}[Mesh facets]
		A \textit{mesh facet} is a facet of a polytopic mesh element $T\in\calT_h$, i.e., the closure of a $(d-1)$-dimensional polytope that form the boundary $\partial T$.
		We denote by $\calE_h$ the set of all facets of  $\calT_h$ and by $e\in\calE_h$ a mesh facet.
	\end{Def}
	We distinguish between \textit{interior facets} for which there exist two elements $T_1$ and $T_2$ such that $e=\partial T_1\cap \partial T_2$ and \textit{boundary facets} for which there exists an element $T$ such that $e\subseteq \partial T\cap \partial \Omega$.
	The sets of interior and boundary facets are denoted by $\calE_h^I$ and $\calE_h^B$, respectively.
	We assume that it is possible to collect the boundary facets where Dirichlet conditions are assigned in a set, denoted $\calE_h^D$, and the boundary facets where Neumann conditions are assigned in a set, denoted  $\calE_h^N$.
	
	\begin{Def}[Facets diameter, measure, unit normal]
		For all $e\in\calE_h$, we denote by	$h_e$ the \textit{diameter} of the facet $e$ and by $\abs{e}$ its $(d-1)$-dimensional \textit{measure}.
		We associate a \textit{unit normal} vector $\bn_e$ to each facet $e$. If $e\in \calE_h^B$ then $\bn_e$ coincides with the unit outward vector $\bn$ normal to $\partial \Omega$.
	\end{Def}
	The following analysis does not depend on the choice of $\bn_e.$
	
	\begin{Def}\label{Npartial}
		For each element $T\in\calT_h$ we define the set of all its facets as 
		$$
		\calF_T:=\{e\in\calE_h\mid e\subseteq\partial T\}.
		$$
		The maximum number of mesh facets composing the boundary of mesh elements is indicated by
		$$
		N_{\partial}:=\max_{T\in\calT_h}card(\calF_T),
		$$
		where $card$ denotes the cardinality of the set.
	\end{Def}
	
	We now define three properties that  a mesh sequence $\calT_{\mathcal{H}}$ can have. 
	
	\begin{Def}[Shape regularity]
		The mesh sequence $\calT_\mathcal{H}$ is \textit{shape-regular} if, for all $h\in\mathcal{H}$, there exists $C_{sr}>0$, independent of $h$, such that, for all $T\in\mathcal{T}_h$,
		\begin{equation}\label{shaperegularity}
			h_T\leq C_{sr} \rho_T.
		\end{equation}
	\end{Def}
	\begin{Def}[Graded mesh]
		The mesh sequence $\calT_{\mathcal{H}}$ is \textit{graded} if, for all $h\in\mathcal{H}$, there exists $C_{g}>0$, independent of $h$, such that, for all $ T\in\calT_h $ and for all $e\in\calF_T$,
		\begin{equation}\label{gradedmesh}
			C_{g}h_e\ge h_T.
		\end{equation}
	\end{Def}
	\begin{Def}[Star-shaped property]\label{starshaped}
		The \textit{star-shaped property} holds for a mesh sequence  $\calT_\mathcal{H}$ if, for all $h\in\mathcal{H}$, there exists $0< r\leq\frac{1}{2}$, independent of $h$, such that, each $ T\in\calT_h$  is star-shaped with respect to a ball centred at some $\bx\in T$ and with radius $r h_T$.
	\end{Def}
	
	We assume to work with mesh sequences that are shape-regular, graded and with the star-shaped property. 
	We will use these assumptions later in the analysis, together with the following one, which is a condition on the “chunkiness” of the mesh, similar to \cite[eq. 26]{imbert2023space}.
	
	We assume that, for all $h\in\mathcal{H}$, there exists a constant $\eta>0$, independent of $h$, such that, for all $T\in\calT_h$,
	\begin{equation}\label{cuboidalmesh}
		h_T\abs{\partial T}\leq \eta \abs{T}.
	\end{equation}
	\begin{rem}
		We prove the above inequality in the case $d=2$ for shape-regular meshes with triangular elements.
		Let $T\in\calT_h$, we observe that $\abs{\partial T}h_T\leq 3 h_T^2$, since $h_T$ is the diameter of $T$ and $\abs{e}\leq h_T$ for all $e\in\calF_T$.
		Moreover, for the shape-regularity assumption \eqref{shaperegularity} there exists $C_{sr}>0$, independent of $h$, such that, for all $T\in\mathcal{T}_h$,
		$h_T\leq C_{sr}\rho_T,$
		implying that $\abs{\partial T}h_T\leq 3 h_T^2\leq  3 C_{sr}^2 \rho_T^2$.
		Finally, we note that $\pi \rho_T^2\leq \abs{T}$, since the area of the triangle $T$ is greater than the area of the largest circle inscribed, so we obtain $\abs{\partial T}h_T\leq \eta \abs{T}$ with 
		$\eta=\frac{3}{\pi}C_{sr}^2$.
		
		We also observe that the inequality \eqref{cuboidalmesh} holds for shape-regular meshes with convex elements. This is a consequence of the fact that the measure of the boundary of a convex element is smaller than the measure of the boundary of any ball circumscribed, due to convexity.
	\end{rem}
	
	\section{Discontinuous Galerkin tools}\label{DGtoools}
	We now introduce the typical tools of a discontinuous Galerkin approximation such as jumps and averages of scalar and vector-valued functions across the facets of a mesh $\calT_h$. 
	
	Given $T\in\calT_h$, for all $\varphi\in H^1(T)$ the trace of $\varphi$ along the boundary $\partial T$ is well-defined and we denote it by $\varphi_{|_T}$.
	Let $e\in \calE_h^I$ be an interior facet shared by the elements $T_{1}$ and $T_{2}$.
	If we consider a function $\varphi$ in $H^1(T_1\cup T_2)$, then along $e$ there are two traces of $\varphi$: $\varphi_{|_{T_1}}$ and $\varphi_{|_{T_2}}$. We can add or subtract those values and we obtain an average and a jump for $\varphi$.
	Recall that the unit vector $\bn_{T_1}$ normal to $e$ points exterior to $T_1$ and  that the unit vector $\bn_{T_2}$ normal to $e$ points exterior to $T_2$, so $\bn_{T_1}=-\bn_{T_2}$.
	
	\begin{Def}[Averages and jumps of a scalar function]
		For all $e\in\calE_h^I$ with $e=\partial T_1 \cap \partial T_2$ and for all $\varphi\in H^1(T_1\cup T_2)$, we define the \textit{average} of $\varphi$ on $e$ as 
		\begin{equation*}
			\mvl{\varphi}:=\frac{\varphi_{|_{T_1}}+\varphi_{|_{T_2}}}2,
		\end{equation*}
		and the \textit{jump} of $\varphi$ on $e$ as 
		\begin{equation*}
			\jmp{\varphi}:= \varphi_{|_{T_1}}\bn_{T_1}+\varphi_{|_{T_2}} \bn_{T_2} .
		\end{equation*}
		For all $e\in\calE_h^B$ with $e\subseteq\partial T \cap \partial \Omega$ and for all $\varphi\in H^1(T)$, we define the \textit{average} and the \textit{jump} of $\varphi$ on  $e$ as 
		\begin{equation*}
			\mvl{\varphi} :=\varphi_{|_{T}}, \quad \jmp{\varphi}:=\varphi_{|_{T}}\bn_T.
		\end{equation*}
	\end{Def} 
	
	\begin{Def}[Averages and jumps of a vector-valued function]
		For all $e\in\calE_h^I$ with $e=\partial T_1 \cap \partial T_2$ and for all $\bw \in [H^1(T_1\cup T_2)]^d$, we define the \textit{average} of $\bw$ on $e$ as 
		\begin{equation*}
			\mvl{\bw}:=\frac{\bw_{|_{T_1}}+\bw_{|_{T_2}}}2,
		\end{equation*}
		and the \textit{jump} of $\bw$ on  $e$ as 
		\begin{equation*}
			\jmp{\bw}:= \bm{w}_{|_{T_1}}\cdot\bn_{T_1}+\bw_{|_{T_2}} \cdot\bn_{T_2}.
		\end{equation*}
		For all $e\in\calE_h^B$ with $e\subseteq\partial T \cap \partial \Omega$ and for all $\bw\in [H^1(T)]^d$, we define the \textit{average} and the \textit{jump} of $\bm{w}$ on $e$ as 
		\begin{equation*}
			\mvl{\bw} :=\bw_{|_{T}}, \quad \jmp{\bw}:=\bw_{|_{T}}\cdot \bn_T.
		\end{equation*}
	\end{Def} 
	
	Notice that the jump $\jmp{\varphi}$ of a scalar function $\varphi$ is a vector while the jump $\jmp{\bw}$ of a vector-valued function $\bw$ is a scalar. The advantage of these definitions is that they do not depend on the ordering assigned to the elements $T_i$. 
	
	For simplicity, on an interior facet $e=\partial T_1\cap \partial T_2$ we use the notation $\varphi_i=\varphi_{|_{T_i}}$, $\bm{w}_i=\bw_{|_{T_i}}$ and  $\bn_i=\bn_{T_i} $ for $ i\in \{1, 2\}$ and on a boundary facet $e=\partial T\cap \partial \Omega$ we use $\varphi=\varphi_{|_{T}}$, $\bm{w}^{}=\bw_{|_{T}}$ and  $\bn=\bn_{T}=\bn_e$.
	
	A classical tool in discontinuous Galerkin methods to deal with the advection term is the use of upwind numerical fluxes. We introduce them and some of their useful properties.
	
	For each element $T$ we define the inflow and outflow parts of $\partial T$ as
	$$	\partial T_{-}:=\{\bx\in\partial T \mid \bm{\beta}(\bx) \cdot \bn_T(\bx) < 0 \}, \quad 	\partial T_{+}:=\{\bx\in\partial T \mid \bm{\beta}(\bx) \cdot \bn_T(\bx) > 0 \},$$
	respectively,
	and the neutral part as
	$$
	\partial T_{0}:=\{\bx\in\partial T \mid \bm{\beta}(\bx) \cdot \bn_T(\bx) = 0 \}.$$
	For each $e\in\calE_h$, we make the following assumption:
	$\bbeta(\bx) \cdot \bn_e(\bx)$ has the same sign for all $\bx\in e$. Hence, for all $T\in\calT_h$ and for all $e\in\calF_T$ we have that $e\subseteq \partial T_{-}$ or $e\subseteq \partial T_{+}$ or $e\subseteq \partial T_{0}$.
	For all $T\in\calT_h$, instead of considering the arithmetic average, we can consider the upwind idea.
	This means that for all function $\bm{w}\in [H^{1}(\calT_h)]^d$ on $ \partial T_{-}$ we consider only the trace from outside, on $ \partial T_{+}$ the trace from inside and on $ \partial T_{0}$ we can choose any value directed as $\bm{w}$.  
	\begin{Def}[Upwind numerical flux]
		For all $e\in\calE_h^I$ with $e=\partial T_1 \cap \partial T_2$ and for all $\bw \in [H^1(T_1\cup T_2)]^d$, we define the 
		\textit{upwind numerical flux} of $\bw$ on $e$ as 
		\begin{equation}\label{upwind}
			\mvl{\bw}_{\text{upw}}: = \begin{cases}
				\bw_2 & \text{if $\bbeta \cdot \bn_1<0$,}\\
				\bw_1 & \text{if $\bbeta \cdot \bn_1>0$,}\\ 
				\mvl{\bw} & \text{if $\bbeta \cdot \bn_1=0$.}
			\end{cases}
		\end{equation}
	For all $e\in\calE_h^B$ with $e\subseteq\partial T \cap \partial \Omega$ and for all $\bw\in [H^1(T)]^d$, we define the \textit{upwind numerical flux} of $\bm{w}$ on $e$ as 
		$$	\mvl{\bw}_{\text{upw}}= \bw_{|_T}.$$
	\end{Def}
	We can define a more general average:
	\begin{Def}[Weighted averages]
		For all $e\in\calE_h^I$ with $e=\partial T_1 \cap \partial T_2$ and for all $\bm{w}\in [H^1(T_1\cup T_2)]^d$, we define the \textit{weighted average} of $\bm{w}$ on  $e$ as 
	\begin{equation*}
			\mvl{\bm{w}}_{\alpha}:=\alpha_1\bm{w}_{|_{T_1}}+\alpha_2\bm{w}_{|_{T_2}},
		\end{equation*}
		with $\alpha_1+\alpha_2=1$, $\alpha_1, \alpha_2\ge0$.
		
		For all $e\in\calE_h^B$ with $e\subseteq\partial T \cap \partial \Omega$ and for all $\bm{w}\in[ H^1(T)]^d$, we define the \textit{weighted average} of $\bm{w}$ on $e$ as 
		\begin{equation*}
			\mvl{\bm{w}}_{\alpha} :=\bm{w}_{|_{T}}.
		\end{equation*}
	\end{Def} 
	We note that the arithmetic average is obtained for $\alpha_1=\alpha_2=\frac{1}{2}$ while the upwind flux is obtained when $\alpha_i = \frac{(sign(\bbeta\cdot \bn_i)+1)}{2}$ for $ i = 1,2$.
	\begin{prop}
		For all $e\in\calE_h^I$ and for all $\varphi\in H^1(\calT_h)$,
		\begin{equation}\label{fluxesusefulid}
			\mvl{	\bbeta \varphi}_{\mathrm{upw}} \cdot \bn_e=\mvl{\bbeta \varphi} \cdot \bn_e +\frac{1}{2}\abs{\bbeta\cdot\bn_e} \jmp{\varphi}\cdot \bn_e.
		\end{equation}
	\end{prop}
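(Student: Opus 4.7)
The plan is to verify the identity by a direct case analysis on the sign of $\bbeta \cdot \bn_e$, exploiting three simplifying observations: first, $\bbeta$ is a function defined on all of $\Omega$ (not element-by-element), so it is single-valued on $e$ and factors out of the averages and the upwind flux; second, on an interior facet $e = \partial T_1 \cap \partial T_2$ we may fix the orientation $\bn_e = \bn_1$ without loss of generality, since, as noted just before the statement, the final identity is independent of the choice of $\bn_e$; third, with this choice $\bn_2 = -\bn_e$, so $\jmp{\varphi}\cdot \bn_e = \varphi_1 - \varphi_2$ and $\mvl{\bbeta \varphi}\cdot \bn_e = (\bbeta \cdot \bn_e)\,\frac{\varphi_1 + \varphi_2}{2}$.

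With these reductions, I would split the proof according to the assumption (stated before Definition of the upwind numerical flux) that $\bbeta \cdot \bn_e$ has constant sign on each facet $e$. In the case $\bbeta \cdot \bn_e > 0$, the upwind definition \eqref{upwind} gives $\mvl{\bbeta \varphi}_{\mathrm{upw}}\cdot \bn_e = (\bbeta \cdot \bn_e)\varphi_1$, while $|\bbeta \cdot \bn_e| = \bbeta \cdot \bn_e$, so a short computation shows the right-hand side equals $(\bbeta \cdot \bn_e)\frac{\varphi_1 + \varphi_2}{2} + \frac{1}{2}(\bbeta \cdot \bn_e)(\varphi_1 - \varphi_2) = (\bbeta \cdot \bn_e)\varphi_1$. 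In the case $\bbeta \cdot \bn_e < 0$, the upwind flux selects $\varphi_2$ and $|\bbeta \cdot \bn_e| = -(\bbeta \cdot \bn_e)$, so the sign change on the jump term leaves $(\bbeta \cdot \bn_e)\varphi_2$ on the right-hand side as well. Finally, when $\bbeta \cdot \bn_e = 0$, both sides trivially vanish (the jump term because of the factor $|\bbeta \cdot \bn_e|$, the left-hand side because the upwind flux reduces to the arithmetic average dotted with $\bn_e$).

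There is no real obstacle here: the statement is essentially a bookkeeping identity that rewrites the upwind flux as a centred average plus a stabilisation term proportional to the jump, and the only point requiring a little care is tracking the orientation of the facet normal, which is handled by the WLOG choice $\bn_e = \bn_1$.
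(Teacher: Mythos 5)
Your proof is correct and follows essentially the same route as the paper: both expand the right-hand side using the definitions of average and jump, exploit the continuity of $\bbeta$ and the choice $\bn_e=\bn_1$, and then match the result with the piecewise definition of the upwind flux. Your explicit case split on the sign of $\bbeta\cdot\bn_e$ merely spells out what the paper leaves implicit in the phrase ``using the definition of the upwind flux.''
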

	\begin{proof}
		Using the definitions of jumps and averages and the continuity of $\bbeta$ \eqref{assumptionbeta}, the right-hand side is equal to
		$$
		\frac{1}{2} (\varphi_1+  \varphi_2) \bbeta \cdot \bn_e +\frac{1}{2}\abs{\bbeta\cdot\bn_e} \left(\varphi_1\bn_1+  \varphi_2\bn_2\right) \cdot \bn_e.
		$$
		If $\bn_e=\bn_1$ we obtain
		$$
		\frac{1}{2} (\varphi_1+  \varphi_2) \bbeta \cdot \bn_e +\frac{1}{2}\abs{\bbeta\cdot\bn_e} \left(\varphi_1-  \varphi_2\right) .
		$$
		and coincides with $\mvl{	\bbeta \varphi}_{\text{upw}} \cdot \bn_e$, using the definition of the upwind flux.
	\end{proof}
	\begin{prop}
		For all $e\in\calE_h^I$ and for all $\varphi\in H^1(\calT_h)$,
		\begin{equation}\label{fluxesid}
			\frac{1}{2}\mvl{\bbeta} \cdot \jmp{\varphi^2}=\mvl{\bbeta \varphi} \cdot \jmp{\varphi}.
		\end{equation}
	\end{prop}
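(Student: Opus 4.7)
The plan is to expand both sides of the identity by directly applying the definitions of averages and jumps on an interior facet $e=\partial T_1\cap\partial T_2$, and then compare. The key observation is that the regularity $\bbeta\in [W^{1,\infty}(\Omega)]^d$ (assumption \eqref{assumptionbeta}) implies that $\bbeta$ is continuous across $e$, so the two traces $\bbeta_1$ and $\bbeta_2$ coincide; this lets the factor $\bbeta$ be pulled out of both averages. After that, the identity reduces to an elementary algebraic factoring of a difference of squares.

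More precisely, I would first write $\mvl{\bbeta}=\bbeta$ by continuity and $\jmp{\varphi^2}=(\varphi_1^2-\varphi_2^2)\bn_1$, using $\bn_2=-\bn_1$. The left-hand side then becomes
\[
\frac{1}{2}\bbeta\cdot\jmp{\varphi^2}=\frac{1}{2}(\varphi_1^2-\varphi_2^2)\,\bbeta\cdot\bn_1=\frac{1}{2}(\varphi_1-\varphi_2)(\varphi_1+\varphi_2)\,\bbeta\cdot\bn_1.
\]
Next, for the right-hand side I would use the continuity of $\bbeta$ to write $\mvl{\bbeta\varphi}=\bbeta\,\frac{\varphi_1+\varphi_2}{2}$ and $\jmp{\varphi}=(\varphi_1-\varphi_2)\bn_1$, obtaining
\[
\mvl{\bbeta\varphi}\cdot\jmp{\varphi}=\frac{1}{2}(\varphi_1+\varphi_2)(\varphi_1-\varphi_2)\,\bbeta\cdot\bn_1,
\]
which matches the previous expression, concluding the proof.

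There is no real obstacle here: the identity is essentially the continuous-coefficient version of the discrete magic formula $\mvl{ab}=\mvl{a}\mvl{b}+\tfrac14\jmp{a}\cdot\jmp{b}$ (up to the normal), and it collapses because the jump of $\bbeta$ vanishes. The only point that deserves to be stated explicitly, as in the proof of \eqref{fluxesusefulid} above, is the appeal to \eqref{assumptionbeta} for the continuity of $\bbeta$, which is what makes $\mvl{\bbeta\varphi}=\mvl{\bbeta}\mvl{\varphi}$ hold on $e$.
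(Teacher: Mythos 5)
Your proof is correct and follows essentially the same route as the paper: both expand $\mvl{\cdot}$ and $\jmp{\cdot}$ by their definitions on an interior facet, invoke the continuity of $\bbeta$ from \eqref{assumptionbeta} to identify its two traces, and conclude using $\bn_1=-\bn_2$ (your explicit difference-of-squares factoring is just a slightly more spelled-out version of the paper's direct comparison of the two expanded expressions). No gaps.
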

	\begin{proof}
		Using the definitions of jumps and averages and the continuity of $\bbeta$ \eqref{assumptionbeta} we can rewrite the assertion as
		\begin{equation*}
			\frac{1}{2} \bbeta\cdot (\varphi_1^2\bn_1+\varphi_{2}^2\bn_2)=	\frac{1}{2} (\bbeta \varphi_1+\bbeta \varphi_{2})\cdot (\varphi_{1}\bn_1+\varphi_{2}\bn_2).
		\end{equation*}
		Since $\bn_1=-\bn_2$ the two quantities coincide.
	\end{proof}
	
	We now introduce broken Sobolev spaces and  the broken version of the space $H(\text{div};\Omega)$ defined in \eqref{Hdivdef}.
	In Lemma \ref{Wsalti} and in Lemma \ref{Hdivsalti} we state their main properties, that we will widely use in the analysis.
	\begin{Def}[Broken Sobolev spaces]
		For  $ m\in\mathbb{N}$ and $1\leq p\leq \infty$ we define the \textit{broken Sobolev spaces}
		\begin{align*}
			W^{m,p}(\mathcal{T}_{h}):=&\{\varphi\in L^{p}(\Omega) \mid \varphi_{|_T}\in W^{m,p}(T) \quad \forall T \in \mathcal{T}_{h} \},\\
			H^{m}(\mathcal{T}_{h}):=&\{\varphi\in L^{2}(\Omega) \mid \varphi_{|_T} \in H^{m}(T) \quad \forall T \in \mathcal{T}_{h} \}.
		\end{align*}
	\end{Def}
	\begin{lemma}[Characterization of $W^{1,p}(\Omega)$]\label{Wsalti}
		Let $ 1\leq p\leq +\infty$ and let $\varphi$ be a function in  $W^{1,p}(\mathcal{T}_{h})$, then
		\begin{equation*}
			\varphi\in W^{1,p}(\Omega) \iff \jmp{\varphi}=0 \quad  \forall e\in \mathcal{E}_{h}^{I}.
		\end{equation*}
	\end{lemma}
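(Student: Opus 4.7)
My plan is to reduce both implications to a single integration-by-parts identity. Define the \emph{broken} partial derivatives $g_k\in L^p(\Omega)$ for $k=1,\dots,d$ by $g_k|_T:=\partial_k(\varphi|_T)$; this is meaningful because $\varphi|_T\in W^{1,p}(T)$ on each element. I would first establish the master identity
$$
\int_\Omega \varphi\, \partial_k \phi \;+\; \int_\Omega g_k\, \phi \;=\; \sum_{e\in\calE_h^I} \int_e \jmp{\varphi}\cdot\be_k\;\phi \qquad \forall \phi\in C^{\infty}_c(\Omega),
$$
by splitting the left-hand side as a sum over $T\in\calT_h$, applying the divergence theorem on each element (legitimate since $\varphi|_T\in W^{1,p}(T)$ and $\phi\in W^{1,p'}(T)$ for the conjugate exponent) to move the derivative onto $\varphi$ plus a boundary term $\int_{\partial T}\varphi\,\phi\,(\bn_T\cdot\be_k)$, and then regrouping facet by facet. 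Boundary facets contribute nothing because $\phi$ has compact support in $\Omega$, and on each interior facet $e=\partial T_1\cap\partial T_2$ the two element contributions collapse into $\jmp{\varphi}\cdot\be_k$ exactly because $\bn_1=-\bn_2$.

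For the direction ($\Leftarrow$), the assumption $\jmp{\varphi}=0$ on every interior facet annihilates the right-hand side, so $\int_\Omega \varphi\,\partial_k\phi=-\int_\Omega g_k\,\phi$ for all $\phi\in C^{\infty}_c(\Omega)$ and all $k$. This is precisely the defining identity making $g_k$ the distributional $k$-th partial derivative of $\varphi$, and since $g_k\in L^p(\Omega)$ by construction I conclude that $\varphi\in W^{1,p}(\Omega)$.

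For the direction ($\Rightarrow$), assuming $\varphi\in W^{1,p}(\Omega)$, I would first observe that the global distributional derivative $D^{\be_k}\varphi\in L^p(\Omega)$ restricts on each element $T$ to the distributional derivative of $\varphi|_T$ (immediate from the definition, since $C^{\infty}_c(T)\subseteq C^{\infty}_c(\Omega)$), hence $D^{\be_k}\varphi=g_k$ almost everywhere and the left-hand side of the identity vanishes. Consequently the right-hand sum must vanish for every admissible $\phi$ and every $k$, and to extract $\jmp{\varphi}=0$ facet by facet I would localize: given an interior facet $e_0$, pick $\phi$ supported in a small tubular neighborhood of the relative interior of $e_0$ disjoint from all other facets, so that only the $e_0$ term survives; varying $\phi$ and $k$ then forces $\jmp{\varphi}=0$ almost everywhere on $e_0$.

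The main obstacle I anticipate is precisely this localization step in the forward direction near the junctions where several facets meet (vertices in 2D, edges in 3D). These junction sets have $(d-1)$-dimensional measure zero inside $e_0$, so testing against bump functions supported on the relative interior of $e_0$ away from such junctions still suffices to conclude $\jmp{\varphi}=0$ almost everywhere on every interior facet, which is exactly the sense in which the equivalence is claimed.
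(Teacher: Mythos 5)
Your proposal is correct: the master identity obtained by elementwise integration by parts, with the two directions and the facet-by-facet localization, is exactly the standard argument, and the localization concern you raise (junctions of facets having zero $(d-1)$-measure) is handled correctly. Note that the paper does not prove this lemma itself but defers to \cite[Lemma 1.23]{di2011mathematical}, whose proof proceeds essentially along the same lines as yours, so your approach matches the intended one.
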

	For the proof see \cite[Lemma 1.23]{di2011mathematical}.
	\begin{Def}[Broken version of $H(\text{div};\Omega)$]
		We define the \textit{broken version of  $H(\mathrm{div};\Omega)$} 
		\begin{equation*}
			H(\text{div};\mathcal{T}_{h}):=\{\bm{w}  \in[ L^{2}(\Omega)] ^{d} \mid 
			\bm{w}_{|_T} \in H(\text{div};T)  \quad \forall T \in \mathcal{T}_{h} \}
		\end{equation*}
	\end{Def}
	
	\begin{lemma}[Characterization of $H(\text{div};\Omega)$]\label{Hdivsalti}
		Let $\bm{w} $ be a function in $H(\mathrm{div};\mathcal{T}_{h}) \cap [W^{1,1}(\mathcal{T}_{h})]^{d}$, then
		\begin{equation*}
			\bm{w}  \in	H(\mathrm{div};\Omega) \iff \jmp{\bm{w}}=0 \quad  \forall e\in \mathcal{E}_{h}^{I}.
		\end{equation*}
	\end{lemma}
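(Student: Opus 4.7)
The plan is to prove both implications by localising the weak-divergence identity and tracking the boundary contributions produced by element-by-element integration by parts, mirroring the strategy used for Lemma \ref{Wsalti}.

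For the direction $(\Rightarrow)$, I would assume $\bm{w}\in H(\mathrm{div};\Omega)$ and pick any $\phi\in C^\infty_c(\Omega)$. On each element $T\in\calT_h$ the divergence theorem applies: this is legitimate because $\bm{w}_{|_T}\in H(\mathrm{div};T)$ gives $\mathrm{div}(\bm{w}_{|_T})\in L^2(T)$, while the extra $[W^{1,1}(T)]^d$ regularity provides a well-defined trace of $\bm{w}_{|_T}$ on $\partial T$ that can be paired with the smooth $\phi_{|_{\partial T}}$. Summing the elementwise identities $\int_T \mathrm{div}(\bm{w}_{|_T})\phi+\int_T \bm{w}_{|_T}\cdot\nabla\phi=\int_{\partial T}(\bm{w}_{|_T}\cdot\bn_T)\phi$ over $T\in\calT_h$, the volume side reconstructs $\int_\Omega \mathrm{div}(\bm{w})\phi+\int_\Omega \bm{w}\cdot\nabla\phi$, which vanishes by the definition of weak divergence on $\Omega$ together with $\supp\phi\subset\Omega$. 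Reorganising the surface integrals, contributions on boundary facets vanish, while the two contributions from each interior facet $e=\partial T_1\cap\partial T_2$ combine into $\int_e \jmp{\bm{w}}\phi$. One thus obtains $\sum_{e\in\calE_h^I}\int_e \jmp{\bm{w}}\phi=0$ for every $\phi\in C^\infty_c(\Omega)$, and by a standard localisation argument (choosing $\phi$ concentrated around a single interior facet) we conclude $\jmp{\bm{w}}=0$ on each $e\in\calE_h^I$.

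For the direction $(\Leftarrow)$, I would define $g\in L^2(\Omega)$ elementwise by $g_{|_T}:=\mathrm{div}(\bm{w}_{|_T})$, which is well-posed since $\bm{w}\in H(\mathrm{div};\calT_h)$, and then show that $g$ is the weak divergence of $\bm{w}$ on $\Omega$. Testing against any $\phi\in C^\infty_c(\Omega)$ and running exactly the same element-by-element integration by parts as above yields
$$\int_\Omega g\,\phi+\int_\Omega \bm{w}\cdot\nabla\phi=\sum_{e\in\calE_h^I}\int_e \jmp{\bm{w}}\phi+\sum_{e\in\calE_h^B}\int_e(\bm{w}\cdot\bn)\phi.$$
The boundary-facet sum vanishes because $\phi$ has compact support in $\Omega$, and the interior-facet sum vanishes by the hypothesis $\jmp{\bm{w}}=0$. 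Hence $\int_\Omega \bm{w}\cdot\nabla\phi=-\int_\Omega g\,\phi$ for all $\phi\in C^\infty_c(\Omega)$, which is precisely the definition of $\bm{w}\in H(\mathrm{div};\Omega)$ with $\mathrm{div}(\bm{w})=g$.

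The main obstacle is the careful justification of the elementwise divergence theorem under the minimal regularity hypothesis $\bm{w}_{|_T}\in H(\mathrm{div};T)\cap [W^{1,1}(T)]^d$: this is exactly the condition that makes the normal trace of $\bm{w}_{|_T}$ on $\partial T$ meaningful and turns $\jmp{\bm{w}}$ into an integrable function on each facet, so that the facet integrals above are literal Lebesgue integrals rather than duality pairings. The localisation step in the forward direction is routine once one notes that on a conforming mesh interior facets can be separated by cutoffs subordinate to an arbitrarily fine partition of unity inside $\Omega$.
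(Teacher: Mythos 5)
The paper does not reproduce a proof of this lemma at all — it simply cites \cite[Lemma 1.24]{di2011mathematical} — and your argument is exactly the standard proof given there: elementwise integration by parts (legitimate because the $[W^{1,1}(T)]^d$ regularity makes the normal traces integrable functions rather than $H^{-1/2}(\partial T)$ duality pairings), regrouping of the facet contributions into jump terms, and localisation around a single interior facet for the forward direction. Your proof is correct; the only step you leave implicit is the identification $\mathrm{div}(\bm{w})_{|_T}=\mathrm{div}(\bm{w}_{|_T})$ needed to say the volume terms ``reconstruct'' $\int_\Omega \mathrm{div}(\bm{w})\,\phi$, which follows immediately by testing with $\phi\in C^\infty_c(T)$ on each element.
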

	For the proof see \cite[Lemma 1.24]{di2011mathematical}.
	
	We now prove the “DG magic formula”, which is fundamental for deriving and analysing the discontinuous Galerkin formulation.
	\begin{prop}[DG magic formula]
		For all $\varphi\in H^1(\calT_h)$ and for all $\bw\in \left[H^1(\calT_h)\right]^d$,
		\begin{equation}\label{DG_magic}
			\sum_{T\in \calT_{h}} \int_{\partial T} \bw \cdot \bn_T \varphi =
			\sum_{e\in \calE_{h}^{I}}	\int_{e} \left(\mvl{\bw} \cdot \jmp{\varphi} +  \jmp{\bw}  \mvl{\varphi}\right)
			+\int_{\partial\Omega}  \bw \cdot \bn \varphi.
		\end{equation}
	\end{prop}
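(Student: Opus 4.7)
The plan is to rearrange the sum on the left-hand side from one over element boundaries to one over facets, and then on each interior facet verify the algebraic identity $\bw_1\cdot\bn_1\varphi_1 + \bw_2\cdot\bn_2\varphi_2 = \mvl{\bw}\cdot\jmp{\varphi} + \jmp{\bw}\mvl{\varphi}$ by expanding the definitions.

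First I would use the fact that the boundary $\partial T$ of each element is the union of its facets in $\calF_T$, so
\[
\sum_{T\in\calT_h}\int_{\partial T}\bw\cdot\bn_T\,\varphi
=\sum_{T\in\calT_h}\sum_{e\in\calF_T}\int_e \bw_{|_T}\cdot\bn_T\,\varphi_{|_T}.
\]
I would then interchange the order of summation. Because the mesh is conforming, each interior facet $e\in\calE_h^I$ is shared by exactly two elements $T_1,T_2$ and therefore contributes twice, while each boundary facet $e\in\calE_h^B$ is contained in exactly one element $T$ and contributes once with $\bn_T=\bn$. This gives
\[
\sum_{T\in\calT_h}\int_{\partial T}\bw\cdot\bn_T\,\varphi
=\sum_{e\in\calE_h^I}\int_e\bigl(\bw_1\cdot\bn_1\,\varphi_1+\bw_2\cdot\bn_2\,\varphi_2\bigr)
+\int_{\partial\Omega}\bw\cdot\bn\,\varphi.
\]

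The core computation is then purely algebraic on each interior facet. Using $\bn_1=-\bn_2$ and the definitions of averages and jumps, I would expand
\[
\mvl{\bw}\cdot\jmp{\varphi}+\jmp{\bw}\mvl{\varphi}
=\tfrac12(\bw_1+\bw_2)\cdot(\varphi_1\bn_1+\varphi_2\bn_2)
+\tfrac12(\bw_1\cdot\bn_1+\bw_2\cdot\bn_2)(\varphi_1+\varphi_2),
\]
collect terms, and observe that the cross-terms $\bw_1\cdot\bn_1\,\varphi_2$ and $\bw_2\cdot\bn_2\,\varphi_1$ cancel between the two halves, leaving exactly $\bw_1\cdot\bn_1\,\varphi_1+\bw_2\cdot\bn_2\,\varphi_2$. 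Substituting this identity back into each interior integral yields the claimed formula.

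The argument is essentially bookkeeping; the only part that requires care is the counting step, namely checking that every point of every $\partial T$ is covered exactly once after the rearrangement, so that interior facets are picked up twice with opposite normals and boundary facets only once with $\bn_T=\bn$. The regularity assumptions on the traces ($\bw\in[H^1(\calT_h)]^d$, $\varphi\in H^1(\calT_h)$) guarantee that all facet integrals are well-defined, so no additional analytic ingredient is needed.
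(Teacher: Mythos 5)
Your proposal is correct and follows essentially the same route as the paper's proof: rearrange the element-boundary sums into facet sums and verify, on each interior facet, the algebraic identity $\mvl{\bw}\cdot\jmp{\varphi}+\jmp{\bw}\mvl{\varphi}=\bw_1\cdot\bn_1\varphi_1+\bw_2\cdot\bn_2\varphi_2$ using $\bn_1=-\bn_2$. The only difference is that you spell out the bookkeeping step (each interior facet counted twice, each boundary facet once) which the paper leaves implicit, and that is a harmless, indeed welcome, addition.
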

	\begin{proof}
		Let $e\in\calE_h^I$ with $T_1$ and $T_2$  the elements sharing the facet $e$.
		Using the definitions of jumps and averages and the fact that for each internal facet $\bn_1=-\bn_2$, we obtain
		\begin{align*}
			\mvl{\bw} \cdot \jmp{\varphi} +  \jmp{\bw}\mvl{\varphi}&=
			\frac{1}{2}\left(\bw_1+\bw_2\right) \cdot \left(\varphi_1 \bn_1+\varphi_2\bn_2\right)+ \frac{1}{2}\left(\bw_1\cdot \bn_1+\bw_2\cdot\bn_2\right)  \left(\varphi_1 +\varphi_2\right)\\&=
			\bw_1\cdot \bn_1 \varphi_1+\bw_2\cdot \bn_2 \varphi_2.
		\end{align*}
		The assertion follows considering also the boundary facets.
	\end{proof}
	
	We also define the broken version of the space $C^{\infty}_c(\Omega)$, the space of infinitely differentiable functions with compact support in $\Omega$ defined in Section \ref{functionspaces}:
	$$C^{\infty}_c(\calT_h):=\{v \in L^{2}(\Omega) \mid 
	v_{|_T} \in C^{\infty}_c(T)  \quad \forall T \in \mathcal{T}_{h} \}.$$
	
	We now consider broken polynomial spaces.
	We use the notation $\mi=(i_{1},\ldots,i_{d})
	\in \IN^{d}$ for multi-indices and  define their length as  $|\mi|:=i_{1}+\cdots+i_{d}$.
	For $\bx\in \Omega$, we use the standard notation
	$
	\bx^\mi
	=x_1^{i_{1}}\cdots x_d^{i_{d}}.$
	\begin{Def}[Polynomial space]\label{polytot}
		Let $ p\in\mathbb{N}$. We define the \textit{space of polynomials of degree at most $p$ in $d$ variables} as
		$$
		\mathbb{P}^p_d(\Omega):=\biggl\{v:\Omega \to \mathbb{R}\mid  v(\bx)=\sum_{\abs{\bi}\leq p} \alpha_{\bi} \bx^{\bi} \text{ for some } (\alpha_{i})_{\abs{\bi}\leq p}\in\mathbb{R}^{S_{d,p}}\biggl\},
		$$
		where $ S_{d,p}$ denotes the dimension of the space 
		$$
		S_{d,p}:=\text{dim}\left(\mathbb{P}^p_d(\Omega)\right)=\binom{p+d}{d}=\frac{(p+d)!}{p! d!}.
		$$
	\end{Def}
	\begin{Def}[Broken polynomial space]\label{brokenpolyspace}
		Let $ p\in\mathbb{N}$. We define the \textit{broken space of polynomials of degree at most $p$ in $d$ variables} as
		$$
		\mathbb{P}_d^p(\calT_h)=\{v\in L^2(\Omega) \mid 
		v_{|_T} \in \mathbb{P}^p_d(T)  \quad \forall T \in \mathcal{T}_{h} \}.
		$$
	\end{Def}
	
	We recall the discrete inverse trace inequality that is a useful tool to analyse DG methods.
	\begin{lemma}[Discrete trace inequality]\label{Traceinverseineq}
		Let $\calT_{\mathcal{H}}$ be a shape-regular mesh sequence  with the star-shaped property. Then, exists $C_{tr}>0$ such that
		\begin{equation} \label{discretetraceinequality}
			\N{v}^2_{L^2(\partial T)}\leq C_{tr}h_T^{-1} \N{v}^2_{L^2(T)},
		\end{equation}
		for all $h\in\mathcal{H}$,  all $T\in\calT_h$, all $v\in \mathbb{P}^p_d(T)$.
	\end{lemma}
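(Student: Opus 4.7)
The plan is to reduce, by an affine rescaling, to a uniform estimate on elements of diameter $1$, and then to partition each such element into shape-regular sub-simplices using the star-shaped property, on which the classical simplicial trace inequality applies directly.

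First, for $T\in\calT_h$, I set $\Phi_T(\hat\bx):=\bx_T+h_T\hat\bx$, $\hat T:=\Phi_T^{-1}(T)$, and $\hat v:=v\circ\Phi_T\in\mathbb{P}^p_d(\hat T)$. A change of variables gives
\begin{equation*}
\N{v}_{L^2(T)}^2=h_T^d\,\N{\hat v}_{L^2(\hat T)}^2,\qquad \N{v}_{L^2(\partial T)}^2=h_T^{d-1}\,\N{\hat v}_{L^2(\partial\hat T)}^2,
\end{equation*}
so \eqref{discretetraceinequality} is equivalent to the scale-free estimate $\N{\hat v}_{L^2(\partial\hat T)}^2\leq C\,\N{\hat v}_{L^2(\hat T)}^2$ on any polytope $\hat T$ of diameter $1$, star-shaped with respect to a ball $B(\hat\bx_0,r)$ coming from Definition \ref{starshaped}, with $C$ depending only on $p$, $d$, $r$ and $C_{sr}$.

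Next, I would decompose $\hat T$ using $\hat\bx_0$ as an apex. After subdividing each $(d-1)$-facet $e$ of $\hat T$ into $(d-1)$-simplices, the cones with apex $\hat\bx_0$ over these simplices partition $\hat T$ into $d$-simplices $\{K_j\}_j$ whose number is uniformly controlled in terms of $N_\partial$ and $d$ (Definition \ref{Npartial}). Each $K_j$ has height at least $r$ at $\hat\bx_0$ (by the inscribed ball condition) and edges of length at most $1$, which, combined with the shape-regularity of the underlying mesh, makes the $K_j$ shape-regular with constants depending only on $r$, $d$, $C_{sr}$. On every $K_j$ the standard simplicial trace inequality (obtained by affine transport to a reference simplex and equivalence of norms on the finite-dimensional space $\mathbb{P}^p_d$) yields $\N{\hat v}_{L^2(\partial K_j)}^2\leq C(p,r,d)\,\N{\hat v}_{L^2(K_j)}^2$. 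Summing over $j$ and using that $\partial\hat T$ is covered by those faces of the $K_j$ lying on $\partial\hat T$ gives the scale-free bound; undoing the rescaling restores the factor $h_T^{-1}$.

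The main obstacle is geometric: controlling the shape-regularity of the sub-simplices $K_j$ uniformly across the admissible family of polytopes. This is precisely the purpose of the star-shaped assumption of Definition \ref{starshaped}, which forces $\hat\bx_0$ to stay at distance at least $r$ from every facet of $\hat T$, so that the aspect ratio of each $K_j$ is bounded in terms of $r$ and $d$ alone. Without this geometric control, elongated or highly non-convex elements could produce sub-simplices with vanishing inradius and a blow-up of the trace constant; with it, the simplicial constants patch together into the uniform $C_{tr}$ of the statement.
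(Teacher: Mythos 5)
Your route is genuinely different from the paper's. The paper proves the lemma in two lines, combining the trace-type estimate for elements that are star-shaped with respect to a ball, $\N{v}^2_{L^2(\partial T)}\leq \frac{d+c}{rh_T}\N{v}^2_{L^2(T)}+\frac{h_T}{rc}\N{\nabla v}^2_{L^2(T)}$ from \cite{moiola2018space}, with the polynomial inverse estimate $\N{\nabla v}_{L^2(T)}\leq C_{inv}h_T^{-1}\N{v}_{L^2(T)}$ of \cite{brenner2008mathematical}; shape-regularity enters only through $C_{inv}$. You instead avoid the gradient and any inverse inequality by decomposing the element into cones from the star centre and invoking finite-dimensionality on a reference simplex; your rescaling step and the reduction to a scale-free estimate are correct, and the height bound $\delta_j\geq r$ for each cone does hold — though it follows from star-shapedness with respect to the ball, not merely from the ball being inscribed (for a non-convex polytope a facet's supporting hyperplane can cross an inscribed ball).

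The gap is the claim that the sub-simplices $K_j$ are shape-regular with constants depending only on $r$, $d$, $C_{sr}$. Neither $h_T\leq C_{sr}\rho_T$ nor the star-shaped property controls the shape of the facets or of the triangulation you choose for them: in $d\geq 3$ the base $(d-1)$-simplices can be arbitrarily thin slivers, and even in $d=2$ a shape-regular, star-shaped polygon may have edges of length much smaller than $h_T$, so the cones over them have unbounded aspect ratio (their number is also governed by the facets' vertex counts, not by $N_{\partial}$). Hence the full-boundary bound $\N{\hat v}^2_{L^2(\partial K_j)}\leq C(p,r,d)\N{\hat v}^2_{L^2(K_j)}$, justified by affine transport plus norm equivalence, is not available with a uniform constant. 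The repair is local: you only need the trace on the base face $S_j\subset\partial\hat T$, and for that face the same affine-transport argument gives a constant proportional to $\frac{|S_j|}{|K_j|}=\frac{d}{\delta_j}\leq\frac{d}{r}$, which involves only the height and not the aspect ratio; equivalently, parametrize $K_j$ in cone coordinates $\bx=\hat\bx_0+t(\by-\hat\bx_0)$ and use the one-dimensional norm equivalence $|q(1)|^2\leq c_{p,d}\int_0^1 q(t)^2\,t^{d-1}\,dt$ on $\mathbb{P}^p(\mathbb{R})$. Since the $K_j$ have disjoint interiors and their bases tile $\partial\hat T$, summing then yields the scale-free bound with a constant depending only on $p$, $d$ and $r$ — in fact with no use of shape-regularity at all, which is slightly more general than the paper's constant.
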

	\begin{proof}
		The assertion follows combining two results that hold under the assumptions.
		The estimate of \cite[Lemma 2]{moiola2018space}: for all $c>0$
		\begin{equation*}
			\N{v}^2_{L^2(\partial T)}\leq \frac{d+c}{rh_T} \N{v}^2_{L^2(T)}+ \frac{h_T}{rc} \N{\nabla v}^2_{L^2(T)},
		\end{equation*}
		where $r$ is defined in \ref{starshaped} and
		the bound of \cite[Lemma 4.5.3]{brenner2008mathematical}:
		exists $C_{inv}>0$ such that
		\begin{equation*}
			\N{\nabla v}_{L^2( T)}\leq C_{inv}h_T^{-1} \N{v}_{L^2(T)}.
		\end{equation*}
	\end{proof}
	We observe that the inequality above and the value of $C_{tr}$ are relative to polynomial spaces.  
	Since we will use as discrete space in the DG scheme the quasi-Trefftz space, which is a subspace of the full polynomial space as we will see in Chapter \ref{Chapter5}, this inequality can be applied.
	We observe that we can have inverse trace inequalities also for different spaces, since the only property of the polynomial space that is used is its finite dimension.
	
	\chapter{The DG variational formulation}\label{Chapter3}
	We study the discontinuous Galerkin method for approximating the diffusion-ad\-vec\-tion-reaction  problem \eqref{variationalcontinuous}.
	In this chapter, we derive the weak discontinuous Galerkin variational formulation of the model problem.
	We proceed considering two cases separately and then we combine them together to deal with the general case.
	First, we consider only the diffusion problem in Section \ref{s:Diffvar}. We derive a formulation that depends on two parameters: the penalty parameter $\gamma>0$ and the symmetrization parameter $\epsilon\in \{-1, 0, 1\}$.
	The choice of $\epsilon$ leads to three different formulations including the Symmetric Interior Penalty Galerkin (SIPG) one. 
	Then, we deal with the advection-reaction problem in Section \ref{s:Advreacvar} using the classical upwind numerical fluxes and, finally, in Section \ref{s:diffAdvreacvar} we consider  the diffusion-advection-reaction problem combining the previous two. 
	We remark that these two cases are considered separately for simplicity of the construction and of the following  discrete coercivity and boundedness analysis.
	Notice that, under the assumptions made in Section \ref{problema}, the diffusion equation is a particular case of the equation \eqref{eq}, since $\bbeta$ and $\sigma$ could vanish. However, this is not true for the problem with only advection and reaction terms, since a direct consequence of the ellipticity condition is that the diffusion matrix $\bk$ cannot be null.
	We mostly follow the analysis of \cite[Chapter 4]{di2011mathematical}, which considers the SIPG method to handle the diffusion term and the upwind DG method to handle the advection-reaction terms, and combine it with the analysis of \cite[Chapter 2]{riviere2008discontinuous}, which considers the three different  formulations  that arise from the choice of the symmetrization parameter $\epsilon$ for diffusion problems.
	
	Let $u$ be the weak solution of problem \eqref{eq}–\eqref{Neumann} and recall that the trial and test space is $V=H^1_{\Gamma_D}(\Omega)$.
	We want to approximate the problem \eqref{variationalcontinuous} using DG methods. 
	Consider as discrete space $V_h$ a finite-dimensional subspace of the broken Sobolev space $H^2(\calT_h)$.
	Hence $V_h \subseteq H^2(\calT_h)\not \subseteq V$.
	In the spirit of Theorem \ref{errortheorem},
	we assume that 
	$u\in V_{*} := H^1_{\Gamma_D}(\Omega) \cap H^2(\calT_h)$
	and we define $V_{*h} := V_* + V_h$.
	From now on we will use this notation for the above spaces unless otherwise specified.
	
	\section{Diffusion}\label{s:Diffvar}
	In this section we consider the following boundary value problem for the diffusion equation:
	\begin{alignat}{2}
		\text{div}(-\bm{K} \nabla u)&= f  && \quad \text{in  $\Omega$,}\label{eqdiff}\\
		u&=g_{D} &&\quad  \text{on $ \Gamma_{D}$,} \label{Dirichletdiff}\\
		- \bm{K} \nabla u  \cdot \mathbf{n} &=g_{N} &&\quad \text{on $ \Gamma_{N}$.}\label{Neumanndiff}
	\end{alignat}
	This is a particular case of the model problem \eqref{eq}-\eqref{Neumann} with $\bbeta=\bm{0}$ and $\sigma=0$. We work under the same assumptions made in the previous chapters. 
	We will derive the following discrete  bilinear form $a_h^{d}:V_{*h}\times V_h\to\mathbb{R}:$
	\begin{align*}
		a_h^{d}(w,v_h):=&
		\sum_{T\in\mathcal{T}_{h}}	\int_{T} \bm{K}  \nabla w\cdot \nabla  v_h -
		\sum_{e\in\mathcal{E}_{h}^{I}}	\int_{e}  \mvl{ \bm{K}   \nabla w} \cdot \jmp{v_h} +
		\epsilon \sum_{e\in\mathcal{E}_{h}^{I}}	\int_{e}  \jmp{w} \cdot  \mvl{ \bm{K}   \nabla v_h }\\
		&-\sum_{e\in\mathcal{E}_{h}^{D}}	\int_{e}  \bm{K}   \nabla w\cdot \mathbf{n} v_h+
		\epsilon 	\sum_{e\in\mathcal{E}_{h}^{D}}	\int_{e}  w \bm{K}   \nabla v_h \cdot \mathbf{n} \\
		&+ \sum_{e\in\mathcal{E}_{h}^{I}}\frac{\gamma}{h_e}	\int_{e}  \jmp{w}\cdot \jmp{v_h}+
		\sum_{e\in\mathcal{E}_{h}^{D}}\frac{\gamma}{h_e}	\int_{e} w v_h.
	\end{align*}
	We construct it in the spirit of Theorem \eqref{errortheorem}, so we want to achieve consistency,  discrete coercivity and boundedness.
	
	Let $u$ be the exact solution of the diffusion problem \eqref{eqdiff}-\eqref{Neumanndiff}.
	Let $v_h\in V_h$, we multiply \eqref{eqdiff} by the test function $v_h$ and integrate on one element $T$:
	\begin{equation*}
		\int_{T} \text{div}(-\bm{K} \nabla u)   v_h = \int_{T} f  v_h.
	\end{equation*}
	Integrating by parts
	and summing over all elements, we obtain
	\begin{equation*}
		\sum_{T\in\mathcal{T}_{h}}	\int_{T} \bm{K}  \nabla u \cdot \nabla  v_h -
		\sum_{T\in\mathcal{T}_{h}}	\int_{\partial T}   \bm{K}   \nabla u \cdot \mathbf{n}_{T} v_h
		= \int_{\Omega} f  v_h.
	\end{equation*}
	Owing to the DG magic formula \eqref{DG_magic} with $\bw=\bk\nabla u$ and $\varphi=v_h$ the second term can be rewritten as a sum over mesh facets in this way:
	\begin{equation*}
		\sum_{T\in \calT_{h}} \int_{\partial T}  \bm{K}   \nabla u\cdot \mathbf{n}_{T} v_h =
		\sum_{e\in \calE_{h}^{I}}	\int_{e} \left(\mvl{\bm{K}   \nabla u} \cdot \jmp{v_h} +  \jmp{\bm{K}   \nabla u}  \mvl{v_h}\right)
		+	\sum_{e\in \calE_{h}^{B}}\int_{e}  \bm{K}   \nabla u\cdot \bn v_h.
	\end{equation*}
	Note that  $\bm{K}   \nabla u\in H(\text{div};\Omega)$ since $u$ satisfies \eqref{eqdiff} and $f\in L^2(\Omega)$.
	Hence, for Proposition \ref{Hdivsalti} we know that the jump $ \jmp{\bm{K}   \nabla u}$ is zero on each interior facet, implying that $\sum_{e\in \calE_{h}^{I}}	\int_{e}  \jmp{\bm{K}   \nabla u} \mvl{v_h}=0$.
	Using the Neumann boundary condition \eqref{Neumanndiff}, we get
	\begin{equation}\label{addsym}
		\sum_{T\in\mathcal{T}_{h}}	\int_{T} \bm{K}  \nabla u \cdot \nabla  v_h -
		\sum_{e\in\mathcal{E}_{h}^{I}}	\int_{e}  \mvl{ \bm{K}   \nabla u } \cdot \jmp{v_h} -
		\sum_{e\in\mathcal{E}_{h}^{D}}	\int_{e}  \bm{K}   \nabla u  \cdot \mathbf{n} v_h
		= \int_{\Omega} f  v_h- \sum_{e\in\mathcal{E}_{h}^{N}}	\int_{e} g_{N} v_h.
	\end{equation}
	If the model problem is symmetric, like in this case, is good, in general, to preserve the symmetry property at the discrete level. 
	We can think of making the discrete bilinear form symmetric, this will lead to the Symmetric Interior Penalty Galerkin (SIPG) method.
	Focusing on the left-hand side of \eqref{addsym}, we note that the first term is already symmetric, so in order to achieve the symmetry we need to add two quantities:
	$
	-\sum_{e\in\mathcal{E}_{h}^{I}}	\int_{e} \jmp{u}\cdot \mvl{\bm{K}   \nabla v_h} $ and 
	$- \sum_{e\in\mathcal{E}_{h}^{D}}	\int_{e}  u \bm{K}   \nabla v_h  \cdot \mathbf{n} $. 
	We will consider not only the symmetric formulation but also two nonsymmetric ones.
	For this reason, we  introduce a parameter $\epsilon$ that can assume value $-1$,  $0$ or $1$ and generalize the previous step, adding to the left-hand side of \eqref{addsym} the two following terms
	\begin{equation*}
		+\epsilon \sum_{e\in\mathcal{E}_{h}^{I}}	\int_{e} \jmp{u}\cdot \mvl{\bm{K}   \nabla v_h }, \quad +\epsilon\sum_{e\in\mathcal{E}_{h}^{D}}	\int_{e}  u \bm{K}   \nabla v_h \cdot \mathbf{n}.
	\end{equation*}
	The choice of $\epsilon$ leads to three different bilinear forms and, hence, different variational formulations.
	For $\epsilon=-1$ the resulting method is called the Symmetric Interior Penalty Galerkin (SIPG) method, for $\epsilon=0$ we have the Incomplete Interior Penalty Galerkin (IIPG) method and for $\epsilon=1$ the Nonsymmetric Interior Penalty Galerkin (NIPG) method.
	We observe that the quantity $+\epsilon \sum_{e\in\mathcal{E}_{h}^{I}}	\int_{e} \jmp{u}\cdot \mvl{\bm{K}   \nabla v_h }$ can be added to the left-hand side of \eqref{addsym} without loosing the consistency property.
	This is due to the fact that, since $u\in H^{1}(\Omega)$, from Proposition \ref{Wsalti} we have $ \jmp{u}=0$ on each interior facet.
	The term $+\epsilon\sum_{e\in\mathcal{E}_{h}^{D}}	\int_{e}  u \bm{K}   \nabla v_h \cdot \mathbf{n}$, instead, is not consistent, so the same quantity is added to the right-hand side of \eqref{addsym} to restore the consistency and using the Dirichlet boundary condition \eqref{Dirichletdiff} on this term, we arrive at:
	\begin{align*}
		&\sum_{T\in\mathcal{T}_{h}}	\int_{T} \bm{K}  \nabla u \cdot \nabla  v_h -
		\sum_{e\in\mathcal{E}_{h}^{I}}	\int_{e}  \mvl{ \bm{K}   \nabla u } \cdot \jmp{v_h} 
		+	\epsilon \sum_{e\in\mathcal{E}_{h}^{I}}	\int_{e} \jmp{u}\cdot  \mvl{ \bm{K}   \nabla v_h }  \\ 
		&	-\sum_{e\in\mathcal{E}_{h}^{D}}	\int_{e}  \bm{K}   \nabla u  \cdot \mathbf{n} v_h+ 	\epsilon \sum_{e\in\mathcal{E}_{h}^{D}}	\int_{e} u \bm{K}   \nabla v_h  \cdot \mathbf{n} 
		\\
		&= \int_{\Omega} f  v - \sum_{e\in\mathcal{E}_{h}^{N}}	\int_{e} g_{N} v_h    +	\epsilon \sum_{e\in\mathcal{E}_{h}^{D}}	\int_{e} g_D  \bm{K}   \nabla v_h  \cdot \mathbf{n}.
	\end{align*}
	To achieve the discrete coercivity, we need to add a stabilization term which penalizes interior facets and boundary jumps.
	Let $\gamma>0$ be the penalty parameter, we add on the left-hand side the two following terms:
	\begin{equation}\label{pen}
		+\sum_{e\in\calE_{h}^{I}}\frac{\gamma}{h_e} 	\int_{e}  \jmp{u}\cdot \jmp{v}, \quad 	+\sum_{e\in\mathcal{E}_{h}^{D}}\frac{\gamma}{h_e} 	\int_{e} u v.
	\end{equation}
	As before, we note that the first term of \eqref{pen} is consistent, since $\jmp{u}=0$ on interior facets, while the other term is not. Hence, we add the second term of  \eqref{pen}  also to the right-hand side of the previous equation in order to maintain the consistency and then use Dirichlet condition \eqref{Dirichletdiff}.
	We obtain the following formulation:
	\begin{align*}
		&\sum_{T\in\mathcal{T}_{h}}	\int_{T} \bm{K}  \nabla u \cdot \nabla  v_h-
		\sum_{e\in\mathcal{E}_{h}^{I}}	\int_{e}  \mvl{ \bm{K}   \nabla u } \cdot \jmp{v_h} 
		+	\epsilon \sum_{e\in\mathcal{E}_{h}^{I}}	\int_{e} \jmp{u}\cdot  \mvl{ \bm{K}   \nabla v_h }  \\ 
		&	-\sum_{e\in\mathcal{E}_{h}^{D}}	\int_{e}  \bm{K}   \nabla u  \cdot \mathbf{n} v_h+ 	\epsilon \sum_{e\in\mathcal{E}_{h}^{D}}	\int_{e} u \bm{K}   \nabla v_h  \cdot \mathbf{n} 
		\\
		&
		+\sum_{e\in\calE_{h}^{I}}\frac{\gamma}{h_e} 	\int_{e}  \jmp{u}\cdot \jmp{v_h}	+\sum_{e\in\mathcal{E}_{h}^{D}}\frac{\gamma}{h_e} 	\int_{e} u v_h\\&
		= \int_{\Omega} f  v_h - \sum_{e\in\mathcal{E}_{h}^{N}}	\int_{e} g_{N} v_h    +	\epsilon \sum_{e\in\mathcal{E}_{h}^{D}}	\int_{e} g_D  \bm{K}   \nabla v_h  \cdot \mathbf{n} +\sum_{e\in\mathcal{E}_{h}^{D}}\frac{\gamma}{h_e} 	\int_{e} g_D v_h.
	\end{align*}
	
	\section{Advection-reaction}\label{s:Advreacvar}
	In this section we consider the following boundary value problem for the advection-reaction equation:
	\begin{alignat}{2}
		\text{div}(\bm{\beta} u) +\sigma u &= f  &&\quad \text{in  $\Omega$,}\label{eqtrasp}\\
		u&=g_{D} &&\quad\text{on $ \Gamma_{D}$.} \label{Dirichlettrasp}
	\end{alignat}
	All the data satisfy the same assumptions we made for the general case, even though this is not a special case of our model problem, since the diffusion matrix is null.
	In this case we will arrive at the following discrete  bilinear form $a_h^{ar}:V_{*h}\times V_h\to\mathbb{R}:$
	\begin{align*}
		a_h^{ar}(w,v_h):=&
		-\sum_{T\in\mathcal{T}_{h}}	\int_{T}  (\bm{\beta} w) \cdot  \nabla  v_h +\sum_{T\in\mathcal{T}_{h}}	\int_{T}  \sigma  w v_h\\
		&+\sum_{e\in\mathcal{E}_{h}^{I}}	\int_{e}  \mvl{\bm{\beta}  w} \cdot \jmp{v_h}+
		\frac{1}{2} \sum_{e\in\mathcal{E}_{h}^{I}}	\int_{e}  |\bm{\beta}\cdot \mathbf{n}_{e}|\jmp{w}\cdot\jmp{v_h}  +
		\sum_{e\in\mathcal{E}_{h}^{N}}	\int_{e} ( \bm{\beta}  w) \cdot \mathbf{n} v_h.
	\end{align*}
	As before, we derive it in the spirit of Theorem \ref{errortheorem}, so we want to achieve consistency,  discrete coercivity and boundedness.
	
	Let $u$ be the exact solution of the problem \eqref{eqtrasp}-\eqref{Dirichlettrasp}.
	Let $v_h\in V_h$, we multiply \eqref{eqtrasp} by the test function $v_h$ and integrate on one element $T$:
	\begin{equation*}
		\int_{T} \text{div}(\bm{\beta} u)  v_h +\int_{T}  \sigma u  v_h= \int_{T} f  v_h.
	\end{equation*}
	Integrating by parts the first term
	and summing over all elements, we obtain
	\begin{equation*}
		-	\sum_{T\in\mathcal{T}_{h}}	\int_{T}\left(\bbeta u\right)\cdot \nabla  v_h +
		\sum_{T\in\mathcal{T}_{h}}	\int_{\partial T}  \left(\bbeta u\right)\cdot \mathbf{n}_{T} v_h+
		\sum_{T\in\mathcal{T}_{h}}	\int_{T}  \sigma u  v_h= 	\int_{\Omega} f  v_h.
	\end{equation*}
	From the DG magic formula \eqref{DG_magic} with $\bw=\bbeta u$ and $\varphi=v_h$ we deduce that
	\begin{equation*}
		\sum_{T\in \calT_{h}} \int_{\partial T}   (\bm{\beta}  u)\cdot \mathbf{n}_{T} v_h=
		\sum_{e\in \calE_{h}^{I}}	\int_{e} \left(\mvl{ \bm{\beta}  u} \cdot \jmp{v_h} +  \jmp{\bm{\beta}  u} \mvl{v_h}\right)
		+\sum_{e\in \calE_{h}^B}\int_{e}  (\bm{\beta}  u)\cdot \bn v_h.
	\end{equation*}
	Note that  $\bbeta u \in H(\text{div};\Omega)$ since $u$ satisfies \eqref{eqtrasp} and $f\in L^2(\Omega)$ and $\sigma\in L^{\infty}(\Omega)$.
	From Proposition \ref{Hdivsalti} we know that the jump $ \jmp{\bbeta u}$ is zero on each interior facet, implying that $\sum_{e\in \calE_{h}^{I}}	\int_{e}  \jmp{\bbeta u}  \mvl{v_h}=0$.
	Using the Dirichlet boundary condition \eqref{Dirichlettrasp}, we get
	\begin{align*}
		&-\sum_{T\in\mathcal{T}_{h}}	\int_{T}  (\bm{\beta} u) \cdot  \nabla  v_h +
		\sum_{e\in\mathcal{E}_{h}^I}	\int_{e}  \mvl{\bm{\beta}  u} \cdot \jmp{v_h}	+\sum_{e\in \calE_{h}^N}\int_{e}  ( \bm{\beta}  u )\cdot \bn v_h+\sum_{T\in\mathcal{T}_{h}}	\int_{T}  \sigma u  v_h\\&= \int_{\Omega} f  v_h   - \sum_{e\in\mathcal{E}_{h}^{D}}	\int_{e}  ( \bm{\beta}  g_{D})\cdot \mathbf{n} v_h.
	\end{align*}
	On every interior facet $e$ we substitute the average $\mvl{\bbeta u}$ by the classical  upwind numerical flux $\mvl{\bbeta u}_{\text{upw}}$ defined in \eqref{upwind}. This is done in order to arrive at a formulation that is stable in a stronger norm than the $L^2$-norm, leading to a quasi-optimal error estimates.  
	We observe that we can do this without loosing the consistency.
	We notice that the average $\mvl{\bbeta u}$ appears on the second term multiplied by the jump $\jmp{v_h}$, hence only the normal component of $\bbeta u$ is relevant.
	For Proposition \ref{Hdivsalti}, since $\bbeta u\in H(\text{div},\Omega)$ for the regularity of $\bbeta $ \eqref{assumptionbeta}, the jump $\jmp{\bbeta u}=0$.
	This means that on each interior facet the two traces of $\bbeta u$ coincide, implying that $\mvl{\bbeta u}=\mvl{\bbeta u}_{\text{upw}}$.
	By doing this substitution and by using the identity \eqref{fluxesusefulid} valid on the interior facets, the second term becomes
	\begin{equation*}
		\sum_{e\in\calE_h^I}	\int_{e}  \mvl{\bm{\beta}  u}_{\text{upw}} \cdot \jmp{v_h}=
		\sum_{e\in\mathcal{E}_{h}^{I}}	\int_{e}  \mvl{\bm{\beta}  u} \cdot \jmp{v_h}+\frac{1}{2}\sum_{e\in\mathcal{E}_{h}^{I}}	\int_{e} \abs{\bbeta \cdot \bn_e} \jmp{u} \cdot \jmp{v_h}.
	\end{equation*}
	With this formula, we can also see the substitution of $\mvl{\bbeta u}$ with $\mvl{\bbeta u}_{\text{upw}}$ like the addition of a stabilization term containing $\jmp{u}$, which is zero since $u\in H^1(\Omega)$.
	This leads to:
	\begin{align*}
		&
		-\sum_{T\in\mathcal{T}_{h}}	\int_{T}  (\bm{\beta} u) \cdot  \nabla  v_h+
		\sum_{e\in\mathcal{E}_{h}^{I}}	\int_{e}  \mvl{\bm{\beta}  u} \cdot \jmp{v_h}+\frac{1}{2}\sum_{e\in\mathcal{E}_{h}^{I}}	\int_{e} \abs{\bbeta \cdot \bn_e} \jmp{u} \cdot \jmp{v_h}+
		\sum_{e\in\mathcal{E}_{h}^{N}}	\int_{e} ( \bm{\beta}  u) \cdot \bn v_h\\
		&+\sum_{T\in\mathcal{T}_{h}}	\int_{T}  \sigma u  v_h= \int_{\Omega} f  v_h  - \sum_{e\in\mathcal{E}_{h}^{D}}	\int_{e}  ( \bm{\beta}  g_{D})\cdot \bn v_h .
	\end{align*}
	
	\section{Diffusion-advection-reaction}\label{s:diffAdvreacvar}
	In this section we consider the diffusion-advection-reaction problem \eqref{eq}-\eqref{Neumann}.
	The DG method for the general problem combines the $a_h^{d} $ bilinear form defined in Section \ref{s:Diffvar} for the diffusion part and the $a_h^{ar} $ bilinear form defined in Section \ref{s:Advreacvar} for the advection-reaction part. 
	
	The discontinuous Galerkin variational formulation of the problem \eqref{eq}–\eqref{Neumann} is as follows:
	\begin{equation}\label{variational}
		\text{Find } u_h \in V_h \text{ such that }
		a_h^{\text{dar}}(u_h,v_h)=L_h(v_h) \quad \forall v_h \in V_h,
	\end{equation}
	with the DG bilinear form 
	$a_h^{\text{dar}}: V_{*h} \times V_h \to \mathbb{R} $:
	\begin{align*}
		a_h^{\text{dar}}(w,v_h):=&
		\sum_{T\in \calT_h}	\int_{T} \bk  \nabla w \cdot \nabla  v_h -
		\sum_{e\in\calE_h^I}	\int_{e}  \mvl{\bm{K}   \nabla w } \cdot \jmp{v_h}+
		\epsilon \sum_{e\in\calE_h^I}	\int_e  \jmp{w} \cdot \mvl{\bm{K}   \nabla v_h}  \\
		&-\sum_{e\in\mathcal{E}_{h}^{D}}	\int_{e}  \bm{K}   \nabla w  \cdot \mathbf{n} v_h+
		\epsilon 	\sum_{e\in\mathcal{E}_{h}^{D}}	\int_{e} w \bm{K}   \nabla v_h  \cdot \mathbf{n} \\
		&+\sum_{e\in\calE_{h}^{I}}\frac{\gamma}{h_e} 	\int_{e}  \jmp{w}\cdot \jmp{v_h}+
		\sum_{e\in\mathcal{E}_{h}^{D}}		\frac{\gamma}{h_e} \int_{e} w v_h\\
		&-\sum_{T\in\mathcal{T}_{h}}	\int_{T}  (\bbeta w) \cdot  \nabla  v_h + \sum_{T\in\mathcal{T}_{h}}	\int_{T}  \sigma w  v_h\\ 
		&+\sum_{e\in\mathcal{E}_{h}^{I}}	\int_{e}  \mvl{\bbeta  w} \cdot \jmp{v_h}+
		\frac{1}{2} \sum_{e\in\mathcal{E}_{h}^{I}}	\int_{e}  |\bm{\beta}\cdot \bn_{e}|\jmp{w}\cdot\jmp{v_h}+
		\sum_{e\in\mathcal{E}_{h}^{N}}	\int_{e} ( \bm{\beta}  w) \cdot \mathbf{n} v_h,
	\end{align*}
	and  the linear form $L_h:V_h\to\mathbb{R}:$
	\begin{equation*}
		L_h(v_h):=\int_{\Omega}  f v_h - \sum_{e\in\mathcal{E}_{h}^{N}} \int_{e}  g_{N} v_h + 	\epsilon \sum_{e\in\mathcal{E}_{h}^{D}}	\int_{e} g_D  \bm{K}   \nabla v_h  \cdot \mathbf{n}  + \sum_{e\in\mathcal{E}_{h}^{D}}	\frac{\gamma}{h_e}\int_{e}  g_{D} v_h- \sum_{e\in\mathcal{E}_{h}^{D}}	\int_{e}  ( \bm{\beta}  g_{D}) \cdot \mathbf{n} v_h.
	\end{equation*}
	The variational problem \eqref{variational} is derived collecting together the steps of the two previous sections, indeed
	\begin{equation*}
		a_h^{dar}(w,v_h)=a_h^{d}(w,v_h)+ a_h^{ar}(w,v_h) \quad \forall  (w,v_h)\in V_{*h}\times V_h.
	\end{equation*}
	The bilinear form $a_h^{dar}$ contains the parameter $\epsilon$ that may take the value $-1$, $0$ or $1$ and the penalty parameter $\gamma >0$ that penalizes the jump of the function values. 
	Note that the problem \eqref{variational} is independent of the choice of the normal $\bn_e$ on the internal facets, since its only occurrence in $a_h^{dar}$ is inside the absolute value.
	
	For the well-posedness of the problem \eqref{variational} and for the convergence analysis in the spirit of Theorem \ref{errortheorem}  we need to prove: discrete coercivity, consistency, and boundedness for the bilinear form $a_h^{dar}$. 
	The consistency is established in the following section, while the discrete coercivity and the boundedness in the next chapter.
	\section{Consistency} 
	Let $u$ be the weak solution of the problem \eqref{eq}–\eqref{Neumann}. 
	By construction we already have the consistency:
	\begin{equation*}
		a_h^{\text{dar}}(u,v_h)=L_h(v_h) \quad \forall v_h \in V_h.
	\end{equation*}
	
	We now show  a stronger property of the method, which also provides a direct proof for the consistency.
	We observe that the bilinear form $a_h^{dar}$ can be extended to $H^2(\calT_h) \times H^2(\calT_h)$. Consider the variational problem
	\begin{equation}\label{cccc}
		\text{Find } u \in H^2(\calT_h)\text{ such that }
		a_h^{\text{dar}}(u,v)=L_h(v) \quad \forall v \in H^2(\calT_h).
	\end{equation}
	The next proposition establishes the equivalence between the model problem \eqref{eq}–\eqref{Neumann} and the DG variational formulation \eqref{cccc}.
	We follow \cite[Proposition 2.9]{riviere2008discontinuous}, which considers diffusion-reaction problems, adapting the proof to the case of diffusion-advection-reaction problems.
	\begin{prop}
		Assume that the weak solution $u$ of problem \eqref{eq}–\eqref{Neumann} belongs to $H^2(\mathcal{T}_{h})$; then $u$ satisfies the variational problem \eqref{cccc}. Conversely, if $u\in H^1(\Omega) \cap H^2(\mathcal{T}_{h})$ satisfies  \eqref{cccc}, then $u$ is the solution of problem \eqref{eq}–\eqref{Neumann}.
	\end{prop}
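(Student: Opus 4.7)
The statement has two directions, and both are proved by retracing the derivation of $a_h^{dar}$ from the strong form of the PDE. The forward implication is a verification: starting from a $u \in H^2(\calT_h)$ that solves \eqref{eq}--\eqref{Neumann}, I multiply the PDE by $v \in H^2(\calT_h)$, integrate elementwise, integrate by parts, and apply the DG magic formula \eqref{DG_magic} exactly as in Sections \ref{s:Diffvar}--\ref{s:diffAdvreacvar}. The consistency of the $\epsilon$-symmetrization and $\gamma$-penalty contributions on $\mathcal{E}_h^I$ follows because $u \in H^1(\Omega)$ gives $\jmp{u} = 0$ by Lemma \ref{Wsalti}. The jump terms $\jmp{\bk \nabla u}\mvl{v}$ and $\jmp{\bbeta u}\mvl{v}$ produced by \eqref{DG_magic} also vanish via Lemma \ref{Hdivsalti}, since $\bbeta u \in [H^1(\Omega)]^d \subset H(\mathrm{div};\Omega)$ by \eqref{assumptionbeta}, $-\bk\nabla u + \bbeta u \in H(\mathrm{div};\Omega)$ by the PDE, and hence $\bk\nabla u \in H(\mathrm{div};\Omega)$ as well. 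Finally \eqref{Dirichlet} and \eqref{Neumann} convert the remaining boundary traces of $u$ and of $-\bk\nabla u \cdot \bn$ into $g_D$ and $g_N$, yielding $a_h^{dar}(u,v) = L_h(v)$.

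For the converse I proceed in two steps. First I localize by choosing $v = \phi \in C^\infty_c(T) \subset H^2(\calT_h)$, extended by zero outside a single element $T$; every facet and boundary integral in $a_h^{dar}(u,\phi) = L_h(\phi)$ then drops, and integrating by parts back (using $u \in H^2(T)$ and the compact support of $\phi$) gives $\int_T(\calL u - f)\phi = 0$. Density of $C^\infty_c(T)$ in $L^2(T)$ yields $\calL u = f$ a.e.\ in $T$, and summing over elements recovers \eqref{eq} a.e.\ in $\Omega$.

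With $\calL u = f$ in hand and the jumps $\jmp{u}, \jmp{\bk\nabla u}, \jmp{\bbeta u}$ vanishing on $\mathcal{E}_h^I$ (same argument as in the forward direction), I test against arbitrary $v \in H^2(\calT_h)$, integrate $\int_\Omega(\calL u - f)v$ by parts elementwise, and apply \eqref{DG_magic}. Subtracting from $a_h^{dar}(u,v) = L_h(v)$, all volume and interior-facet contributions cancel, leaving the two boundary-facet identities
\begin{equation*}
\sum_{e\in\mathcal{E}_h^N}\int_e \bigl(\bk\nabla u\cdot\bn + g_N\bigr)v = 0, \qquad \sum_{e\in\mathcal{E}_h^D}\int_e\Bigl(\epsilon\,\bk\nabla v\cdot\bn + \tfrac{\gamma}{h_e}\,v - (\bbeta\cdot\bn)\,v\Bigr)(u - g_D) = 0,
\end{equation*}
for all $v \in H^2(\calT_h)$. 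The Neumann identity immediately yields \eqref{Neumann} by localizing $v$ in one Neumann-adjacent element and letting its trace on the facet vary. For the Dirichlet identity I localize $v$ in a Dirichlet-adjacent element and impose $\bk\nabla v\cdot\bn = 0$ on the facet, which is feasible inside $H^2$ because $v|_e$ and $\bk\nabla v\cdot\bn|_e$ can be prescribed independently via the surjectivity of the $H^2$-trace map; the surviving coefficient $\tfrac{\gamma}{h_e}-\bbeta\cdot\bn$ is strictly positive because $\bbeta\cdot\bn < 0$ on $\Gamma_D = \Gamma_-$ by \eqref{gammaDgammaN}, so varying $v|_e$ forces $u = g_D$ on $\Gamma_D$. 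The main bookkeeping obstacle of the proof is precisely this decoupling of the symmetrization and penalty contributions on the same Dirichlet facet; once that is handled, everything else is the straightforward transcription of the algebra already performed in Sections \ref{s:Diffvar}--\ref{s:diffAdvreacvar}.
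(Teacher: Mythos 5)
Your forward direction matches the paper's argument and is fine. The converse, however, has a genuine gap at the point where you claim that $\jmp{\bk\nabla u}$ (equivalently $\jmp{-\bk\nabla u+\bbeta u}$) vanishes on every interior facet ``by the same argument as in the forward direction.'' That argument relied on $u$ solving the PDE distributionally \emph{in all of} $\Omega$, which gives $-\bk\nabla u+\bbeta u\in H(\mathrm{div};\Omega)$ and hence zero jumps via Lemma \ref{Hdivsalti}. In the converse, at that stage you have only established $\calL u=f$ \emph{elementwise}, by testing with $\phi\in C^{\infty}_c(T)$; this yields $-\bk\nabla u+\bbeta u\in H(\mathrm{div};T)$ for each $T$, but by the very equivalence in Lemma \ref{Hdivsalti} it does \emph{not} give membership in $H(\mathrm{div};\Omega)$ — the vanishing of the facet jumps is precisely what is missing, so invoking it here is circular. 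Concretely, when you subtract the elementwise integration by parts of $\int_\Omega(\calL u-f)v$ from \eqref{cccc}, the DG magic formula leaves the residual $\sum_{e\in\calE_h^I}\int_e\jmp{-\bk\nabla u+\bbeta u}\,\mvl{v}$, so your two boundary identities are contaminated unless this term is first shown to vanish. The fix is the step the paper performs and you skipped: for each interior facet $e=\partial T_1\cap\partial T_2$ test with $v\in C^{\infty}_c(T_1\cup T_2\cup e)$ (so that all other facet terms drop and $\mvl{v}$ is arbitrary on $e$), which forces $\jmp{-\bk\nabla u+\bbeta u}=0$ in $L^2(e)$; only then does the PDE hold in $\Omega$ and the interior-facet contributions genuinely cancel.

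Apart from this, your treatment of the Dirichlet condition is a genuinely different and arguably cleaner route than the paper's: the paper splits into the cases $\epsilon\in\{-1,1\}$ (testing with $v\in H^2(\Omega)\cap H^1_0(\Omega)$ and using $\epsilon\neq0$) and $\epsilon=0$ (localizing at a boundary facet and using $\tfrac{\gamma}{h_e}-\bbeta\cdot\bn>0$), whereas you handle all three values of $\epsilon$ at once by choosing local $H^2$ test functions with $\bk\nabla v\cdot\bn=0$ on the facet while $v|_e$ varies freely. Be aware, though, that $\bk\nabla v\cdot\bn=\bn^T\bk\bn\,\partial_\bn v+(\text{tangential part of }\bk^T\bn)\cdot\nabla_{e}v$, so enforcing it requires solving for $\partial_\bn v|_e$ in terms of tangential derivatives of $v|_e$; with $\bk$ only in $L^\infty$ the resulting normal-trace datum need not be admissible for an $H^2$ lifting, so this shortcut needs $\bk$ smooth enough near $\Gamma_D$ (acceptable under the piecewise-smoothness standing assumptions, but worth stating). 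The Neumann step and the overall structure otherwise follow the paper.
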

	\begin{proof}
		First, we prove that if the solution $u$ of \eqref{eq}–\eqref{Neumann} belongs to $H^2(\calT_h)$, then it also solves  \eqref{cccc}.
		Let $v$ be a function in $H^2(\mathcal{T}_{h})$. We multiply \eqref{eq} by $v$ and integrate on one element $T$:
		\begin{equation*}
			\int_{T} \text{div}(-\bm{K} \nabla u +\bm{\beta} u)   v +\int_{T}  \sigma u  v= \int_{T} f  v.
		\end{equation*}
		We integrate by parts 
		and sum over all elements:
		\begin{equation*}
			-\sum_{T\in\mathcal{T}_{h}}	\int_{T} \left(-\bm{K}  \nabla u+\bbeta u\right) \cdot \nabla  v +
			\sum_{T\in\mathcal{T}_{h}}	\int_{\partial T}   \left(-\bm{K}  \nabla u+\bbeta u\right) \cdot \mathbf{n}_{T} v +
			\sum_{T\in\mathcal{T}_{h}}	\int_{T}  \sigma u  v= 	\sum_{T\in\mathcal{T}_{h}}\int_{T} f  v.
		\end{equation*}
		Using the DG magic formula \eqref{DG_magic}  with $\bw=\bk\nabla u+\bbeta u$ and $\varphi=v$ we can rewrite the second term as
		\begin{align*}
			\sum_{T\in\mathcal{T}_{h}}	\int_{\partial T}   \left(-\bm{K}  \nabla u+\bbeta u\right) \cdot \mathbf{n}_{T} v =&
			\sum_{e\in \calE_{h}^{I}}	\int_{e} (\mvl{-\bm{K}  \nabla u+\bbeta u} \cdot \jmp{v} +  \jmp{-\bm{K}  \nabla u+\bbeta u}  \mvl{v})\\&
			+\int_{\partial\Omega}    \left(-\bm{K}  \nabla u+\bbeta u\right)\cdot \bn v.
		\end{align*}
		Notice that $	-\bm{K} \nabla u +\bm{\beta} u $ belongs to  $H(\text{div};\Omega)$ since $u$ satisfy \eqref{eq} and $f$ and $\sigma $ are functions in $L^2(\Omega)$.
		For Proposition \ref{Hdivsalti}, this implies that the jump  $\jmp{-\bm{K}  \nabla u+\bbeta u} =0 $ on each interior facet, hence $\sum_{e\in \calE_{h}^{I}}	\int_{e}  \jmp{\bm{K}   \nabla u+\bbeta u} \cdot \mvl{v}=0$.
		Using the Dirichlet and Neumann boundary conditions \eqref{Dirichlet}-\eqref{Neumann}, we find
		\begin{equation}
			\begin{aligned}\label{intermed}
				&		-\sum_{T\in\mathcal{T}_{h}}	\int_{T} \left(-\bm{K}  \nabla u+\bbeta u\right) \cdot \nabla  v -
				\sum_{e\in\mathcal{E}_{h}^{I}}	\int_{e}  \mvl{ \bm{K}   \nabla u } \cdot \jmp{v} +
				\sum_{e\in\mathcal{E}_{h}\setminus \mathcal{E}_{h}^{D}}	\int_{e}  \mvl{\bm{\beta}  u} \cdot \jmp{v}	\\&
				-	\sum_{e\in\mathcal{E}_{h}^{D}}	\int_{e}  \bm{K}   \nabla u  \cdot \mathbf{n} v+\sum_{T\in\mathcal{T}_{h}}	\int_{T}  \sigma u  v= \int_{\Omega} f  v - \sum_{e\in\mathcal{E}_{h}^{N}}	\int_{e} g_{N} v   - \sum_{e\in\mathcal{E}_{h}^{D}}	\int_{e}  ( \bm{\beta}  g_{D})\cdot \mathbf{n} v .
			\end{aligned}
		\end{equation}
		As we have done  in Section \ref{s:Advreacvar} for the advection-reaction case, in the third term we substitute $\mvl{\bbeta u}$ with the upwind numerical flux $\mvl{\bbeta u}_{\text{upw}}$ and use the identity \eqref{fluxesusefulid}:
		\begin{equation*}
			\sum_{e\in\mathcal{E}_{h}\setminus \mathcal{E}_{h}^{D}}	\int_{e}  \mvl{\bm{\beta}  u}_{\text{upw}} \cdot \jmp{v}=
			\sum_{e\in\mathcal{E}_{h}^{I}}	\int_{e}  \mvl{\bm{\beta}  u} \cdot \jmp{v}+\frac{1}{2}\sum_{e\in\mathcal{E}_{h}^{I}}	\int_{e} \abs{\bbeta \cdot \bn_e} \jmp{u} \cdot \jmp{v}+
			\sum_{e\in\mathcal{E}_{h}^{N}}	\int_{e} ( \bm{\beta}  u) \cdot \bn v.
		\end{equation*}
		Since $u\in H^{1}(\Omega)$, by Proposition \ref{Wsalti} the jump $\jmp{u}=0$ on each interior facet.
		Proceeding as in Section \ref{s:Diffvar} for the diffusion case, we add on the left-hand side of \eqref{intermed} the quantity $
		+\epsilon \sum_{e\in\mathcal{E}_{h}^{I}}	\int_{e}  \jmp{u}\cdot \mvl{\bm{K}   \nabla v } $ without loosing consistency and we add $+	\epsilon \sum_{e\in\mathcal{E}_{h}^{D}}	\int_{e}  u \bm{K}   \nabla v  \cdot \mathbf{n} $ to both sides of \eqref{intermed} and then use the Dirichlet boundary condition \eqref{Dirichlet}.
		Combining the previous steps we have:
		\begin{align*}
			&	-\sum_{T\in\mathcal{T}_{h}}	\int_{T} \left(-\bm{K}  \nabla u+\bbeta u\right) \cdot \nabla  v -
			\sum_{e\in\mathcal{E}_{h}^{I}}	\int_{e}  \mvl{ \bm{K}   \nabla u } \cdot \jmp{v} 
			+	\epsilon \sum_{e\in\mathcal{E}_{h}^{I}}	\int_{e}  \jmp{u}\cdot  \mvl{ \bm{K}   \nabla v } \\ 
			&	-\sum_{e\in\mathcal{E}_{h}^{D}}	\int_{e}  \bm{K}   \nabla u  \cdot \mathbf{n} v+ 	\epsilon \sum_{e\in\mathcal{E}_{h}^{D}}	\int_{e} u \bm{K}   \nabla v  \cdot \mathbf{n}
			\\ &+
			\sum_{e\in\mathcal{E}_{h}^{I}}	\int_{e}  \mvl{\bm{\beta}  u} \cdot \jmp{v}+\frac{1}{2}\sum_{e\in\mathcal{E}_{h}^{I}}	\int_{e} \abs{\bbeta \cdot \bn_e} \jmp{u} \cdot \jmp{v}+
			\sum_{e\in\mathcal{E}_{h}^{N}}	\int_{e} ( \bm{\beta}  u) \cdot \bn v\\
			&+\sum_{T\in\mathcal{T}_{h}}	\int_{T}  \sigma u  v= \int_{\Omega} f  v - \sum_{e\in\mathcal{E}_{h}^{N}}	\int_{e} g_{N} v   - \sum_{e\in\mathcal{E}_{h}^{D}}	\int_{e}  ( \bm{\beta}  g_{D})\cdot \mathbf{n} v  +	\epsilon \sum_{e\in\mathcal{E}_{h}^{D}}	\int_{e}g_D  \bm{K}   \nabla v  \cdot \mathbf{n} .
		\end{align*}
		We add the stabilization terms, which penalize the interior and boundary jumps:
		the consistent term 
		$\frac{\gamma}{h_e} \sum_{e\in\calE_{h}^{I}}	\int_{e}  \jmp{u}\cdot \jmp{v}$ only on the left-hand side of the last equation and the quantity 
		$
		\frac{\gamma}{h_e} \sum_{e\in\mathcal{E}_{h}^{D}}	\int_{e} u v$
		to both sides and then use Dirichlet condition \eqref{Dirichlet}.
		We have proved that $u$ satisfy the variational equation \eqref{cccc}
		\begin{align*}
			&	-\sum_{T\in\mathcal{T}_{h}}	\int_{T} \left(-\bm{K}  \nabla u+\bbeta u\right) \cdot \nabla  v -
			\sum_{e\in\mathcal{E}_{h}^{I}}	\int_{e}  \mvl{ \bm{K}   \nabla u } \cdot \jmp{v} 
			+	\epsilon \sum_{e\in\mathcal{E}_{h}^{I}}	\int_{e}  \jmp{u}\cdot  \mvl{ \bm{K}   \nabla v } \\ 
			&	-\sum_{e\in\mathcal{E}_{h}^{D}}	\int_{e}  \bm{K}   \nabla u  \cdot \mathbf{n} v+ 	\epsilon \sum_{e\in\mathcal{E}_{h}^{D}}	\int_{e} u \bm{K}   \nabla v  \cdot \mathbf{n} 
			+\sum_{e\in\calE_{h}^{I}}\frac{\gamma}{h_e}	\int_{e}  \jmp{u}\cdot \jmp{v}+
			\sum_{e\in\mathcal{E}_{h}^{D}}	\frac{\gamma}{h_e}\int_{e} u v\\&+
			\sum_{e\in\mathcal{E}_{h}^{I}}	\int_{e}  \mvl{\bm{\beta}  u} \cdot \jmp{v}+\frac{1}{2}\sum_{e\in\mathcal{E}_{h}^{I}}	\int_{e} \abs{\bbeta \cdot \bn_e} \jmp{u} \cdot \jmp{v}+
			\sum_{e\in\mathcal{E}_{h}^{N}}	\int_{e} ( \bm{\beta}  u) \cdot \bn v+\sum_{T\in\mathcal{T}_{h}}	\int_{T}  \sigma u  v\\
			&= \int_{\Omega} f  v - \sum_{e\in\mathcal{E}_{h}^{N}}	\int_{e} g_{N} v   - \sum_{e\in\mathcal{E}_{h}^{D}}	\int_{e}  ( \bm{\beta}  g_{D})\cdot \mathbf{n} v  +	\epsilon \sum_{e\in\mathcal{E}_{h}^{D}}	\int_{e}g_D  \bm{K}   \nabla v  \cdot \mathbf{n}+
			\sum_{e\in\mathcal{E}_{h}^{D}}		\frac{\gamma}{h_e} \int_{e} g_D v.
		\end{align*}
		
		Conversely,  let $u\in H^1(\Omega) \cap H^2(\mathcal{T}_{h})$ be solution of \eqref{cccc} and 
		take $v\in C^{\infty}_c(\calT_h)$. Then \eqref{cccc} reduces to 
		\begin{equation*}
			-\sum_{T\in\mathcal{T}_{h}}	\int_{T} \left(-\bm{K}  \nabla u+\bbeta u\right) \cdot \nabla  v 
			+\sum_{T\in\mathcal{T}_{h}}	\int_{T}  \sigma u  v= 	\sum_{T\in\mathcal{T}_{h}}	\int_{T}  f  v.
		\end{equation*}
		This implies that, for all $T\in\calT_h$,  for the density of $C^{\infty}_c(T)$ in $L^2(T)$, we have 
		$
		\text{div}(-\bm{K}  \nabla u   + \bm{\beta} u)  + \sigma u =  f  
		$
		in $L^2(T)$ and, hence,
		a.e in $T$.
		
		Let $e\in \calE_h^I$ with $e=\partial T_1\cap\partial T_2$. Consider $v\in C^{\infty}_c(T_1\cup T_2\cup e)$ and consider its extension by zero over the rest of the domain. 
		We now know that $
		\text{div}(-\bm{K}  \nabla u   + \bm{\beta} u)  + \sigma u =  f  
		$
		in $T_1\cup T_2$.
		Multiplying this equation  by $v$ and using the integration by parts, we get 
		\begin{equation*}
			\int_{T_1 \cup T_2} \left(\bm{K}  \nabla u -\bbeta u\right)\cdot \nabla  v +
			\int_{e} \jmp{-\bm{K}  \nabla u+\bbeta u}   v +
			\int_{T_1 \cup T_2}  \sigma u  v= 	\int_{T_1 \cup T_2} f  v.
		\end{equation*}
		Since $\jmp{v}=0$ on all facets, $\nabla v=0$ on boundary facets and $\jmp{u}=0$ on interior facets, equation \eqref{cccc} becomes
		\begin{equation*}
			-\int_{T_1 \cup T_2} \left(-\bm{K}  \nabla u+\bbeta u\right) \cdot \nabla  v
			+	
			\int_{T_1 \cup T_2}  \sigma u  v= \int_{T_1 \cup T_2} f  v.
		\end{equation*}
		From the two previous equations, that we have derived, it follows that
		\begin{equation*}
			\int_{e} \jmp{-\bm{K}  \nabla u+\bbeta u}   v =0 \quad \forall v\in C^{\infty}_{c}(T_1 \cup T_2\cup e).
		\end{equation*}
		A consequence is that $	\jmp{-\bm{K}  \nabla u+\bbeta u}   =0$ in $L^2(e)$.
		This holds for all facets $e$, implying that 
		$\text{div}(-\bk \nabla u +\bbeta u) \in L^2(\Omega)$, and then we obtain
		\begin{equation}\label{eqdeducted}
			\text{div}(-\bk \nabla u +\bbeta u)+\sigma u=f\quad \text{in} \quad \Omega.
		\end{equation}
	
Next, we want to deduce the Dirichlet boundary condition \eqref{Dirichlet}. 
		First, consider the case when the symmetrization parameter $\epsilon\in\{-1,1\}$. Multiplying \eqref{eqdeducted} by a function $v\in H^{2}(\Omega)\cap H^1_0(\Omega)$ and integrating by parts, we have
		\begin{equation*}
			\sum_{T\in\calT_h}	\int_{T} \left(\bm{K}  \nabla u -\bbeta u\right)\cdot \nabla  v 
			+
			\sum_{T\in\calT_h}	\int_{T}  \sigma u  v= \sum_{T\in\calT_h}	\int_{T} f  v.
		\end{equation*}
		From  \eqref{cccc} with $v$ as test function, we get
		\begin{equation*}
			\sum_{T\in\mathcal{T}_{h}}	\int_{T} \left(\bm{K}  \nabla u-\bbeta u\right) \cdot \nabla  v  
			+ 	\epsilon \sum_{e\in\mathcal{E}_{h}^{D}}	\int_{e} u \bm{K}   \nabla v  \cdot \mathbf{n} +
			\sum_{T\in\mathcal{T}_{h}}	\int_{T}  \sigma u  v
			= \int_{\Omega} f  v  +	\epsilon \sum_{e\in\mathcal{E}_{h}^{D}}	\int_{e}g_D  \bm{K}   \nabla v  \cdot \mathbf{n}.
		\end{equation*}
		Comparing the two formulas above we arrive at
		\begin{equation*}
			\epsilon \sum_{e\in\mathcal{E}_{h}^{D}}	\int_{e} u \bm{K}   \nabla v  \cdot \mathbf{n} 
			= \epsilon \sum_{e\in\mathcal{E}_{h}^{D}}	\int_{e} g_D \bm{K}   \nabla v  \cdot \mathbf{n} 
		\end{equation*}
		This holds for all $v\in H^{2}(\Omega)\cap H^{1}_{0}(\Omega)$ and, since $\epsilon\neq 0$, implies that $u=g_D$ on $\Gamma_D$.
		Now consider the case $\epsilon=0$. Let $e\in\calE_h^D$ with $T\in\calT_h$ such that $e\subseteq \partial T$.
		Consider $v\in C^{\infty}_c(T\cup e\cup \Omega^c)$, where $\Omega^c$ denotes the complement of $\Omega$ in $\mathbb{R}^d$, and consider its extension by zero over the rest of the domain.
		Multiplying \eqref{eqdeducted} by $v$ and integrating by parts, we get
		\begin{equation*}
			\int_{T} \left(\bk  \nabla u-\bbeta u\right)\cdot \nabla  v  +\int_{e} \left(-\bk \nabla u + \bbeta u\right)\cdot \bn v+ 	\int_{T}  \sigma u  v 
			= \int_{T}  f v.
		\end{equation*}
		Using $v$ as test function in \eqref{cccc}, we obtain
		\begin{align*}
		&	\int_{T} \left(\bk  \nabla u-\bbeta u\right)\cdot \nabla  v  + 	\int_{T}  \sigma u  v -	\int_{e}  \bm{K}   \nabla u  \cdot \mathbf{n} v+
			\frac{\gamma}{h_e} 
			\int_{e} u v=\\& \int_{T}  f v -\int_{e}  ( \bm{\beta}  g_{D}) \cdot \mathbf{n} v+ 	\frac{\gamma}{h_e}\int_{e}  g_{D} v.
		\end{align*}
		From the two formulas above follows:
		\begin{equation*}
			\int_{e} \left(u-g_D\right)\left(\frac{\gamma}{h_e} -\bbeta \cdot \bn  \right)v =0.
		\end{equation*}
		Since $\bbeta \cdot \bn <0$ on $e\in \calE_h^D$  for the assumption \eqref{gammaDgammaN}  and since $\frac{\gamma}{h_e}>0$ we deduce that $ \left(\frac{\gamma}{h_e} -\bbeta \cdot \bn  \right)>0$ on $e$.
		Then, the equality above, which holds for all $v\in C^{\infty}_c(T\cup e\cup \Omega^c)$, implies that $u=g_D$ on $e$.
		This is true for all $e\in\calE_h^D$, hence $u=g_D$ on $\Gamma_D$.

	To recover the Neumann boundary condition \eqref{Neumann} we choose $v\in H^2(\Omega)$ such that $v=0$ on $\Gamma_D$.
		As done before, on one hand we multiply \eqref{eq} by $v$, integrate by parts and obtain
		\begin{equation*}
			\sum_{T\in\calT_h}	\int_{T} \left(\bm{K}  \nabla u -\bbeta u\right)\cdot \nabla  v +
			\sum_{e\in\mathcal{E}_{h}^{N}}\int_{e}    \left(-\bm{K}  \nabla u+\bbeta u\right)\cdot \bn v
			+
			\sum_{T\in\calT_h}	\int_{T}  \sigma u  v= \sum_{T\in\calT_h}	\int_{T} f  v.
		\end{equation*}
		On the other hand, from \eqref{cccc} and from the Dirichlet boundary condition, we find
		\begin{equation*}
			\sum_{T\in\mathcal{T}_{h}}	\int_{T} \left(\bm{K}  \nabla u-\bbeta u\right) \cdot \nabla  v 
			+
			\sum_{e\in\mathcal{E}_{h}^{N}}	\int_{e} ( \bm{\beta}  u) \cdot \bn v+\sum_{T\in\mathcal{T}_{h}}	\int_{T}  \sigma u  v= \int_{\Omega} f  v - \sum_{e\in\mathcal{E}_{h}^{N}}	\int_{e} g_{N} v.
		\end{equation*}
		From the comparison between the two equation above we infer that 
		\begin{equation*}
			-	\sum_{e\in\calE_h^N}	\int_{e}   \bm{K}   \nabla u \cdot \bn v 
			=- \sum_{e\in\mathcal{E}_{h}^{N}}	\int_{e} g_{N} v,
		\end{equation*}
		which implies \eqref{Neumann}, completing the proof.
	\end{proof}
	
	\chapter{Well-posedness and quasi-optimality of the DG method}\label{Chapter4}
	The aim of this chapter is to prove the well-posedness of the discrete DG problem \eqref{variational} and the quasi-optimal error estimates of the DG method. In particular, since we already established the consistency, we focus on the proofs of the discrete coercivity and of the boundedness.
	First, in Section \ref{normss} we collect the bilinear forms, the norms and some preliminary estimates.
	In Section \ref{coercivitydiscrete} we prove the discrete coercivity of $a_h^{dar}$, in Section \ref{s:boundedness} the boundedness of $a_h^{dar}$ and in Section \ref{s:erroranalysis} the error estimates.
	We follow mostly \cite[Chapter 4]{di2011mathematical} and \cite[Chapter 2]{riviere2008discontinuous}.
		\section{Bilinear forms, norms and preliminary estimates}\label{normss}
	Recall that the trial and test space is $V=H^1_{\Gamma_D}(\Omega)$, that the exact solution
	$u\in V_{*} :=H^1_{\Gamma_D}(\Omega) \cap H^2(\calT_h)$ and  $V_{*h} := V_* + V_h$.
	In Chapter \ref{Chapter3} we have considered as discrete space $V_h$ a finite-dimensional subspace of the broken Sobolev space $H^2(\calT_h)$. Now we consider $V_h\subseteq \mathbb{P}^p_d(\calT_h)$, i.e., a subspace of the broken space of degree-$p$ polynomials defined in \ref{brokenpolyspace}, since we will make use of the discrete inverse trace inequality \eqref{discretetraceinequality} for polynomial spaces.
	
	We recall the $a_h^{dar}$ bilinear form: for all $(v, w_h)\in V_{*h} \times V_h$, we have 
	\begin{equation*}
		a_h^{dar}(w,v_h)=a_h^{d}(w,v_h)+ a_h^{ar}(w,v_h),
	\end{equation*}
	where
	\begin{align*}
		a_h^{d}(w,v_h)=&
		\sum_{T\in \calT_h}	\int_{T} \bk  \nabla w \cdot \nabla  v_h -
		\sum_{e\in\calE_h^I}	\int_{e}  \mvl{\bm{K}   \nabla w } \cdot \jmp{v_h}+
		\epsilon \sum_{e\in\calE_h^I}	\int_e  \jmp{w} \cdot \mvl{\bm{K}   \nabla v_h}  \\
		&-\sum_{e\in\mathcal{E}_{h}^{D}}	\int_{e}  \bm{K}   \nabla w  \cdot \mathbf{n} v_h+
		\epsilon 	\sum_{e\in\mathcal{E}_{h}^{D}}	\int_{e} w \bm{K}   \nabla v_h  \cdot \mathbf{n} \\
		&+ \sum_{e\in\calE_{h}^{I}}\frac{\gamma}{h_e}	\int_{e}  \jmp{w}\cdot \jmp{v_h}+ \sum_{e\in\mathcal{E}_{h}^{D}}\frac{\gamma}{h_e}	\int_{e} w v_h,\\
		a_h^{ar}(w,v_h)=&
		-\sum_{T\in\mathcal{T}_{h}}	\int_{T}  (\bbeta w) \cdot  \nabla  v_h + \sum_{T\in\mathcal{T}_{h}}	\int_{T}  \sigma w  v_h\\ 
		&+\sum_{e\in\mathcal{E}_{h}^{I}}	\int_{e}  \mvl{\bbeta  w} \cdot \jmp{v_h}+
		\frac{1}{2} \sum_{e\in\mathcal{E}_{h}^{I}}	\int_{e}  |\bm{\beta}\cdot \bn_{e}|\jmp{w}\cdot\jmp{v_h}+
		\sum_{e\in\mathcal{E}_{h}^{N}}	\int_{e} ( \bm{\beta}  w) \cdot \mathbf{n} v_h.
	\end{align*}
	
	For the convergence analysis, in the spirit of Theorem \ref{errortheorem}, we need to define some norms on $V_{*h}$.
	For all $v\in V_{*h}$ we define the following norms.
	For the diffusion term we consider the energy norm
	\begin{equation*}
		\vertiii{v}^2_{d}:=\sum_{T\in \calT_h}\int_{T}\bk\nabla v\cdot \nabla v+\abs{v}^2_{J},
	\end{equation*}
	with the jump seminorm
	\begin{equation*}
		\abs{v}^2_{J}:=\sum_{e\in\calE_{h}^{I}}\frac{\gamma}{h_e}\int_{e} \jmp{v}^2+\sum_{e\in\calE_{h}^{D}}\frac{\gamma}{h_e}\int_{e}v^2,
	\end{equation*}
	and the following stronger norm for the boundedness
	\begin{equation*}
		\vertiii{v}^2_{d,*}:=	\vertiii{v}^2_{d}+\sum_{T\in \calT_h}h_T\N{\bk^{\frac{1}{2}}\nabla v\cdot \bn_T}^2_{L^2(\partial T)}.
	\end{equation*}
	For handling the advection-reaction term, we use
	\begin{equation*}
		\vertiii{v}^2_{ar}:=\N{v}^2_{L^{2}(\Omega)}+ \frac{1}{2}\sum_{e\in\calE_{h}}\int_{e}\abs{\bbeta\cdot\bn_e}\jmp{v}^2,
	\end{equation*}
	and the stronger norm for the boundedness
	\begin{equation*}
		\vertiii{v}^2_{ar,*}:=\vertiii{v}^2_{ar}+\sum_{T\in \calT_h}\N{ v}^2_{L^2(\partial T)}.
	\end{equation*}
	Combining the previous norms we define two norms, one stronger than the other, for the error analysis of the DG method for the diffusion-advection-reaction problem:
	\begin{align*}
		\vertiii{v}^2_{dar}:=&	\vertiii{v}^2_{d}+	\vertiii{v}^2_{ar},\\
		\vertiii{v}^2_{dar,*}:=&	\vertiii{v}^2_{d,*}+	\vertiii{v}^2_{ar,*}.
	\end{align*}
	In order to see that all of them are norms the only non trivial property to check is that for all $v \in V_{*h}$, if $\vertiii{v}=0$ then $v=0$, since the absolute homogeneity and the triangle inequality are trivial and if $v=0 $ then all of them are zero.
	
	In the next two lemmas, we estimate the quantity 
	$\sum_{e\in\calE_h^I\cup \calE_h^D} \int_{e}  \mvl{\bm{K}   \nabla v} \cdot \jmp{w_h}$ with two different bounds as we will require both of them in following analysis.
	\begin{lemma}
		For all $(v,w_h)\in V_{*h}\times  V_h$,
		\begin{equation}\label{consistencytermbound}
			\abs{\sum_{e\in\calE_h^I\cup \calE_h^D} \int_{e}  \mvl{\bm{K}   \nabla v} \cdot \jmp{w_h}} \leq
			\N{\bk}_{L^{\infty}(\Omega)}^{\frac{1}{2}}
			\left( 		\sum_{T\in\calT_h} \sum_{e\in\calF_T} \frac{h_e}{\gamma} 
			\N{(\bk^{\frac{1}{2}}\nabla v)_{|_T}\cdot \bn_e}_{L^2(e)}^2
			\right)^{\frac{1}{2}} \abs{w_h}_J.
		\end{equation}
	\end{lemma}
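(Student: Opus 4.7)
The strategy is a standard decomposition of this DG consistency-type term, followed by a weighted Cauchy--Schwarz inequality applied facet by facet together with the $L^\infty$ bound on $\bk$.

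I would first expand each average $\mvl{\bk\nabla v}$ into the one-sided traces of the adjacent elements, thereby rewriting the sum over $\calE_h^I\cup \calE_h^D$ as a sum over pairs $(T,e)$ with $e\in\calF_T\cap(\calE_h^I\cup\calE_h^D)$, each interior facet contributing twice with coefficient $c_{T,e}=\tfrac12$ and each Dirichlet facet once with $c_{T,e}=1$. Since $\jmp{w_h}$ is parallel to $\bn_e$ on every facet, the integrand on $e$ reduces to the product of the scalar normal component $(\bk\nabla v)_{|_T}\cdot \bn_e$ and the scalar jump of $w_h$ across $e$.

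I would then insert and remove the factor $(\gamma/h_e)^{1/2}$ and apply the Cauchy--Schwarz inequality in $L^2(e)$ facet by facet to obtain, for every such pair,
\begin{equation*}
\biggl|\int_e ((\bk\nabla v)_{|_T}\cdot \bn_e)\,\jmp{w_h}\biggr|\leq \Bigl(\tfrac{h_e}{\gamma}\Bigr)^{1/2}\N{(\bk\nabla v)_{|_T}\cdot \bn_e}_{L^2(e)}\Bigl(\tfrac{\gamma}{h_e}\Bigr)^{1/2}\N{\jmp{w_h}}_{L^2(e)}.
\end{equation*}
To replace $\bk$ by $\bk^{1/2}$ I would use the factorization $\bk=\bk^{1/2}\bk^{1/2}$, which is meaningful because $\bk$ is symmetric positive definite by the ellipticity assumption~\eqref{ellipticity}, together with the pointwise bound $|\bk^{1/2}\bn_e|^2=\bn_e^T\bk\bn_e\leq \N{\bk}_{L^\infty(\Omega)}$ coming directly from the definition of the matrix $L^\infty$ norm; combined with a further Cauchy--Schwarz in $\mathbb{R}^d$ this extracts the overall $\N{\bk}_{L^\infty(\Omega)}^{1/2}$ factor in front of $\N{(\bk^{1/2}\nabla v)_{|_T}\cdot \bn_e}_{L^2(e)}$.

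A weighted discrete Cauchy--Schwarz applied to the sum $\sum_{T}\sum_{e\in\calF_T\cap(\calE_h^I\cup\calE_h^D)} c_{T,e}(\ldots)$ then separates it into the two factors displayed in the claim: dropping $c_{T,e}\leq 1$ from the first factor bounds it by the bracket on the right-hand side (enlarged, harmlessly, to all of $\calF_T$), while the second factor, with $c_{T,e}$ retained, equals exactly $\abs{w_h}_J^2$ since each interior facet is counted twice in $\sum_T\sum_{e\in\calF_T\cap\calE_h^I}$ with $2 c_{T,e}=1$, and each Dirichlet facet once with $c_{T,e}=1$, matching the definition of the jump seminorm from Section~\ref{normss}. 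I expect the main obstacle to be precisely this bookkeeping step, i.e.\ matching the $\tfrac12$ coefficients from the averages with the multiplicities of the reindexing so that the second factor collapses cleanly to $\abs{w_h}_J$ without a spurious constant; a minor additional subtlety, settled by the Cauchy--Schwarz-in-$\mathbb{R}^d$ argument above, is the passage from $\bk\nabla v\cdot\bn_e$ to $\bk^{1/2}\nabla v\cdot\bn_e$ in the anisotropic matrix case.
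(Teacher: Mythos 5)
Your proposal follows essentially the same route as the paper's proof: the same weighted Cauchy--Schwarz with the factors $(h_e/\gamma)^{1/2}$ and $(\gamma/h_e)^{1/2}$, the same extraction of $\N{\bk}_{L^{\infty}(\Omega)}^{1/2}$ via $\bk=\bk^{\frac{1}{2}}\bk^{\frac{1}{2}}$, the same splitting of each interior average into half-contributions from the two neighbouring elements, and the same final regrouping of facet terms element by element into $\sum_{T\in\calT_h}\sum_{e\in\calF_T}$ with the second factor identified as $\abs{w_h}_J$; the paper merely performs the discrete Cauchy--Schwarz over facets first and then applies Young's inequality to the averages, which is your computation in a slightly different order. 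The only point to watch is that your Cauchy--Schwarz in $\mathbb{R}^d$, $\abs{(\bk^{\frac{1}{2}}\nabla v)\cdot(\bk^{\frac{1}{2}}\bn_e)}\le \abs{\bk^{\frac{1}{2}}\nabla v}\,\abs{\bk^{\frac{1}{2}}\bn_e}$, literally yields the full Euclidean norm $\N{(\bk^{\frac{1}{2}}\nabla v)_{|_T}}_{L^2(e)}$ rather than its normal component $\N{(\bk^{\frac{1}{2}}\nabla v)_{|_T}\cdot\bn_e}_{L^2(e)}$ for a genuinely matrix-valued $\bk$, but this is exactly the same looseness present in the paper's own Young-inequality step, so your argument is on par with the reference proof.
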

	\begin{proof}
		Let $(v,w_h)\in V_{*h}\times  V_h$ and let $\mathfrak{T}:=\sum_{e\in\calE_h^I\cup \calE_h^D} \int_{e}  \mvl{\bm{K}   \nabla v} \cdot \jmp{w_h}$.
		Using the Cauchy--Schwarz inequality, we get
		\begin{equation}\label{lemmaconsistency}
			\abs{\mathfrak{T}}\leq 
			\left(\sum_{e\in\calE_h^I\cup \calE_h^D}  \frac{h_e}{\gamma}
			\int_e \left(\mvl{\bk\nabla v}\cdot \bn_e\right)^{2}
			\right)^{\frac{1}{2}}
			\left(\sum_{e\in\calE_h^I\cup \calE_h^D}  \frac{\gamma}{h_e}\N{\jmp{w_h}}^2_{L^2(e)}\right)^{\frac{1}{2}},
		\end{equation}
		where the last term is the jump seminorm $\abs{w_h}_{J}$.
		In the other term we distinguish between interior facets and boundary ones:
		for all $e\in\calE_h^I$ with $e=\partial T_1 \cap \partial T_2$,
		the Young's inequality yields
		\begin{align*}
			\int_{e} \left( \mvl{\bm{K}   \nabla v} \cdot \bn_e\right)^2&
			=\int_{e}\left[\frac{1}{2} 
			\left(\bk_{|_{T_1}}^{\frac{1}{2}} (\bk^{\frac{1}{2}}\nabla v)_{|_{T_1}}+\bk_{|_{T_2}}^{\frac{1}{2}} (\bk^{\frac{1}{2}}\nabla v)_{|_{T_2}}\right)  \cdot \bn_e\right]^2\\
			&	\leq
			\frac{1}{2} 	\N{\bk}_{L^{\infty}(\Omega)} \left(\N{(\bk^{\frac{1}{2}}\nabla v)_{|_{T_1}} \cdot \bn_e}^2_{L^2(e)}+\N{(\bk^{\frac{1}{2}}\nabla v)_{|_{T_2}}\cdot \bn_e}^2_{L^2(e)}\right),
		\end{align*}
		and for all $e\in\calE_h^D$ with $e\subseteq\partial T\cap \Gamma_D$, we obtain
		\begin{equation*}
			\int_{e} \left( \mvl{\bm{K}   \nabla v} \cdot \bn_e\right)^2
			=\int_{e}\left[
			\left(\bk_{|_{T}}^{\frac{1}{2}} (\bk^{\frac{1}{2}}\nabla v)_{|_{T}}\right)  \cdot \bn_e\right]^2	\leq
			\N{\bk}_{L^{\infty}(\Omega)} \N{(\bk^{\frac{1}{2}}\nabla v)_{|_{T}} \cdot \bn_e}^2_{L^2(e)}.
		\end{equation*}
		Combining these two bounds, we arrive at
		\begin{align*}
			\abs{\mathfrak{T}}\leq & \N{\bk}_{L^{\infty}(\Omega)}^{\frac{1}{2}}
			\left[\sum_{e\in\calE_h^I}  \frac{h_e}{\gamma}\frac{1}{2} 
			\left(\N{(\bk^{\frac{1}{2}}\nabla v)_{|_{T^e_2}} \cdot \bn_e}^2_{L^2(e)}+\N{(\bk^{\frac{1}{2}}\nabla v)_{|_{T^e_2}}\cdot \bn_e}^2_{L^2(e)}\right)\right.\\
			&\left. +
			\sum_{e\in\calE_h^D}  \frac{h_e}{\gamma} 
			\N{(\bk^{\frac{1}{2}}\nabla v)_{|_T}\cdot\bn_e}_{L^2(e)}^2
			\right]^{\frac{1}{2}} \abs{w_h}_{J}.
		\end{align*}
		The thesis is derived by aggregating the facet contributions of each mesh element.
	\end{proof}
	\begin{lemma}
		For all $(v,w_h)\in V_{*h}\times  V_h$,
		\begin{equation}\label{consistencyterm}
			\abs{\sum_{e\in\calE_h^I\cup \calE_h^D} \int_{e}  \mvl{\bm{K}   \nabla v} \cdot \jmp{w_h}} \leq
			\N{\bk}_{L^{\infty}(\Omega)}
			\left( 		\sum_{T\in\calT_h} \sum_{e\in\calF_T} \frac{h_e}{\gamma} 
			\N{\nabla v_{|_T}\cdot \bn_e}_{L^2(e)}^2
			\right)^{\frac{1}{2}} \abs{w_h}_J.
		\end{equation}
	\end{lemma}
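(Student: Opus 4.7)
The plan is to mirror the proof of the preceding lemma \eqref{consistencytermbound} almost verbatim, the only change being the way $\bk$ is factored out of the expression $\mvl{\bk\nabla v}\cdot \bn_e$. Whereas in \eqref{consistencytermbound} the matrix is split symmetrically as $\bk=\bk^{1/2}\bk^{1/2}$, so that one half is absorbed into the remaining $\bk^{1/2}\nabla v$, here I would pull out the full $\bk$ by the $L^{\infty}$ bound, leaving the plain gradient of $v$ (and, correspondingly, the full power $\N{\bk}_{L^\infty(\Omega)}$ rather than its square root) in the final estimate.

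Concretely, setting $\mathfrak{T}:=\sum_{e\in\calE_h^I\cup \calE_h^D} \int_{e}  \mvl{\bm{K}   \nabla v} \cdot \jmp{w_h}$, I would first repeat the Cauchy--Schwarz step of \eqref{lemmaconsistency}, inserting the weights $(h_e/\gamma)^{1/2}$ and $(\gamma/h_e)^{1/2}$, to obtain
$$
\abs{\mathfrak{T}}\leq
\Biggl(\sum_{e\in\calE_h^I\cup \calE_h^D}  \frac{h_e}{\gamma}
\int_e \bigl(\mvl{\bk\nabla v}\cdot \bn_e\bigr)^{2}\Biggr)^{\!1/2}
\abs{w_h}_{J}.
$$
Next, I would control the pointwise integrand. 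On an interior facet $e=\partial T_1\cap\partial T_2$, Young's inequality yields
$$
\bigl(\mvl{\bk\nabla v}\cdot \bn_e\bigr)^{2}
\leq \tfrac12\bigl((\bk_{|T_1}\nabla v_{|T_1}\cdot\bn_e)^2+(\bk_{|T_2}\nabla v_{|T_2}\cdot\bn_e)^2\bigr)
\leq \tfrac12\N{\bk}^2_{L^\infty(\Omega)}\bigl((\nabla v_{|T_1}\cdot\bn_e)^2+(\nabla v_{|T_2}\cdot\bn_e)^2\bigr),
$$
and on a Dirichlet facet $e\subseteq \partial T\cap \Gamma_D$ only one trace contributes, giving the analogous bound without the factor $\tfrac12$. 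Substituting and re-grouping the facet sum by elements, $T\in\calT_h$ contributes its facets $\calF_T$ with a weight that exactly reconstructs the double sum $\sum_{T}\sum_{e\in\calF_T}$ appearing on the right-hand side. Factoring $\N{\bk}_{L^\infty(\Omega)}$ out of the square root then yields \eqref{consistencyterm}.

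The main delicate point is the pointwise bound $|\bk\nabla v\cdot\bn_e|\leq \N{\bk}_{L^\infty(\Omega)}|\nabla v\cdot\bn_e|$, which is the reason the gradient appears contracted against $\bn_e$ in the final estimate rather than with its full Euclidean norm. This relies on interpreting $\N{\bk}_{L^\infty(\Omega)}$ as the essential supremum of a suitable matrix norm of $\bk(\bx)$ compatible with this estimate; under the assumption \eqref{assumptionbeta} on $\bk$ this is standard, and all remaining manipulations (Young's inequality, aggregation by element, Cauchy--Schwarz) are routine.
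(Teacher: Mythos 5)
Your proposal is correct and follows essentially the same route as the paper's proof: the Cauchy--Schwarz step of \eqref{lemmaconsistency} is reused verbatim, the average on each facet is split by Young's inequality, the coefficient is extracted via $\N{\bk}_{L^{\infty}(\Omega)}$ (appearing squared inside the square, hence to the first power outside), and the facet sums are regrouped elementwise. The ``delicate point'' you flag, namely bounding $\abs{(\bk\nabla v)_{|_T}\cdot\bn_e}$ by $\N{\bk}_{L^{\infty}(\Omega)}\abs{\nabla v_{|_T}\cdot\bn_e}$, is exactly the step the paper performs implicitly in its estimates, so your treatment matches the paper's.
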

	\begin{proof}
		We proceed as in the proof of the previous lemma.
		The only difference is in the way we estimate the terms $\int_{e} \left( \mvl{\bm{K}   \nabla v} \cdot \bn_e\right)^2$:
		for all $e\in\calE_h^I$ with $e=\partial T_1 \cap \partial T_2$,
		the Young's inequality yields
		\begin{align*}
			\int_{e} \left( \mvl{\bm{K}   \nabla v} \cdot \bn_e\right)^2&
			=\int_{e}\left[\frac{1}{2} 
			\left(	(\bk\nabla v)_{|_{T_1}}+ (\bk\nabla v)_{|_{T_2}}\right)  \cdot \bn_e\right]^2\\
			&	\leq
			\frac{1}{2} 	\N{\bk}_{L^{\infty}(\Omega)}^2 \left(\N{(\nabla v)_{|_{T_1}} \cdot \bn_e}^2_{L^2(e)}+\N{(\nabla v)_{|_{T_2}}\cdot \bn_e}^2_{L^2(e)}\right),
		\end{align*}
		and for all $e\in\calE_h^D$ with $e\subseteq\partial T\cap \Gamma_D$, we obtain
		\begin{equation*}
			\int_{e} \left( \mvl{\bm{K}   \nabla v} \cdot \bn_e\right)^2
			=\int_{e}\left[
			(\bk\nabla v)_{|_{T}}  \cdot \bn_e\right]^2	\leq
			\N{\bk}_{L^{\infty}(\Omega)}^2 \N{(\nabla v)_{|_{T}} \cdot \bn_e}^2_{L^2(e)}.
		\end{equation*}
		From \eqref{lemmaconsistency}, using these two bounds and aggregating the facet contributions of each mesh element, we deduce the assertion.
	\end{proof}
	
	\section{Discrete coercivity}\label{coercivitydiscrete}
	First, in Lemma \ref{coerdiff} we establish the coercivity of the diffusion bilinear form $a_h^{d}$ on $V_h$ with respect to the $\N{\cdot}_{d}$-norm. Then, Lemma \ref{coeradvreac} states that the advection-reaction bilinear form $a_h^{ar}$ is coercive on $V_h$ with respect to the $\N{\cdot}_{ar}$-norm.
	Combining these two results in  in Lemma \ref{coercdiscreta}, we deduce the discrete coercivity of the diffusion-advection-reaction bilinear form $a^{dar}_h$ with respect to the  $\N{\cdot}_{dar}$-norm.
	As noted before, once we have the discrete coercivity, we also have the well-posedness of the discrete problem \eqref{variational}.
	
	The following lemma states that if the penalty parameter $\gamma$ is sufficiently large, then the diffusion bilinear form $a_h^{d}$ is coercive on $V_h$ with respect to the $\vertiii{\cdot}_d$-norm.
	\begin{lemma}[Discrete coercivity: diffusion]\label{coerdiff}
		Let
		\begin{equation}\label{gammaeps}
			\gamma_{\epsilon} := \begin{cases}
				0 & \text{if $\epsilon=1$,}\\
				\N{\bk}_{L^{\infty}(\Omega)}^2 \frac{N_{\partial}C_{tr}}{4 k_{min}} &\text{if $\epsilon=0$,} \\
				\N{\bk}_{L^{\infty}(\Omega)}^2 \frac{N_{\partial}C_{tr}}{k_{min}} &\text{if $\epsilon=-1$,}
			\end{cases}
		\end{equation}
		with $C_{tr}$ defined in \eqref{discretetraceinequality}, $N_{\partial}$ defined in \ref{Npartial} and $k_{min}$ defined in \eqref{ellipticity}.
		For all $\gamma > \gamma_\epsilon$, there exists $\alpha_d>0$, independent of $h$, such that
		\begin{equation*}
			a_h^{d}(v_h,v_h)\ge \alpha_d\vertiii{v_h}^2_{d} \quad \forall v_h\in V_h,
		\end{equation*}
		with 
		\begin{equation}\label{alphad}
			\alpha_d= \begin{cases}
				1 & \text{if $\epsilon=1$,}\\
				1+\frac{1}{2}(\epsilon-1)\N{\bk}_{L^{\infty}(\Omega)} 	 \left( \frac{N_{\partial}C_{tr}}{\gamma k_{min}}\right)^{\frac{1}{2}} &\text{if $\epsilon=0$ or $\epsilon=-1$.} 
			\end{cases}
		\end{equation}
	\end{lemma}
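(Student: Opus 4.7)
The plan is to set $w = v_h$ in $a_h^{d}$ and observe that the two ``consistency-like'' facet terms collapse into a single quantity with coefficient $(\epsilon-1)$. Writing $\mathfrak{T} := \sum_{e\in\calE_h^I\cup \calE_h^D}\int_{e}\mvl{\bm{K}\nabla v_h}\cdot\jmp{v_h}$ (with the boundary facets interpreted via the boundary definitions of average and jump), one obtains the identity
$a_h^{d}(v_h,v_h) = \sum_{T\in\calT_h}\int_T \bm{K}\nabla v_h\cdot\nabla v_h + (\epsilon-1)\mathfrak{T} + \abs{v_h}_J^2$.
For $\epsilon=1$ the mixed term vanishes and $a_h^{d}(v_h,v_h)=\vertiii{v_h}_d^2$, giving $\alpha_d=1$ immediately with no restriction on $\gamma>0$, consistent with $\gamma_1=0$.

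For $\epsilon\in\{0,-1\}$ the mixed term must be absorbed. The first move is to invoke Lemma~\eqref{consistencytermbound} with $v=w_h=v_h$, yielding
$\abs{\mathfrak{T}} \leq \N{\bm{K}}_{L^{\infty}(\Omega)}^{1/2}\bigl(\sum_T\sum_{e\in\calF_T}\tfrac{h_e}{\gamma}\N{(\bm{K}^{1/2}\nabla v_h)_{|_T}\cdot\bn_e}_{L^2(e)}^2\bigr)^{1/2}\abs{v_h}_J$.
I would then bound the inner sum by combining four routine ingredients: (i) the pointwise estimate $\abs{(\bm{K}^{1/2}\nabla v_h)\cdot\bn_e}\leq \N{\bm{K}}_{L^{\infty}(\Omega)}^{1/2}\abs{\nabla v_h}$; (ii) the discrete trace inequality (Lemma~\ref{Traceinverseineq}) applied component-wise to the polynomial field $\nabla v_h$ on a single facet $e\subseteq \partial T$, giving $\N{\nabla v_h}_{L^2(e)}^2\leq C_{tr}h_T^{-1}\N{\nabla v_h}_{L^2(T)}^2$; (iii) the geometric comparison $h_e\leq h_T$ and the cardinality bound $\mathrm{card}(\calF_T)\leq N_\partial$; and (iv) the ellipticity assumption \eqref{ellipticity}, $\N{\nabla v_h}_{L^2(T)}^2\leq k_{min}^{-1}\int_T \bm{K}\nabla v_h\cdot\nabla v_h$. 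Chaining them produces $\abs{\mathfrak{T}}\leq \N{\bm{K}}_{L^{\infty}(\Omega)}\bigl(\tfrac{N_\partial C_{tr}}{\gamma k_{min}}\bigr)^{1/2}\bigl(\sum_T\int_T \bm{K}\nabla v_h\cdot\nabla v_h\bigr)^{1/2}\abs{v_h}_J$.

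To close, I would use $(\epsilon-1)\mathfrak{T}\geq -(1-\epsilon)\abs{\mathfrak{T}}$ followed by the symmetric Young split $AB\leq \tfrac12(A^2+B^2)$ with $A^2=\sum_T\int_T \bm{K}\nabla v_h\cdot\nabla v_h$ and $B^2=\abs{v_h}_J^2$. Since $A^2+B^2=\vertiii{v_h}_d^2$, the factor pulls out cleanly and produces the prefactor $\alpha_d = 1+\tfrac12(\epsilon-1)\N{\bm{K}}_{L^{\infty}(\Omega)}\sqrt{N_\partial C_{tr}/(\gamma k_{min})}$ as stated in \eqref{alphad}. Positivity of $\alpha_d$ for $\gamma>\gamma_\epsilon$ is then an elementary algebraic check: the cases $\epsilon=0$ and $\epsilon=-1$ reproduce exactly the thresholds in \eqref{gammaeps}, the factor $\tfrac14$ appearing for $\epsilon=0$ because of the coefficient $\tfrac12$ in $\alpha_d$. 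The main obstacle I anticipate is the careful bookkeeping in steps (ii)--(iii): the $N_\partial$ factor must arise by summing the \emph{single-facet} trace bound over the at most $N_\partial$ facets of each element, and not by invoking a global $L^2(\partial T)$ trace inequality, or else the resulting constant (and hence the threshold $\gamma_\epsilon$) would be off.
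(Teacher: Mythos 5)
Your proposal is correct and follows essentially the same route as the paper: collapse the two consistency-type facet terms into $(\epsilon-1)\mathfrak{T}$, treat $\epsilon=1$ trivially, then bound $\mathfrak{T}$ via $h_e\leq h_T$, the discrete trace inequality \eqref{discretetraceinequality}, the cardinality bound $N_\partial$, the ellipticity \eqref{ellipticity}, and a symmetric Young split, yielding exactly the stated $\alpha_d$ and thresholds $\gamma_\epsilon$. The only cosmetic difference is that you start from \eqref{consistencytermbound} and pull $\N{\bk}_{L^\infty(\Omega)}^{1/2}$ out pointwise, whereas the paper uses \eqref{consistencyterm} directly; both land on the same intermediate estimate, and your step (ii) correctly applies the trace inequality only to the polynomial field $\nabla v_h$ (not to $\bk^{1/2}\nabla v_h$), which is precisely the subtlety the paper's remark after the proof warns about.
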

	\begin{proof}
		Let $v_h\in V_h$,
		\begin{equation*}
			\begin{split}
				a_h^{d}(v_h,v_h)=
				\sum_{T\in \calT_h}	\int_{T} \bk  \nabla v_h \cdot \nabla  v_h +(\epsilon-1)
				\sum_{e\in\calE_h^I \cup \calE_h^D}	\int_{e}  \mvl{\bm{K}   \nabla v_h } \cdot \jmp{v_h} 
				+\sum_{e\in\calE_{h}^{I}\cup \calE_h^D}\frac{\gamma}{h_e} 	\int_{e}   \jmp{v_h}^2.
			\end{split}
		\end{equation*}
		For $\epsilon=1$ we have $a_h^{d}(v_h,v_h)=\vertiii{v_h}^2_d$ for all $v_h\in V_h$. Hence, in this case the coercivity property is satisfied for all $\gamma>0$ with $\alpha_d=1$.
		
		Consider now the case where $ \epsilon=0$ or $ \epsilon=-1$.
		Owing to the fact that $h_e\leq h_T$ for all $e\in\calF_T$ and for all $T\in\calT_h$ and to the discrete trace inequality \eqref{discretetraceinequality}, we deduce from the bound \eqref{consistencyterm} that
		\begin{align*}
			\abs{\sum_{e\in\calE_h^I\cup \calE_h^D} \int_{e}  \mvl{\bm{K}   \nabla v_h} \cdot \jmp{v_h}} &\leq
			\N{\bk}_{L^{\infty}(\Omega)}
			\left( \sum_{T\in\calT_h}	\sum_{e\in\calF_T} \frac{h_T}{\gamma} 
			\N{\nabla v_{{h}_{|_T}}\cdot \bn_e}_{L^2(e)}^2
			\right)^{\frac{1}{2}} \abs{v_h}_J\\
			&	\leq\N{\bk}_{L^{\infty}(\Omega)}
			\left( 		\sum_{T\in\calT_h} \sum_{e\in\calF_T} \frac{h_T}{\gamma}  C_{tr} h_T^{-1}
			\N{\nabla v_{{h}_{|_T}}\cdot \bn_T}_{L^2(T)}^2
			\right)^{\frac{1}{2}} \abs{v_h}_J.
		\end{align*}
		Recalling that $N_{\partial}$ is the maximum number of facets composing the boundary of a mesh element, using the ellipticity condition \eqref{ellipticity} and then the Young's inequality, we obtain
		\begin{align*}
			\abs{\sum_{e\in\calE_h^I\cup \calE_h^D} \int_{e}  \mvl{\bm{K}   \nabla v_h} \cdot \jmp{v_h}} &\leq
			\N{\bk}_{L^{\infty}(\Omega)} 
			\left( 	 \frac{C_{tr}N_{\partial} }{\gamma}	\sum_{T\in\calT_h} 
			\N{\nabla v_{h_{|_T}}}_{L^2( T)}^2
			\right)^{\frac{1}{2}} \abs{v_h}_J\\&
			\leq
			\N{\bk}_{L^{\infty}(\Omega)}
			\left( \frac{N_{\partial}C_{tr}}{\gamma k_{min}}\right)^{\frac{1}{2}}
			\left( 		\sum_{T\in\calT_h} 
			\N{(\bk^{\frac{1}{2}}\nabla v_h)_{|_T}}_{L^2( T)}^2
			\right)^{\frac{1}{2}} \abs{v_h}_J\\
			&\leq	\N{\bk}_{L^{\infty}(\Omega)}  
			\left( \frac{N_{\partial}C_{tr}}{\gamma k_{min}}\right)^{\frac{1}{2}}
			\left(   \frac{1}{2}	
			\sum_{T\in\calT_h} 
			\N{(\bk^{\frac{1}{2}}\nabla v_h)_{|_T}}_{L^2( T)}^2
			+  \frac{1}{2}	 \abs{v_h}_J^2\right).
		\end{align*}
		Using this bound we achieve:
		\begin{align*}
			a_h^{d}(v_h,v_h)&\ge \vertiii{v_h}_d^2+\frac{1}{2}(\epsilon-1)  \N{\bk}_{L^{\infty}(\Omega)}
			\left( \frac{N_{\partial}C_{tr}}{\gamma k_{min}}\right)^{\frac{1}{2}}
			\left(	\sum_{T\in\calT_h} 
			\N{(\bk^{\frac{1}{2}}\nabla v_h)_{|_T}}_{L^2( T)}^2
			+ \abs{v_h}_J^2	\right)\\
			& = \left(1+\frac{1}{2}(\epsilon-1)\N{\bk}_{L^{\infty}(\Omega)} 	 \left( \frac{N_{\partial}C_{tr}}{\gamma k_{min}}\right)^{\frac{1}{2}}\right)
			\vertiii{v_h}_d^2.
		\end{align*}
		Choosing $\gamma$ large enough, $\gamma > \N{\bk}_{L^{\infty}(\Omega)}^2 \frac{N_{\partial}C_{tr}}{4 k_{min}}$ if $\epsilon = 0$ and $\gamma >  \N{\bk}_{L^{\infty}(\Omega)}^2 \frac{N_{\partial}C_{tr}}{k_{min}}$ if $\epsilon = -1$, we obtain the coercivity property with $\alpha_d=   1+\frac{1}{2}(\epsilon-1)\N{\bk}_{L^{\infty}(\Omega)} 	 \left( \frac{N_{\partial}C_{tr}}{\gamma k_{min}}\right)^{\frac{1}{2}}>0$.
	\end{proof}
	We notice that in this proof we could have used the bound \eqref{consistencytermbound} where $\bk$ is already inside the $L^2$-norm, instead of using \eqref{consistencyterm} and then pay $\frac{1}{k_{min}}$. However, this is done to maintain the same constant $C_{tr}$ when applying the discrete trace inequality \eqref{discretetraceinequality}, as $\bk^{\frac{1}{2}}\nabla v_h$ is not a polynomial.
	
	The next lemma establishes the coercivity property of the advection-reaction bilinear form $a_h^{ar}$ on $V_h$ with respect to the $\vertiii{\cdot}_{ar}$-norm.
	\begin{lemma}[Discrete coercivity: advection-reaction]\label{coeradvreac}
		There exists $\alpha_{ar}>0$, independent of $h$, such that
		\begin{equation*}
			a_h^{\text{ar}}(v_h,v_h)\ge \alpha_{ar}\vertiii{v_h}^2_{ar} \quad \forall v_h\in V_h,
		\end{equation*}
		where $\alpha_{ar}= \min\{1,\sigma_0\}$, with $\sigma_0$ defined in \eqref{assumptiondiv}.
	\end{lemma}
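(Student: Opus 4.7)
The plan is to evaluate $a_h^{ar}(v_h,v_h)$ directly, using integration by parts on the transport volume term together with the DG magic formula so that every remaining quantity is manifestly non-negative. Setting $w=v_h$ in the definition gives
\begin{align*}
a_h^{ar}(v_h,v_h) =\,& -\sum_{T\in\calT_h}\int_T(\bbeta v_h)\cdot\nabla v_h + \sum_{T\in\calT_h}\int_T\sigma v_h^2 \\
&+\sum_{e\in\calE_h^I}\int_e\mvl{\bbeta v_h}\cdot\jmp{v_h} + \frac{1}{2}\sum_{e\in\calE_h^I}\int_e|\bbeta\cdot\bn_e|\jmp{v_h}^2 + \sum_{e\in\calE_h^N}\int_e(\bbeta\cdot\bn)\, v_h^2.
\end{align*}
I would rewrite the volume transport term using $(\bbeta v_h)\cdot\nabla v_h = \bbeta\cdot\nabla(v_h^2/2)$ and integrate by parts elementwise, producing the reaction-compatible volume term $\sum_T\int_T\tfrac{1}{2}\text{div}(\bbeta)v_h^2$ together with a skeleton contribution $-\sum_T\int_{\partial T}\bbeta\cdot\bn_T\,\tfrac{v_h^2}{2}$.

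Next I would apply the DG magic formula \eqref{DG_magic} with $\bw=\bbeta$ and $\varphi=v_h^2/2$ to this skeleton sum. Since $\bbeta\in H(\text{div};\Omega)$ by the regularity assumption \eqref{assumptionbeta}, Lemma \ref{Hdivsalti} ensures that $\jmp{\bbeta}=0$ on each interior facet, so only the average and the $\partial\Omega$ contributions survive. By identity \eqref{fluxesid}, $\tfrac{1}{2}\mvl{\bbeta}\cdot\jmp{v_h^2} = \mvl{\bbeta v_h}\cdot\jmp{v_h}$, so the interior average exactly cancels the third term of the expansion above. What remains is the volume piece $\sum_T\int_T(\sigma+\tfrac{1}{2}\text{div}(\bbeta))v_h^2$, the interior jump penalty, and the boundary combination $-\tfrac{1}{2}\int_{\partial\Omega}\bbeta\cdot\bn\,v_h^2 + \int_{\Gamma_N}\bbeta\cdot\bn\,v_h^2$.

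The final step is the bookkeeping of boundary signs, which I expect to be the only slightly delicate point. Splitting $\partial\Omega=\Gamma_D\cup\Gamma_N$, the boundary residual reduces to $\tfrac{1}{2}\int_{\Gamma_N}\bbeta\cdot\bn\,v_h^2-\tfrac{1}{2}\int_{\Gamma_D}\bbeta\cdot\bn\,v_h^2$; the sign conventions \eqref{gammaDgammaN} ($\bbeta\cdot\bn\ge 0$ on $\Gamma_N$, $\bbeta\cdot\bn<0$ on $\Gamma_D$) let me rewrite both summands as $\tfrac{1}{2}\int_e|\bbeta\cdot\bn|\,v_h^2$, which equals $\tfrac{1}{2}\sum_{e\in\calE_h^B}\int_e|\bbeta\cdot\bn_e|\jmp{v_h}^2$ thanks to the definition of $\jmp{v_h}$ on boundary facets. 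Merging with the interior jump term yields $\tfrac{1}{2}\sum_{e\in\calE_h}\int_e|\bbeta\cdot\bn_e|\jmp{v_h}^2$, i.e.\ exactly the jump contribution of $\vertiii{v_h}_{ar}^2$. Invoking assumption \eqref{assumptiondiv} to bound the volume part below by $\sigma_0\N{v_h}^2_{L^2(\Omega)}$ and taking $\alpha_{ar}=\min\{1,\sigma_0\}$ concludes the proof. No analytic estimate is needed; the entire argument is an identity followed by a sign check.
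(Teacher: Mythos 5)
Your proposal is correct and follows essentially the same route as the paper's proof: elementwise integration by parts on the transport term, the DG magic formula with $\bw=\bbeta$ (using $\jmp{\bbeta}=0$ from Lemma \ref{Hdivsalti}), the identity \eqref{fluxesid} to cancel the interior average term, and then the sign conditions \eqref{gammaDgammaN} together with assumption \eqref{assumptiondiv} to obtain $\alpha_{ar}=\min\{1,\sigma_0\}$. The only differences are cosmetic (you feed $\varphi=v_h^2/2$ rather than $v_h^2$ into the magic formula), so nothing further is needed.
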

	\begin{proof}
		Let $v_h\in V_h$, 
		\begin{align*}
			a_h^{ar}(v_h,v_h)=
			&-\sum_{T\in\mathcal{T}_{h}}	\int_{T}  (\bbeta v_h) \cdot  \nabla  v_h + \sum_{T\in\mathcal{T}_{h}}	\int_{T}  \sigma v_h^2\\ 
			&+\sum_{e\in\mathcal{E}_{h}^{I}}	\int_{e}  \mvl{\bbeta  v_h} \cdot \jmp{v_h}+
			\frac{1}{2} \sum_{e\in\mathcal{E}_{h}^{I}}	\int_{e}  |\bm{\beta}\cdot \bn_{e}|\jmp{v_h}^2+
			\sum_{e\in\mathcal{E}_{h}^{N}}	\int_{e} ( \bm{\beta}  v_h) \cdot \mathbf{n}_{e} v_h.
		\end{align*}
		Integration by parts on the first term yields
		\begin{equation*}
			\sum_{T\in\calT_h}\int_{T}  (\bbeta v_h) \cdot  \nabla  v_h=\sum_{T\in\calT_h}\int_{T} \bbeta \cdot  \nabla  \left(\frac{v_h^2}{2}\right)
			=-\sum_{T\in\calT_h}\int_{T}  \text{div}(\bbeta) \frac{v_h^2}{2} +\sum_{T\in\calT_h}\int_{\partial T}  \bbeta \cdot \bn_T \frac{v_h^2}{2}.
		\end{equation*}
		Applying the DG magic formula \eqref{DG_magic} on the last term with $\bw=\bbeta$ and $\varphi=v_h^2$, we get
		\begin{equation*}
			\sum_{T\in\calT_h}\int_{\partial T}  \bbeta \cdot \bn_T \frac{v_h^2}{2}=
			\frac{1}{2}\sum_{e\in\calE_h^I}\int_{e}( \mvl{\bbeta} \cdot \jmp{v_h^2}+\jmp{\bbeta}\mvl{v_h^2})+\frac{1}{2}\sum_{e\in\calE_h^B}\int_{e}\bbeta \cdot\bn v_h^2.
		\end{equation*}
		We now use the formula  \eqref{fluxesid}:
		$
		\frac{1}{2}\mvl{\bbeta} \cdot \jmp{v_h^2}=\mvl{\bbeta v_h} \cdot \jmp{v_h},
		$
		which holds true on each interior facet and observe that $\jmp{\bbeta}=0$ on $e\in\calE_h^I$ for the regularity assumption \eqref{assumptionbeta}.
		Combining the previous steps,
		the bilinear form can be rewritten as follows:
		\begin{align*}
			a_h^{ar}(v_h,v_h)=&
			\sum_{T\in\mathcal{T}_{h}}	\int_{T}  \bigg(\sigma+\frac{\text{div}\bbeta}{2}\bigg) v_h^2 -\frac{1}{2}\sum_{e\in\calE_h^D}\int_{e}\bbeta \cdot\bn v_h^2+\frac{1}{2}\sum_{e\in\calE_h^N}\int_{e}\bbeta \cdot\bn v_h^2\\ 
			&+
			\frac{1}{2} \sum_{e\in\mathcal{E}_{h}^{I}}	\int_{e}  |\bm{\beta}\cdot \bn_{e}|\jmp{v_h}^2.
		\end{align*}
		Recalling that
		$\Gamma_D=\Gamma_{-}$ and $\Gamma_{N}=\Gamma_{+}$ for \eqref{gammaDgammaN} and using the assumption  \eqref{assumptiondiv}, we deduce that
		\begin{equation*}
			a_h^{\text{ar}}(v_h,v_h)\ge
			\sigma_0\N{v_h}_{L^{2}(\Omega)}^2+
			\frac{1}{2} \sum_{e\in\mathcal{E}_{h}}	\int_{e}  |\bm{\beta}\cdot \bn_{e}|\jmp{v_h}^2
			\ge \min\{1,\sigma_0\}\vertiii{v_h}_{ar}^2.
		\end{equation*}
		The coercivity constant is $\alpha_{ar}= \min\{1,\sigma_0\}$.
	\end{proof}
	
	By combining the two previous lower bounds we obtain that the diffusion-advection-reaction bilinear form  $a_h^{dar}$  is coercive on $V_h$ with respect to the $\vertiii{\cdot}_{dar}$-norm.
	
	\begin{lemma}[Discrete coercivity] \label{coercdiscreta}
		Let $\gamma_{\epsilon}$ be as in \eqref{gammaeps}.
		For all $\gamma > \gamma_{\epsilon}$, there exists $\alpha>0$, independent of $h$, such that
		\begin{equation*}
			a^{\text{dar}}(v_h,v_h)\ge \alpha\vertiii{v_h}^2_{dar} \quad \forall v_h\in V_h,
		\end{equation*}
		with $\alpha=\min\{1, \sigma_0, \alpha_d\}$, where  $\sigma_0$ is defined  in \eqref{assumptiondiv} and $\alpha_d$ in \eqref{alphad}.
	\end{lemma}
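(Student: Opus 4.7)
The plan is straightforward: this is essentially a bookkeeping lemma that combines the two previous coercivity results. First I would observe that by construction of the DG formulation in Section \ref{s:diffAdvreacvar},
\[
a_h^{dar}(v_h,v_h) = a_h^{d}(v_h,v_h) + a_h^{ar}(v_h,v_h) \qquad \forall v_h \in V_h,
\]
and likewise that the triple norm splits additively as $\vertiii{v_h}^2_{dar} = \vertiii{v_h}^2_d + \vertiii{v_h}^2_{ar}$.

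Then, assuming $\gamma > \gamma_{\epsilon}$ so that Lemma \ref{coerdiff} applies, I would lower-bound each piece using the two earlier lemmas:
\[
a_h^{d}(v_h,v_h) \ge \alpha_d \vertiii{v_h}^2_d, \qquad a_h^{ar}(v_h,v_h) \ge \alpha_{ar} \vertiii{v_h}^2_{ar},
\]
with $\alpha_d$ defined in \eqref{alphad} and $\alpha_{ar} = \min\{1,\sigma_0\}$ from Lemma \ref{coeradvreac}. Summing and pulling out the minimum of the two constants yields
\[
a_h^{dar}(v_h,v_h) \ge \min\{\alpha_d, \alpha_{ar}\} \bigl(\vertiii{v_h}^2_d + \vertiii{v_h}^2_{ar}\bigr) = \min\{\alpha_d, \alpha_{ar}\} \vertiii{v_h}^2_{dar},
\]
which gives the claim with $\alpha = \min\{1,\sigma_0,\alpha_d\}$ (the extra $1$ in the minimum is only cosmetic since $\min\{1,\sigma_0\} = \alpha_{ar}$).

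There is no real obstacle here; all of the analytical work (integration by parts for the advection term, use of the DG magic formula, discrete trace inequality, Young's inequality, the ellipticity assumption \eqref{ellipticity}, and the sign hypothesis \eqref{assumptiondiv}) was already absorbed into the proofs of Lemma \ref{coerdiff} and Lemma \ref{coeradvreac}. The only thing to double-check is that the threshold on $\gamma$ required for the combined statement is exactly $\gamma_\epsilon$ from \eqref{gammaeps}, since the advection-reaction part imposes no condition on the penalty parameter; this matches the hypothesis of the lemma.
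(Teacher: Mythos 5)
Your proposal is correct and follows essentially the same route as the paper: the paper's proof likewise invokes Lemma \ref{coerdiff} and Lemma \ref{coeradvreac}, uses the splitting $a_h^{dar}=a_h^{d}+a_h^{ar}$ together with $\vertiii{v_h}^2_{dar}=\vertiii{v_h}^2_{d}+\vertiii{v_h}^2_{ar}$, and concludes with the constant $\min\{1,\sigma_0,\alpha_d\}$. Your remark that the condition $\gamma>\gamma_\epsilon$ comes only from the diffusion part, and that the extra $1$ in the minimum is just $\alpha_{ar}=\min\{1,\sigma_0\}$, is also consistent with the paper.
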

	\begin{proof}
		Let $v_h\in V_h$, from Lemma \ref{coerdiff} we have
		\begin{equation*}
			a_h^{d}(v_h,v_h)\ge \alpha_d\vertiii{v_h}^2_{d},
		\end{equation*}
		and from Lemma \ref{coeradvreac} we get 
		\begin{equation*}
			a_h^{ar}(v_h,v_h)\ge \min\{1,\sigma_0\}\vertiii{v_h}^2_{ar}.
		\end{equation*}
		Since $\vertiii{v_h}^2_{dar}= \vertiii{v_h}^2_{d}+ \vertiii{v_h}^2_{ar}$, we infer the thesis
		\begin{equation*}
			a_h^{dar}(v_h,v_h)=	a_h^{d}(v_h,v_h)+	a_h^{ar}(v_h,v_h)\ge \min\{1, \sigma_0, \alpha_d\} \vertiii{v_h}^2_{dar}.
		\end{equation*}
	\end{proof}
	
	\section{Boundedness}\label{s:boundedness}
	We consider separately the diffusion case from the advection-reaction one. 
	Lemma \ref{bounddiff}  states the boundedness of $a_h^d$ with respect to the norms $\N{\cdot}_{d,*}$ and $\N{\cdot}_{d}$ and in Lemma \ref{boundadvreac}  we establish the boundedness of $a_h^{ar}$ using the norms $\N{\cdot}_{ar,*}$ and $\N{\cdot}_{dar}$.
	Combining these two bounds in Lemma \ref{bounded}, we infer the boundedness of the diffusion-advection-reaction bilinear form $a^{\text{dar}}_h$ with respect to the  $\N{\cdot}_{dar,*}$-norm and to the $\N{\cdot}_{dar}$-norm.
	
	The next lemma states the boundedness of the diffusion bilinear form $a_h^{d}$ on $V_{*h}\times V_h$ using the norms $\N{\cdot}_{d,*}$ and $\N{\cdot}_{d}$. 
	\begin{lemma}[Boundedness: diffusion]\label{bounddiff}
		There exists $M_d>0$, independent of $h$, such that 
		\begin{equation*}
			a_h^{d}(v,w_h)\leq M_d\vertiii{v}_{d,*} \vertiii{w_h}_{d} \quad \forall (v,w_h)\in V_{*h}\times V_h,
		\end{equation*}
		with  $M_d=2+	\N{\bk}_{L^{\infty}(\Omega)}^{\frac{1}{2}}
		\gamma^{-\frac{1}{2}}+\N{\bk}_{L^{\infty}(\Omega)} \left(\frac{N_{\partial}C_{tr}}{\gamma k_{min}}\right)^{\frac{1}{2}}$.
	\end{lemma}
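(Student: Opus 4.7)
The plan is to split $a_h^{d}(v,w_h)$ into four contributions and bound each separately using Cauchy--Schwarz, the two preliminary bounds \eqref{consistencytermbound} and \eqref{consistencyterm} already proved in Section \ref{normss}, the discrete trace inequality \eqref{discretetraceinequality}, and the ellipticity condition \eqref{ellipticity}. The four pieces are: (i) the volume term $\sum_{T} \int_T \bk\nabla v\cdot\nabla w_h$, (ii) the consistency-type term $-\sum_{e\in\calE_h^I\cup\calE_h^D}\int_e \mvl{\bk\nabla v}\cdot \jmp{w_h}$ (where Dirichlet facet contributions are grouped with interior ones since on $\calE_h^D$ the jump reduces to $w_h \bn$), (iii) the symmetrization term $\epsilon\sum_{e\in\calE_h^I\cup\calE_h^D}\int_e \jmp{v}\cdot\mvl{\bk\nabla w_h}$, and (iv) the penalty term $\sum_{e\in\calE_h^I\cup\calE_h^D}\frac{\gamma}{h_e}\int_e \jmp{v}\cdot\jmp{w_h}$.

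For the volume term I would factor $\bk=\bk^{1/2}\bk^{1/2}$ and apply Cauchy--Schwarz elementwise, obtaining the bound by $\big(\sum_T \int_T \bk\nabla v\cdot\nabla v\big)^{1/2}\big(\sum_T \int_T \bk\nabla w_h\cdot\nabla w_h\big)^{1/2}\le \vertiii{v}_{d}\vertiii{w_h}_{d}$, contributing $1$ to $M_d$. For the penalty term, Cauchy--Schwarz on each facet yields the bound $\abs{v}_J\abs{w_h}_J\le\vertiii{v}_{d}\vertiii{w_h}_{d}$, contributing another $1$, and together these produce the leading $2$ in $M_d$. For the consistency term (ii), I would directly apply Lemma \eqref{consistencytermbound}: using $h_e\le h_T$ for all $e\in\calF_T$, the right-hand side is bounded by $\N{\bk}_{L^\infty(\Omega)}^{1/2}\gamma^{-1/2}\big(\sum_T h_T\N{\bk^{1/2}\nabla v\cdot\bn_T}_{L^2(\partial T)}^2\big)^{1/2}\abs{w_h}_J$, and the first factor is controlled by $\vertiii{v}_{d,*}$ (this is precisely why the extra term $\sum_T h_T\N{\bk^{1/2}\nabla v\cdot \bn_T}^2_{L^2(\partial T)}$ was included in the stronger norm) while the second is controlled by $\vertiii{w_h}_{d}$, giving the middle contribution $\N{\bk}_{L^\infty(\Omega)}^{1/2}\gamma^{-1/2}$.

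The symmetrization term (iii) is the step that requires the discrete trace inequality and is the main obstacle, because the gradient of $v$ no longer appears but rather the gradient of $w_h$ dotted with $\jmp{v}$; the strong norm on $v$ cannot help here. The approach is to apply Lemma \eqref{consistencyterm} with the roles of $v$ and $w_h$ swapped, obtaining
\[
\abs{\epsilon\sum_{e\in\calE_h^I\cup\calE_h^D}\int_e \jmp{v}\cdot\mvl{\bk\nabla w_h}}
\le \N{\bk}_{L^\infty(\Omega)}\Bigl(\sum_{T}\sum_{e\in\calF_T}\frac{h_e}{\gamma}\N{\nabla w_h|_T\cdot\bn_e}_{L^2(e)}^2\Bigr)^{1/2}\abs{v}_J.
\]
Since $w_h\in V_h\subseteq\mathbb{P}^p_d(\calT_h)$ is piecewise polynomial, the discrete trace inequality \eqref{discretetraceinequality} gives $\N{\nabla w_h|_T\cdot\bn_e}_{L^2(e)}^2\le C_{tr}h_T^{-1}\N{\nabla w_h}_{L^2(T)}^2$; together with $h_e\le h_T$, the cardinality bound from Definition \ref{Npartial} (at most $N_\partial$ facets per element), and the ellipticity condition to pass from $\N{\nabla w_h}_{L^2(T)}^2$ to $k_{min}^{-1}\int_T\bk\nabla w_h\cdot\nabla w_h$, the bracket is controlled by $\frac{N_\partial C_{tr}}{\gamma k_{min}}\vertiii{w_h}_d^2$. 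Hence this term contributes $\N{\bk}_{L^\infty(\Omega)}(N_\partial C_{tr}/(\gamma k_{min}))^{1/2}\vertiii{v}_d\vertiii{w_h}_d\le \N{\bk}_{L^\infty(\Omega)}(N_\partial C_{tr}/(\gamma k_{min}))^{1/2}\vertiii{v}_{d,*}\vertiii{w_h}_{d}$.

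Summing the four bounds and using $\vertiii{v}_d\le\vertiii{v}_{d,*}$ for the volume and penalty contributions yields the claimed $M_d=2+\N{\bk}_{L^\infty(\Omega)}^{1/2}\gamma^{-1/2}+\N{\bk}_{L^\infty(\Omega)}(N_\partial C_{tr}/(\gamma k_{min}))^{1/2}$, which is independent of $h$ because $C_{tr}$ and $N_\partial$ are (the latter by shape-regularity and gradedness, the former by Lemma \ref{Traceinverseineq}).
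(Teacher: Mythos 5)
Your proposal is correct and follows essentially the same route as the paper: the same four-term splitting, Cauchy--Schwarz for the volume and penalty terms, the bound \eqref{consistencytermbound} (with $h_e\le h_T$) for the consistency term absorbed by $\vertiii{\cdot}_{d,*}$, and the bound \eqref{consistencyterm} combined with the discrete trace inequality \eqref{discretetraceinequality}, $N_\partial$, and ellipticity for the symmetrization term, yielding the same constant $M_d$.
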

	\begin{proof}
		Let $(v,w_h)\in V_{*h}\times V_h$, we consider
		\begin{align*}
			a_h^{d}(v,w_h)=&
			\sum_{T\in\mathcal{T}_{h}}	\int_{T} \bm{K}  \nabla v \cdot \nabla  w_h -
			\sum_{e\in\mathcal{E}_{h}^{I}\cup\mathcal{E}_{h}^{D}}	\int_{e}  \mvl{ \bm{K}   \nabla v } \cdot \jmp{w_h}  \\
			&+
			\epsilon \sum_{e\in\mathcal{E}_{h}^{I}\cup\mathcal{E}_{h}^{D}}		\int_{e}  \jmp{v}\cdot  \mvl{ \bm{K}   \nabla w_h }+ \sum_{e\in\mathcal{E}_{h}^{I}\cup\mathcal{E}_{h}^{D}}		\frac{\gamma}{h_e}\int_{e}  \jmp{v}\cdot \jmp{w_h}\\
			=:&\mathfrak{T}_1+\mathfrak{T}_2+\mathfrak{T}_3+\mathfrak{T}_4.
		\end{align*}
		Owing to the Cauchy--Schwarz inequality we obtain
		\begin{align*}
			\abs{\mathfrak{T}_1 + \mathfrak{T}_4} &\leq  	\left(\sum_{T\in\mathcal{T}_{h}}	\int_{T} \bm{K}  \nabla v \cdot \nabla  v\right)^{\frac{1}{2} }
			\left(\sum_{T\in\mathcal{T}_{h}}	\int_{T} \bm{K}  \nabla w_h \cdot \nabla  w_h\right)^{\frac{1}{2}}+	\abs{v}_{J} 	\abs{w_h}_{J}
			\\&
			\leq	2\vertiii{v}_{d} 	\vertiii{w_h}_{d}.
		\end{align*}
		Since $h_e\leq h_T$ for all $e\in\calF_T$ and for all  $T\in\calT_h$, from to the bound \eqref{consistencytermbound} we get
		\begin{align*}
			\abs{\mathfrak{T}_2}&\leq
			\N{\bk}_{L^{\infty}(\Omega)}^{\frac{1}{2}}
			\left( 		\sum_{T\in\calT_h} \sum_{e\in\calF_T} \frac{h_T}{\gamma} 
			\N{(\bk^{\frac{1}{2}}\nabla v)_{|_T}\cdot \bn_e}_{L^2(e)}^2
			\right)^{\frac{1}{2}} \abs{w_h}_J\\
			&\leq
			\N{\bk}_{L^{\infty}(\Omega)}^{\frac{1}{2}}
			\gamma^{-\frac{1}{2}}	\vertiii{v}_{d,*}	\vertiii{w_h}_{d}.
		\end{align*}
		Finally, we proceed as in the proof of Lemma \ref{coerdiff} applying the discrete trace inequality \ref{Traceinverseineq} and the ellipticity condition \eqref{ellipticity} to the bound \eqref{consistencyterm}, achieving:
		\begin{align*}
			\abs{\mathfrak{T}_3}&\leq	\N{\bk}_{L^{\infty}(\Omega)}  \left(\frac{N_{\partial}C_{tr}}{\gamma k_{min}}\right)^{\frac{1}{2}}
			\left( 		\sum_{T\in\calT_h} 
			\N{(\bk^{\frac{1}{2}}\nabla w_h)_{|_T}}_{L^2( T)}^2
			\right)^{\frac{1}{2}} \abs{v}_J\\&\leq
			\N{\bk}_{L^{\infty}(\Omega)}\left(\frac{N_{\partial}C_{tr}}{\gamma k_{min}}\right)^{\frac{1}{2}}
			\vertiii{v}_{d}	\vertiii{w_h}_{d}.
		\end{align*}
		By combining all these bounds we infer the assertion with $M_d=2+	\N{\bk}_{L^{\infty}(\Omega)}^{\frac{1}{2}}
		\gamma^{-\frac{1}{2}}+\N{\bk}_{L^{\infty}(\Omega)} \left(\frac{N_{\partial}C_{tr}}{\gamma k_{min}}\right)^{\frac{1}{2}}$.
	\end{proof}
	We now prove that the advection-reaction bilinear form $a_h^{ar}$ is bounded on $V_{*h}\times V_h$ using the norms $\N{\cdot}_{ar,*}$ and $\N{\cdot}_{dar}$. 
	\begin{lemma}[Boundedness: advection-reaction]\label{boundadvreac}
		There exists $M_{ar}>0$, independent of $h$, such that 
		\begin{equation*}
			a_h^{ar}(v,w_h)\leq M_{ar}\vertiii{v}_{ar,*} \vertiii{w_h}_{dar}\quad \forall (v,w_h)\in V_{*h}\times V_h,
		\end{equation*}
		with $M_{ar}=\N{\bbeta}_{L^{\infty}(\Omega)} k_{min}^{-\frac{1}{2}}+	\N{\sigma}_{L^{\infty}(\Omega)}+2+	\N{\bbeta}_{L^{\infty}(\Omega)}^{\frac{1}{2}}$.
	\end{lemma}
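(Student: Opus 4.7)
The plan is to decompose $a_h^{ar}(v,w_h)$ into its five constitutive terms—the volume advection $-\sum_T\int_T(\bbeta v)\cdot\nabla w_h$, the reaction $\sum_T\int_T\sigma v w_h$, the interior average term $\sum_{e\in\calE_h^I}\int_e\mvl{\bbeta v}\cdot\jmp{w_h}$, the upwind jump-stabilization $\frac12\sum_{e\in\calE_h^I}\int_e|\bbeta\cdot\bn_e|\jmp{v}\cdot\jmp{w_h}$, and the Neumann term $\sum_{e\in\calE_h^N}\int_e(\bbeta v)\cdot\bn w_h$—and to bound each by a term of the form $C\vertiii{v}_{ar,*}\vertiii{w_h}_{dar}$, summing the constants to reach $M_{ar}$.

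For the volume advection I would apply Cauchy--Schwarz in $L^2(T)$, pull $\N{\bbeta}_{L^\infty}$ out of the integral, and then use the ellipticity condition \eqref{ellipticity} to replace $\N{\nabla w_h}_{L^2(T)}$ by $k_{min}^{-1/2}\N{\bk^{1/2}\nabla w_h}_{L^2(T)}$; summing over $T$ yields the bound $\N{\bbeta}_{L^\infty}k_{min}^{-1/2}\N{v}_{L^2(\Omega)}\vertiii{w_h}_d$, contributing the first summand of $M_{ar}$. The reaction term is immediately bounded by $\N{\sigma}_{L^\infty}\N{v}_{L^2(\Omega)}\N{w_h}_{L^2(\Omega)}$ by a single Cauchy--Schwarz, giving $\N{\sigma}_{L^\infty}$. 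Both estimates can be closed by $\vertiii{v}_{ar,*}\vertiii{w_h}_{dar}$ since $\N{\cdot}_{L^2(\Omega)}\le\vertiii{\cdot}_{ar}\le\vertiii{\cdot}_{ar,*}$ and $\vertiii{\cdot}_d\le\vertiii{\cdot}_{dar}$.

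For the upwind stabilization I would apply Cauchy--Schwarz twice with the weight $|\bbeta\cdot\bn_e|$: after noting $\jmp{v}\cdot\jmp{w_h}=(v_{|T_1}-v_{|T_2})(w_{h|T_1}-w_{h|T_2})$, each resulting factor is of the form $\bigl(\sum_{e\in\calE_h^I}\int_e|\bbeta\cdot\bn_e|\jmp{\cdot}^2\bigr)^{1/2}\le\sqrt{2}\,\vertiii{\cdot}_{ar}$, which absorbs $\bbeta$ into the norm and leaves only a numerical constant. For the Neumann term, by the assumption \eqref{gammaDgammaN} we have $\bbeta\cdot\bn\ge0$ on $\Gamma_N$, so $\bbeta\cdot\bn=|\bbeta\cdot\bn_e|$ and (since $\jmp{\varphi}^2=\varphi^2$ on boundary facets) the same weighted Cauchy--Schwarz gives a bound by $2\vertiii{v}_{ar}\vertiii{w_h}_{ar}$, contributing the numerical constant $2$.

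The main obstacle is the interior average term $\sum_{e\in\calE_h^I}\int_e\mvl{\bbeta v}\cdot\jmp{w_h}$, because it couples the average of $v$ (not controlled by $\vertiii{v}_{ar}$ alone) to the jump of $w_h$. Here I would exploit the continuity of $\bbeta$ (from $\bbeta\in[W^{1,\infty}(\Omega)]^d$) to write $\mvl{\bbeta v}=\bbeta\mvl{v}$, so that the integrand becomes $(\bbeta\cdot\bn_e)(w_{h|T_1}-w_{h|T_2})\mvl{v}$. Splitting $|\bbeta\cdot\bn_e|^{1/2}$ onto each side and applying Cauchy--Schwarz in the integral and in the sum yields
\begin{equation*}
\Bigl|\sum_{e\in\calE_h^I}\int_e\mvl{\bbeta v}\cdot\jmp{w_h}\Bigr|
\le\Bigl(\sum_{e\in\calE_h^I}\int_e|\bbeta\cdot\bn_e|\mvl{v}^2\Bigr)^{1/2}\Bigl(\sum_{e\in\calE_h^I}\int_e|\bbeta\cdot\bn_e|\jmp{w_h}^2\Bigr)^{1/2}.
\end{equation*}
The second factor is $\le\sqrt{2}\,\vertiii{w_h}_{ar}$. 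For the first, I would bound $|\bbeta\cdot\bn_e|\le\N{\bbeta}_{L^\infty}$ and $\mvl{v}^2\le\tfrac12(v_{|T_1}^2+v_{|T_2}^2)$, and then re-aggregate $\sum_{e\in\calE_h^I}\int_e(v_{|T_1}^2+v_{|T_2}^2)\le\sum_{T\in\calT_h}\N{v}_{L^2(\partial T)}^2\le\vertiii{v}_{ar,*}^2$, producing the $\N{\bbeta}_{L^\infty}^{1/2}$ contribution. Collecting the four constants finally gives $M_{ar}$ as stated.
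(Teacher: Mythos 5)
Your decomposition and your treatment of each term coincide with the paper's own proof: the same Cauchy--Schwarz plus ellipticity argument for the volume advection and reaction terms, the same weighted Cauchy--Schwarz with the weight $|\bbeta\cdot\bn_e|$ for the facet terms, and the same use of $\mvl{\bbeta v}=\bbeta\mvl{v}$ together with $2\mvl{v}^2\le v_{|T_1}^2+v_{|T_2}^2$ to turn the interior-average term into the $\sum_{T\in\calT_h}\N{v}^2_{L^2(\partial T)}$ part of $\vertiii{v}_{ar,*}$, yielding the $\N{\bbeta}_{L^{\infty}(\Omega)}^{1/2}$ contribution exactly as in the paper.

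The one flaw is the bookkeeping of the numerical constant. Bounding the upwind-stabilization term and the Neumann term \emph{separately}, as you propose, gives $\tfrac12\cdot\sqrt2\cdot\sqrt2=1$ for the former and $\sqrt2\cdot\sqrt2=2$ for the latter, i.e.\ a total of $3\,\vertiii{v}_{ar}\vertiii{w_h}_{ar}$, not the $2\,\vertiii{v}_{ar}\vertiii{w_h}_{ar}$ required by the stated $M_{ar}$; your ``four constants'' silently drop the $1$ coming from the interior stabilization term. To obtain the constant $2$ one must bound the two terms \emph{jointly}, as the paper does: writing $A_I=\sum_{e\in\calE_h^I}\int_e|\bbeta\cdot\bn_e|\jmp{v}^2$, $A_N=\sum_{e\in\calE_h^N}\int_e|\bbeta\cdot\bn_e|\jmp{v}^2$ and similarly $B_I,B_N$ for $w_h$, one has
\begin{equation*}
\tfrac12\sqrt{A_IB_I}+\sqrt{A_NB_N}\le\Bigl(\bigl(\tfrac12A_I+A_N\bigr)\bigl(\tfrac12B_I+B_N\bigr)\Bigr)^{\frac12}
\le 2\Bigl(\tfrac12\sum_{e\in\calE_h}\int_e|\bbeta\cdot\bn_e|\jmp{v}^2\Bigr)^{\frac12}\Bigl(\tfrac12\sum_{e\in\calE_h}\int_e|\bbeta\cdot\bn_e|\jmp{w_h}^2\Bigr)^{\frac12}
\le 2\,\vertiii{v}_{ar}\vertiii{w_h}_{ar},
\end{equation*}
i.e.\ a single Cauchy--Schwarz over $\calE_h^I\cup\calE_h^N$ after noting $\tfrac12 a+b\le a+b$. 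With this adjustment your argument reproduces the lemma with the constant as stated; as written it proves the boundedness with the slightly larger constant $M_{ar}+1$, which is harmless for the subsequent error analysis but does not match the statement.
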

	\begin{proof}
		Let $(v,w_h)\in V_{*h}\times V_h$, we consider
		\begin{align*}
			a_h^{ar}(v,w_h)=&
			-\sum_{T\in\mathcal{T}_{h}}	\int_{T}  (\bm{\beta} v) \cdot  \nabla  w_h +\sum_{T\in\mathcal{T}_{h}}	\int_{T}  \sigma  v w_h\\
			&+\sum_{e\in\mathcal{E}_{h}^{I}}	\int_{e}  \mvl{\bm{\beta}  v} \cdot \jmp{w_h}+
			\frac{1}{2} \sum_{e\in\mathcal{E}_{h}^{I}}	\int_{e}  |\bm{\beta}\cdot \mathbf{n}_{e}|\jmp{v}\cdot\jmp{w_h}  +
			\sum_{e\in\mathcal{E}_{h}^{N}}	\int_{e} ( \bm{\beta}  v) \cdot \mathbf{n}_{e} w_h\\
			=&:\mathfrak{T}_1+\mathfrak{T}_2+\mathfrak{T}_3+\mathfrak{T}_4+\mathfrak{T}_5.
		\end{align*} 
		The Cauchy--Schwarz inequality and the ellipticity condition \eqref{ellipticity} yield
		\begin{align*}
			\abs{\mathfrak{T}_1+\mathfrak{T}_2} &\leq\N{\bbeta}_{L^{\infty}(\Omega)}\N{v}_{L^2(\Omega)}
			\left(\sum_{T\in\calT_h}
			\N{\nabla w_h}_{L^2(T)}^2\right)^{\frac{1}{2}}+
			\N{\sigma}_{L^{\infty}(\Omega)} \N{v}_{L^2(\Omega)}\N{ w_h}_{L^2(\Omega)}\\
			&	\leq \N{\bbeta}_{L^{\infty}(\Omega)} k_{min}^{-\frac{1}{2}} \N{v}_{L^2(\Omega)}
			\left(\sum_{T\in\calT_h}
			\N{\bk^{\frac{1}{2}}\nabla w_h}_{L^2(T)}^2\right)^{\frac{1}{2}}+
			\N{\sigma}_{L^{\infty}(\Omega)} \N{v}_{L^2(\Omega)}\N{ w_h}_{L^2(\Omega)}\\
			&\leq  \N{\bbeta}_{L^{\infty}(\Omega)} k_{min}^{-\frac{1}{2}}\vertiii{v}_{ar}
			\vertiii{w_h}_{d}+
			\N{\sigma}_{L^{\infty}(\Omega)} \vertiii{v}_{ar}\vertiii{ w_h}_{ar}.
		\end{align*}
		Again owing to the Cauchy--Schwarz inequality:
		\begin{align*}
			\abs{\mathfrak{T}_4+\mathfrak{T}_5} &\leq 2  \left(\frac{1}{2} \sum_{e\in\calE_h^I \cup \calE_h^N}\int_{e} \abs{\bbeta\cdot\bn_e} \jmp{v}^2\right)^{\frac{1}{2}}\  \left(\frac{1}{2} \sum_{e\in\calE_h^I \cup \calE_h^N}\int_{e} \abs{\bbeta\cdot\bn_e} \jmp{w_h}^2\right)^{\frac{1}{2}}
			\\
			&
			\leq 2  \vertiii{v}_{ar}\vertiii{ w_h}_{ar}.
		\end{align*}
		Moreover, using the continuity of $\bbeta$ \eqref{assumptionbeta} and the Cauchy--Schwarz inequality, we infer
		\begin{align*}
			\abs{\mathfrak{T}_3} =\sum_{e\in\mathcal{E}_{h}^{I}}	\int_{e} \bbeta \mvl{v}\cdot \jmp{w_h}&\leq  \left(2\sum_{e\in\calE_h^I}\int_{e} \abs{\bbeta\cdot\bn_e}\mvl{v}^2\right)^{\frac{1}{2}}\  \left(\frac{1}{2} \sum_{e\in\calE_h^I}\int_{e} \abs{\bbeta\cdot\bn_e} \jmp{w_h}^2\right)^{\frac{1}{2}}\\
			&\leq  \left(   \N{\bbeta}_{L^{\infty}(\Omega)} \sum_{T\in\calT_h}\N{v}^2_{L^2(\partial T)} \right)^{\frac{1}{2}}\  \vertiii{ w_h}_{ar}\\
			&
			\leq 	\N{\bbeta}_{L^{\infty}(\Omega)}^{\frac{1}{2}}
			\vertiii{v}_{ar,*}\vertiii{ w_h}_{ar},        
		\end{align*}
		where in the second step we use the formula
		$2\mvl{v}^2=\frac{1}{2}(v_1+v_2)^2\leq v_1^2+v_2^2$ that follows from the Young's inequality.
		Collecting all the above bounds yields the thesis with $M_{ar}=\N{\bbeta}_{L^{\infty}(\Omega)} k_{min}^{-\frac{1}{2}}+	\N{\sigma}_{L^{\infty}(\Omega)}+2+	\N{\bbeta}_{L^{\infty}(\Omega)}^{\frac{1}{2}}$.
	\end{proof}
	The following lemma establishes the boundedness of the diffusion-advection-reaction bilinear form $a^{dar}_h$ on $V_{*h}\times V_h$ using the norms $\N{\cdot}_{dar,*}$ and  $\N{\cdot}_{dar}$.
	\begin{lemma}[Boundedness]\label{bounded}
		There exists $M>0$, independent of $h$, such that 
		\begin{equation*}
			a_h^{\text{dar}}(v,w_h)\leq M\vertiii{v}_{dar,*} \vertiii{w_h}_{dar}\quad \forall (v,w_h)\in V_{*h}\times V_h,
		\end{equation*}
		where $M=M_d+M_{ar}$, with $M_d$ as in Lemma \ref{bounddiff} and $M_{ar}$ as in Lemma \ref{boundadvreac}.
	\end{lemma}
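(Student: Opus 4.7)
The plan is to derive this boundedness as a direct consequence of Lemma \ref{bounddiff} and Lemma \ref{boundadvreac}, together with the elementary comparisons between the norms $\vertiii{\cdot}_{dar}$, $\vertiii{\cdot}_{dar,*}$ and their diffusion/advection-reaction components. The key observation is that $a_h^{dar}$ splits additively as $a_h^{dar}=a_h^{d}+a_h^{ar}$, so the two previous boundedness lemmas handle each piece on its own, and only the norms on the right-hand sides need to be upgraded to $\vertiii{\cdot}_{dar,*}$ and $\vertiii{\cdot}_{dar}$.

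Given $(v,w_h)\in V_{*h}\times V_h$, I would first apply Lemma \ref{bounddiff} to obtain $a_h^{d}(v,w_h)\leq M_d \vertiii{v}_{d,*}\vertiii{w_h}_{d}$ and Lemma \ref{boundadvreac} to obtain $a_h^{ar}(v,w_h)\leq M_{ar} \vertiii{v}_{ar,*}\vertiii{w_h}_{dar}$. Then I would invoke the trivial norm comparisons that follow from the definitions
\begin{equation*}
\vertiii{v}^2_{dar,*}=\vertiii{v}^2_{d,*}+\vertiii{v}^2_{ar,*},\qquad \vertiii{w_h}^2_{dar}=\vertiii{w_h}^2_{d}+\vertiii{w_h}^2_{ar},
\end{equation*}
namely $\vertiii{v}_{d,*}\leq \vertiii{v}_{dar,*}$, $\vertiii{v}_{ar,*}\leq \vertiii{v}_{dar,*}$ and $\vertiii{w_h}_{d}\leq \vertiii{w_h}_{dar}$. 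Using the identity $a_h^{dar}=a_h^{d}+a_h^{ar}$ recorded after the definition of the bilinear form in Section \ref{s:diffAdvreacvar}, combining the two bounds and factoring the common term yields
\begin{equation*}
a_h^{dar}(v,w_h)\leq M_d\vertiii{v}_{dar,*}\vertiii{w_h}_{dar}+M_{ar}\vertiii{v}_{dar,*}\vertiii{w_h}_{dar}=(M_d+M_{ar})\vertiii{v}_{dar,*}\vertiii{w_h}_{dar},
\end{equation*}
which is the claim with $M=M_d+M_{ar}$.

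There is no real obstacle here: all the analytic content (the discrete trace inequality, Cauchy--Schwarz, Young, ellipticity, use of the auxiliary $\vertiii{\cdot}_{*}$-norms on the boundary traces) was already absorbed into Lemma \ref{bounddiff} and Lemma \ref{boundadvreac}, and the norms have been deliberately designed so that the diffusion and advection-reaction pieces sit inside $\vertiii{\cdot}_{dar}$ and $\vertiii{\cdot}_{dar,*}$ by Pythagorean-style identities. Hence the proof is essentially a one-line aggregation, and the only thing to be careful about is to pick the correct (stronger) norm on the first argument and the weaker one on the second, matching the asymmetric roles of $v\in V_{*h}$ and $w_h\in V_h$ inherited from the two lemmas.
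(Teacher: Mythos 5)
Your proposal is correct and follows essentially the same route as the paper: split $a_h^{dar}=a_h^{d}+a_h^{ar}$, apply Lemma \ref{bounddiff} and Lemma \ref{boundadvreac}, and use the definitions $\vertiii{\cdot}^2_{dar,*}=\vertiii{\cdot}^2_{d,*}+\vertiii{\cdot}^2_{ar,*}$ and $\vertiii{\cdot}^2_{dar}=\vertiii{\cdot}^2_{d}+\vertiii{\cdot}^2_{ar}$ to upgrade the norms and sum the constants. Your version simply makes explicit the norm comparisons that the paper leaves implicit in the phrase ``for the norm definitions''.
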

	\begin{proof}
		Let $(v,w_h)\in V_{*h}\times V_h$, Lemma \ref{bounddiff} states
		$$
		a_h^{d}(v,w_h)\leq M_d\vertiii{v}_{d,*} \vertiii{w_h}_{d},
		$$
		and Lemma \ref{boundadvreac} yields
		$$
		a_h^{ar}(v,w_h)\leq M_{ar}\vertiii{v}_{ar,*} \vertiii{w_h}_{dar}.
		$$
		For the norm definitions, from these two bounds we derive the assertion with 
		$M=M_d+M_{ar}$.
	\end{proof}
	
	\section{Error analysis}\label{s:erroranalysis}
	The aim of this section is to infer a convergence rate in $h$ for the Galerkin error $u-u_h$ measured in the $\N{\cdot}_{dar}$-norm.	
	Since consistency, discrete coercivity and  boundedness are satisfied, we can apply Theorem \ref{errortheorem} to the DG method \eqref{variational} for any discrete space $V_h\subseteq \mathbb{P}^{p}_d(\calT_h)$.
	\begin{theorem}[Error estimate]\label{errorestimate}
		Let $u\in V_{*h}$ solve \eqref{variationalcontinuous} and let $u_h $ solve \eqref{variational} with the penalty parameter $\gamma$ as in Lemma \ref{coerdiff}. 
		Then, 
		\begin{equation}\label{darnormconverge}
			\vertiii{u-u_h}_{dar}\leq \left(1+\frac{M}{\alpha}\right) \inf_{v_h\in V_h}\vertiii{u-v_h}_{dar,*},
		\end{equation}
		with $\alpha$ as in Lemma \ref{coercdiscreta} and $M$ as in Lemma \ref{bounded}.
	\end{theorem}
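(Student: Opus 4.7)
The plan is to derive this as a direct application of the abstract error estimate of Theorem \ref{errortheorem}, after verifying that all its hypotheses have already been established in the preceding sections for the concrete bilinear form $a_h^{dar}$, the concrete right-hand side $L_h$, the solution space $V=H^1_{\Gamma_D}(\Omega)$, and the broken spaces $V_*=H^1_{\Gamma_D}(\Omega)\cap H^2(\calT_h)$, $V_{*h}=V_*+V_h$ introduced at the beginning of Chapter \ref{Chapter3}. Since Theorem \ref{errortheorem} delivers precisely the bound $\vertiii{u-u_h}\leq\bigl(1+\frac{M}{\alpha}\bigr)\inf_{v_h\in V_h}\vertiii{u-v_h}_{*}$ whenever the three ingredients hold, the proof essentially reduces to a careful bookkeeping check.

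First I would recall that $a_h^{dar}$ is defined on $V_{*h}\times V_h$ and that the two norms used, $\vertiii{\cdot}_{dar}$ and $\vertiii{\cdot}_{dar,*}$, are well-defined on $V_{*h}$ and satisfy $\vertiii{v}_{dar}\leq\vertiii{v}_{dar,*}$ for every $v\in V_{*h}$; this last inequality follows immediately from the definitions since $\vertiii{\cdot}_{dar,*}^2=\vertiii{\cdot}_{dar}^2+\text{(nonnegative terms)}$. Then I would collect the three ingredients: consistency was obtained by construction in the consistency section of Chapter \ref{Chapter3} (indeed the bilinear form was assembled step by step so that the exact solution $u$ satisfies $a_h^{dar}(u,v_h)=L_h(v_h)$ for every $v_h\in V_h$); discrete coercivity with constant $\alpha=\min\{1,\sigma_0,\alpha_d\}$, valid whenever $\gamma>\gamma_\epsilon$, is exactly Lemma \ref{coercdiscreta}; boundedness in the pair of norms $(\vertiii{\cdot}_{dar,*},\vertiii{\cdot}_{dar})$ with constant $M=M_d+M_{ar}$ is Lemma \ref{bounded}.

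At that point the conclusion follows verbatim from the proof of Theorem \ref{errortheorem}: for any $v_h\in V_h$, by the triangle inequality
\[
\vertiii{u-u_h}_{dar}\leq \vertiii{u-v_h}_{dar}+\vertiii{v_h-u_h}_{dar}\leq \vertiii{u-v_h}_{dar,*}+\vertiii{v_h-u_h}_{dar};
\]
discrete coercivity applied to $v_h-u_h\in V_h$ gives $\alpha\vertiii{v_h-u_h}_{dar}^2\leq a_h^{dar}(v_h-u_h,v_h-u_h)$, while Galerkin orthogonality (a reformulation of consistency) replaces one factor by $a_h^{dar}(v_h-u,v_h-u_h)$, and boundedness yields $\vertiii{v_h-u_h}_{dar}\leq (M/\alpha)\vertiii{v_h-u}_{dar,*}$. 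Combining and taking the infimum over $v_h\in V_h$ produces the asserted bound.

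I do not expect any genuine obstacle: the substantive technical work has been carried out in Lemmas \ref{coercdiscreta} and \ref{bounded}, and the only subtle point to mention explicitly is the requirement $V_h\subseteq\mathbb{P}_d^p(\calT_h)$, used through the discrete trace inequality inside Lemma \ref{coerdiff} and hence needed for the coercivity constant $\alpha$ to be the one stated; this is indeed the standing assumption on $V_h$ adopted in Section \ref{normss}, so the statement is consistent with the setting.
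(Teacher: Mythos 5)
Your proposal is correct and follows exactly the paper's route: the theorem is obtained by applying the abstract estimate of Theorem \ref{errortheorem}, with consistency established by construction in Chapter \ref{Chapter3}, discrete coercivity from Lemma \ref{coercdiscreta}, and boundedness from Lemma \ref{bounded}, together with the observation that $\vertiii{\cdot}_{dar}\leq\vertiii{\cdot}_{dar,*}$ on $V_{*h}$. Your explicit remark that $V_h\subseteq\mathbb{P}^p_d(\calT_h)$ is needed for the trace inequality behind the coercivity constant matches the standing assumption of Section \ref{normss}, so no gap remains.
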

	
	In order to deduce the $h$-convergence,
	given the quasi-optimality inequality \eqref{darnormconverge},
	we need to estimate the right-hand side
	$ \inf_{v_h\in V_h}\vertiii{u-v_h}_{dar,*}$.
	The estimation of this quantity depends on the approximation properties of the discrete space $V_h$.
	
	If we choose as discrete space $V_h$  in the DG scheme \eqref{variational} the broken space of degree-$p$ polynomials $\mathbb{P}^p_d(\calT_h)$
	we obtain the standard DG method.
	The Bramble-Hilbert Lemma states the 
	approximation properties that are achieved using the full polynomial space \cite[Lemma 4.3.8]{brenner2008mathematical}.
	\begin{lemma}[Bramble-Hilbert]
		Let $\calT_{\mathcal{H}}$ be a mesh sequence with the star-shaped property \ref{starshaped}.
		Then, for all $h \in \mathcal{H}$, all $ T \in \calT_h$ and for all polynomial degrees $p$, there exist a polynomial $v \in {P}^p_d(T)$ and $C_{opt}>0$, independent of $T$ and $h$,  such that, for all $ q \in \{0,\dots,p+1\}$ and all $u \in H^q(T)$, there holds
		\begin{equation}\label{BH}
			\abs{u-v}_{H^m(T)}\leq C_{opt} h_T^{q-m}\abs{v}_{H^q(T)} \quad \forall m\in\{0,\dots,q\}.
		\end{equation}
	\end{lemma}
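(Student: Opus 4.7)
The plan is to take $v$ to be the averaged (Sobolev--Dupont--Scott) Taylor polynomial of $u$ of degree $p$, relative to the ball $B \subseteq T$ furnished by the star-shaped property. Since $T$ is star-shaped with respect to a ball $B$ of radius $\rho := r h_T$ with $r>0$ uniform in $h$ and $T$, I fix a single nonnegative mollifier $\phi \in C^\infty_c(B_1(0))$ with $\int \phi = 1$, rescale it to $\phi_\rho$ supported in $B$, and set
\begin{equation*}
v(\bx) := \int_{B} \phi_\rho(\by) \sum_{|\bi| \le p} \frac{(\bx-\by)^{\bi}}{\bi!}\, D^{\bi} u(\by) \,d\by.
\end{equation*}
A direct expansion in powers of $\bx$ shows $v \in \mathbb{P}^p_d(T)$, so I only need to control $|u-v|_{H^m(T)}$.

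The second step is to use the classical integral representation of the remainder: on the star-shaped domain $T$, for any multi-index $\alpha$ with $|\alpha| = m \le q \le p+1$, one obtains an identity of the form
\begin{equation*}
D^\alpha(u-v)(\bx) = \sum_{|\gamma| = q - m}\int_{T} k_{\alpha,\gamma}(\bx,\by)\, D^{\alpha+\gamma}u(\by)\, d\by,
\end{equation*}
where the kernels $k_{\alpha,\gamma}$ are supported in the cone $\{\bx + t(\by-\bx) : t\in[0,1], \by \in B\}$ and satisfy pointwise bounds controlled by $\|\phi_\rho\|_\infty$, $\mathrm{diam}(T)$, and inverse powers of $\rho$. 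Integrating in $\bx$ over $T$ and applying a Young-type inequality for integral operators (or a direct Cauchy--Schwarz combined with Fubini) yields
\begin{equation*}
|u-v|_{H^m(T)} \le C(d,p,\alpha)\, h_T^{q-m} \Bigl( \frac{h_T}{\rho}\Bigr)^{\!d/2+p}\, |u|_{H^q(T)}.
\end{equation*}

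The third step is to collect the geometric dependence: since $\rho = r h_T$, the ratio $h_T/\rho = 1/r$ depends only on the star-shaped constant. Hence the constant $C_{opt} := C(d,p,\alpha)\, r^{-d/2-p}$ is independent of $T$ and $h$, and the estimate reduces to
\begin{equation*}
|u-v|_{H^m(T)} \le C_{opt}\, h_T^{q-m}\, |u|_{H^q(T)},\qquad m\in\{0,\dots,q\}.
\end{equation*}
A density argument extends the bound from $C^\infty(\overline T)$ to $H^q(T)$.

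I expect the main obstacle to be the careful bookkeeping of the kernel bounds in the integral remainder formula and proving that the implied constants depend only on $r$, $p$, and $d$ (and in particular not on $h_T$, the aspect ratio of $T$, or the shape of $\partial T$). This is precisely where the uniform star-shaped property is indispensable: without a ball of radius proportional to $h_T$, the kernels $k_{\alpha,\gamma}$ could concentrate and the constant would degenerate as $h \to 0$. Once the scaling $\rho = r h_T$ is plugged in, the rest is a routine rescaling computation of Sobolev seminorms.
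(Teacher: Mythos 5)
The paper itself does not prove this lemma: it is quoted directly from \cite[Lemma 4.3.8]{brenner2008mathematical}, and your argument is essentially a reconstruction of the proof in that reference (the Dupont--Scott averaged Taylor polynomial over the ball provided by the star-shaped property, an integral representation of the remainder with kernels controlled by the mollifier, and a rescaling in which only the ratio $h_T/\rho=1/r$ enters the constant). So in spirit you are reproducing the cited proof rather than offering a different route, and the overall strategy is sound.

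One technical point needs fixing. You take $v$ to be the \emph{degree-$p$} averaged Taylor polynomial of $u$ and then claim, for every $q\le p+1$, a remainder identity whose kernels hit exactly $D^{\alpha+\gamma}u$ with $\abs{\alpha+\gamma}=q$. That identity is the Sobolev integral remainder formula only when $q=p+1$; for $q<p+1$ it does not hold verbatim for your $v$, since the higher-order averaged terms (of orders $q,\dots,p$) do not appear in it and do not vanish. The clean repair is either to choose $v=Q^{q}u$, the averaged Taylor polynomial of order $q$ (degree $q-1\le p$, hence still in $\mathbb{P}^p_d(T)$), which is exactly what \cite[Lemma 4.3.8]{brenner2008mathematical} does and for which your kernel computation is correct, or to keep the degree-$p$ polynomial but argue instead via polynomial reproduction plus $L^2$-boundedness of the averaging operator and a Deny--Lions/Bramble--Hilbert compactness-and-scaling argument. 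With that adjustment (and noting that $v$ necessarily depends on $u$, and that the right-hand side of \eqref{BH} should read $\abs{u}_{H^q(T)}$ rather than $\abs{v}_{H^q(T)}$ -- a typo in the statement), your proof goes through with constants depending only on $r$, $p$ and $d$, as you indicate.
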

	
	The next theorem establishes the convergence rate in $h$ for the error $u-u_h$ in the $\N{\cdot}_{dar}$-norm.
	From the quasi-optimality inequality \eqref{darnormconverge} we deduce the convergence rate for the DG method, using the the error bound  \eqref{BH}.
	We recall that 
	$ V_{*} :=H^1_{\Gamma_D}(\Omega) \cap H^2(\calT_h)$.
	
	\begin{theorem}[Convergence rate]\label{finalerror}
		Let $ p \in \mathbb{N} $ and let $u\in V_{*}\cap H^{p+1}(\calT_h)$ solve \eqref{eq}-\eqref{Neumann}.
		Let $u_h $ solve \eqref{variational} with discrete space $V_h=\mathbb{P}_d^p(\calT_h)$ and the penalty parameter $\gamma$ as in Lemma \ref{coerdiff}.
		Then,  the following convergence rate holds
		\begin{equation}
			\begin{aligned}\label{fullpoly}
				\vertiii{u-u_h}_{dar}\leq & \left(1+\frac{M}{\alpha}\right)C_{opt} \Biggl(
				\sum_{T\in \calT_h}	
				\left[\left(1+C_{tr}\right)\N{\bk}_{L^{\infty}(T)} 	+2 C_g \gamma  C_{tr}
				\right.	 \\&\left.
				+C_{tr} 
				\left(\N{\bbeta}_{L^{\infty}(T)} 
				+1\right)h_T
				+h_T^2	\right]  h_T^{2p}\abs{u}^2_{H^{p+1}(T)}\Biggr)^{\frac{1}{2}}.
			\end{aligned}
		\end{equation}
		with $\alpha$ as in Lemma \ref{coercdiscreta} and $M$ as in Lemma \ref{bounded}.
	\end{theorem}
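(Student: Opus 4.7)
The plan is to combine the abstract quasi\-optimality estimate \eqref{darnormconverge} with an element\-wise Bramble--Hilbert construction. Since the right\-hand side of \eqref{darnormconverge} is an infimum, it suffices to exhibit one convenient competitor $v_h\in \mathbb{P}^p_d(\calT_h)$. For every $T\in\calT_h$, by the Bramble--Hilbert Lemma, choose the polynomial $v_T\in\mathbb{P}^p_d(T)$ approximating $u_{|_T}$ and define $v_h$ by $v_{h|_T}:=v_T$; then \eqref{BH} gives, for all $m\in\{0,\dots,p+1\}$,
\begin{equation*}
\abs{u-v_h}_{H^m(T)}\leq C_{opt}\, h_T^{p+1-m}\abs{u}_{H^{p+1}(T)}.
\end{equation*}
What remains is to bound $\vertiii{u-v_h}_{dar,*}$ term by term and to check that every contribution scales at least as $h_T^{2p}\abs{u}_{H^{p+1}(T)}^2$.

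First I would split $\vertiii{u-v_h}_{dar,*}^2=\vertiii{u-v_h}_{d,*}^2+\vertiii{u-v_h}_{ar,*}^2$ according to the definitions in Section \ref{normss}. The volume contributions are handled directly: the diffusive term $\sum_T\int_T\bk\nabla(u-v_h)\cdot\nabla(u-v_h)$ is controlled by $\N{\bk}_{L^\infty(T)}\abs{u-v_h}_{H^1(T)}^2$, while $\N{u-v_h}_{L^2(\Omega)}^2$ is controlled by $\abs{u-v_h}_{L^2(T)}^2$; both are bounded via \eqref{BH} and yield the $\N{\bk}_{L^\infty(T)}$ and $h_T^2$ factors in the target estimate, respectively.

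The facet contributions are the technical core. For each $e\subseteq\partial T$ I would apply a scaled trace inequality of the form $\N{v}_{L^2(\partial T)}^2\leq C_{tr}\bigl(h_T^{-1}\N{v}_{L^2(T)}^2+h_T\abs{v}_{H^1(T)}^2\bigr)$ (same constant used in Lemma \ref{Traceinverseineq} and valid for general $v\in H^1(T)$ by the first estimate in its proof). Using the graded mesh assumption \eqref{gradedmesh} to exchange $h_e^{-1}$ for $C_g h_T^{-1}$, the jump seminorm $\abs{u-v_h}_J^2$ in $\vertiii{\cdot}_d^2$ produces the factor $2C_g\gamma C_{tr}$; the advective jump term $\tfrac{1}{2}\sum_e\int_e\abs{\bbeta\cdot\bn_e}\jmp{u-v_h}^2$ uses $\abs{\bbeta\cdot\bn_e}\leq\N{\bbeta}_{L^\infty(T)}$ and produces the factor $C_{tr}\N{\bbeta}_{L^\infty(T)}h_T$; the $\rspace$-strengthening terms $h_T\N{\bk^{1/2}\nabla(u-v_h)\cdot\bn_T}_{L^2(\partial T)}^2$ and $\N{u-v_h}_{L^2(\partial T)}^2$ give the remaining $C_{tr}\N{\bk}_{L^\infty(T)}$ and $C_{tr}h_T$ contributions (using \eqref{BH} at order $m=1$ and $m=0$, combined with $m=2$ and $m=1$ respectively to absorb the $h_T\abs{\cdot}_{H^1}^2$ parts back into $h_T^{2p}\abs{u}_{H^{p+1}(T)}^2$).

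Summing these element\-wise estimates, re\-arranging the resulting bracket in the form claimed in \eqref{fullpoly}, and finally invoking Theorem \ref{errorestimate} with the constants $\alpha$ and $M$ from Lemma \ref{coercdiscreta} and Lemma \ref{bounded} finishes the proof. The main obstacle I anticipate is purely bookkeeping: making sure that every boundary integral is estimated with a trace inequality whose $h_T$ powers, once combined with the appropriate $H^m$\-Bramble--Hilbert bound, yield precisely $h_T^{2p}$ (so that no extra negative or spurious positive powers of $h_T$ remain), and that factors of $\N{\bk}_{L^\infty(T)}$, $k_{min}^{-1}$, $\N{\bbeta}_{L^\infty(T)}$, $C_g$ and $C_{tr}$ are distributed to reproduce the coefficients stated in \eqref{fullpoly}.
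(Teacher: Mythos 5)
Your strategy is correct and is essentially the paper's proof: both arguments start from the quasi-optimality bound \eqref{darnormconverge}, convert the facet sums in $\vertiii{\cdot}_{dar,*}$ into element sums using Young's inequality and the graded-mesh assumption \eqref{gradedmesh}, estimate the resulting $L^2(\partial T)$ terms by a trace inequality, and conclude with the Bramble--Hilbert bound \eqref{BH} and the constants $\alpha$, $M$ of Lemma \ref{coercdiscreta} and Lemma \ref{bounded}. The one genuine difference is the trace inequality employed: the paper applies the one-term discrete inequality \eqref{discretetraceinequality}, $\N{w}^2_{L^2(\partial T)}\leq C_{tr}h_T^{-1}\N{w}^2_{L^2(T)}$, directly to $u-v_h$ and to $\nabla(u-v_h)$, which is precisely what makes the bracket $\left(1+C_{tr}\right)\N{\bk}_{L^{\infty}(T)}+2C_g\gamma C_{tr}+C_{tr}\left(\N{\bbeta}_{L^{\infty}(T)}+1\right)h_T+h_T^2$ come out exactly as stated; you instead use the two-term $H^1$ trace inequality $\N{w}^2_{L^2(\partial T)}\leq C_{tr}\bigl(h_T^{-1}\N{w}^2_{L^2(T)}+h_T\abs{w}^2_{H^1(T)}\bigr)$. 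Your choice is arguably the more scrupulous one, since $u-v_h$ is not a polynomial and \eqref{discretetraceinequality} is only stated for $\mathbb{P}^p_d(T)$, but the extra $h_T\abs{\cdot}^2_{H^1(T)}$ contributions (and, for the term $h_T\N{\bk^{\frac{1}{2}}\nabla(u-v_h)\cdot\bn_T}^2_{L^2(\partial T)}$, an $H^2$-seminorm term that needs $p\ge 1$ to be covered by \eqref{BH} with $m=2$) alter the constants inside the square bracket, so your route yields the same $\calO(h^{p})$ rate but reproduces \eqref{fullpoly} only up to modified constants. A minor bookkeeping remark: $k_{min}^{-1}$ does not in fact enter the final bracket, so no such factor needs to be tracked.
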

	\begin{proof}
		Since we have the quasi-optimality inequality \eqref{darnormconverge}, we now focus on estimating the quantity
		$	\inf_{v_h\in V_h}\vertiii{u-v_h}_{dar,*}$.
		We recall the definition of the $\N{\cdot}_{dar,*}$-norm, for $v\in V_{*h}$,
		\begin{align*}
			\vertiii{v}^2_{dar,*}=&\sum_{T\in \calT_h}\int_{T}\bk\nabla v\cdot \nabla v+\sum_{e\in\calE_{h}^{I}\cup \calE_h^D}\frac{\gamma}{h_e}\int_{e} \jmp{v}^2+ \frac{1}{2}\sum_{e\in\calE_{h}}\int_{e}\abs{\bbeta\cdot\bn_e}\jmp{v}^2\\&+
			\N{v}^2_{L^{2}(\Omega)}+\sum_{T\in \calT_h}h_T\N{\bk^{\frac{1}{2}}\nabla v\cdot \bn_T}^2_{L^2(\partial T)}+	\sum_{T\in \calT_h}\N{ v}^2_{L^2(\partial T)}.
		\end{align*}
		We recall the assumption that the mesh is graded \eqref{gradedmesh} and 
		observe that, from the Young's inequality, $\jmp{v}^2=(v_1-v_2)^2\leq 2(v_{1}^2+v_2^2)$ on an internal facet $e=\partial T_1\cap \partial T_2$
		and $\jmp{v}^2=v^2$ on a boundary facet $e$. Using these two facts in the second and third terms of the norm, we rearrange the sums over mesh elements as sums over elements and obtain the bound:
		\begin{align*}
			\vertiii{v}^2_{dar,*}\leq&
			\sum_{T\in \calT_h}	\biggl(\N{\bk}_{L^{\infty}(T)} \N{\nabla v}^2_{L^2(T)} +2 C_g\frac{\gamma}{h_T} \N{v}^2_{L^2(\partial T)}
			+\N{\bbeta}_{L^{\infty}(T)}\N{v}^2_{L^2(\partial T)}\\&
			+\N{v}^2_{L^2(T)}
			+\N{\bk}_{L^{\infty}(T)}h_T\N{\nabla v}^2_{L^2(\partial T)}
			+\N{v}^2_{L^2(\partial T)}\biggr).
		\end{align*}
		Owing to the discrete trace inequality \eqref{discretetraceinequality}, we get
		\begin{align*}
			\vertiii{v}^2_{dar,*}\leq&
			\sum_{T\in \calT_h}	\biggl(\N{\bk}_{L^{\infty}(T)} \N{\nabla v}^2_{L^2(T)} +
			2 C_g \frac{\gamma}{h_T} C_{tr}h_T^{-1}\N{v}^2_{L^2( T)}\\&
			+\N{\bbeta}_{L^{\infty}(T)}C_{tr}h_T^{-1}\N{v}^2_{L^2( T)}
			+\N{v}^2_{L^2(T)}\\&
			+\N{\bk}_{L^{\infty}(T)}h_T C_{tr}h_T^{-1}\N{\nabla v}^2_{L^2( T)}
			+ C_{tr}h_T^{-1}\N{v}^2_{L^2( T)}\biggr).
		\end{align*}
		Considering the quantity of our interest we have 
		\begin{align*}
			\inf_{v_h\in V_h}\vertiii{u-v_h}^2_{dar,*}\leq &
			\sum_{T\in \calT_h}		\inf_{v_h\in V_h}
			\biggl[\left(1+C_{tr}\right)\N{\bk}_{L^{\infty}(T)} 	\N{\nabla (u-v_h)}^2_{L^2(T)} \\&
			+\left(2 C_g \gamma h_T^{-2} C_{tr}
			+\N{\bbeta}_{L^{\infty}(T)} C_{tr} h_T^{-1}
			+1
			+C_{tr} h_T^{-1}\right)\N{u-v_h}^2_{L^2(T)}\biggr].
		\end{align*}
		The optimal polynomial approximation properties of the Bramble-Hilbert Lemma \eqref{BH} yield
		\begin{align*}
			\inf_{v_h\in V_h}\vertiii{u-v_h}^2_{dar,*}\leq &		\sum_{T\in \calT_h}	
			\biggl[\left(1+C_{tr}\right)\N{\bk}_{L^{\infty}(T)} 	
			C_{opt}^2 h_T^{2p}\abs{u}^2_{H^{p+1}(T)}\\&
			+
			\biggl(2 C_g \gamma h_T^{-2} C_{tr}
			+\N{\bbeta}_{L^{\infty}(T)} C_{tr} h_T^{-1}
			+1
			+C_{tr} h_T^{-1}\biggr)	\times\\&	C_{opt}^2 h_T^{2(p+1)}\abs{u}^2_{H^{p+1}(T)}\biggr].
			\\=&
			\sum_{T\in \calT_h}	
			\biggl[\left(1+C_{tr}\right)\N{\bk}_{L^{\infty}(T)} 	+2 C_g \gamma  C_{tr}
			\\&
			+C_{tr} 
			\left(\N{\bbeta}_{L^{\infty}(T)} 
			+1\right)h_T
			+h_T^2	\biggr]C_{opt}^2h_T^{2p}\abs{u}^2_{H^{p+1}(T)}.
		\end{align*}
		By combining this bound with the inequality \eqref{darnormconverge} we infer the assertion.
	\end{proof}
	
	\chapter{Quasi-Trefftz space}\label{Chapter5}
	In this chapter we define the quasi-Trefftz space, prove its high-order approximation properties and describe an algorithm for the construction of the basis functions.
	First, in Section \ref{s:DefNot} we fix some notation and give the definitions. In Section \ref{s:ApproxT} we prove approximation bounds for the local quasi-Trefftz space and then in Section \ref{s:ADR-QT}
	we prove the $h$-convergence of the quasi-Trefftz DG method for the diffusion-advection-reaction problem.
	In Section \ref{s:BasisT} we define a family of basis functions and  we explicitly construct them in Section \ref{s:Algorithm}.
	
	\section{Definitions and notation}\label{s:DefNot}
	We call multi-indices the vectors $\mi=(i_{1},\ldots,i_{d})
	\in \IN^{d}$.
	We define their length $|\mi|:=i_{1}+\cdots+i_{d}$ and we establish a partial order
	$\mi\leq \mj$ if $i_k\leq j_k$ for all $k\in\{1,\dots,d\}$.
	The factorial and the binomial coefficient are defined, respectively, as
	$$\mi!:=i_{1}!\cdots i_{d}!	\quad\text{and}\quad
	\binom\mi\mj:=\frac{\mi!}{\mj!(\mi-\mj)!}
	=\binom{i_{1}}{j_{1}}\cdots
	\binom{i_{d}}{j_{d}}.$$
	We use standard multi-index notation for partial derivatives and monomials:
	$$D^\mi f := 
	\partial_{x_1}^{i_{1}}\cdots\partial_{x_d}^{i_{d}}f, \quad \text{and} \quad
	\bx^\mi
	=x_1^{i_{1}}\cdots x_d^{i_{d}}.$$
	We recall the following formula
	\begin{equation}\label{formulaMoiola}
		\sum_{\bi\in\mathbb{N}^d, \abs{\bi}=k} \frac{1}{\bi!}=\frac{d^k}{k!} \quad\forall  k\in\mathbb{N},
	\end{equation}
	obtained from the multinomial theorem
	$$
	(x_1+\dots+x_d)^k=\sum_{\bi\in\mathbb{N}^d, \abs{\bi}=k} \frac{k!}{\bi !}\bx^{\bi } \quad \forall \bx\in\mathbb{R}^d, k\in\mathbb{N},$$
	by choosing
	$\bx=(1,\dots,1)$ \cite[p.~198]{moiola2011trefftz}.
	We use the canonical basis of $\IR^d$, namely $\{\be_k\in\IR^d,1\leq k\leq  d, (\be_k)_l=\delta_{kl}\}$.
	We recall the Leibniz product rule for multi-indices:
	\begin{equation}\label{def:Leibniz}
		D^\mi(f g)
		=\sum_{\substack{\mj\in\IN^{d},\;\mj\le\mi}}
		\binom\mi\mj D^\mj fD^{\mi-\mj} g.
	\end{equation}
	Let $p\in\mathbb{N}$ , let $\Upsilon\subset \mathbb{R}^d$ be an open, bounded, Lipschitz set and let $\bz\in\Upsilon$.
	\begin{Def}[Taylor polynomial]
		Given $v\in C^{p}(\Upsilon)$, the \textit{Taylor polynomial} of order $p+1$ of $v$, centred at $\bz\in \Upsilon$, is defined as
		$$	\tayt^{p+1}_{\bz}[v](\bx):=
		\sum_{|\mj|\le p}\frac1{\mj!}	D^\mj v(\bz)
		(\bx-\bz)^{\mj}
		.$$
	\end{Def}
	Notice that $\tayt^{p+1}_{\bz}[v]$ is a polynomial of degree at most $p$.
	We highlight the following property: for every multi-index  $\mi$
	\begin{equation}\label{propTay}
		\begin{split}
			D^\mi \tayt^{p+1}_{\bz}[v](\bx)&=
			\sum_{\substack{\abs{\bj}\leq p\\ \bj\ge \bi}}\frac{1}{\mj!}D^{\mj } v(\bz)\frac{\mj!}{(\mj-\mi)!}
			(\bx-\bz)^{\mj-\mi}\\
			&=\sum_{\abs{\bm{\gamma}}\leq p-\abs{\bi}}\frac{1}{\bm{\gamma}!}D^{\bm{\gamma}+\bi } v(\bz)
			(\bx-\bz)^{\bm{\gamma}}= \tayt^{p+1-\abs{\bi}}_{\bz}[D^{\bi}v](\bx).
		\end{split}
	\end{equation}
	From the first step, evaluated at $\bz$, it follows that
	\begin{equation}\label{Tayprop}
		D^\mi \tayt^{p+1}_{\bz}[v](\bz)=
		\left\{
		\begin{array}{ll}
			D^\mi v(\bz) &\text{ if }|\mi|\le p,\\
			0 &\text{ if }|\mi|>p.
		\end{array}
		\right.
	\end{equation}
	\noindent
	We also recall the Lagrange's form of the Taylor remainder \cite[Cor.~3.19]{callahan2010advanced}: if $v\in C^{p+1}(\Upsilon)$ and the segment $S$ with extremes $\bz$ and $\bx$ is  contained in $\Upsilon$, then exists $ \bx_*\in S$ such that 
	\begin{equation}\label{eq:TaylorRem}
		v(\bx)-\tayt^{p+1}_{\bz}[v](\bx)=
		\sum_{|\mj|=p+1}\frac1{\mj!}D^\mj v(\bx_*)(\bx-\bz)^{\mj}.
	\end{equation}
	If $\Upsilon$ is a mesh element $T\in \calT_h$, which is the case of our interest, we denote  the centre of the Taylor expansion as $\bx_T\in T$ and the Taylor polynomial as $\tayt^{p+1}_T[v](\bx)=\tayt^{p+1}_{\bx_T}[v](\bx)$.
	
	For each $T\in \calT_h$, for $m\in\IN$ we use the standard $C^m$ norms and seminorms denoted by
	\begin{equation*}
		\N{v}_{C^0(T)}:=\sup_{\bx\in T}|v(\bx)|,\qquad
		\abs{v}_{C^m(T)}:=\max_{|\mi|=m}\N{D^\mi v}_{C^0(T)}.
	\end{equation*}
	We define the global space
	$C^{m}(\calT_h):=\{v \in L^{2}(\Omega) \mid 
	v_{|_T} \in C^{m}(T)  \quad \forall T \in \mathcal{T}_{h} \}$
	and the seminorms
	\begin{equation*}
		\abs{v}_{C^m\Th}:=\max_{T\in\calT_h} \abs{v_{|_T}}_{C^m(T)}.
	\end{equation*}
	
	We now define the quasi-Trefftz space and prove its optimal approximation properties for a general linear PDE with varying coefficients. This is possible since no special property of the diffusion-advection-reaction equation will be used in the proof of Theorem \ref{prop:Approx}, which provides local best-approximation bounds.
	Then, we will focus on the homogeneous diffusion-advection-reaction equation of our model problem.
	
	We consider a linear partial differential operator of order $m\in\mathbb{N}$ denoted as
	\begin{equation}\label{linearop}
		\calM:=\sum_{|\bm{j}|\leq m} \alpha_{\bm{j}} D^{\bm{j}},
	\end{equation}
	with coefficients $\alpha_{\bj}:\Omega\to \mathbb{R}$ for $\abs{\bj}\leq m$.
	We introduce local Trefftz and quasi-Trefftz spaces.
	\begin{Def}[Local Trefftz space]
		Let $p\in \mathbb{N}$, let $T\in\calT_h$ and assume that, for all $\abs{\bj}\leq m$, the coefficients $\alpha_{\bj}$ of $\calM$ are constant inside the mesh element $T$.
		We define the \textit{polynomial Trefftz space} 
		for the homogeneous equation $\calM u=0$ 
		on the mesh element $T$ as
		$$
		\mathbb{T}^{p}(T):=\{v\in \mathbb{P}_d^{p}(T) \mid \calM v=0 \text{ in } T\}.
		$$
	\end{Def}
	This is the space of degree-$p$ polynomials such that are exact solutions of $\calM u=0$ in $T$.
	We observe that, by definition, $\mathbb{T}^p(T)$ is a subspace of the full polynomial space  $\mathbb{P}_d^p(T)$.
	
	\begin{rem}
		When all the terms in the differential operator $\calM$ are derivatives of the same order, such as, for example, in the Laplace's equation and in the wave equation, the polynomial Trefftz space  $\mathbb{T}^{p}(T)$ offers the same orders of $h$-convergence
		as the full polynomial space $\mathbb{P}^{p}_d(T)$ \cite[Lemma 1]{moiola2018space}.
		On the other hand, if the differential operator $\calM$ includes derivatives of different orders, in general,
		the convergence rates for the Trefftz subspace of degree-$p$ polynomials are not the same as those for full polynomial space of the same degree \cite{gomez2023polynomial}.
		For example, in \cite{gomez2023polynomial} for the linear time-dependent Schr\"{o}dinger equation  the same accuracy as for full polynomials of  degree at most $p$ is reached using Trefftz polynomials of degree $2p$.
		Moreover, when the derivatives are of different orders, the polynomial Trefftz space defined above can also be trivial, such as in the case of the Helmholtz’s equation, which does not admit polynomial solutions.
		The quasi-Trefftz space, instead, is always rich enough to give the same approximation bounds as the full polynomial space of the same degree, as we will see below in Theorem \ref{prop:Approx}.
		This suggests that also for problems with piecewise-constant coefficients  could be a good idea to use quasi-Trefftz methods.
	\end{rem}
	\begin{Def}[Local quasi-Trefftz space]
		Let $p\in\mathbb{N}$, let $T\in\calT_h$ and let $\bx_T\in T$. Assume that the coefficients $\alpha_\bj\in C^{\max\{p-m,0\}}(T)$ for all $\abs{\bj}\leq m$.
		We define the \textit{quasi-Trefftz space} for the homogeneous equation $\calM u=0$ on the mesh element $T$ as
		\begin{equation}\label{QTlocal}
			\QT^{p}(T):=\{ v\in \mathbb{P}^{p}_d(T) \mid D^{\bm{i}}\calM v (\mathbf{x}_{T})=0, \forall \bm{i}\in \mathbb{N}^{d}, |\bm{i}|\leq p-m\}.
		\end{equation}
	\end{Def}
	\noindent
	This is the space of degree-$p$ polynomials such that the Taylor polynomial of order $p-m+1$ of their image by the differential operator $\calM$ vanishes at $\bx_T$.
	Notice that, by definition, $\QT^p(T)$ is a subspace of the full polynomial space  $\mathbb{P}_d^p(T)$ and that, for $p<m$, $\QT^p(T)$ coincides with $\mathbb{P}_d^p(T)$.
	
	We notice that, in general,  $\QT^p(T)\not\subseteq\QT^{p+1}(T)$.
	In order to show this, we can consider, for example, as linear partial differential operator $\calM$ the second-order diffusion-advection-reaction operator $\calL$ with $\bk(\bx)=Id$, $\bbeta(\bx)=(1,\cdots,1)^T$ and  $\sigma(\bx)=\frac{2}{x_1^2+1}$.
	If we consider the point $\bx_T=\bm{0}$ and the function $v(\bx)=x_1^2+1\in  \mathbb{P}^2_d(T)$,  then, $\calL v(\bx)=-2+2x_1+2$, so $\calL v(\bx_T)=0$.
	Hence $v\in \QT^2(T)$ , but $\partial_{x_1}\calL v(\bx)=2$, implying that $ v\in\QT^2(T)\setminus \QT^3(T)$.
	This means that, in general, for increasing polynomial degree $p$ the quasi-Trefftz spaces are not nested.
	
For $\abs{\bi}\leq p-m$, using the definition of $\calM$ \eqref{linearop} and the Leibniz formula \eqref{def:Leibniz}, we obtain, for any  $v\in C^{\abs{\bi}+m}(T)$,
	\begin{equation}\label{derimage}
		\begin{aligned}
			D^\mi \calM v(\bx_T)&=\sum_{\abs{\bj}\leq m} D^\bi \left(\alpha_{\bj} (\bx_T)D^\bj v(\bx_T)\right)\\&
			=\sum_{\abs{\bj}\leq m}\sum_{\br\leq \bi} \binom{\bi}{\br} D^\br \alpha_{\bj} (\bx_T)D^{\bi-\br+\bj}v(\bx_T).
		\end{aligned}
	\end{equation}
	From \eqref{derimage}, the space $\QT^p(T)$ is well-defined if, for all $\abs{\bj}\leq m$, $\alpha_{\bj}\in C^{\max\{p-m,0\}}$ in a neighbourhood of $\bx_T$. 
	
	The global quasi-Trefftz space on the whole mesh is defined as follows.
	\begin{Def}[Global quasi-Trefftz space]
		For $p\in\mathbb{N}$, we define the \textit{global quasi-Trefftz space} for the homogeneous equation $\calM u=0$ as
		\begin{equation}\label{globalQT}
			\QT^{p}(\mathcal{T}_{h}):=\{v \in L^{2}(\Omega) \mid 
			v_{|_T} \in \QT^{p}(T)  \quad \forall T \in \mathcal{T}_{h} \}.
		\end{equation}
	\end{Def}
	\noindent
	We want to choose this space as discrete space $V_h$ in the DG method \eqref{variational}.
	
	\section{Approximation properties of the quasi-Trefftz space}\label{s:ApproxT}
	The following theorem states the approximation properties of the local quasi-Trefftz spaces. 
	We prove that Taylor polynomials of smooth solutions of the homogeneous linear PDE considered are quasi-Trefftz functions. This implies that we can approximate smooth solutions  in $\QT^p(T)$ with optimal convergence rates with respect to the mesh size, meaning that
	the orders of $h$-convergence for quasi-Trefftz spaces are the same as those for full polynomial spaces.
	
	\begin{theorem}\label{prop:Approx}
		Let $ p \in \mathbb{N} $ and $T\in\calT_h$. Assume that $\alpha_{\bm{j}}\in C^{\max\{p-m,0\}}(T)$ for all $\abs{\bj}\leq m$ and  that $u\in C^{p+1}(T)$ satisfies $\calM u=0$ in $T$.
		
		Then the Taylor polynomial $\mathsf{T}^{p+1}_T[u]\in \QT^p(T)$.
		
		Moreover, if $T$ is star-shaped with respect to $\mathbf{x}_T$, then, for all $q\in\mathbb N$ such that $q\le p$,
		\begin{equation}\label{eq:Approx}
			\inf_{v\in\QT^p(T)}\abs{u-v}_{C^q(T)}
			\le \frac{d^{p+1-q}}{(p+1-q)!} h_T^{p+1-q} \abs{u}_{C^{p+1}(T)}.
		\end{equation}
	\end{theorem}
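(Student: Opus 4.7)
The plan is to split the theorem into its two assertions and handle them sequentially. For the first assertion, I would fix $v:=\tayt^{p+1}_T[u]$ and, for every multi-index $\bi$ with $\abs{\bi}\le p-m$, apply formula \eqref{derimage} to write
\[
D^\bi\calM v(\bx_T)=\sum_{\abs{\bj}\le m}\sum_{\br\le\bi}\binom{\bi}{\br}D^\br\alpha_\bj(\bx_T)\,D^{\bi-\br+\bj}v(\bx_T).
\]
Since $\abs{\bi-\br+\bj}\le\abs{\bi}+m\le p$, the property \eqref{Tayprop} of Taylor polynomials gives $D^{\bi-\br+\bj}v(\bx_T)=D^{\bi-\br+\bj}u(\bx_T)$. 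Substituting back and reading \eqref{derimage} in the reverse direction with $u$ in place of $v$ yields $D^\bi\calM v(\bx_T)=D^\bi\calM u(\bx_T)=0$, where the last equality uses that $\calM u\equiv 0$ on $T$. Hence $\tayt^{p+1}_T[u]\in\QT^p(T)$.

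For the second assertion, I would use $\tayt^{p+1}_T[u]$ as the candidate in the infimum. Fix a multi-index $\bm{\alpha}$ with $\abs{\bm{\alpha}}=q\le p$. By the identity in \eqref{propTay}, $D^{\bm{\alpha}}\tayt^{p+1}_T[u]=\tayt^{p+1-q}_T[D^{\bm{\alpha}}u]$, so
\[
D^{\bm{\alpha}}\bigl(u-\tayt^{p+1}_T[u]\bigr)(\bx)=D^{\bm{\alpha}}u(\bx)-\tayt^{p+1-q}_T[D^{\bm{\alpha}}u](\bx).
\]
The star-shaped assumption guarantees that the segment from $\bx_T$ to any $\bx\in T$ is contained in $T$, which allows invoking the Lagrange form of the Taylor remainder \eqref{eq:TaylorRem} applied to $D^{\bm{\alpha}}u\in C^{p+1-q}(T)$ at order $p-q+1$.

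The final step is a purely combinatorial bound: take absolute values, use $\abs{\bx-\bx_T}\le h_T$ to bound $\abs{(\bx-\bx_T)^\mj}\le h_T^{p+1-q}$ for $\abs{\mj}=p+1-q$, estimate each derivative $D^{\mj+\bm{\alpha}}u(\bx_*)$ by $\abs{u}_{C^{p+1}(T)}$, and then collapse the sum $\sum_{\abs{\mj}=p+1-q}\frac{1}{\mj!}=\frac{d^{p+1-q}}{(p+1-q)!}$ using the multinomial identity \eqref{formulaMoiola}. Taking the supremum over $\bx\in T$ and the maximum over $\abs{\bm{\alpha}}=q$ produces the claimed bound \eqref{eq:Approx}.

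I do not anticipate a serious obstacle: part one is Leibniz plus the matching of derivatives at $\bx_T$, and part two reduces to the classical pointwise Taylor estimate with the only genuine ingredient being the star-shapedness, which is needed precisely to legitimize the Lagrange remainder along the connecting segment. The mild subtlety worth stating explicitly is that the quasi-Trefftz condition \eqref{QTlocal} is used only in the first assertion, whereas the approximation bound in the second assertion follows once one knows that $\tayt^{p+1}_T[u]$ belongs to the space.
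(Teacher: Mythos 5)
Your proposal is correct and follows essentially the same route as the paper's proof: both parts use $\tayt^{p+1}_T[u]$ as the candidate, prove membership in $\QT^p(T)$ via \eqref{derimage} together with the derivative-matching property \eqref{Tayprop}, and obtain \eqref{eq:Approx} from the commutation identity \eqref{propTay}, the Lagrange remainder \eqref{eq:TaylorRem} (justified by star-shapedness), and the multinomial identity \eqref{formulaMoiola}. No gaps to report.
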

	\begin{proof}
		First we prove that $\mathsf{T}^{p+1}_T[u]\in \QT^p(T)$.
		By definition, we already know that  $\mathsf{T}^{p+1}_T[u]\in \mathbb{P}^p_d(T)$.
		For all $|\mi|\le p-m$, we have
		\begin{align*}
			D^\mi \calM \mathsf{T}^{p+1}_T[u](\bx_T)&=\sum_{\abs{\bj}\leq m}\sum_{\br\leq \bi} \binom{\bi}{\br} D^\br \alpha_{\bj}(\bx_T) D^{\bi-\br+\bj} \mathsf{T}^{p+1}_T[u](\bx_T)\\
			&=\sum_{\abs{\bj}\leq m}\sum_{\br\leq \bi} \binom{\bi}{\br} D^\br \alpha_{\bj}(\bx_T) D^{\bi-\br+\bj} u(\bx_T)\\&=
			D^\mi\calM u (\bx_T).
		\end{align*}
		The first step is the identity \eqref{derimage}  with $v=\mathsf{T}^{p+1}_T[u]\in C^{\abs{\bi}+m}(T)$, while the second step follows from the property \eqref{Tayprop} with partial derivatives of order at most equal to $\abs{\bi}+m\leq p$, and the third one is \eqref{derimage} again  with $v=u\in C^{\abs{\bi}+m}(T)$.
		Since $u$ is solution of $\calM u=0$ in $T$, the last term $D^\mi\calM u (\bx_T)$ vanishes, implying that $D^\mi \calM \mathsf{T}^{p+1}_T[u](\mathbf{x}_{T})=0$ for all $|\mi|\le p-m$.
		This shows that the Taylor polynomial $\mathsf{T}^{p+1}_T[u]$ belongs to the quasi-Trefftz space $\QT^p(T)$ defined in \eqref{QTlocal}.
		
		In order to prove the inequality \eqref{eq:Approx}, fix $q\in\mathbb N$ such that $q\le p$.
		Using the fact that $\mathsf{T}^{p+1}_T[u]\in \QT^p(T)$, the best approximation error is estimated by the error of the Taylor polynomial
		$$\inf_{v\in\QT^p(T)}\abs{u-v}_{C^q(T)}\le\abs{u-\mathsf{T}^{p+1}_T[u]}_{C^q(T)}.$$
		Using the $\abs{ \cdot }_{C^q}$-norm definition, the linearity of the derivatives and the identity $D^\mi \tayt^{p+1}_T[u]=\tayt^{p+1-|\mi|}_T[D^\mi u]$
		for $|\mi|=q\le p$ from \eqref{propTay}, we obtain
		\begin{align*}
			\abs{u-\mathsf{T}^{p+1}_T[u]}_{C^q(T)}
			&=\max_{\mi\in\IN^{d},\; |\mi|=q}\N{D^\mi (u-\mathsf{T}^{p+1}_T[u])}_{C^0(T)}
			\\
			&=\max_{\mi\in\IN^{d},\; |\mi|=q}
			\N{D^\mi u-\mathsf{T}^{p+1-q}_T[D^\mi u]}_{C^0(T)}.
		\end{align*}
		Since $u\in C^{p+1}(T)$ and $T$ is star-shaped with respect to $\mathbf{x}_T$, we can consider the Lagrange's form of the Taylor remainder \eqref{eq:TaylorRem} and estimate the last term as follows:
		\begin{align*}
			\max_{\mi\in\IN^{d},\; |\mi|=q}\N{D^\mi u-\mathsf{T}^{p+1-q}_T[D^\mi u]}_{C^0(T)}
			&	\le \max_{\mi\in\IN^{d},\; |\mi|=q}
			\sum_{\substack{|\mj|=p+1-q}}
			\frac1{\mj!}		\sup_{\bx\in T}\abs{	D^{\mi+\mj}u(\bx_*)(\bx-\bx_T)^\mj}.
			\\&
			\le \frac{d^{p+1-q}}{(p+1-q)!} h_{T}^{p+1-q} \abs{u}_{C^{p+1}(T)},
		\end{align*}
		where we use the formula
		$\sum_{|\mj|=p+1-q}\frac1{\mj!}= \frac{d^{p+1-q}}{(p+1-q)!}$, obtained from  \eqref{formulaMoiola} with $k=p+1-q$.
		From the chain of inequalities, the thesis follows.
	\end{proof}
	
	\section{Convergence analysis of the quasi-Trefftz DG method}\label{s:ADR-QT}
	\subsection{Quasi-Trefftz space for the homogenous diffusion-advection-re\-ac\-tion equation}
	We now focus on the diffusion-advection-reaction operator $\calL$ with variable coefficients $\bk:\Omega\to \mathbb{R}^{d\times d}$, $\bbeta:\Omega\to \mathbb{R}^d$ and $\sigma:\Omega\to \mathbb{R}$:
	\begin{equation*}
		\mathcal{L}v=	\text{div}\left(-\bm{K}  \nabla v  +\bm{\beta}  v \right) +\sigma v.
	\end{equation*}
	Since $\calL$ is a linear partial differential operator of the second order, the quasi-Trefftz space \eqref{QTlocal} for the homogeneous diffusion-advection-reaction equation on a mesh element $T\in\mathcal{T}_{h}$ is
	\begin{equation}\label{localdar}
		\QT^{p}(T)=\{ v\in \mathbb{P}^{p}_d(T) \mid D^{\bm{i}} (\mathcal{L}v) (\mathbf{x}_{T})=0, \forall \bm{i}\in \mathbb{N}^{d}, |\bm{i}|\leq p-2\}, \quad  p\in \mathbb{N}.
	\end{equation}
	As seen before, this space is well-defined if the PDE coefficients are sufficiently smooth. 
	The operator $\calL$ can be written in the form \eqref{linearop} with $m=2$, for which we have shown that the coefficients $\alpha_\bj$ need to belong to $C^{\max\{p-2,0\}}(T)$. 
	In order to deduce the regularity assumptions of $\bk$, $\bbeta$ and $\sigma$, under which the quasi-Trefftz space is well-defined, we expand the operator $\calL$, using linearity and Leibniz formula \eqref{def:Leibniz}:
	\begin{align*}
		\mathcal{L}v &= \sum_{j=1}^{d} D^{\bm{e}_{j}}( -\bm{K}  \nabla v + \bm{\beta}  v )_{j}+\sigma v\\
		&=  
		\sum_{j=1}^{d} \biggl(-\sum_{m=1}^{d}  D^{\bm{e}_{j}}\big(\bm{K}_{jm}  D^{\bm{e}_{m}} v\big) +	
		D^{\bm{e}_{j}}  (\bm{\beta}_{j}  v ) \biggl)+\sigma v\\
		&=\sum_{j=1}^{d}  \sum_{\substack{\bm{\ell}\in\IN^{d},\;\bm{\ell}\le\bm{e}_{j}}}	\binom{\bm{e}_{j}}{\bm{\ell}}
		\biggl(-\sum_{m=1}^{d}
		D^{\bm{\ell}}\bm{K}_{jm}  D^{\bm{e}_{j}-\bm{\ell} +\bm{e}_{m}} v+	
		D^{\bm{\ell}}  \bm{\beta}_{j} 	 D^{\bm{e}_{j}-\bm{\ell}} v\biggl)  +\sigma v\\
		&=\sum_{j=1}^{d}  
		\biggl[-\sum_{m=1}^{d}\bigg(
		\bm{K}_{jm}  D^{\bm{e}_{j} +\bm{e}_{m}} v+D^{\bm{e}_j}	\bm{K}_{jm}  D^{\bm{e}_{m}} v\bigg)+
		\bm{\beta}_{j} 	 D^{\bm{e}_{j}} v+D^{\bm{e}_j} \bbeta_j v\biggl]  +\sigma v.
	\end{align*}
	It follows that $ D^{\bm{\ell}}\bm{K}_{jm}$, $D^{\bm{\ell}}  \bm{\beta}_{j}$ and $ \sigma $ need to belong to $C^{\max\{p-2,0\} } (T)$ for all $\bm\ell\leq\bm{e}_{j}$ and for all $ j,m=1,\dots,d$.
	Hence, the quasi-Trefftz space $\QT^p(T)$ for the diffusion-advection-reaction equation is well-defined if 
	$\bk_{jm}\in C^{\max\{p-1,1\} } (T)$, $\bbeta_j \in C^{\max\{p-1,1\}}(T)$ and $\sigma\in C^{\max\{p-2,0\}}(T)$ for all $ j,m=1,\dots,d$.
	
	Using linearity and Leibniz formula \eqref{def:Leibniz}, we also derive the following expression of the partial derivatives of $\calL v$ which will be useful for the construction of the quasi-Trefftz basis functions:
	\begin{align}\label{derop}
		D^{\bm{i}}	\mathcal{L}v =&	\nonumber	-\sum_{j=1}^{d} \sum_{m=1}^{d}  D^{\bm{i}+\bm{e}_{j}} (\bm{K}_{jm}  D^{\bm{e}_{m}} v) +	
		\sum_{j=1}^{d} D^{\bi+\bm{e}_{j}}  (\bm{\beta}_{j}  v ) +D^{\bi} \sigma v\\
		=&
		-\sum_{j=1}^{d} \sum_{m=1}^{d} \sum_{\bm{\ell}\leq \bm{i}+\bm{e}_{j}} \binom{\bm{i}+\bm{e}_{j}}{\bm{\ell}} D^{\bm{\ell}} \bm{K}_{jm}  D^{\bm{i}+\bm{e}_{j}-\bm{\ell}+\bm{e}_{m}} v\\&+
		\sum_{j=1}^{d}\sum_{\bm{\ell}\leq \bm{i}+\bm{e}_{j}} \binom{\bm{i}+\bm{e}_{j}}{\bm{\ell}}  D^{\bm{\ell}} \bm{\beta}_{j}  D^{\bm{i}+\bm{e}_{j}-\bm{\ell}} v + \sum_{\bm{\ell}\leq \bm{i}} \binom{\bm{i}}{\bm{\ell}} D^{\bm{\ell}}\sigma D^{\bm{i}-\bm{\ell}} v.\nonumber
	\end{align}
	\subsection{\textit{h}-convergence of the quasi-Trefftz DG method}
	We consider only homogenous diffusion-advection-reaction equation, i.e, with no source term $f$, since the quasi-Trefftz space defined in \eqref{localdar} is suited for this kind of problems.
	Choosing the global quasi-Trefftz space of degree $p$ defined in \eqref{globalQT} as discrete space $V_h$ in the DG scheme \eqref{variational} with $f=0$, we obtain the quasi-Trefftz DG method.
	
	The next theorem establishes the convergence rate in $h$ for the error $u-u_h$ in the $\N{\cdot}_{dar}$-norm.
	From the quasi-optimality inequality \eqref{darnormconverge} we deduce the convergence rate for the quasi-Trefftz DG method, using the the error bound  \eqref{eq:Approx}.
	We recall that 
	$ V_{*} :=H^1_{\Gamma_D}(\Omega) \cap H^2(\calT_h)$.
	\begin{theorem}[Quasi-Trefftz DG convergence rate]\label{finaleerror}
		Let $ p \in \mathbb{N} $ and let $u\in V_{*}\cap C^{p+1}(\calT_h)$ solve \eqref{eq}-\eqref{Neumann} with 
		$\bk_{jm}\in C^{\max\{p-1,1\} } (\calT_h)$, $\bbeta_j \in C^{\max\{p-1,1\}}(\calT_h)$ and $\sigma\in C^{\max\{p-2,0\}}(\calT_h)$ for all $ j,m=1,\dots,d$ and $f=0$.
		Let $u_h $ solve \eqref{variational} with discrete space $V_h=\QT^p(\calT_h)$ and the penalty parameter $\gamma$ as in Lemma \ref{coerdiff}.
		Assume that each mesh element $T$  is star-shaped with respect to $\mathbf{x}_T$.  Then,  the following convergence rate holds
		\begin{equation}
			\begin{split}
				\vertiii{u-u_h}_{dar}\leq& \left(1+\frac{M}{\alpha}\right)
				\frac{d^{p}}{p!}
				\left[	\sum_{T\in \calT_h}	\abs{T}
				\biggl(\left(1+\eta\right)\N{\bk}_{L^{\infty}(T)} 
				\right. \\&\left.
				+	 \frac{d^{2}}{(p+1)^2}	\left(2	C_g \gamma \eta 
				+\N{\bbeta}_{L^{\infty}(T)} \eta h_T
				+h_T^2
				+\eta h_T\right)\biggr)
				h_T^{2p}\abs{u}^2_{C^{p+1}(T)}	\right]^{\frac{1}{2}},
			\end{split}
		\end{equation}	
		with $\alpha$ as in Lemma \ref{coercdiscreta} and $M$ as in Lemma \ref{bounded}.
	\end{theorem}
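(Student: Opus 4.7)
The plan is to apply the abstract quasi-optimality estimate of Theorem \ref{errorestimate}, exactly as in the proof of Theorem \ref{finalerror}, replacing the Bramble--Hilbert approximation argument by Theorem \ref{prop:Approx}. Since $\QT^p(\calT_h)\subseteq\mathbb{P}_d^p(\calT_h)$, the consistency, discrete coercivity and boundedness proved in Chapter \ref{Chapter4} are still valid for this choice of $V_h$, so \eqref{darnormconverge} gives
$$\vertiii{u-u_h}_{dar}\le\left(1+\frac{M}{\alpha}\right)\inf_{v_h\in V_h}\vertiii{u-v_h}_{dar,*},$$
and the task reduces to bounding this infimum elementwise.

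For each $T\in\calT_h$ I would take the explicit candidate $v_h|_T := \tayt^{p+1}_T[u]$, whose membership in $\QT^p(T)$ is precisely the first statement of Theorem \ref{prop:Approx}. Expanding $\vertiii{\cdot}_{dar,*}^2$ as a sum of element contributions, I would bound the volume terms trivially by $\abs{T}$ times $\abs{u-v_h}_{C^0(T)}^2$ or $\abs{u-v_h}_{C^1(T)}^2$, and the boundary terms by $\abs{\partial T}$ times the same seminorms. The latter are then rewritten using the chunkiness assumption \eqref{cuboidalmesh}, which yields $\abs{\partial T}\le \eta\abs{T}/h_T$, so that all contributions become a multiple of $\abs{T}$ times a $C^m$ seminorm. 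For the jump stabilisation seminorm $\abs{\cdot}_J$ the graded-mesh bound \eqref{gradedmesh} replaces $h_e^{-1}$ by $C_g h_T^{-1}$ before \eqref{cuboidalmesh} is applied, yielding the weight $2 C_g\gamma\eta$; for the advective jump $\frac12\sum_e\int_e\abs{\bbeta\cdot\bn_e}\jmp{\cdot}^2$ the factor $\abs{\bbeta\cdot\bn_e}$ is bounded by $\N{\bbeta}_{L^{\infty}(T)}$, producing the weight $\N{\bbeta}_{L^{\infty}(T)}\eta h_T$.

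The final step is to apply the approximation bound \eqref{eq:Approx} twice: with $q=1$ to control the gradient-type contributions (interior diffusion term and boundary gradient term), giving a factor $\frac{d^p}{p!}h_T^p\abs{u}_{C^{p+1}(T)}$; and with $q=0$ for the remaining contributions ($L^2$ mass, boundary mass, jump stabilisation, advective jump), giving $\frac{d^{p+1}}{(p+1)!}h_T^{p+1}\abs{u}_{C^{p+1}(T)} = \frac{d}{p+1}\cdot\frac{d^p}{p!}h_T^{p+1}\abs{u}_{C^{p+1}(T)}$. After squaring, the common factor $\bigl(\frac{d^p}{p!}\bigr)^2 h_T^{2p}$ can be pulled out of the sum and of the square root, producing exactly the coefficient $(1+\eta)\N{\bk}_{L^{\infty}(T)}$ on the gradient block (the $1$ coming from the volume diffusion term and the $\eta$ from the boundary gradient term via \eqref{cuboidalmesh}) and the coefficient $\frac{d^2}{(p+1)^2}\bigl(2 C_g\gamma\eta+\N{\bbeta}_{L^{\infty}(T)}\eta h_T+h_T^2+\eta h_T\bigr)$ on the zeroth-order block. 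The main obstacle will be the careful bookkeeping needed to match the stated constants; note that, unlike in Theorem \ref{finalerror}, no discrete inverse trace inequality is invoked because $u-v_h$ is no longer a polynomial, and the chunkiness assumption \eqref{cuboidalmesh} together with the direct $C^m$ estimates of Theorem \ref{prop:Approx} plays that role instead.
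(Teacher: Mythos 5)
Your proposal is correct and follows essentially the same route as the paper: quasi-optimality \eqref{darnormconverge}, elementwise bounding of the $\vertiii{\cdot}_{dar,*}$-norm via $C^m$ seminorms, the graded-mesh and chunkiness assumptions \eqref{gradedmesh}, \eqref{cuboidalmesh} in place of the discrete trace inequality, and the quasi-Trefftz approximation bound \eqref{eq:Approx} with $q=1$ for the gradient block and $q=0$ for the zeroth-order block, yielding exactly the stated constants. The only cosmetic difference is that you name the Taylor polynomial $\tayt^{p+1}_T[u]$ as the explicit candidate, whereas the paper works with the elementwise infimum that this same candidate realizes.
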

	\begin{proof}     
		Since we have the quasi-optimality inequality \eqref{darnormconverge}, we now focus on estimating the quantity
		$	\inf_{v_h\in \QT^p(\calT_h)}\vertiii{u-v_h}_{dar,*}$.
		We recall, for $v\in V_{*h}=V_{*}+\QT^p(\calT_h)$, the definition of the $\N{\cdot}_{dar,*}$-norm defined in Section \ref{normss}
		\begin{align*}
			\vertiii{v}^2_{dar,*}=&\sum_{T\in \calT_h}\int_{T}\bk\nabla v\cdot \nabla v+\sum_{e\in\calE_{h}^{I}\cup \calE_h^D}\frac{\gamma}{h_e}\int_{e} \jmp{v}^2+ \frac{1}{2}\sum_{e\in\calE_{h}}\int_{e}\abs{\bbeta\cdot\bn_e}\jmp{v}^2\\&+
			\N{v}^2_{L^{2}(\Omega)}+\sum_{T\in \calT_h}h_T\N{\bk^{\frac{1}{2}}\nabla v\cdot \bn_T}^2_{L^2(\partial T)}+	\sum_{T\in \calT_h}\N{ v}^2_{L^2(\partial T)}.
		\end{align*}
		We recall the assumption \eqref{gradedmesh}: the mesh $\calT_h$ is graded, i.e., there exists a constant $C_{g}>0$   such that, for all $ T\in\calT_h $ and all $e\in\calF_T$, 
		$$
		C_{g}h_e\ge h_T.
		$$
		Observe that, from the Young's inequality, $\jmp{v}^2=(v_1-v_2)^2\leq 2(v_{1}^2+v_2^2)$ on an internal facet $e=\partial T_1\cap \partial T_2$
		and $\jmp{v}^2=v^2$ on a boundary facet $e$.
		Using these two facts in the second and third terms of the norm, we rearrange the sums over parts of the mesh skeleton as sums over elements and obtain the bound:
		\begin{align*}
			\vertiii{v}^2_{dar,*}\leq&
			\sum_{T\in \calT_h}	\biggl(\N{\bk}_{L^{\infty}(T)} \N{\nabla v}^2_{L^2(T)} +2 C_g\frac{\gamma}{h_T} \N{v}^2_{L^2(\partial T)}
			+\N{\bbeta}_{L^{\infty}(T)}\N{v}^2_{L^2(\partial T)}\\&
			+\N{v}^2_{L^2(T)}
			+\N{\bk}_{L^{\infty}(T)}h_T\N{\nabla v}^2_{L^2(\partial T)}
			+\N{v}^2_{L^2(\partial T)}\biggr).
		\end{align*}
		Next, we use the definition of the $\N{\cdot}_{C^{m}}$-norms and obtain
		\begin{align*}
			\vertiii{v}^2_{dar,*}\leq&
			\sum_{T\in \calT_h}	\biggl(\N{\bk}_{L^{\infty}(T)}\abs{T} \N{\nabla v}^2_{C^0(T)} +
			2C_g	\frac{\gamma}{h_T} \abs{\partial T}\N{v}^2_{C^0( T)}\\&
			+\N{\bbeta}_{L^{\infty}(T)} \abs{\partial T}\N{v}^2_{C^0(T)}
			+\abs{T}\N{v}^2_{C^0(T)}\\&
			+\abs{\partial T}\N{v}^2_{C^0(T)}
			+\N{\bk}_{L^{\infty}(T)}\abs{\partial T}h_T\N{\nabla v}^2_{C^0( T)}\biggr).
		\end{align*}
		Considering the quantity of our interest we have 
		\begin{align*}
			\inf_{v_h\in \QT^p(\calT_h)}\vertiii{u-v_h}^2_{dar,*}\leq&
			\sum_{T\in \calT_h}		\inf_{v_h\in \QT^p(T)}
			\biggl[\left(\abs{T}+\abs{\partial T}h_T\right)\N{\bk}_{L^{\infty}(T)} \N{\nabla (u-v_h)}^2_{C^0(T)}\\&
			+\left(2 C_g	\frac{\gamma}{h_T} \abs{\partial T}
			+\N{\bbeta}_{L^{\infty}(T)} \abs{\partial T}
			+\abs{T}
			+\abs{\partial T}\right)\N{u-v_h}^2_{C^0(T)}\biggr].
		\end{align*}
		Using the Quasi-Trefftz approximation properties \eqref{eq:Approx} we get 
		\begin{align*}
			\inf_{v_h\in \QT^p(\calT_h)}\vertiii{u-v_h}^2_{dar,*}\leq&
			\sum_{T\in \calT_h}	
			\left[\left(\abs{T}+\abs{\partial T}h_T\right)\N{\bk}_{L^{\infty}(T)} 
			\frac{d^{2p}}{(p!)^2} h_T^{2p}\abs{u}^2_{C^{p+1}(T)} \right. \\&\left.
			+\left(2 C_g	\frac{\gamma}{h_T} \abs{\partial T}
			+\N{\bbeta}_{L^{\infty}(T)} \abs{\partial T}
			+\abs{T}
			+\abs{\partial T}\right)\times \right. \\&\left.\frac{d^{2(p+1)}}{((p+1)!)^2} h_T^{2(p+1)}\abs{u}^2_{C^{p+1}(T)} \right].
		\end{align*}
		We now use the assumption \eqref{cuboidalmesh}: exists $\eta>0$ such that
		$
		h_T \abs{\partial T} \leq \eta \abs{T}$ for all  $T\in\calT_h
		$,
		and we infer that
		\begin{align*}
			\inf_{v_h\in \QT^p(\calT_h)}\vertiii{u-v_h}^2_{dar,*}\leq& 	\frac{d^{2p}}{(p!)^2}
			\sum_{T\in \calT_h}	
			\biggl[	\left(1+\eta\right)\abs{T}\N{\bk}_{L^{\infty}(T)} 
			+	 \frac{d^{2}}{(p+1)^2}\times  \\&
				\left(2	C_g \frac{\gamma}{h_T} \eta h_T^{-1}+\N{\bbeta}_{L^{\infty}(T)} \eta h_T^{-1}	+1 +\eta h_T^{-1}
		\right)\abs{T}
			h_T^{2}\biggr] h_T^{2p}\abs{u}^2_{C^{p+1}(T)}.
		\end{align*}
		Combining this bound  with the inequality \eqref{darnormconverge} yields the assertion.
	\end{proof}
	We remark that we use the $C^m$-norms in the convergence analysis of the quasi-Trefftz DG method, instead of the classical Sobolev norms $H^m$. We observe that in Theorem \ref{prop:Approx} and in Theorem \ref{finaleerror} the solution $u$ needs to belong to $C^{p+1}(\calT_h)$, which is a stronger regularity assumption than the one required in Theorem \ref{finalerror}, i.e., $u\in H^{p+1}(\calT_h)$.
	For the Trefftz space the analysis has been extended to the case of less regular solutions using the fact that “averaged Taylor polynomials” \cite[p. 421]{moiola2018space} of solutions are Trefftz functions. However, we cannot use this argument since, in general, they are not quasi-Trefftz functions and to our knowledge a study using Sobolev norms for the quasi-Trefftz space is still missing \cite[Remark 4.7]{imbert2023space}.
	Apart from this difference, we have optimal convergence rates in the $\N{\cdot}_{dar}$-norm.
	\section{Basis functions}\label{s:BasisT}
	In this section, we define, for any $p\in\mathbb{N} $ and $T\in\calT_h$, a family of bases for the quasi-Trefftz space $\QT^p(T)$ for the homogeneous diffusion-advection-reaction equation and derive an explicit formula to compute them.
	
	Since each function $v\in\QT^p(T)$ is a degree-$p$ polynomial, $v$ can be expressed as a linear combination of scaled monomials centred in $\bx_T\in T$:
	\begin{equation}\label{lincombmon}
		v(\bx)=\sum_{\mk\in\IN^{d},|\mk|\leq p} a_\mk \left(\frac{\mathbf{x}-\mathbf{x}_{T}}{h_{T}}\right)^{\mk},
	\end{equation}
	where the coefficients $\{a_\mk\}_{\mk\in\IN^{d},|\mk|\le p}$  are such that 
	$$
	a_\mk=\frac{h_T^{\abs{\bm{k}}}}{\mk!}D^\mk v(\bx_T).
	$$
	This is obtained by considering the derivatives of $v$
	$$
	D^{\bi}v(\bx)=\sum_{\substack{\mk\in\IN^d, \abs{\mk}\leq p\\ \mk\ge \bi}} a_\mk \left(\frac{\mathbf{x}-\mathbf{x}_{T}}{h_{T}}\right)^{\mk-\bi} \frac{\mk !}{(\mk-\bi)!} \frac{1}{h_T^{\abs{\bi}}},
	$$
	and evaluating them at $\bx_T$.
	The construction of $v\in\QT^p(T)$  as in \eqref{lincombmon} is equivalent to finding the set of coefficients $\{a_\mk\}_{\mk\in\IN^{d},|\mk|\le p}$ such that  $v$ satisfies $	D^{\bm{i}}	\mathcal{L}v(\bx_T)=0$ for all $\abs{\bi}\leq p-2$.
	Evaluating \eqref{derop} in $\bx_T$ and using the relation $D^{\bm{k}} v(\bx_T)=\frac{a_{\bm{k}}\bm{k}!}{h_T^{\abs{\bm{k}}}}$, we obtain:
	\begin{align*}
		D^{\bm{i}}	\mathcal{L} v(\bx_T)=&
		-\sum_{j=1}^{d} \sum_{m=1}^{d} \sum_{\bm{\ell}\leq \bm{i}+\bm{e}_{j}} \binom{\bm{i}+\bm{e}_{j}}{\bm{\ell}} D^{\bm{\ell}} \bm{K}_{jm} (\bx_T)
		\frac{a_{\bm{i}+\bm{e}_{j}-\bm{\ell}+\bm{e}_{m}}(\bm{i}+\bm{e}_{j}-\bm{\ell}+\bm{e}_{m})!}{h_T^{\abs{\bi-\bm{\ell}}+2}}\\
		&+\sum_{j=1}^{d}\sum_{\bm{\ell}\leq \bm{i}+\bm{e}_{j}} \binom{\bm{i}+\bm{e}_{j}}{\bm{\ell}}  D^{\bm{\ell}} \bm{\beta}_{j} (\bx_T)
		\frac{a_{\bm{i}+\bm{e}_{j}-\bm{\ell}}(\bm{i}+\bm{e}_{j}-\bm{\ell})!}{h_T^{\abs{\bi-\bm{\ell}}+1}} \\&
		+ \sum_{\bm{\ell}\leq \bm{i}} \binom{\bm{i}}{\bm{\ell}} D^{\bm{\ell}}\sigma (\bx_T) \frac{a_{\bm{i}-\bm{\ell}}(\bm{i}-\bm{\ell}	)!}{h_T^{\abs{\bi-\bm{\ell}}}}.
	\end{align*}
	Using the definition of the binomial coefficients, this can be rewritten as
	
	\begin{align}\label{coeff}
		D^{\bm{i}}	\mathcal{L} v (\bx_T)=&	\nonumber
		-\sum_{j=1}^{d} \sum_{m=1}^{d} \sum_{\bm{\ell}\leq \bm{i}+\bm{e}_{j}} \frac{(\bm{i}+\bm{e}_{j})!}{\bm{\ell}!} D^{\bm{\ell}} \bm{K}_{jm} (\bx_T)
		\frac{a_{\bm{i}+\bm{e}_{j}-\bm{\ell}+\bm{e}_{m}}(i_m+(\bm{e}_{j})_m-\ell_m+1)}{h_T^{\abs{\bi-\bm{\ell}}+2}}\\ 
		&+\sum_{j=1}^{d}\sum_{\bm{\ell}\leq \bm{i}+\bm{e}_{j}} \frac{(\bm{i}+\bm{e}_{j})!}{\bm{\ell}!}  D^{\bm{\ell}} \bm{\beta}_{j} (\bx_T)
		\frac{a_{\bm{i}+\bm{e}_{j}-\bm{\ell}}}{h_T^{\abs{\bi-\bm{\ell}}+1}}\\&
		+ \sum_{\bm{\ell}\leq \bm{i}} \frac{\bi !}{\bm{\ell}!} D^{\bm{\ell}}\sigma (\bx_T) \frac{a_{\bm{i}-\bm{\ell}}}{h_T^{\abs{\bi-\bm{\ell}}}}.\nonumber
	\end{align}
	We recall the assumption \eqref{assalg}:
	\begin{equation*}
		\bk_{11}(\bx)\ge k_{min}>0\quad  \text{for all} \quad \bx\in\Omega.
	\end{equation*}
	From the conditions $D^{\bm{i}}	\mathcal{L}v(\bx_T)=0$ for all $\abs{\bi}\leq p-2$, 
	we can compute the element with $\bm{\ell}=\bm{0}$, $j=1$ and $m=1$ in the first sum of \eqref{coeff}, dividing by its non-zero coefficient $\bk_{11}(\bx_T)>0$:
	\begin{align}\label{final}
		a_{\bi+2\be_1}=&\nonumber\frac{{h_T^{\abs{\bi}+2}}}{\bk_{11}(\bx_{T})(\bi+2\be_{1})!}\times\\&\nonumber
		\Biggl(
		-\sum_{\substack{j,m=1, \dots, d\\ \bm{\ell}\leq \bm{i}+\bm{e}_{j}\\(j,m,\bm{\ell})\neq (1,1,\bm{0})}}
		\frac{(\bm{i}+\bm{e}_{j})!}{\bm{\ell}!} D^{\bm{\ell}} \bm{K}_{jm} (\bx_T)
		\frac{a_{\bm{i}+\bm{e}_{j}-\bm{\ell}+\bm{e}_{m}}(i_m+(\bm{e}_{j})_m-\ell_m+1)}{h_T^{\abs{\bi-\bm{\ell}}+2}}\\
		&+\sum_{\substack{j=1, \dots, d\\ \bm{\ell}\leq \bm{i}+\bm{e}_{j}}}
		\frac{(\bm{i}+\bm{e}_{j})!}{\bm{\ell}!}  D^{\bm{\ell}} \bm{\beta}_{j} (\bx_T)
		\frac{a_{\bm{i}+\bm{e}_{j}-\bm{\ell}}}{h_T^{\abs{\bi-\bm{\ell}}+1}}\\&
		+ \sum_{\bm{\ell}\leq \bm{i}} \frac{\bi !}{\bm{\ell}!} D^{\bm{\ell}}\sigma (\bx_T) \frac{a_{\bm{i}-\bm{\ell}}}{h_T^{\abs{\bi-\bm{\ell}}}}\Biggr).\nonumber
	\end{align}
	We have found this recurrence relation \eqref{final} between the coefficients $a_{\mk}$, which will be used to compute the 
	coefficients iteratively.
	We will describe an algorithm in Section \ref{s:Algorithm}  that at every steps relies only on values already computed and is initialized by the coefficients $a_{\mk}$ with $k_1\in\{0,1\}$.
	It is clear from \eqref{final} that the recursive formula is based only on the partial derivatives at $\bx_T$ of the PDE coefficients $\bk$, $\bbeta$ and $\sigma$.
	We note that the equations \eqref{final} could also be written as a square linear system with $a_{\mk}$ with $k_1\neq 0,1$ as unknowns and $a_{\mk}$ with $k_1 \in \{0,1\}$ as right-hand side. However, we will not follow this path but a recursive implementation.
	
	We use the relation \eqref{final} to construct a basis for the local quasi-Trefftz space.
	We recall the dimension of the space of polynomials of maximum degree $p$ in $d$ variables defined in \ref{polytot}: $$S_{d,p}=\text{dim}(\mathbb{P}^p(\mathbb{R}^d))=\binom{p+d}{d}.$$
	To initialize the recursion, we choose two $2$ polynomial bases:
	\begin{equation*}
		\left\{\widehat f_s\right\}_{s=1,\dots,S_{d-1,p}}
		\text{ basis for }\mathbb P^p(\mathbb R^{d-1}) \quad \text{ and } \quad
		\left\{\widetilde f_s\right\}_{s=1,\dots,S_{d-1,p-1}}
		\text{ basis for }\mathbb P^{p-1}(\mathbb R^{d-1}).
	\end{equation*}
	Their total cardinality is 
	
	\begin{equation*}
		N_{d,p}:=S_{d-1,p}+S_{d-1,p-1}=\binom{p+d-1}{d-1}+\binom{p+d-2}{d-1}=\frac{(p+d-2)!(2p+d-1)}{(d-1)!p!}.
	\end{equation*}
	We define the following set of $N_{d,p}$ elements of $\QT^p(T)$:
	\begin{align*}
		&	\mathcal{B}_T^p:=	\left\{
		\begin{array}{ll}
			\refb\in\QT^p(T) :\\
			\refb((\bx_T)_1,\cdot)=\widehat f_J \;\tand \;\partial_{x_1} \refb((\bx_T)_1,\cdot)=0 \tfor J\leq S_{d-1,p},
			\\ 
			\refb((\bx_T)_1,\cdot)=0\; \tand \;\partial_{x_1} \refb((\bx_T)_1,\cdot)=\widetilde f_{J- S_{d-1,p}}
			\tfor J\ge S_{d-1,p}+1.
		\end{array}
		\right\}_{J=1,\dots,N_{p,d},}
	\end{align*}
	where $\refb((\bx_T)_1,\cdot)$ denotes the restriction of $\refb$ to the hyperplane $\{x_1=(\bx_T)_1\}$.
	Equivalently,
	the elements $b_J$ are degree-$p$ polynomials that satisfy the following system of equations
	\begin{align}\label{eq:systembasis}
		\begin{cases}
			D^\mi  \calL \refb(\bx_T)=0 
			&\quad  J=1,\ldots,N_{d,p},\; \mi \in\IN^{d},\; |\mi|\le p-m,\\
			\refb((\bx_T)_1,\cdot)=\widehat f_J \tand  \partial_{x_1} \refb((\bx_T)_1,\cdot)=0
			&\quad J=1,\ldots S_{d-1,p},\\ 
			\refb((\bx_T)_1,\cdot)=0 \tand  \partial_{x_1} \refb((\bx_T)_1,\cdot)=\widetilde f_{J- S_{d-1,p}}
			&\quad J= S_{d-1,p} +1,\ldots,N_{d,p}.
		\end{cases}
	\end{align}
	We  observe that the second and third set of conditions in \eqref{eq:systembasis} determine the coefficients $a_{\mk}$ with $k_{1} \in \{0,1\}$  of the monomial expansion \eqref{lincombmon} of an element $\refb$. 
	In the next proposition we show that the elements $b_J$ are well-defined and that they constitute a basis of $\QT^p(T)$.
	\begin{prop}\label{prop:Basis}
		For any $p \in \mathbb{N} $ and $T \in \calT_h $, the set $\mathcal{B}^p_T$ constitutes a basis for the space $\QT^p(T)$.
	\end{prop}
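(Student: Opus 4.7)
The plan is to prove the proposition by establishing that the linear map
$$\Phi:\QT^p(T)\longrightarrow\mathbb{P}^p_{d-1}(\mathbb{R}^{d-1})\times \mathbb{P}^{p-1}_{d-1}(\mathbb{R}^{d-1}),\qquad v\longmapsto\bigl(v((\bx_T)_1,\cdot),\;\partial_{x_1}v((\bx_T)_1,\cdot)\bigr),$$
is a linear bijection. This simultaneously pins down $\dim \QT^p(T)=N_{d,p}$ and identifies $\mathcal{B}_T^p$ as the preimage under $\Phi$ of the natural basis associated to $\{\widehat f_s\}\cup\{\widetilde f_s\}$, so that $\mathcal{B}_T^p$ is a basis of $\QT^p(T)$.

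First I would write any $v\in\mathbb{P}^p_d(T)$ in the expansion \eqref{lincombmon}. Since $\left(\frac{\bx-\bx_T}{h_T}\right)^{\mk}$ restricted to $\{x_1=(\bx_T)_1\}$ vanishes unless $k_1=0$, the coefficients $\{a_\mk\}_{k_1=0}$ are in one-to-one correspondence with the Taylor coefficients of $v((\bx_T)_1,\cdot)$ about the projection of $\bx_T$; analogously, $\{a_\mk\}_{k_1=1}$ is in bijection with the Taylor coefficients of $h_T\partial_{x_1}v((\bx_T)_1,\cdot)$. Thus $\Phi$ is precisely the map that reads off the coefficients with $k_1\in\{0,1\}$, and the dimensions of the two blocks of initial coefficients are exactly $S_{d-1,p}$ and $S_{d-1,p-1}$.

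The core of the argument is to show that the quasi-Trefftz conditions $D^\bi\calL v(\bx_T)=0$ for $|\bi|\le p-2$ determine the remaining coefficients $\{a_\mk\}_{k_1\ge 2}$ uniquely from the initial ones via the explicit recursion \eqref{final}. I would order the multi-indices $\mk$ with $k_1\ge 2$ lexicographically by $(k_1,|\mk|-k_1)$, and then verify by case analysis of the three sums in \eqref{coeff} that every coefficient $a_\mathbf{m}$ appearing on the right-hand side of the equation for $a_{\bi+2\be_1}$ satisfies either $m_1<\bi_1+2$ (smaller primary key) or $m_1=\bi_1+2$ with $|\mathbf{m}|<|\bi|+2$ (smaller secondary key). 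The advection and reaction contributions involve only indices of strictly smaller total degree, so they cause no difficulty; the diffusion contribution with $\bm{\ell}=\bm 0$ and $(j,m)\ne(1,1)$ preserves total degree but decreases $m_1$, while the diffusion contribution with $\bm{\ell}\ne\bm 0$ can keep $m_1=\bi_1+2$ only by strictly decreasing $|\mathbf{m}|$. The division by $\bk_{11}(\bx_T)$ in \eqref{final} is legitimate thanks to assumption \eqref{assalg}. This ordering bookkeeping is the main technical obstacle; once completed, injectivity of $\Phi$ is immediate (zero initial data propagates to zero everywhere), and surjectivity follows by reversing the roles: given $(\widehat f,\widetilde f)$, one reads off the initial coefficients and then \emph{uses} \eqref{final} as a definition of the remaining ones, producing a polynomial in $\mathbb{P}^p_d(T)$ that by construction satisfies all the quasi-Trefftz conditions, hence lies in $\QT^p(T)$.

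With $\Phi$ established as a linear bijection, each element of $\mathcal{B}_T^p$ is well-defined as the unique preimage under $\Phi$ of $(\widehat f_J,0)$ for $J\le S_{d-1,p}$ or of $(0,\widetilde f_{J-S_{d-1,p}})$ for $J>S_{d-1,p}$. These preimages are linearly independent since their images under $\Phi$ form a basis of the codomain, and there are $N_{d,p}=\dim\QT^p(T)$ of them, so they span $\QT^p(T)$. This completes the proof.
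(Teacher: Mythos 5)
Your proof is correct and takes essentially the same route as the paper's: both rest on the recursion \eqref{final}, showing that a quasi-Trefftz polynomial is uniquely determined by (and can be constructed from) its coefficients with $k_1\in\{0,1\}$, i.e.\ the Cauchy data $\bigl(v((\bx_T)_1,\cdot),\,\partial_{x_1}v((\bx_T)_1,\cdot)\bigr)$. The difference is only presentational: you package this as bijectivity of the map $\Phi$ and spell out the well-founded ordering of the coefficients inline, whereas the paper delegates that bookkeeping to the algorithms of Section \ref{s:Algorithm} and then verifies spanning and linear independence separately by restricting to the hyperplane $\{x_1=(\bx_T)_1\}$.
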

	\begin{proof}
		We need to verify that the elements $\refb$ are well defined and that  $\mathcal{B}_T^p$ is a set of linearly independent generators.
		The algorithms described in Section \ref{s:Algorithm} show that  the coefficients $a_{\mk}$ of an element $\refb$ are uniquely determined once the coefficients $a_{\mk}$ with $k_1\in\{0,1\}$ are assigned.
		This implies that the $N_{d,p}$ polynomials $\refb$ are well and uniquely defined, since the second and third set of conditions in \eqref{eq:systembasis} fix the coefficients $a_{0,k_2,\dots,k_d}$ and  $a_{1,k_2,\dots,k_d}$.
		We also notice that, since \eqref{final} holds for any $v\in \QT^p(T)$, we can apply the algorithms described in Section \ref{s:Algorithm} to construct $v$.
		From this we deduce that each $v\in\QT^p(T)$ is uniquely determined by the coefficients $a_{0,k_2,\dots,k_d}$ and $a_{1,k_2,\dots,k_d}$ and hence by its restriction $v((\bx_T)_1, \cdot)$ and the restriction of its derivative $\partial_{x_1}v((\bx_T)_1, \cdot)$.
		
		For any $v\in\QT^p(T)$, since its restriction $ v\left((\bx_T)_1,\cdot\right)$ is a polynomial of degree $p$ and the restriction of its derivative $ \partial_{x_1} v((\bx_T)_1,\cdot)$ is a polynomial of degree $p-1$, there exist some coefficients $\{\lambda_J\}_{J=1}^{N_{p,d}}\subseteq \mathbb{R}$ such that
		\begin{align*}
			v(\left(\bx_T)_1,\cdot\right)&=\sum_{J=1}^{S_{d-1,p}}\lambda_J \widehat f_J=\sum_{J=1}^{S_{d-1,p}}\lambda_J \refb((\bx_T)_1,\cdot),\\
			\partial_{x_1} v((\bx_T)_1,\cdot)&=\sum_{J=S_{d-1,p}+1}^{N_{d,p}}\lambda_J \widetilde f_{J- S_{d-1,p}}=\sum_{J=S_{d-1,p}+1}^{N_{d,p}}\lambda_J \partial_{x_1}\refb((\bx_T)_1,\cdot).
		\end{align*} 
		Hence, $v=\sum_{J=1}^{N_{d,p}}\lambda_J\refb$ is a linear combination of $\refb$.
		
		The fact that the  polynomials $\{\refb \}_{J=1,\dots,N_{d,p}}$ are linearly independent is due to their restrictions to $x_1=(\bx_T)_1$. Assume that $\sum_{J=1}^{N_{d,p}}c_J\refb=0$ for some coefficients $\{c_J\}_{J=1}^{N_{d,p}}\subseteq \mathbb{R}$.
		Then, restricting to $x_1=(\bx_T)_1$, we obtain
		\begin{align*}\sum_{J=1}^{S_{d-1,p}}c_J \refb((\bx_T)_1,\cdot)=&\sum_{J=1}^{S_{d-1,p}}c_J\widehat f_J=0,\\ \sum_{J=S_{d-1,p}+1}^{N_{d,p}}c_J \partial_{x_1}\refb((\bx_T)_1,\cdot)=&\sum_{J=S_{d-1,p}+1}^{N_{d,p}}c_J\widetilde  f_{J- S_{d-1,p}}=0.
		\end{align*}
		This implies that $c_J=0$ for $J=1,\dots, N_{d,p}$, since  $\left\{\widehat f_s\right\}_{s=1,\dots,S_{d-1,p}}$ and $\left\{\widetilde f_s\right\}_{s=1,\dots,S_{d-1,p-1}}$ are linearly independent.
	\end{proof}
	A consequence of Proposition \ref{prop:Basis} is that the conditions in the definition of $\QT^p(T)$ are linearly independent:
	\begin{equation*}
		\dim\big(\IP^p_d(T)\big)-\#\{\mi \in\IN^{d}\mid |\mi |\leq p-2\}=\begin{pmatrix}p+d\\d\end{pmatrix}-\begin{pmatrix}p+d-2\\ d\end{pmatrix} 
		=N_{d,p}=\dim\big(\QT^p(T)\big).
	\end{equation*}
	In particular, we have
	\begin{equation*}
		\dim\big(\QT^p(T)\big)=N_{d,p}
		=\begin{cases}
			2 & d=1\\ 
			2p+1 & d=2\\ 
			(p+1)^2 & d=3
		\end{cases}
		\quad=\calO_{p\to\infty}(p^{d-1}).
	\end{equation*}
	Notice that, in the one dimensional case, if we vary the polynomial degree $p$, then the dimension of the quasi-Trefftz space remains the same but the space changes.
	
	By comparing the dimension of $\QT^p(T)$ with the dimension of $\mathbb{P}^p_d(T)$, we obtain 
	\begin{align*}
		\dim\big(\QT^p(T)\big)=&\frac{(p+d-2)!(2p+d-1)}{(d-1)!p!}\\
		=&\calO_{p\to\infty}(p^{d-1})\ll \dim(\IP^p(T))=\binom{p+d}{d}=\calO_{p\to\infty}(p^{d}).
	\end{align*}
	We highlight that for large polynomial degrees $p$, the dimension of the quasi-Trefftz space is much smaller than the dimension of the full polynomial space of the same degree.
	
	We have shown in Theorem \ref{prop:Approx} that smooth solutions of the diffusion-advection-reaction equation with variable coefficients are approximated in $\QT^p(T)$ and in $\IP_d^p(T)$ with the same convergence rates with respect to the meshsize $h$.
	This is the main advantage of Trefftz and quasi-Trefftz schemes: offering the same level of accuracy for much fewer degrees of freedom.
	
	\section{The construction of the basis functions}\label{s:Algorithm}
	The aim of this section is the construction of  the basis functions of $\QT^p(T)$. 
	We describe in detail the iterative algorithm for the case $ d=1$, $d=2$ and for the general $d$-dimensional case.
	Every step of the iterations is based on the recursive formula \eqref{final}, hence depends on the Taylor coefficients at $\bx_T$ of the PDE coefficients $\bk$, $\bbeta$ and $\sigma$.
	We also stress that each step uses values already computed.
	
	\subsection{Algorithm for the case  \texorpdfstring{$d=1$}{d=1}}\label{sub1}
	Consider the case $d=1$.
	As noticed before, in this particular case the dimension of the quasi-Trefftz space $ \QT^p(T)$ is equal to $N_{1,p}=2$, which does not depend on $p$, even though the space does.
	We recall that, in the one dimensional case, the space of the analytical solutions of a homogeneous linear differential equation of the second-order with continuous coefficients is a two-dimensional space; this is reflected by the quasi-Trefftz space.
	Since we are in the scalar case, we denote
	by $\text{x}$ the only variable, by $\text{x}_T$ the point $\bx_T\in T$ and by $i$ the multi-indices $\bi$.
	The two quasi-Trefftz basis functions $b_1$ and $b_2$ are degree-$p$ polynomials in one variable such that satisfy 
	\begin{align*}
		b_1(\text{x}_T)=&\widehat f \tand  \partial_{x} b_1(\text{x}_T)=0,\\
		b_2(\text{x}_T)=&0 \tand  \partial_{x} b_2(\text{x}_T)=\widetilde f,
	\end{align*}
	where $
	\widehat f$ and $ \widetilde f$ are  two non zero constants that have been chosen.
	For $J=1,2$ we have
	$$
	b_J(\text{x})=\sum_{k\in\IN,k\leq p} a_k \left(\frac{\text{x}-\text{x}_{T}}{h_{T}}\right)^{k},
	$$
	and we want to compute the coefficients $a_k$.
	The coefficients $a_{0}$ and $a_{1}$ are determined by the choice of $\widehat f$ and $\widetilde f$.
	Formula \eqref{final} allows to compute $a_{i+2}$ from the coefficients $a_{j}$ with $j\leq i +1$.
	We can perform a single loop to compute  $a_{i+2}$ for increasing values of $i$.
	This procedure is described in Algorithm \ref{algo:Basis1D}.
	
	\vspace{0.3cm}
	\begin{algorithm}[H]
		{\sc Algorithm}\\
		\SetAlgoLined
		Data: $p$, $\text{x}_T$, $h_T$, $D^{\ell}\bk(\text{x}_T)$, $D^{\ell}\bbeta(\text{x}_T)$ for $\ell \leq p-1$, $D^{\ell}\sigma(\text{x}_T)$ for $\ell\leq p-2$.\\
		Fix coefficients $a_{0}$, $a_{1}$, choosing polynomial bases $\widehat f$, $\widetilde f$.\\
		For each $J=1,2$, we construct $\refb$ as follows:\\
		\For{$ i =0$ to $p-2$}{ 
			\begin{equation*}
				\begin{split}
					a_{i+2}=&\frac{{h_T^{i+2}}}{\bk(\text{x}_{T})(i+2)!}
					\left(
					-\sum_{ 0<\ell \leq i+1}
					\frac{(i+1)!}{\ell !} D^{\ell} \bm{K}(\text{x}_T)
					\frac{a_{i+2-\ell}(i+2-\ell)}{h_T^{i+2-\ell}}\right.\\
					&\left.+\sum_{0\leq\ell \leq i+1}
					\frac{(i+1)!}{\ell !}  D^{\ell} \bm{\beta} (\text{x}_T)
					\frac{a_{i+1-\ell}}{h_T^{i+1-\ell}}\right.\\&\left.
					+ \sum_{0\leq\ell\leq i} \frac{i !}{\ell !} D^{\ell}\sigma (\text{x}_T) \frac{a_{i-\ell}}{h_T^{i-\ell}}\right),
				\end{split}
			\end{equation*}
		}
		$\displaystyle\refb(x)
		=\sum_{k\in\IN, k\leq p} a_{ k} \bigg(\frac{x-\text{x}_T}{h_T}\bigg)^{ k}.$
		\vspace{2mm}
		\caption{The algorithm for the construction of $\refb$ in the case $d=1$.}
		\label{algo:Basis1D}
	\end{algorithm}
	\vspace{0.3cm}
	
	In order to show that if we vary the polynomial degree $p$ the quasi-Trefftz space changes, even though the dimension remains the same, we consider the following example: a homogeneous diffusion-advection-reaction equation 
	in $T=(0,2)$ with coefficients
	\begin{equation}\label{example1Dbasis}
		\bk(x)=x^4+1, \qquad \bbeta(x)= \text{sin}(x)+2, \qquad \sigma(x)=1.
	\end{equation}
	In Figure \ref{basisfun1D} we plot the two quasi-Trefftz basis functions $b_1$ and $b_2$ for polynomial degree $p=1,2,3,4$ using Algorithm \ref{algo:Basis1D}.
	\begin{figure}[h!]
		\centering
		\begin{subfigure}[b]{0.45\textwidth}
			\centering
			\includegraphics[width=\textwidth]{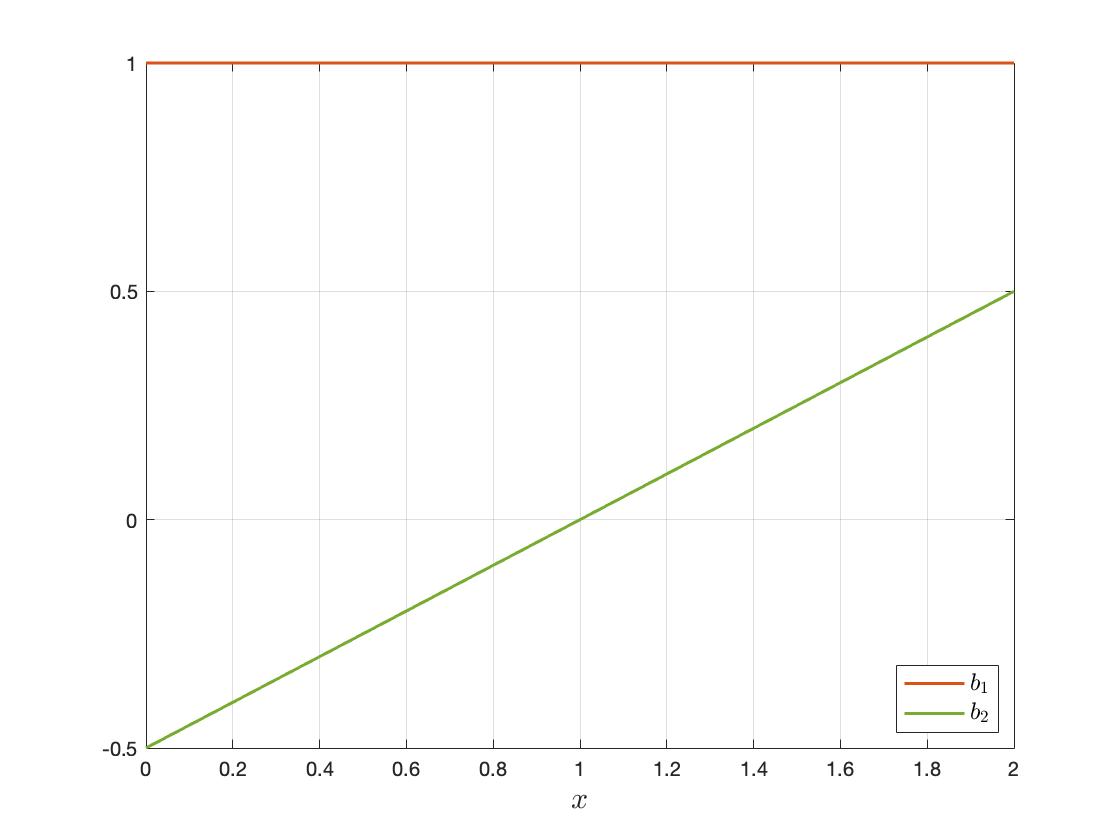}
			\caption{$p=1$}
		\end{subfigure}
		\begin{subfigure}[b]{0.45\textwidth}
			\centering
			\includegraphics[width=\textwidth]{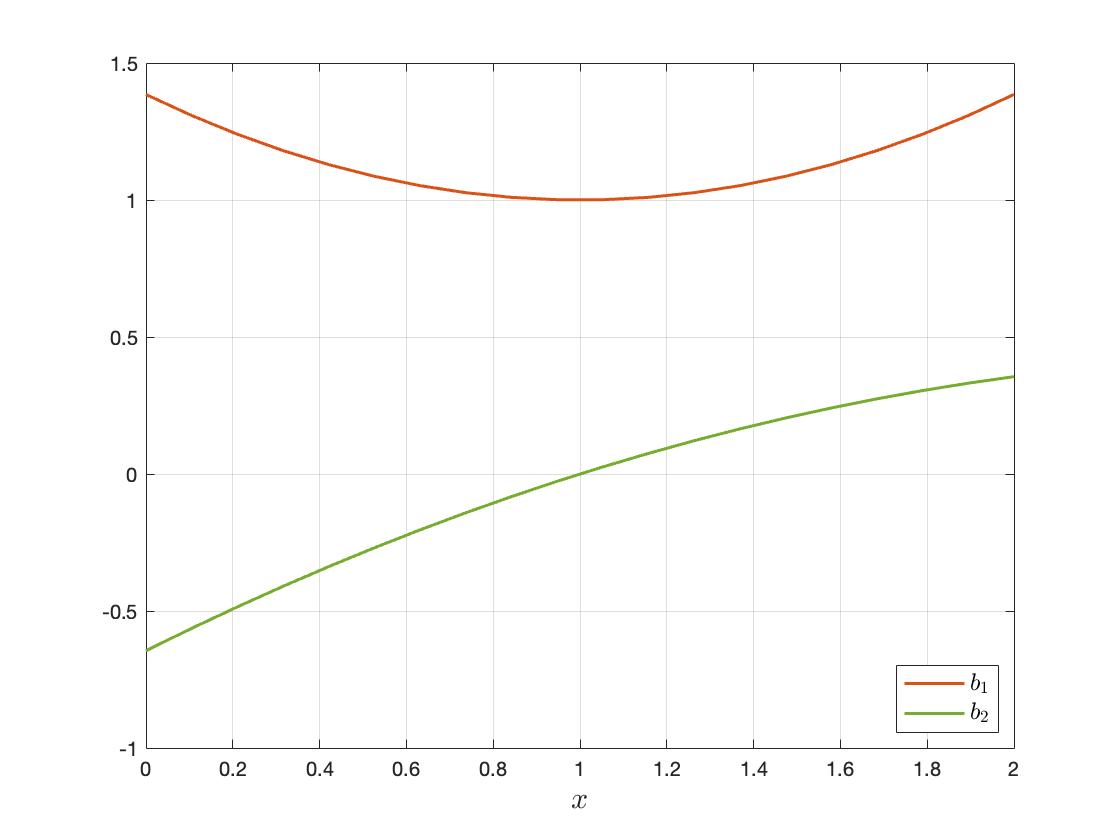}
			\caption{$p=2$}
		\end{subfigure}
		\vfill
		\begin{subfigure}[b]{0.45\textwidth}
			\centering
			\includegraphics[width=\textwidth]{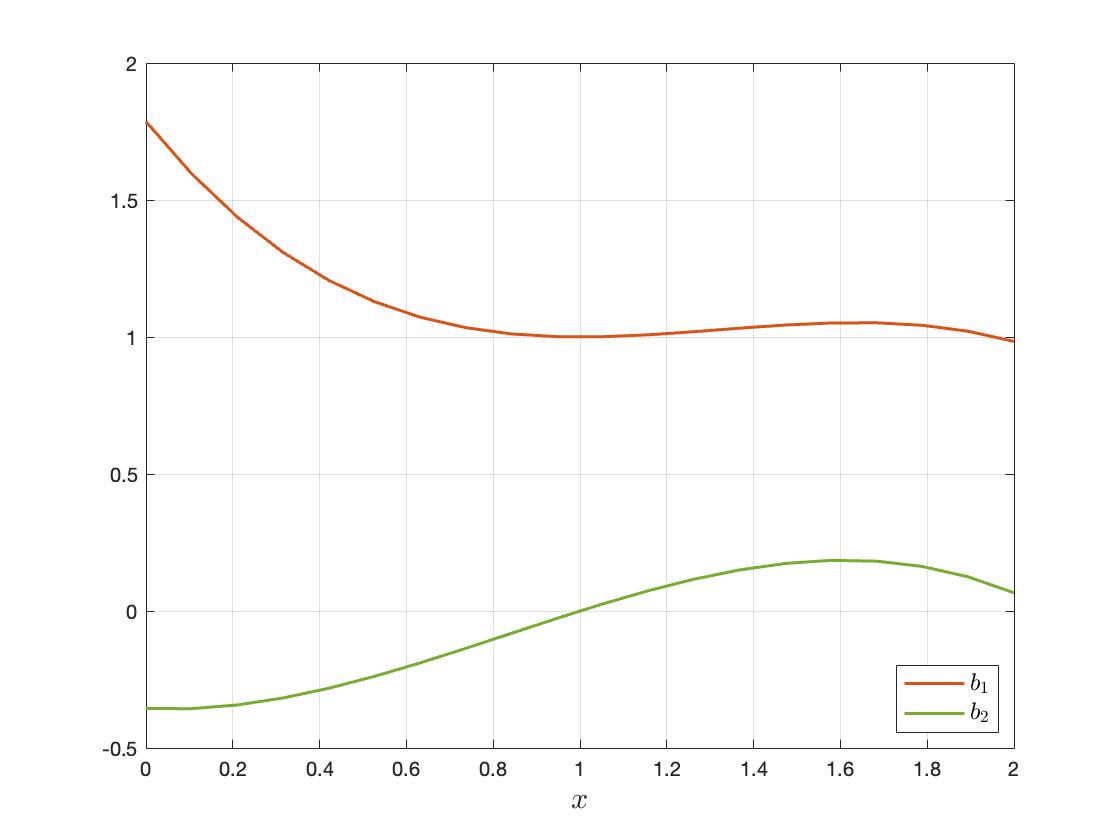}
			\caption{$p=3$}
		\end{subfigure}
		\begin{subfigure}[b]{0.45\textwidth}
			\centering
			\includegraphics[width=\textwidth]{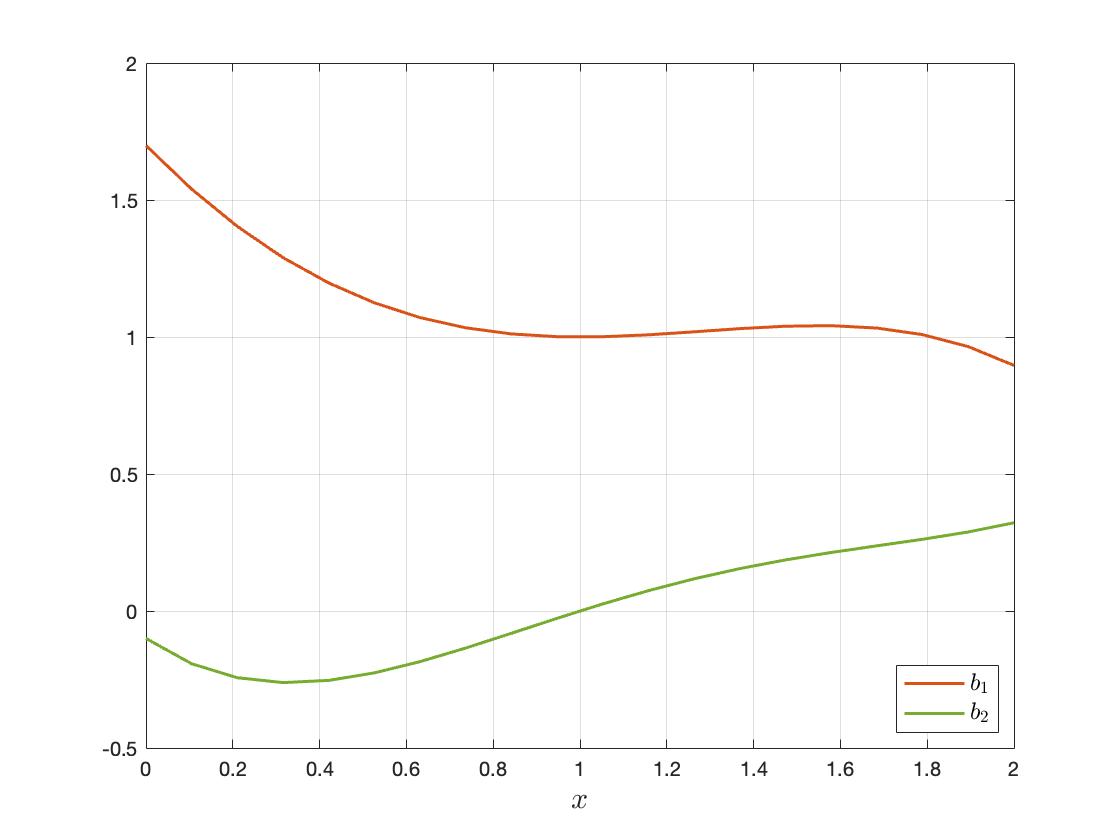}
			\caption{$p=4$}
		\end{subfigure}
		\caption{The two basis functions $b_1$ and $b_2$ of the Quasi-Trefftz space $\QT^p((0,2))$ for polynomial degree $p=1,2,3,4$ for the example \eqref{example1Dbasis}.
		}
		\label{basisfun1D}
	\end{figure}
	\subsection{Algorithm for the case \texorpdfstring{$d=2$}{d=2}}
	We now focus on the two dimensional case and describe the iterative algorithm to compute the  $N_{2,p}=2p+1$ quasi-Trefftz basis functions.
	For $J=1,\dots,2p+1$, we need to compute the coefficients of the monomial expansion of the basis function $\refb$, i.e,  $a_{ k _1, k _2}$ with $ k _1, k _2\in\IN$, $ k _1+ k _2\le p$.
	In Figure \ref{fig:IndexTriangle}, the coefficients $a_{k_1,k_2}$ are represented by the dots in the plane of indices $(k_1,k_2)\in\mathbb N^2$. Under the constraint that $k_1+ k_2\le p$, these dots form a triangular shape in the plane.
	To initialize the algorithm we have to choose two polynomial bases:
	$$
	\left\{\widehat f_s\right\}_{s=1,\dots,p+1}
	\text{ basis for }\mathbb P^p(\mathbb R) \quad \text{ and } \quad
	\left\{\widetilde f_s\right\}_{s=1,\dots,p}
	\text{ basis for }\mathbb P^{p-1}(\mathbb R),
	$$
	which we call “Cauchy data”. 
	This choice fixes the coefficients $a_{0, k _2}$ with  $ 0\leq k _2\le p$ and $a_{1, k _2}$ with  $0\leq  k _2\le p-1$, represented by the shaded yellow area in the figure.
	Once these are known, all the other coefficients are uniquely defined and can be computed using the relation \eqref{final}.
	From this recurrence relation we can compute $a_{ i_1+2, i _2}$ from the coefficients $a_{j_1, j_2}$ with $j_1\leq i _1+2$, $j_2\leq i_2+2$ and $j_1+j_2\leq i_1+i_2+2$, different from $a_{ i_1+2, i _2}$ itself.
	We have to apply \eqref{final} following a precise ordering of the multi-indices $(i_1,i_2)$, since at every step of the iterations we need to use values already computed.
	This suggests to proceed “diagonally”, computing the values $a_{ k _1, k _2}$ for $ k _1+ k _2=r$ increasingly from $r=2$ to $r=p$.
	On each of these diagonals we compute the values of $a_{ k _1,r- k _2}$ for increasing $k _1$.
	This is done performing a double loop: the external loop moves away from the origin across diagonals ($\nearrow$) and the inner loop moves from the $k _2$ axis to the $k _1$ axis along diagonals ($\searrow$). 
	The ordering of the coefficients $a_{ i_1+2, i _2}$ computed is shown by the arrows in Figure \ref{fig:IndexTriangle2}. 
	This procedure is described in Algorithm \ref{algo:Basis2D}.
	\vspace{-4cm}
	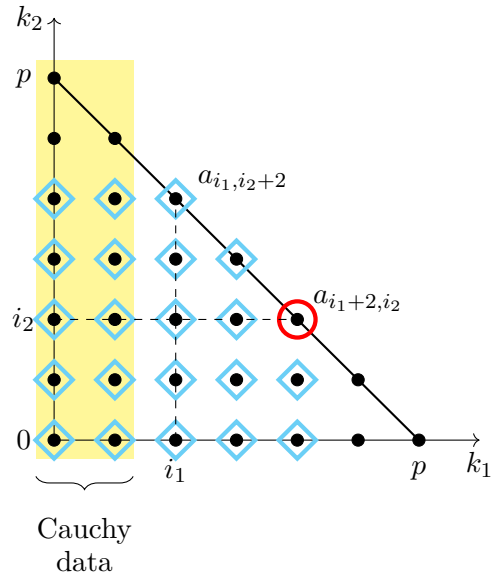
\begin{figure}[h!]
		\caption{A graphical representation of the coefficients  of a quasi-Trefftz basis function $\refb$ in the two dimensional case	(here $p=6$).
			The black dots $\bullet$ in the $(k_1,k_2)$ plane represent the coefficients $a_{k _1, k _2}$ of the function $\refb$.
			The coefficients  with $ k _1\in\{0,1\}$, which correspond to the nodes in the shaded yellow area, are determined from the “Cauchy data”, the second and third sets of equations in \eqref{eq:systembasis}, which we assign.
			The shape of the stencil formed by the nodes surrounded by the blue diamond $\MyDiamond[draw=cyan!50, ultra thick]$ represents the first equation in \eqref{eq:systembasis} for $(i_1,i_2)=(2,2)$ which, given $a_{j_1, j_2}$ with $j_1\leq i _1+2$, $j_2\leq i_2+2$ and $j_1+j_2\leq i_1+i_2+2$ (except for $a_{i_1+2,i_2}$), allows to compute the coefficient $a_{i_1+2,i_2}$ associated with the node surrounded by the red circle $\tikzcircle{}$.
			Each coefficient in the non-shaded region is computed  with formula \eqref{final} in this way, using its corresponding stencil of blue diamonds.}
		\label{fig:IndexTriangle}
		\begin{tikzpicture}[scale=.8]
			
			\fill[yellow!50!white](-.3,-.3)--(-.3,6.3)--(1.3,6.3)--(1.3,-0.3)--(-.3,-.3);
			
			\draw[->](0,0)--(7,0); 
			\draw[->](0,0)--(0,7); 
			\draw[thick](0,6)--(6,0);
			\draw(-.5,6)node{$p$};
			\draw(-.5,0)node{$0$};

			\node[diamond,draw, color=cyan!50,ultra thick] (d) at (0,0) {};
			\draw[fill](0,0)circle(.1);
			
			\node[diamond,draw, color=cyan!50,ultra thick] (d) at (1,0) {};
			\draw[fill](1,0)circle(.1);
			
			\node[diamond,draw, color=cyan!50,ultra thick] (d) at (2,0) {};
			\draw[fill](2,0)circle(.1);
			
			\node[diamond,draw, color=cyan!50,ultra thick] (d) at (3,0) {};
			\draw[fill](3,0)circle(.1);
			
			\node[diamond,draw, color=cyan!50,ultra thick] (d) at (4,0) {};
			\draw[fill](4,0)circle(.1);
			\draw[fill](5,0)circle(.1);
			\draw[fill](6,0)circle(.1);
			\node[diamond,draw, color=cyan!50,ultra thick] (d) at (0,1) {};
			
			\draw[fill](0,1)circle(.1);
			\node[diamond,draw, color=cyan!50,ultra thick] (d) at (1,1) {};
			
			\draw[fill](1,1)circle(.1);
			\node[diamond,draw, color=cyan!50,ultra thick] (d) at (2,1) {};
			
			\draw[fill](2,1)circle(.1);
			
			\node[diamond,draw, color=cyan!50,ultra thick] (d) at (3,1) {};
			\draw[fill](3,1)circle(.1);
			
			\node[diamond,draw, color=cyan!50,ultra thick] (d) at (4,1) {};
			\draw[fill](4,1)circle(.1);
			\draw[fill](5,1)circle(.1);
			
			\node[diamond,draw, color=cyan!50,ultra thick] (d) at (0,2) {};
			\draw[fill](0,2)circle(.1);
			
			\node[diamond,draw, color=cyan!50,ultra thick] (d) at (1,2) {};
			\draw[fill](1,2)circle(.1);
			
			\node[diamond,draw, color=cyan!50,ultra thick] (d) at (2,2) {};
			\draw[fill](2,2)circle(.1);
			\node[diamond,draw, color=cyan!50,ultra thick] (d) at (3,2) {};
			
			\draw[fill](3,2)circle(.1);
			
			\draw[fill](4,2)circle(.1);
			\draw[ultra thick,red](4,2)circle(.3);
			
			\draw(5,2.3)node{$a_{ i _1+2, i _2}$};
			
			\node[diamond,draw, color=cyan!50,ultra thick] (d) at (0,3) {};
			\draw[fill](0,3)circle(.1);
			\node[diamond,draw, color=cyan!50,ultra thick] (d) at (1,3) {};
			
			\draw[fill](1,3)circle(.1);
			
			\node[diamond,draw, color=cyan!50,ultra thick] (d) at (2,3) {};
			\draw[fill](2,3)circle(.1);
			\node[diamond,draw, color=cyan!50,ultra thick] (d) at (3,3) {};
			
			\draw[fill](3,3)circle(.1);
			\node[diamond,draw, color=cyan!50,ultra thick] (d) at (0,4) {};
			
			\draw[fill](0,4)circle(.1);
			\node[diamond,draw, color=cyan!50,ultra thick] (d) at (1,4) {};
			
			\draw[fill](1,4)circle(.1);
			\node[diamond,draw, color=cyan!50,ultra thick] (d) at (2,4) {};
			
			\draw[fill](2,4)circle(.1);
			\draw(3.1,4.3)node{$a_{ i _1, i _2+2}$};
			\draw[fill](0,5)circle(.1);
			\draw[fill](1,5)circle(.1);
			\draw[fill](0,6)circle(.1);
			
			\draw(7,-.4)node{$ k _1$};
			\draw(-.4,7)node{$ k _2$};
			\draw[decorate, decoration={brace,amplitude=5}] (1.3,-.6) -- (-0.3,-.6);
			\draw(0.5,.-1.5)node{Cauchy};
			\draw(0.5,.-2)node{data};
			\draw[dashed](2,0)--(2,4); \draw(2,-.5)node{$ i _1$};
			\draw[dashed](0,2)--(4,2); \draw(-.5,2)node{$ i _2$};
			\draw(6,-0.5)node{$p$};
		\end{tikzpicture}
	\end{figure}
	\pagebreak
	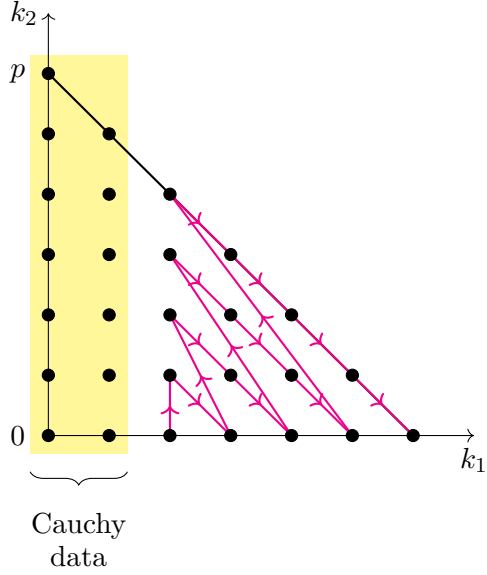
\begin{figure}[h!]
		\begin{tikzpicture}[scale=.8]
			\fill[yellow!50!white](-.3,-.3)--(-.3,6.3)--(1.3,6.3)--(1.3,-0.3)--(-.3,-.3);
			
			\draw[->](0,0)--(7,0); 
			\draw[->](0,0)--(0,7); 
			\draw[thick](0,6)--(6,0);
			\draw(-.5,6)node{$p$};
			\draw(-.5,0)node{$0$};
			
			\draw[middlearrow={>},thick,magenta] (2,0) to (2,1);
			\draw[middlearrow={>},thick,magenta] (2,1) to (3,0);
			\draw[middlearrow={>}, thick,magenta] (3,0) to (2,2);
			\draw[middlearrow={>}, thick,magenta] (2,2) to (3,1);
			\draw[middlearrow={>}, thick,magenta] (3,1) to (4,0);
			\draw[middlearrow={>},thick,magenta] (4,0) to (2,3);
			\draw[middlearrow={>},thick,magenta] (2,3) to (3,2);
			\draw[middlearrow={>},thick,magenta] (3,2) to (4,1);
			\draw[middlearrow={>},thick,magenta] (4,1) to (5,0);
			\draw[middlearrow={>},thick,magenta] (5,0) to (2,4);
			\draw[middlearrow={>},thick,magenta] (2,4) to (3,3);
			\draw[middlearrow={>},thick,magenta] (3,3) to (4,2);
			\draw[middlearrow={>},thick,magenta] (4,2) to (5,1);
			\draw[middlearrow={>},thick,magenta] (5,1) to (6,0);

			\draw[fill](0,0)circle(.1);
			\draw[fill](1,0)circle(.1);
			\draw[fill](2,0)circle(.1);
			\draw[fill](3,0)circle(.1);
			\draw[fill](4,0)circle(.1);
			\draw[fill](5,0)circle(.1);
			\draw[fill](6,0)circle(.1);
			\draw[fill](0,1)circle(.1);
			\draw[fill](1,1)circle(.1);
			\draw[fill](2,1)circle(.1);
			\draw[fill](3,1)circle(.1);
			\draw[fill](4,1)circle(.1);
			\draw[fill](5,1)circle(.1);
			\draw[fill](0,2)circle(.1);
			\draw[fill](1,2)circle(.1);
			\draw[fill](2,2)circle(.1);
			\draw[fill](3,2)circle(.1);
			\draw[fill](4,2)circle(.1);
			\draw[fill](0,3)circle(.1);
			\draw[fill](1,3)circle(.1);
			\draw[fill](2,3)circle(.1);
			\draw[fill](3,3)circle(.1);
			\draw[fill](0,4)circle(.1);
			\draw[fill](1,4)circle(.1);
			\draw[fill](2,4)circle(.1);
			\draw[fill](0,5)circle(.1);
			\draw[fill](1,5)circle(.1);
			\draw[fill](0,6)circle(.1);

			\draw[decorate, decoration={brace,amplitude=5}] (1.3,-.6) -- (-0.3,-.6);
			\draw(0.5,.-1.5)node{Cauchy};
			\draw(0.5,.-2)node{data};
			
			\draw(7,-.4)node{$ k _1$};
			\draw(-.4,7)node{$ k _2$};
			
		\end{tikzpicture}
		\caption{An illustration of the ordering of the coefficients of a quasi-Trefftz basis function $\refb$ computed through Algorithm \ref{algo:Basis2D}. 
			All the coefficients in the non-shaded region are computed with formula \eqref{final} 
			in a double loop: first across diagonals $\nearrow$, and then along diagonals $\searrow$. The ordering is shown by the arrows	$\SEARROW[->,thick,magenta]$.}
		\label{fig:IndexTriangle2}
	\end{figure}
	\begin{algorithm}[H]
		{\sc Algorithm}\\
		\SetAlgoLined
		Data: $p$, $\bx_T$, $h_T$, $D^{\bm\ell}\bk(\bx_T)$, $D^{\bm\ell}\bbeta(\bx_T)$ for $\abs{\bm\ell} \leq p-1$, $D^{\bm\ell}\sigma(\bx_T)$ for $\abs{\bm\ell} \leq p-2$.\\
		Fix coefficients $a_{0,k_2}$, $a_{1,k_2}$, choosing polynomial bases $\left\{\widehat f_J\right\}$, $\left\{\widetilde f_J\right\}$.\\
		For each $J=1,\dots,2p+1$, we construct $\refb$ as follows:\\
		\For{$r=2$ to $p$\quad (loop across diagonals $\nearrow$)\quad}{
			\For{$ i _1=0$ to $r-2$\quad (loop along diagonals $\searrow$)\quad}{
				set $ i _2=r- i _1-2$ and compute
				\begin{equation*}
					\begin{split}
						a_{\bi+2\be_1}=&\frac{{h_T^{\abs{\bi}+2}}}{\bk_{11}(\bx_{T})(\bi+2\be_{1})!}
						\Biggl(
						-\sum_{\substack{j,m\in\{1,2\}\\ \bm{\ell}\leq \bm{i}+\bm{e}_{j}\\(j,m,\bm{\ell})\neq (1,1,\bm{0})}}
						\frac{(\bm{i}+\bm{e}_{j})!}{\bm{\ell}!} \times \\&D^{\bm{\ell}} \bm{K}_{jm} (\bx_T)
						\frac{a_{\bm{i}+\bm{e}_{j}-\bm{\ell}+\bm{e}_{m}}(i_m+(\bm{e}_{j})_m-\ell_m+1)}{h_T^{\abs{\bi-\bm{\ell}}+2}}\\
						&+\sum_{\substack{j=1, 2\\ \bm{\ell}\leq \bm{i}+\bm{e}_{j}}}
						\frac{(\bm{i}+\bm{e}_{j})!}{\bm{\ell}!}  D^{\bm{\ell}} \bm{\beta}_{j} (\bx_T)
						\frac{a_{\bm{i}+\bm{e}_{j}-\bm{\ell}}}{h_T^{\abs{\bi-\bm{\ell}}+1}}\\&
						+ \sum_{\bm{\ell}\leq \bm{i}} \frac{\bi !}{\bm{\ell}!} D^{\bm{\ell}}\sigma (\bx_T) \frac{a_{\bm{i}-\bm{\ell}}}{h_T^{\abs{\bi-\bm{\ell}}}}\Biggr),
					\end{split}
				\end{equation*}
		}}
		$\displaystyle\refb(x_1,x_2)
		=\sum_{0\leq k _1+ k _2\leq p} a_{ k _1, k _2} \bigg(\frac{x_1-(\bx_{T})_1}{h_T}\bigg)^{ k _1} \bigg(\frac{x_2-(\bx_{T})_2}{h_T}\bigg)^{ k _2}.$
		\vspace{2mm}
		\caption{The algorithm for the construction of $\refb$ in the case $d=2$.}
		\label{algo:Basis2D}
	\end{algorithm}

	\subsection{Algorithm for the case \texorpdfstring{$d>2$}{d>2 }}
	We describe the extension of Algorithm \ref{algo:Basis2D} to the general $d$-dimensional case.
	The structure is the same, the only difference is that we need to perform three loops instead of two.
	In fact, for each value of $r=\abs{(i_2,\dots,i_d)}+i_1+2$ and of  $i_1$ we have not only one coefficient to compute but one for each $(i_2,\dots,i_d)\in\IN^{d-1}$ with $\abs{(i_2,\dots,i_d)}=r-i_1-2$.
	For this reason we need a further inner loop over $(i_2,\dots,i_d)$. We note that each coefficient of  this loop can be computed independently of the others and we preserve the requirement that at every step of the iterations all the values needed have already been computed.
	In Figure \ref{fig:IndexPyramid}  the coefficients $a_{\mk}$ are represented by the dots forming a simplex in the space of indices $\mk\in\mathbb N^d$ for the case $d=3$.
	The coefficients in the two yellow triangles are assigned by the “Cauchy data”. 
	The general coefficient, highlighted by the red diamond, is computed with the relation \eqref{final} using the coefficients corresponding to the black dots inside the blue stencil. 
	This procedure is described in Algorithm \ref{algo:BasisND}.
	\newcommand{\DDT}{\draw[dashed,thick]}
	\newcommand{\NodeThreeD}[3]{\draw (#1,#2,#3) node[circle,fill,inner sep=2pt] {};
		\DDT(#1,#2,0)--(#1,#2,#3);\DDT(0,#2,0)--(#1,#2,0);\DDT(#1,0,0)--(#1,#2,0);
		\DDT(#1,0,#3)--(#1,#2,#3);\DDT(0,0,#3)--(#1,0,#3);\DDT(#1,0,0)--(#1,0,#3);
		\DDT(0,#2,#3)--(#1,#2,#3);\DDT(0,#2,0)--(0,#2,#3);\DDT(0,0,#3)--(0,#2,#3);}
	
	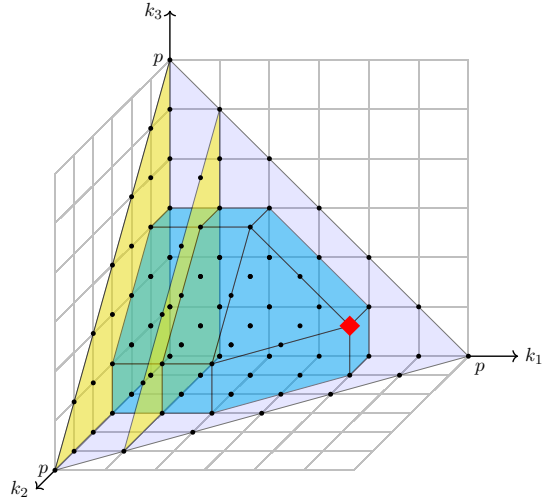
\begin{figure}[h!]\centering
		\resizebox{.49\linewidth}{!}{
			\begin{tikzpicture}
				[grid/.style={very thin,gray},axis/.style={->,thick}]
				
				\draw[axis] (0,0,0) -- (7,0,0) node[anchor=west]{$k_1$};
				\draw[axis] (0,0,0) -- (0,7,0) node[anchor=east]{$k_3$};
				\draw[axis] (0,0,0) -- (0,0,7) node[anchor=east]{$k_2$};

				\foreach \x in {0,1,...,6}
				\foreach \y in {0,1,...,6}
				\foreach \z in {0,1,...,6}
				{
					\draw[grid,lightgray] (\x,0,0) -- (\x,6,0);
					\draw[grid,lightgray] (0,\y,0) -- (6,\y,0);
					\draw[grid,lightgray] (0,\y,0) -- (0,\y,6);
					\draw[grid,lightgray] (0,0,\z) -- (0,6,\z);
					\draw[grid,lightgray] (\x,0,0) -- (\x,0,6);
					\draw[grid,lightgray] (0,0,\z) -- (6,0,\z);
					
				}

				\draw[opacity=.5] (0,6,0)--(0,0,6)--(6,0,0)--(0,6,0);
				\draw[fill=blue,opacity=.1] (0,6,0)--(0,0,6)--(6,0,0)--(0,6,0);
				\draw[fill=yellow,opacity=0.5] (0,0,0)--(0,6,0)--(0,0,6)--(0,0,0);
				\draw[fill=cyan,opacity=.5] 
				(1,1,3)--(1,0,3)--(0,0,3)--(0,1,3)--(1,1,3);
				\draw[fill=cyan,opacity=.5] 
				(0,1,3)--(0,3,1)--(1,3,1)--(1,1,3)--(0,1,3);
				\draw[fill=cyan,opacity=.5] (0,3,0)--(1,3,0)--(1,3,1)--(0,3,1)--(0,3,0);
				
				\draw[fill=yellow,opacity=0.5] (1,0,0)--(1,0,5)--(1,5,0)--(1,0,0);

				\draw[fill=cyan,opacity=.5] (2,1,3)--(1,1,3)--(1,0,3)--(2,0,3)--(2,1,3);
				\draw[fill=cyan,opacity=.5] (2,1,3)--(4,1,1)--(4,0,1)--(2,0,3)--(2,1,3);
				\draw[fill=cyan,opacity=.5] (4,1,1)--(4,0,1)--(4,0,0)--(4,1,0)--(4,1,1);
				\draw[fill=cyan,opacity=.5]
				(2,1,3)--(2,3,1)--(4,1,1)--(2,1,3);
				\draw[fill=cyan,opacity=.5] (2,3,1)--(4,1,1)--(4,1,0)--(2,3,0)--(2,3,1);
				\draw[fill=cyan,opacity=.5] (1,3,0)--(2,3,0)--(2,3,1)--(1,3,1)--(1,3,0);
				\draw[fill=cyan,opacity=.5] 
				(2,1,3)--(1,1,3)--(1,3,1)--(2,3,1)--(2,1,3);

				\foreach \x in {0,1,2,3,4}
				\foreach \y in {0,1}
				\foreach \z in {0,1}
				{
					\draw[] (\x,\y,\z) node[circle,fill,inner sep=1pt] {};
				}
				\foreach \x in {0,1,2,3}
				\foreach \y in {0,1}
				\foreach \z in {2}
				{
					\draw[] (\x,\y,\z) node[circle,fill,inner sep=1pt] {};
				}
				\foreach \x in {0,1,2,3}
				\foreach \y in {2}
				\foreach \z in {0,1}
				{
					\draw[] (\x,\y,\z) node[circle,fill,inner sep=1pt] {};
				}
				\foreach \x in {0,1,2}
				\foreach \y in {0,1}
				\foreach \z in {3}
				{
					\draw[] (\x,\y,\z) node[circle,fill,inner sep=1pt] {};
				}
				\foreach \x in {0,1,2}
				\foreach \y in {0,1,2}
				\foreach \z in {0,1,2}
				{
					\draw[] (\x,\y,\z) node[circle,fill,inner sep=1pt] {};
				}
				
				\foreach \x in {0,1,2}
				\foreach \y in {3}
				\foreach \z in {0,1}
				{
					\draw[] (\x,\y,\z) node[circle,fill,inner sep=1pt] {};
				}

				\draw[] (5,0,0) node[circle,fill,inner sep=1pt] {};
				\draw[] (6,0,0) node[circle,fill,inner sep=1pt] {};
				\draw[] (5,1,0) node[circle,fill,inner sep=1pt] {};
				\draw[] (5,0,1) node[circle,fill,inner sep=1pt] {};
				\draw[] (4,0,2) node[circle,fill,inner sep=1pt] {};
				\draw[] (4,2,0) node[circle,fill,inner sep=1pt] {};
				\draw[] (3,0,3) node[circle,fill,inner sep=1pt] {};
				\draw[] (3,3,0) node[circle,fill,inner sep=1pt] {};
				\draw[] (2,0,4) node[circle,fill,inner sep=1pt] {};
				\draw[] (2,4,0) node[circle,fill,inner sep=1pt] {};
				\draw[] (1,0,5) node[circle,fill,inner sep=1pt] {};
				\draw[] (1,5,0) node[circle,fill,inner sep=1pt] {};
				\draw[] (1,4,1) node[circle,fill,inner sep=1pt] {};
				\draw[] (1,4,0) node[circle,fill,inner sep=1pt] {};
				\draw[] (1,3,2) node[circle,fill,inner sep=1pt] {};
				\draw[] (1,2,3) node[circle,fill,inner sep=1pt] {};
				\draw[] (1,1,4) node[circle,fill,inner sep=1pt] {};
				\draw[] (0,6,0) node[circle,fill,inner sep=1pt] {};
				\draw[] (0,0,6) node[circle,fill,inner sep=1pt] {};
				\draw[] (0,5,1) node[circle,fill,inner sep=1pt] {};
				\draw[] (0,1,5) node[circle,fill,inner sep=1pt] {};
				\draw[] (0,4,2) node[circle,fill,inner sep=1pt] {};
				\draw[] (0,2,4) node[circle,fill,inner sep=1pt] {};
				\draw[] (0,3,3) node[circle,fill,inner sep=1pt] {};
				\draw[] (0,0,5) node[circle,fill,inner sep=1pt] {};
				\draw[] (0,5,0) node[circle,fill,inner sep=1pt] {};
				\draw[] (0,4,0) node[circle,fill,inner sep=1pt] {};
				\draw[] (0,0,4) node[circle,fill,inner sep=1pt] {};
				\draw[] (0,4,1) node[circle,fill,inner sep=1pt] {};
				\draw[] (0,1,4) node[circle,fill,inner sep=1pt] {};
				\draw[] (0,3,2) node[circle,fill,inner sep=1pt] {};
				\draw[] (0,2,3) node[circle,fill,inner sep=1pt] {};

				\draw[] (4,1,1) node[color=red,diamond,fill,inner sep=3pt] {};
				
				\draw (0,0,6) node[left] {$p$};
				\draw (6,0,0) node[below right] {$p$};
				\draw (0,6,0) node[ left] {$p$};
			\end{tikzpicture}
		}
		\caption{
			A graphical representation of the coefficients  $a_{\mk}$ of a quasi-Trefftz basis function $\refb$ in the three dimensional case (here $p=6$).
			The black dots $\tikzdot{}$ in the $(k_1,k_2,k_3)$ space represent the coefficients $a_{k _1, k _2,k_3}$ of the function $\refb$.
			The two yellow triangles in the planes $k_1=0$ and $k_1=1$ contain the coefficients whose values are determined from the choice of $\{\widehat f_s\}_{s=1,\dots,S_{d-1,p}}$ and $\{\widetilde f_s\}_{s=1,\dots,S_{d-1,p-1}}$.
			The coefficient $a_{i_1+2,i_2,i_3}$, represented by the  red diamond $\red{\blacklozenge}$, is computed with formula \eqref{final} for $(i_1,i_2,i_3)=(2,1,1)$ using all the coefficients inside the blue stencil.
			The coefficients whose dots are outside the two yellow triangles are computed  with formula \eqref{final}  from the coefficients inside their corresponding blue stencil.
			Algorithm \ref{algo:BasisND} performs three loops: the first loop through triangles parallel to the violet one (corresponding to $r=6$), the second loop through triangles parallel to the yellow ones, and the third one along the segments determined by the intersection between the two triangles.}
		\label{fig:IndexPyramid}
	\end{figure}
	\vspace{2cm}
	\pagebreak
	\begin{algorithm}[H]
		{\sc Algorithm}\\
		\SetAlgoLined
		Data: $p$, $\bx_T$, $h_T$, $D^{\bm\ell}\bk(\bx_T)$, $D^{\bm\ell}\bbeta(\bx_T)$ for $\abs{\bm\ell} \leq p-1$, $D^{\bm\ell}\sigma(\bx_T)$ for $\abs{\bm\ell} \leq p-2$.\\
		Fix coefficients $a_{0,k_2,\dots,k_d}$, $a_{1,k_2,\dots,k_d}$,
		choosing polynomial bases $\left\{\widehat f_s\right\}$, $\left\{\widetilde f_s\right\}$.\\
		For each $J=1,\ldots,N_{d,p}$, we construct $\refb$ as follows:\\
		\For{$r=2$ to $p$\qquad (loop across $\{i_1+i_2+\dots+i_d=r-2\}$ hyperplanes $\nearrow$)}{
			\For{$ i _1=0$ to $r-2$\qquad (loop across constant-$i_1$ hyperplanes $\rightarrow$)}{
				\For{ $(i_2,\dots,i_d)$ with $\abs{(i_2,\dots,i_d)}=r- i _1-2$\qquad
				}{
					\begin{equation*}
						\begin{split}
							a_{\bi+2\be_1}=&\frac{{h_T^{\abs{\bi}+2}}}{\bk_{11}(\bx_{T})(\bi+2\be_{1})!}
							\Biggl(
							-\sum_{\substack{j,m=1,\dots,d\\ \bm{\ell}\leq \bm{i}+\bm{e}_{j}\\(j,m,\bm{\ell})\neq (1,1,\bm{0})}}
							\frac{(\bm{i}+\bm{e}_{j})!}{\bm{\ell}!} \times \\&D^{\bm{\ell}} \bm{K}_{jm} (\bx_T)
							\frac{a_{\bm{i}+\bm{e}_{j}-\bm{\ell}+\bm{e}_{m}}(i_m+(\bm{e}_{j})_m-\ell_m+1)}{h_T^{\abs{\bi-\bm{\ell}}+2}}\\
							&+\sum_{\substack{j=1, \dots,d\\ \bm{\ell}\leq \bm{i}+\bm{e}_{j}}}
							\frac{(\bm{i}+\bm{e}_{j})!}{\bm{\ell}!}  D^{\bm{\ell}} \bm{\beta}_{j} (\bx_T)
							\frac{a_{\bm{i}+\bm{e}_{j}-\bm{\ell}}}{h_T^{\abs{\bi-\bm{\ell}}+1}}\\&
							+ \sum_{\bm{\ell}\leq \bm{i}} \frac{\bi !}{\bm{\ell}!} D^{\bm{\ell}}\sigma (\bx_T) \frac{a_{\bm{i}-\bm{\ell}}}{h_T^{\abs{\bi-\bm{\ell}}}}\Biggr),
						\end{split}
					\end{equation*}
		}}}
		$\displaystyle\refb(\bx)=\sum_{\mk\in\IN^{d},|\mk|\leq p} a_\mk \left(\frac{\mathbf{x}-\mathbf{x}_{T}}{h_{T}}\right)^{\mk}$.
		\vspace{2mm}
		\caption{The algorithm for the construction of $\refb$ in the $d$-dimensional case.}
		\label{algo:BasisND}
	\end{algorithm}
	
	\chapter{Numerical experiments}\label{Chapter6}
	In this chapter we present some numerical tests in two dimensions. 
	We begin with a simple example with polynomial solution in Section \ref{polysol}. In Section \ref{h-conv} we consider two test cases with smooth solutions in order to validate the $h$-convergence of the quasi-Trefftz DG method and compare the three different formulations SIPG, IIPG and NIPG introduced in Section \ref{s:Diffvar} that result for a different choice of the symmetrization parameter $\epsilon$.
	Moreover, in Section \ref{p-conv} we compare the quasi-Trefftz space against the full polynomial one in terms of $p$-convergence and of number of degrees of freedom.
	Finally, in Section \ref{dom} we consider advection-dominated and reaction-dominated examples to show the behaviour of the quasi-Trefftz DG method also in these cases.
	
	The DG scheme \eqref{variational} with as discrete space the global quasi-Trefftz space \eqref{globalQT} for the diffusion-advection-reaction equation has been implemented in MATLAB.
	We remark that we consider the homogeneous diffusion-advection-reaction equation, i.e., with $f=0$. 
	In all the experiments we use the SIPG method, i.e., the symmetrization parameter $\epsilon=-1$, except in the example of  Section \ref{h-conv} where the three formulations are compared.
	
	All the experiments are performed on the unit square $ \Omega = (0,1)^2$, which is partitioned using conforming meshes of triangular elements generated by 
	DistMesh, see \cite{persson2004simple}, and satisfying the assumptions of Section \ref{sec:domain}.
	For all the experiments we will say, with an abuse of notation, the meshsize $h$ used, though this is just the input of the mesh generator and usually the meshsize as defined in \ref{meshsizEE} is a little larger than this input.
	We calculate the “correct” $h$ and we use this value in the plots of the $h$-convergence in Section \ref{polysol} and in Section \ref{h-conv}.
	
	We compute all the volume integrals in the assembly of the matrix and of the right-hand side using the following quadrature rule.
	First, we use the Gauss--Legendre quadrature rule on the interval $[0,1]$, then we consider the tensor product of the nodes in $[0,1]^2$ and then with the Duffy transformation \cite[p. 21]{antonietti2016high} we map the points into the reference triangle, whose vertices are $(0, 0)$, $(1, 0)$ and $(0, 1)$.
	Finally, the quadrature points are mapped onto the generic triangle $T\in\calT_h$ using the classical affine map, the reference element onto $T$.
	For computing the integrals over edges we simply use the Gauss--Legendre quadrature rule combined with this affine map.
	Since the Gauss--Legendre quadrature rule with $n$ points is exact for polynomials of degree less than or
	equal to $2n-1$, we use $p+1$ points in $[0,1]$, hence $(p+1)^2$ on each triangles, where $p$ is the polynomial degree used.
	The linear system is solved using the backslash command.
	
	To construct the quasi-Trefftz basis functions, we use the symbolic computations only once to calculate the derivative of the PDE coefficients that we need for Algorithm \ref{algo:Basis2D} and to evaluate them in the barycentre of each element. Their values on the barycentres are stored and  passed in input to the quasi-Trefftz algorithm.
	To initialize Algorithm \ref{algo:Basis2D}, we choose scaled monomials as polynomial bases:
	\begin{align*}
		\left\{\widehat f_s(x_2)=\bigg(\frac{x_2-(\bx_T)_2}{h_T}\bigg)^{s-1}\right\}_{s=1,\dots,p+1}
		&\text{ basis for }\mathbb P^p(\mathbb R),\\
		\left\{\widetilde f_s(x_2)=\frac{1}{h_T}\bigg(\frac{x_2-(\bx_T)_2}{h_T}\bigg)^{s-1}\right\}_{s=1,\dots,p}
		&\text{ basis for }\mathbb P^{p-1}(\mathbb R).
	\end{align*}
	
	\section{Polynomial solution}\label{polysol}
	The aim of this simple example is to test the correctness of the quasi-Trefftz DG method implemented in the particular case when the exact solution belongs to the discrete space.
	Consider the problem \eqref{eq}-\eqref{Neumann} with $f=0$ and with coefficients
	$$\bk(x_1,x_2)=Id, \qquad  \bbeta(x_1,x_2)=\bm{0}, \qquad \sigma(x_1,x_2)=\frac{4}{x_1^2+x_2^2+1},$$
	where $Id$ denotes the $2\times 2$ identity matrix.
	
	We impose only Dirichlet boundary condition $g_D$ on $\partial \Omega$ such that the exact solution is given by $u(x_1,x_2)=x_1^2+x_2^2+1$.
	Since $u$ is a polynomial of degree $2$ which satisfy the partial differential equation, we have that $u\in\QT^2(\calT_h)$ for  the definition of the quasi-Trefftz space with $p=2$.
	This follows also from Theorem \ref{prop:Approx} which states that the Taylor polynomial of degree $2$ of $u$, which is $u$ itself, belongs to $\QT^2(T)$ for all $T\in\calT_h$.
	
	We use $\QT^2(\calT_h)$ as discrete space and choose the penalty parameter $\gamma=32$ and the symmetrization parameter $\epsilon=-1$. In Figure  \ref{fig:polysol} we show the quasi-Trefftz DG error against the meshsize $h$.
	We consider the values $h=2^{-j}$ for $j=1,\dots,5$ and we  measure the error in the $L^2$-norm, in the $H^1$-norm and in the  $L^{\infty}$-norm. 
	We observe that the error is not null but reaches almost the machine precision.
	Even though from a theoretical point of view the error should be zero, this is what we expect because of round-off errors.
	\begin{figure}[h]
		\includegraphics[width=0.55\textwidth]{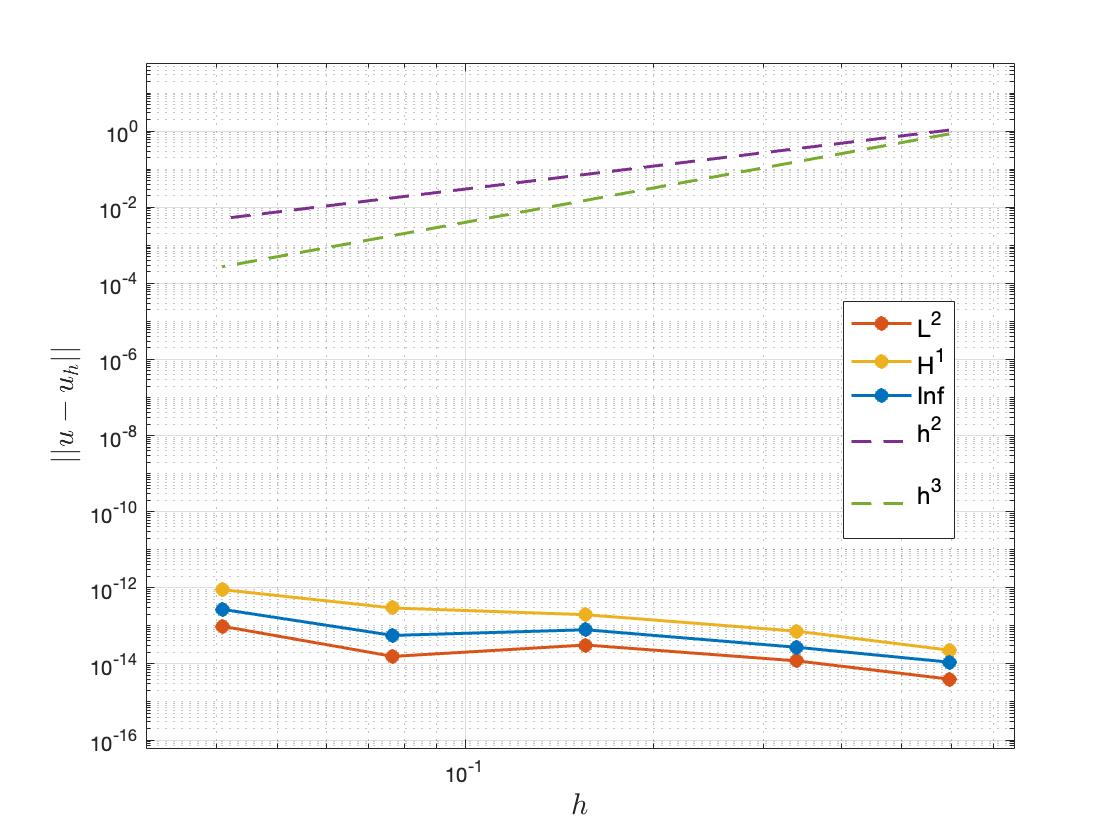}
		\caption{The quasi-Trefftz DG error for the problem of Section \ref{polysol} using $p=2$.
			The error in the $L^2$-norm (red line), in the $H^1$-norm (yellow line)  and in the $L^{\infty}$-norm (blue line) is plotted against the meshsize $h$ for $h=2^{-j}$ for $j=1,\dots,5$.
			Since the exact solution $u\in\QT^2(\calT_h)$ the error almost reaches the machine precision.}
		\label{fig:polysol}
	\end{figure}
	
	\section{\textit{h}-convergence}\label{h-conv}
	In this section we presents two test cases with smooth solutions in order to validate the optimal convergence rates with respect to the mesh size $h$ of the quasi-Trefftz DG method that we have proved in Theorem \ref{finaleerror}.
	
	As first example, we consider the problem \eqref{eq}-\eqref{Neumann} with $f=0$ and with coefficients
	\begin{equation}\label{example2}
		\bk(x_1,x_2)=\left[\begin{matrix}
			e^{x_1-x_2}& 0\\
			0 &e^{x_1-x_2}
		\end{matrix}\right], \qquad \bbeta(x_1,x_2)=\bm{0},  \qquad \sigma(x_1,x_2)=0.
	\end{equation}
	We assign only Dirichlet boundary condition $g_D$ on $\partial \Omega$ such that the exact solution is $u(x_1,x_2)=e^{-x_1+x_2}$.
	
	If the solution is smooth, such as in this case, the approximation bounds of Theorem \ref{finaleerror} for the $\N{\cdot}_{dar}$-norm of the quasi-Trefftz DG error hold.
	Here we compute the $L^2$-norm and the $H^1$-norm of the quasi-Trefftz DG error, which are both bounded above by the $\N{\cdot}_{dar}$-norm.
	In Figure \ref{fig:hconv}  we plot the error against the meshsize $h$ for polynomial degree $p=1,2,3,4$.
	
	\begin{figure}[h!]
		\centering
		\begin{subfigure}[b]{0.49\textwidth}
			\centering
			\includegraphics[width=\textwidth]{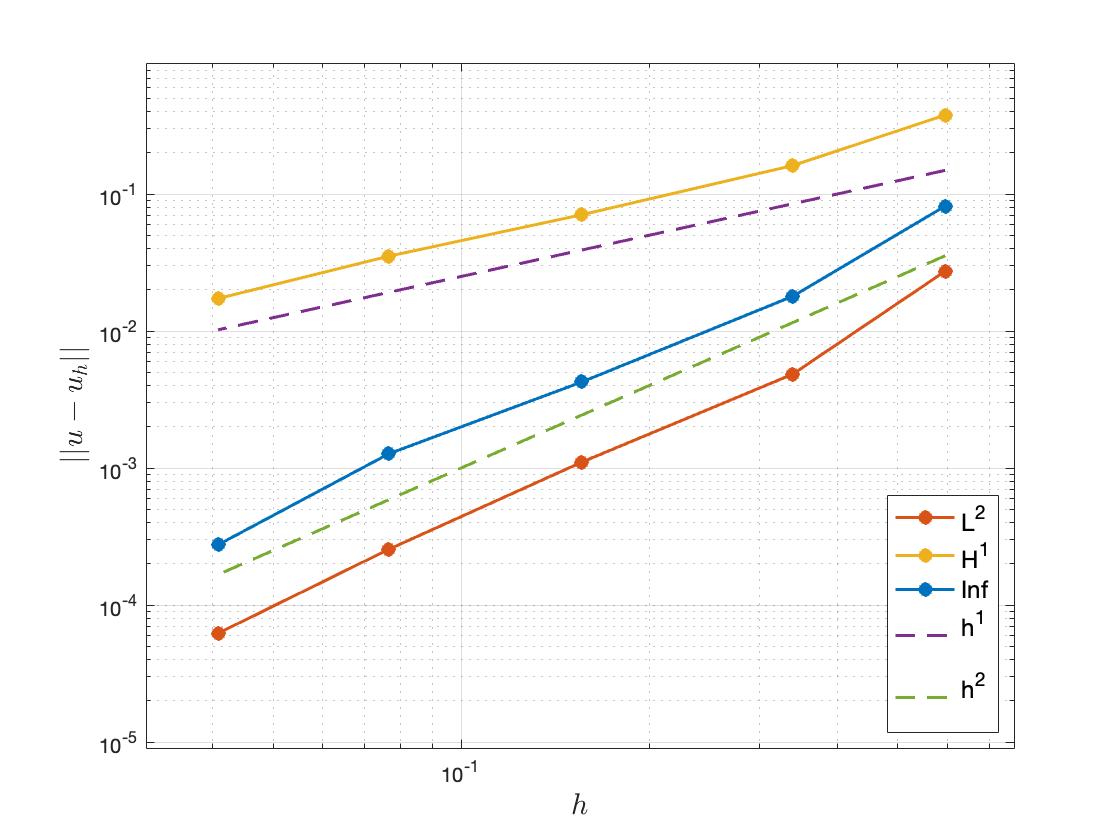}
			\caption{$p=1$}
		\end{subfigure}
		\hfill
		\begin{subfigure}[b]{0.49\textwidth}
			\centering
			\includegraphics[width=\textwidth]{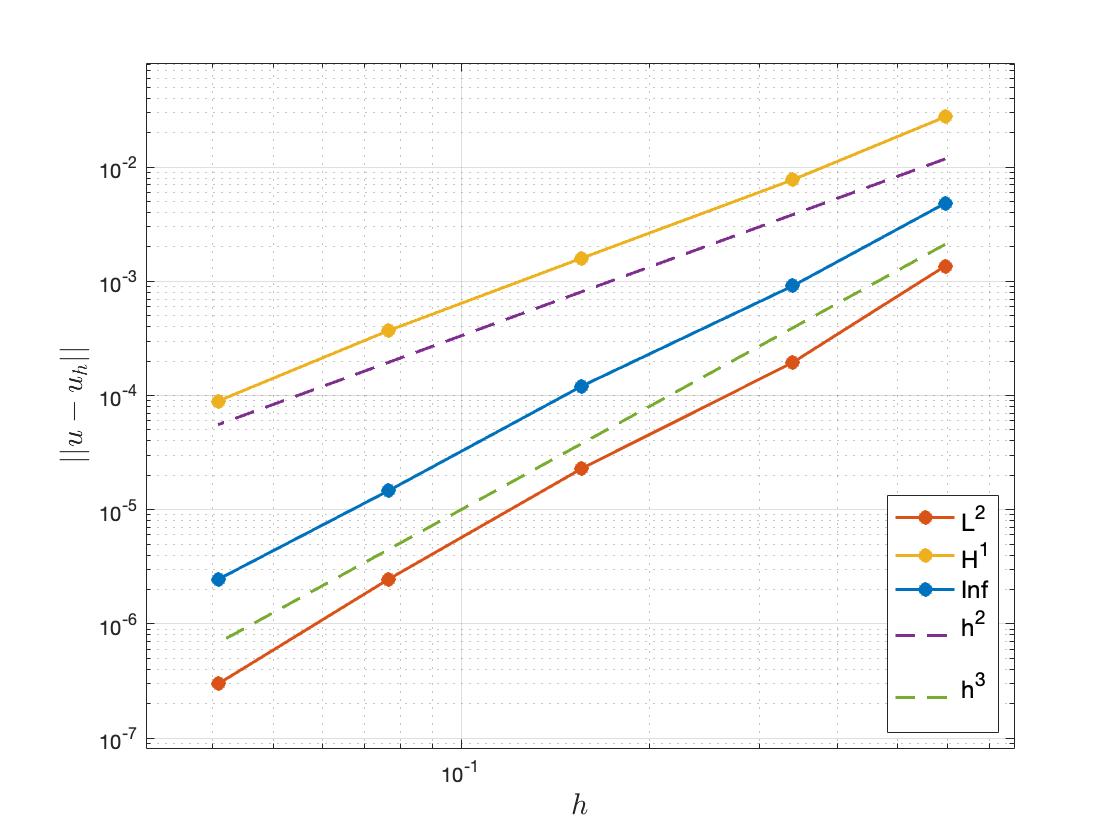}
			\caption{$p=2$}
		\end{subfigure}
		\vfill
		\begin{subfigure}[b]{0.49\textwidth}
			\centering
			\includegraphics[width=\textwidth]{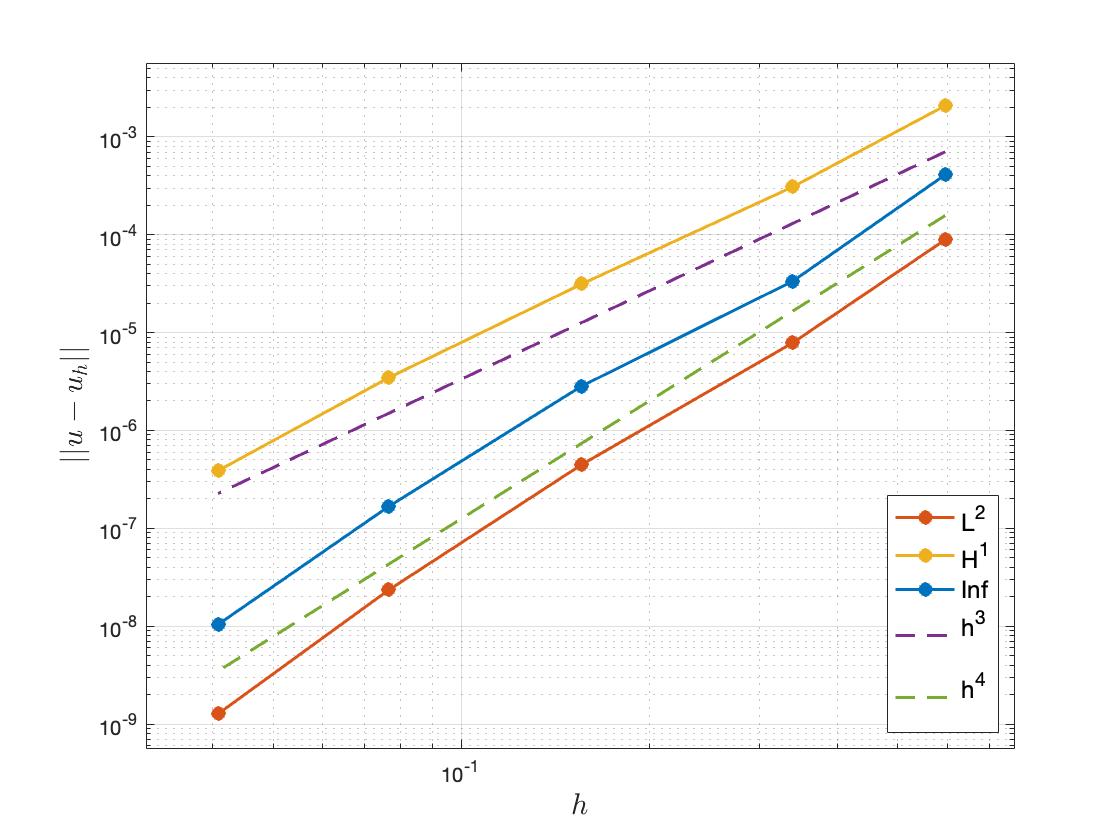}
			\caption{$p=3$}
		\end{subfigure}
		\hfill
		\begin{subfigure}[b]{0.49\textwidth}
			\centering
			\includegraphics[width=\textwidth]{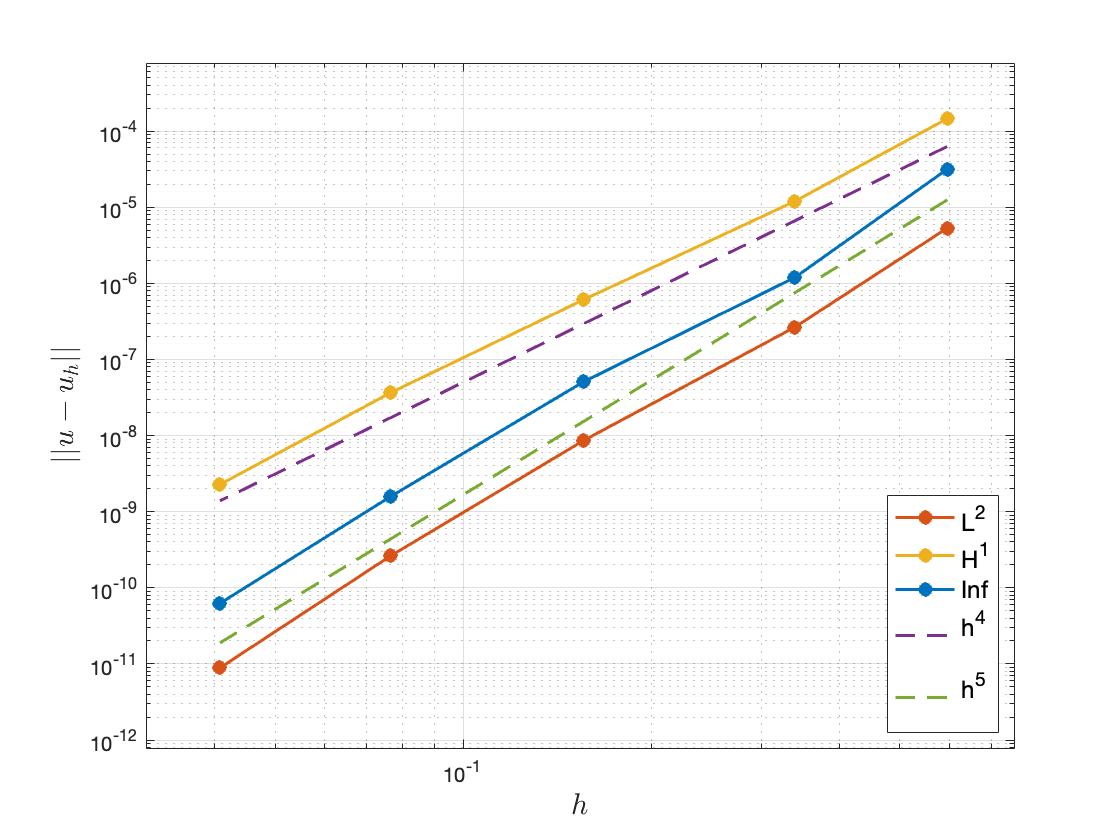}
			\caption{$p=4$}
		\end{subfigure}
		\caption{$h$-convergence for the test case with coefficients \eqref{example2}. The quasi-Trefftz DG error is plotted against the meshsize $h$ for the polynomial degree $p=1,2,3,4$.
			The red lines represent the error measured in the $L^2$-norm, the yellow lines in the $H^1$-norm and the blue lines in the $L^{\infty}$-norm.
			The dots correspond to $h=2^{-1},\dots,2^{-5}$.
		}
		\label{fig:hconv}
	\end{figure}
	
	For each polynomial degree we consider the values $h=2^{-j}$ for $j=1,\dots,5$ and we measure the error in the $L^2$-norm (red line) in the $H^1$-norm (yellow line) and in the $L^{\infty}$-norm (blue line).
	We choose $\gamma=8 p^2$ as penalty parameter and $\epsilon=-1$ as symmetrization parameter.
	We observe rates of convergence of order $\mathcal{O}(h^p)$ (dashed violet line) in the $H^1$-norm and of order  $\mathcal{O}(h^{p+1})$ (dashed green line) in the $L^2$-norm and in the $L^{\infty}$-norm.
	The rates correspond to the theoretical approximation bounds: rates of convergence of order $\mathcal{O}(h^{p})$ for the $\N{\cdot}_{dar}$-norm of the error.
	Hence, we have the optimal approximation properties that one achieve with the full polynomial space.
	
	We now consider a second experiment with a smooth solution in order to test the quasi-Trefftz DG method in a more general case with also Neumann boundary condition and with all the terms of diffusion, advection and reaction.
	As second example, we consider the problem \eqref{eq}-\eqref{Neumann} with $f=0$ and with coefficients
	\begin{equation}\label{smooth2}
		\bk(x_1,x_2)=\left[\begin{matrix}
			x_1+x_2+1 & 0\\
			0 & x_1+x_2+1
		\end{matrix}\right], \quad  \bbeta(x_1,x_2)=\left[\begin{matrix}
			1\\
			0
		\end{matrix}\right],  \quad \sigma(x_1,x_2)=\frac{3}{x_1+x_2+1}.
	\end{equation}
	We assign Dirichlet boundary condition $g_D$ on the inflow $\Gamma_{-}=\{\bx\in\ (0,1)^2 \mid x_1=0\}$ and Neumann boundary condition $g_N$ on the outflow $\Gamma_{+}=\{\bx\in\ (0,1)^2 \mid x_1=1 \text{ or } x_2=0 \text{ or } x_2=1\}$ such that the exact solution is given by  $u(x_1,x_2)=\frac{1}{x_1+x_2+1}$.
	Computing the quasi-Trefftz DG error, we obtain results similar to the ones shown in Figure \ref{fig:hconv} for the previous case.
	
	We now use this problem to compare the three formulations that arise varying the value of the symmetrization parameter $\epsilon$.
	We recall that for $\epsilon=-1$ we have the SIPG method, for $\epsilon=0$ we have the IIPG method and for $\epsilon=+1$ we have the NIPG method, as described in Section \ref{s:Diffvar}.
	In Table \ref{table:1} and in Table \ref{table:2} we show the quasi-Trefftz DG error measured  in the $L^2$-norm and in the $H^1$-norm, respectively.
	We consider the polynomial degrees $ p=2,3,4$ and the penalty parameter $\gamma=8 p^2$.
	For each polynomial degree we compute the error for the three methods for $h=2^{-3},2^{-4},2^{-5}$.
	
	\begingroup 
	\renewcommand{\arraystretch}{1.2}
	\begin{table}[h!]
		\begin{center}
			\begin{tabular}{  |c|c|c c|c c|c c| } 
				\hline
				\multicolumn{8}{|c|}{quasi-Trefftz DG error in $L^2$-norm} \\
				\hline
				&	$h$ & SIPG & rate & IIPG & rate & NIPG & rate \\
				\hline 
				\multirow{3}{4em}{$p=2$, $\gamma=32$ }  & $2^{-3}$ & $ 1.445\times 10^{-5}$   & - &   $3.159\times 10^{-5} $ & -&$ 4.990\times 10^{-5}  $         &- \\
				& $2^{-4}$ & $1.704 \times 10^{-6} $&  3.085     &  $6.488\times 10^{-6}  $  &2.284   & $1.105\times 10^{-5} $&    2.175   \\
				& $2^{-5}$ & $ 1.991\times 10^{-7}  $  &  3.098      & $    1.491\times 10^{-6}  $   &  2.122     & $  2.616\times 10^{-6}$  &     2.079      \\
				\hline
				\multirow{3}{4em}{$p=3$, $\gamma=72$ }  & $2^{-3}$ &  $8.496\times 10^{-7} $&-  & $ 7.530\times 10^{-7}$ &- & $7.183 \times 10^{-7}$ &- \\
				& $2^{-4}$ & $4.954\times 10^{-8} $& 4.100   & $4.276 \times 10^{-8}$& 4.138  &$4.044 \times 10^{-8}$&  4.151   \\
				& $2^{-5}$ &$2.947 \times 10^{-9}$  &  4.071    & $ 2.479\times 10^{-9} $& 4.108   &$ 2.289\times 10^{-9}$ &4.143  \\
				\hline
				\multirow{3}{4em}{$p=4$, $\gamma=128$ }  & $2^{-3}$ &$ 5.753\times 10^{-8}$  & - & $5.929 \times 10^{-8} $  & -&  $6.578\times 10^{-8}  $   &- \\
				& $2^{-4}$ &$1.860\times 10^{-9} $ &  4.951      &  $2.104\times 10^{-9}  $&4.817& $2.681\times 10^{-9}  $  &  4.617    \\
				& $2^{-5}$ & $5.991\times 10^{-11}  $  &  4.956         &$   8.486\times 10^{-11}$ & 4.632      &$ 1.296\times 10^{-10}   $    &      4.371   \\
				\hline
			\end{tabular}
		\end{center}
		\caption{Comparison of the three formulation SIPG, IIPG and NIPG in terms of $L^2$ error for polynomial degree $p=2,3,4$ for the example \eqref{smooth2}.}
		\label{table:1}
	\end{table}
	\endgroup
	
	All the three methods present optimal convergence rates of order $\mathcal{O}(h^p)$ in the $H^1$-norm, as shown in  Table  \ref{table:2}. 
	Observing the rates in the $L^2$-norm in Table \ref{table:1}, we see that the SIPG method has convergence rates of order 
	$\mathcal{O}(h^{p+1})$ while the IIPG and the NIPG methods present rates of order  $\mathcal{O}(h^{p+1})$ if the polynomial degree is odd and of order  between $\mathcal{O}(h^{p})$ and $\mathcal{O}(h^{p+1})$ if the polynomial degree is even.
	This is also observed numerically in \cite[p. 57]{riviere2008discontinuous} for a problem with only the diffusion term and  in \cite[Theorem 2.14]{riviere2008discontinuous} is proved that the $L^2$ optimal error estimate holds for the SIPG method while the others two have suboptimal rates.
	For this reason and also since in the problems with only diffusion it preserves the symmetry of the original problem, the SIPG formulation is preferred.
	\begingroup 
	\renewcommand{\arraystretch}{1.2}
	\begin{table}[h!]
		\begin{center}
			\begin{tabular}{   |c|c|c c|c c|c c| } 
				\hline
				\multicolumn{8}{|c|}{quasi-Trefftz DG error in $H^1$-norm} \\
				\hline
				&	$h$ & SIPG & rate & IIPG & rate & NIPG & rate \\
				\hline
				\multirow{3}{4em}{$p=2$, $\gamma=32$ }  & $2^{-3}$ & 	$  1.145\times 10^{-3} $   & - &  $1.113\times 10^{-3}  $  & -&$1.096\times 10^{-3} $     &- \\
				& $2^{-4}$ &$ 2.785\times 10^{-4} $  &	2.039     &  $ 2.722\times 10^{-4} $ &  2.032  &$2.687\times 10^{-4}$&     2.028 \\
				& $2^{-5}$ & $6.640\times 10^{-5} $    &	 2.069     &     $6.528\times 10^{-5}  $     &   2.060    & $6.467\times 10^{-5}$&     2.055   \\
				\hline
				\multirow{3}{4em}{$p=3$, $\gamma=72$ }  & $2^{-3}$ & $6.721\times 10^{-5}  $& - &   $6.587\times 10^{-5}$   &- & $6.501\times 10^{-5}$ &- \\
				& $2^{-4}$ &$ 7.992\times 10^{-6} $ & 3.072      &  $7.882\times 10^{-6}$   &      3.063      &$7.813\times 10^{-6}$&  3.057   \\
				& $2^{-5}$ 	&   $ 9.229\times 10^{-7} $   & 3.114 &   $9.157\times 10^{-7}  $    &    3.106  &    $ 9.111\times 10^{-7} $ &    3.100  \\
				\hline
				\multirow{3}{4em}{$p=4$, $\gamma=128$ }  & $2^{-3}$ & 		$4.770\times 10^{-6} $  &  -&  $4.652\times 10^{-6}   $   & -&$  4.562\times 10^{-6} $   &- \\
				& $2^{-4}$ &$	2.978\times 10^{-7} $ &  4.002     &   $2.909\times 10^{-7}  $  &3.9995 &$2.854\times 10^{-7} $   &    3.999   \\
				& $2^{-5}$ &$ 1.819\times 10^{-8} $  &   4.033          &    $  1.777 \times 10^{-8}  $   & 4.033   & $1.743\times 10^{-8}  $ &   4.034    \\
				\hline
			\end{tabular}
		\end{center}
		\caption{Comparison of the three formulation SIPG, IIPG and NIPG in terms of $H^1$ error for polynomial degree $p=2,3,4$ for the example  \eqref{smooth2}.}
		\label{table:2}
	\end{table}
	\endgroup
	
	\section{\textit{p}-convergence and number of DOFs}\label{p-conv}
	We now compare the quasi-Trefftz space $\QT^p(\calT_h)$ with the full polynomial space $\mathbb{P}_2^p(\calT_h)$ in terms of $p$-convergence and of number of degrees of freedom (DOFs).
	We consider the first example in Section \ref{h-conv} with coefficients \eqref{example2}, we fix the meshsize $h=2^{-4}$ and choose the parameters $\gamma=8 p^2$ and $\epsilon=-1$.
	
	\begin{figure}[h!]
		\centering
		\begin{subfigure}[b]{0.49\textwidth}
			\centering
			\includegraphics[width=\textwidth]{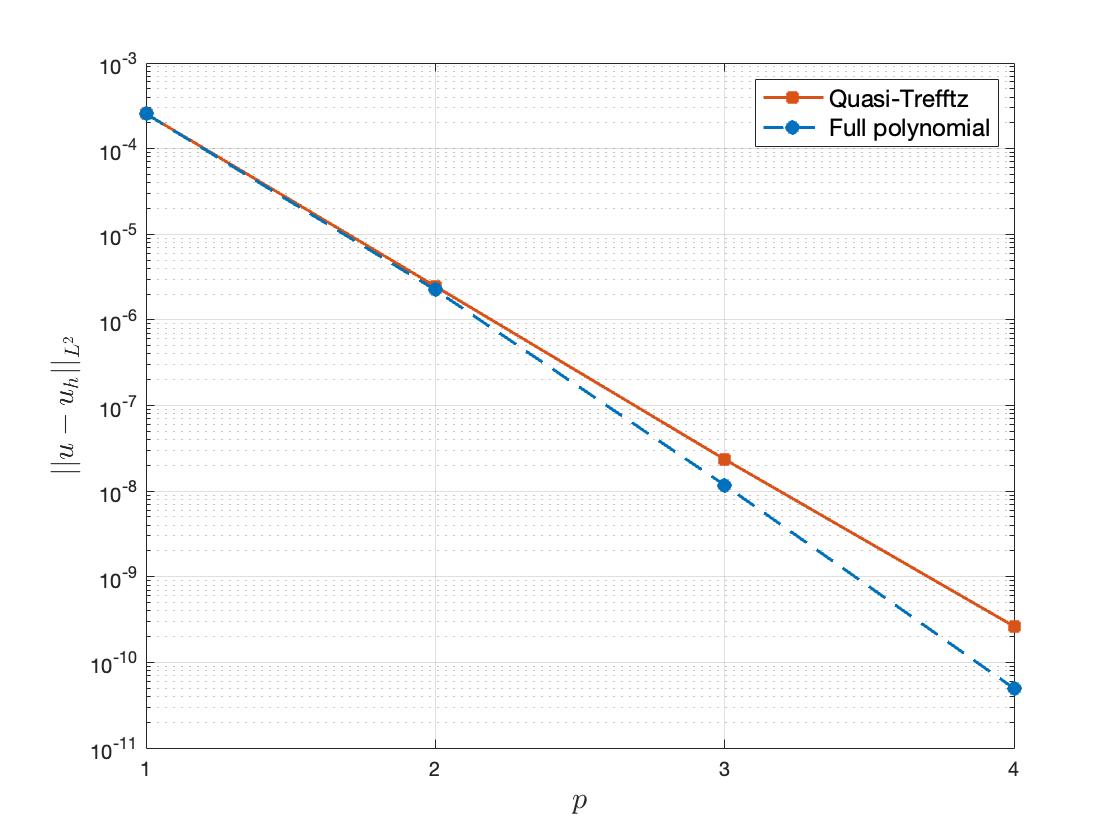}
			\caption{Error in norm $L^2$}
		\end{subfigure}
		\hfill
		\hfill
		\begin{subfigure}[b]{0.49\textwidth}
			\centering
			\includegraphics[width=\textwidth]{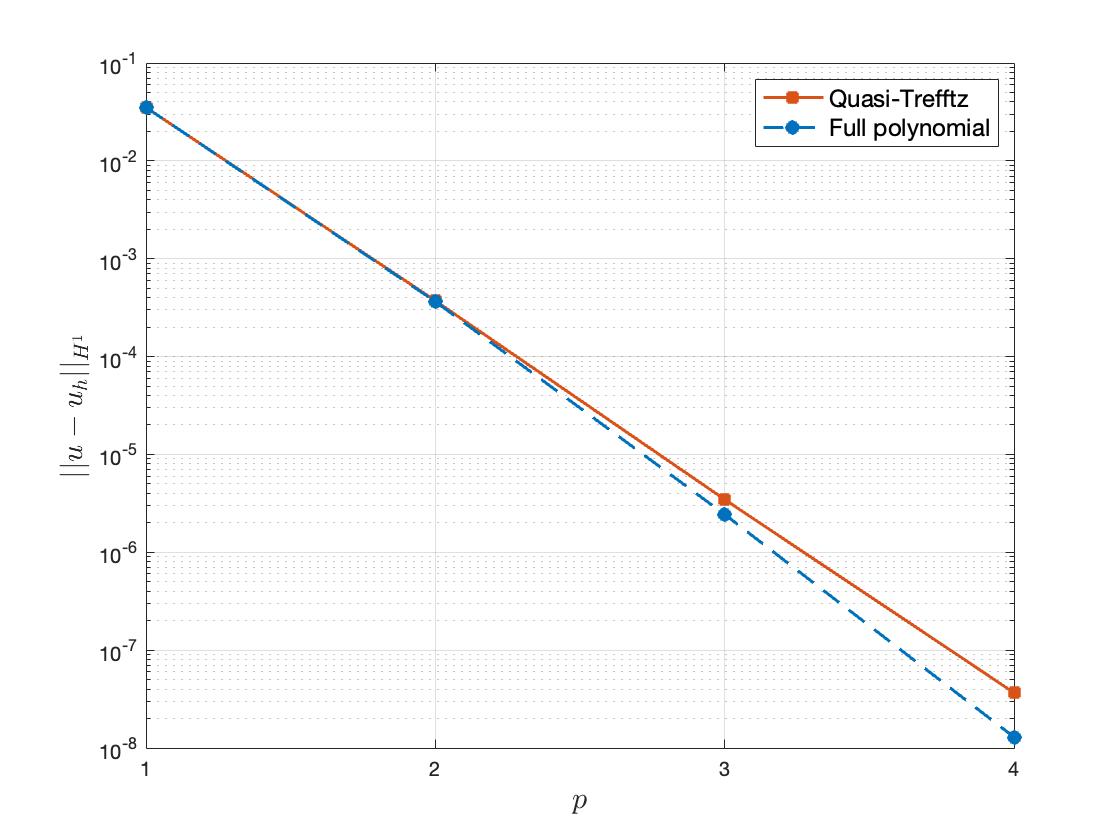}
			\caption{Error in norm $H^1$}
		\end{subfigure}
		\caption{$p$-convergence for the problem with coefficients \eqref{example2}. The red lines represent the DG error for the $\QT^p(\calT_h)$ while the dashed blue lines  the DG error for $\mathbb{P}^p_2(\calT_h)$. The error is measured  in the  $L^2$-norm (a) and in the  $H^1$-norm (b) and the meshsize is $h=2^{-4}$.}
		\label{fig:pconvergence}
	\end{figure}
	
	In Figure \ref{fig:pconvergence} we  plot the error, in the $L^2$-norm (a) and in the $H^1$-norm (b), obtained using the quasi-Trefftz space (red line) and the full polynomial space (dashed blue line) for polynomial degree $p=1,2,3, 4$. 
	These plots show that the quasi-Trefftz space, as the full polynomial space, leads to exponential convergence in terms of polynomial degree $p$.
	However, we observe that the full polynomial space presents a slightly smaller error.
	The advantages of the quasi-Trefftz space are shown in Figure \ref{fig:dofs}, where the error is compared in terms of global number of degrees of freedom.
	We clearly see that, for comparable number of degrees of freedom, with the quasi-Trefftz space we can reach a higher accuracy, especially for high polynomial degrees $p$.
	\begin{figure}[h!]
		\centering
		\begin{subfigure}[b]{0.49\textwidth}
			\centering
			\includegraphics[width=\textwidth]{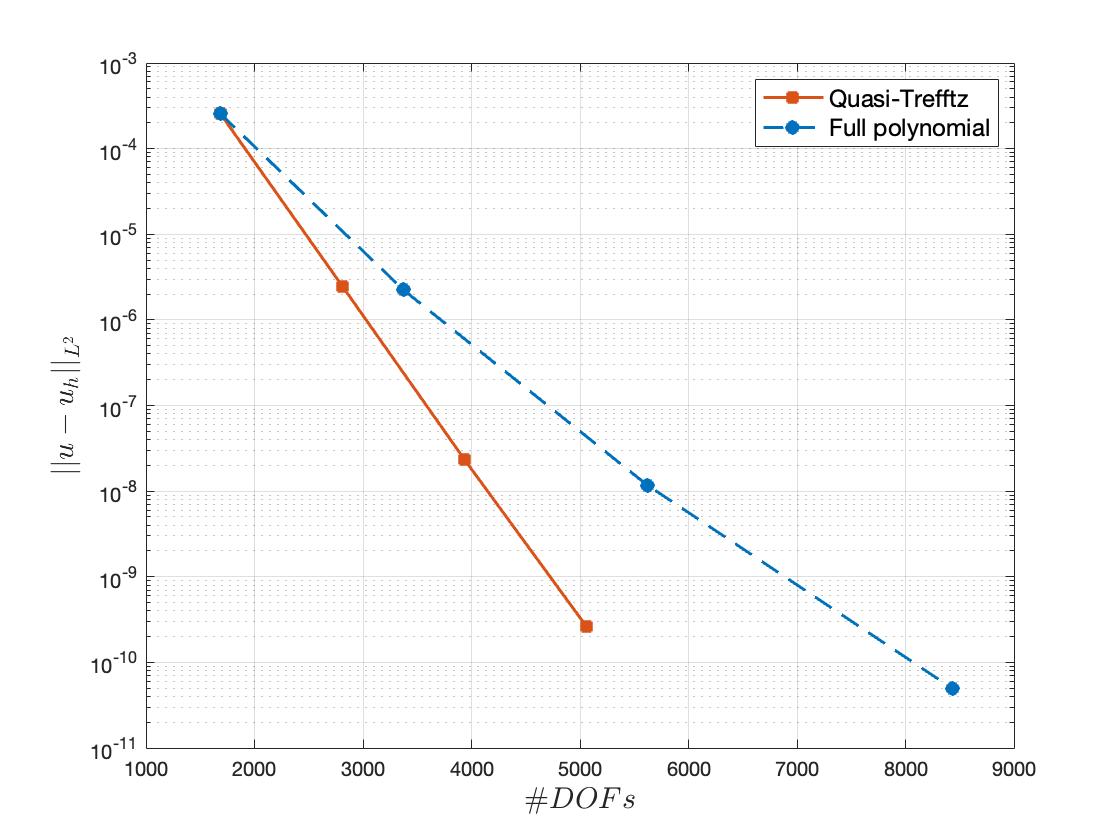}
			\caption{Error in norm $L^2$}
		\end{subfigure}
		\hfill
		\hfill
		\begin{subfigure}[b]{0.49\textwidth}
			\centering
			\includegraphics[width=\textwidth]{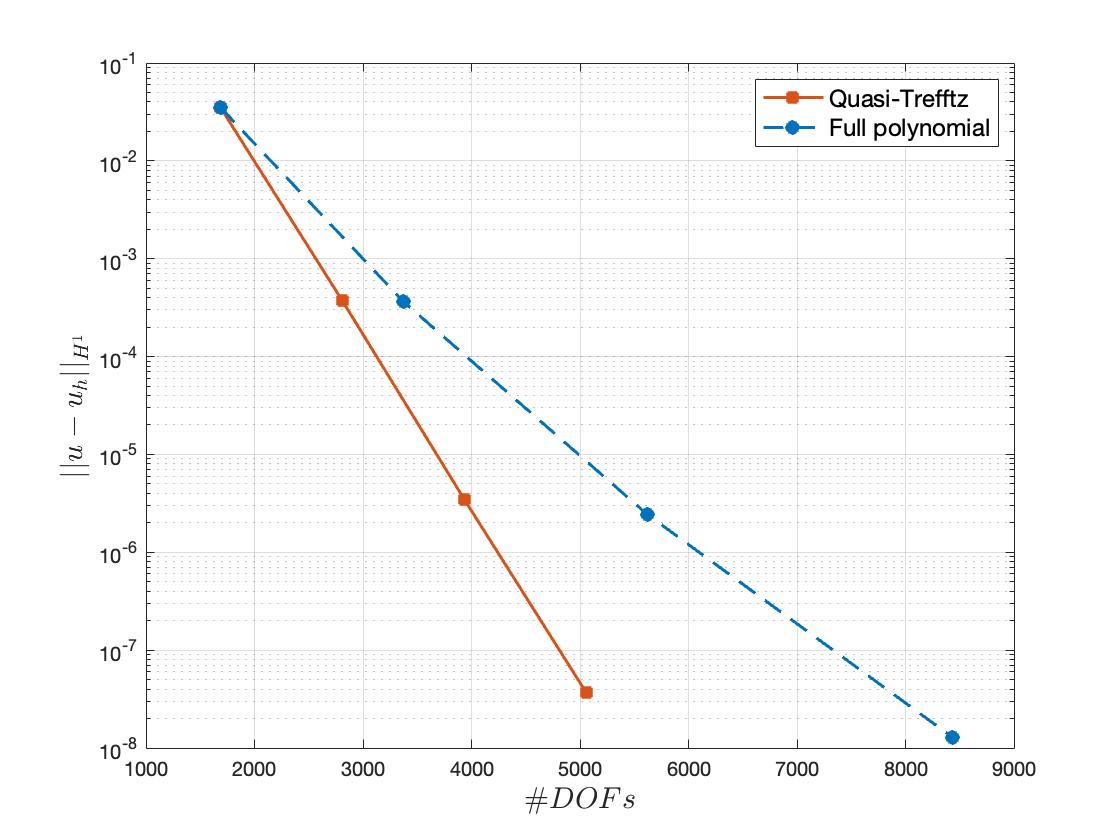}
			\caption{Error in norm $H^1$}
		\end{subfigure}
		\caption{Comparison between  $\QT^p(\calT_h)$ and $\mathbb{P}_2^p(\calT_h)$ in terms of degrees of freedom for the problem with coefficients \eqref{example2} using $h=2^{-4}$.
			The dots correspond to $p=1,2,3,4$ and the errors are measured  in the $L^2$-norm (a) and in the $H^1$-norm (b). These plots show that with the quasi-Trefftz space we have the same level of accuracy for much fewer degrees of freedom.
		}
		\label{fig:dofs}
	\end{figure}
	\vspace{-1.3cm}
	\section{Advection-dominated and reaction-dominated problems}\label{dom}
	In this section we consider advection-dominated and reaction-dominated problems in order to illustrate the approximate solutions obtained with the quasi-Trefftz DG method in these particular cases.
	We recall that the exact solution of this kind of problems can present boundary and internal layers, which are narrow regions where the solution and its gradient change rapidly.
	For this reason the numerical treatment of such problems can be difficult. It is known that, if we use the standard Finite Element Method, the approximate solution, in general, presents spurious oscillations and can be completely different from the exact one.
	
	First, we consider the problem \eqref{eq}-\eqref{Neumann} with $f=0$ and with coefficients  
	\begin{equation}\label{coeffdom}
		\bk(x_1,x_2)=\nu Id,\qquad \bbeta(x_1,x_2)=\left[\begin{matrix}
			x_2+1\\
			-x_1+2
		\end{matrix}\right],\qquad\sigma(x_1,x_2)=0,
	\end{equation}
	for different values of the diffusion parameter
	$\nu=10^{-j}$ for $j=1,\dots,4$.
	We impose the Dirichlet boundary condition on the inflow $\Gamma_{-}=\{\bx\in\ \partial\Omega \mid x_1=0\text{ or } x_2=0\}$:
	\vspace{0.5cm}
	
	\begin{equation}\label{DIRICHLETADVECTIONDOM}
		g_D(x_1,x_2)=	\begin{cases}
			1 \quad \text{ on $\{x_1=0, 0\leq x_2<1\}$,}\\
			1 \quad \text{ on $ \{x_2=0, 0\leq x_1 \leq \frac{1}{3}\}$,}\\
			0 \quad  \text{ on $ \{x_2=0, \frac{1}{3}< x_1 <1\}$,} \\
		\end{cases}
	\end{equation}
	and the Neumann boundary condition $g_N=0$ on the outflow $\Gamma_{+}=\{\bx\in\ \partial \Omega \mid x_1=1 \text{ or } x_2=1\}$.
	We note that, as the value of the diffusion parameter $\nu$ decreases, the advection term $\bbeta$ becomes dominant and for $0 < \nu \ll 1$ the exact solution presents an internal layer.
	
	In Figure \ref{advectiondomNeumann}  we plot the quasi-Trefftz DG approximate solutions obtained using the meshsize $h=2^{-4}$ and the polynomial degree $p=3$.
	
	\begin{figure}[h!]
		\centering
		\begin{subfigure}[b]{0.49\textwidth}
			\centering
			\includegraphics[width=\textwidth]{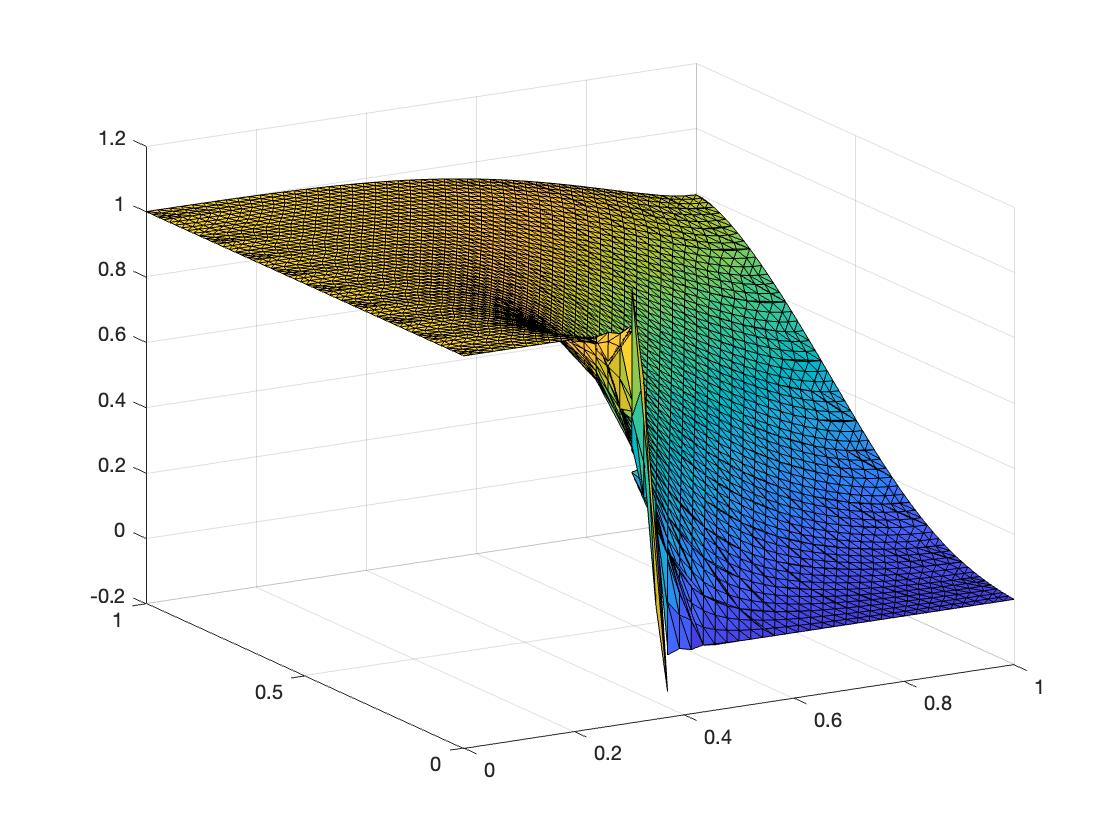}
			\caption{$\nu=10^{-1}$, $\gamma=10$}
		\end{subfigure}
		\hfill
		\begin{subfigure}[b]{0.49\textwidth}
			\centering
			\includegraphics[width=\textwidth]{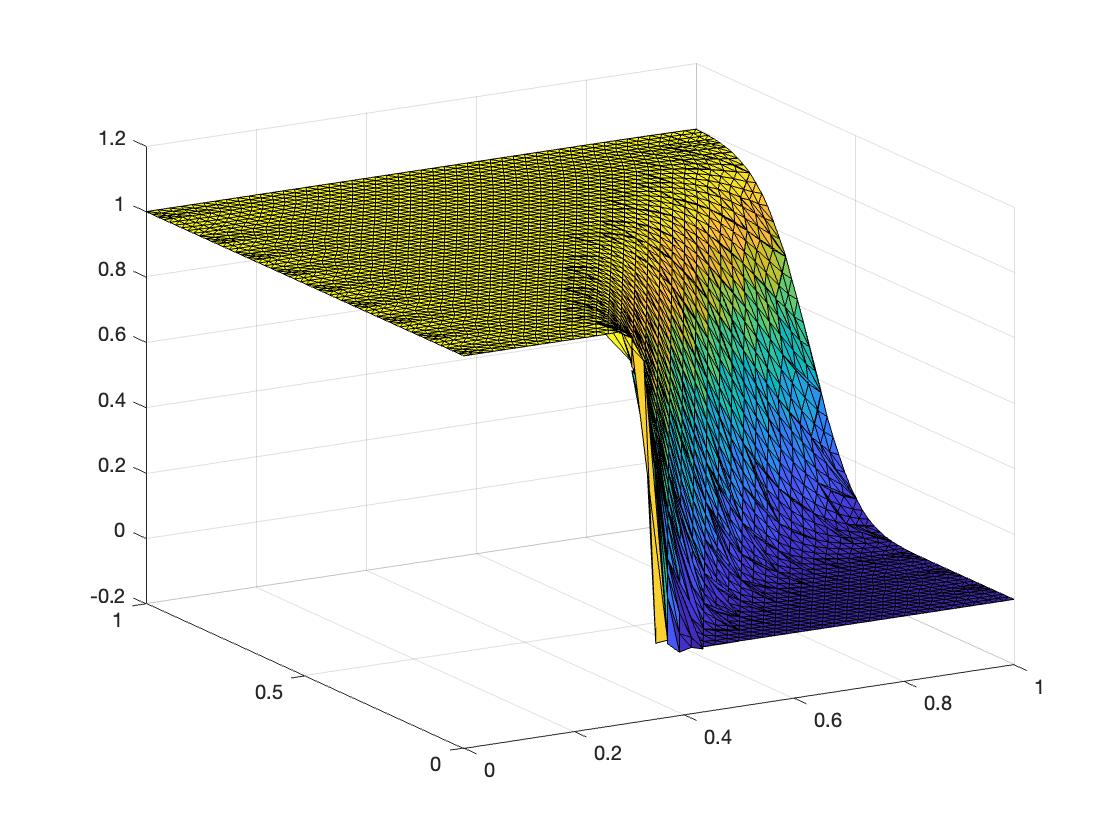}
			\caption{$\nu=10^{-2}$, $\gamma=1$}
		\end{subfigure}
		\vfill
		\begin{subfigure}[b]{0.49\textwidth}
			\centering
			\includegraphics[width=\textwidth]{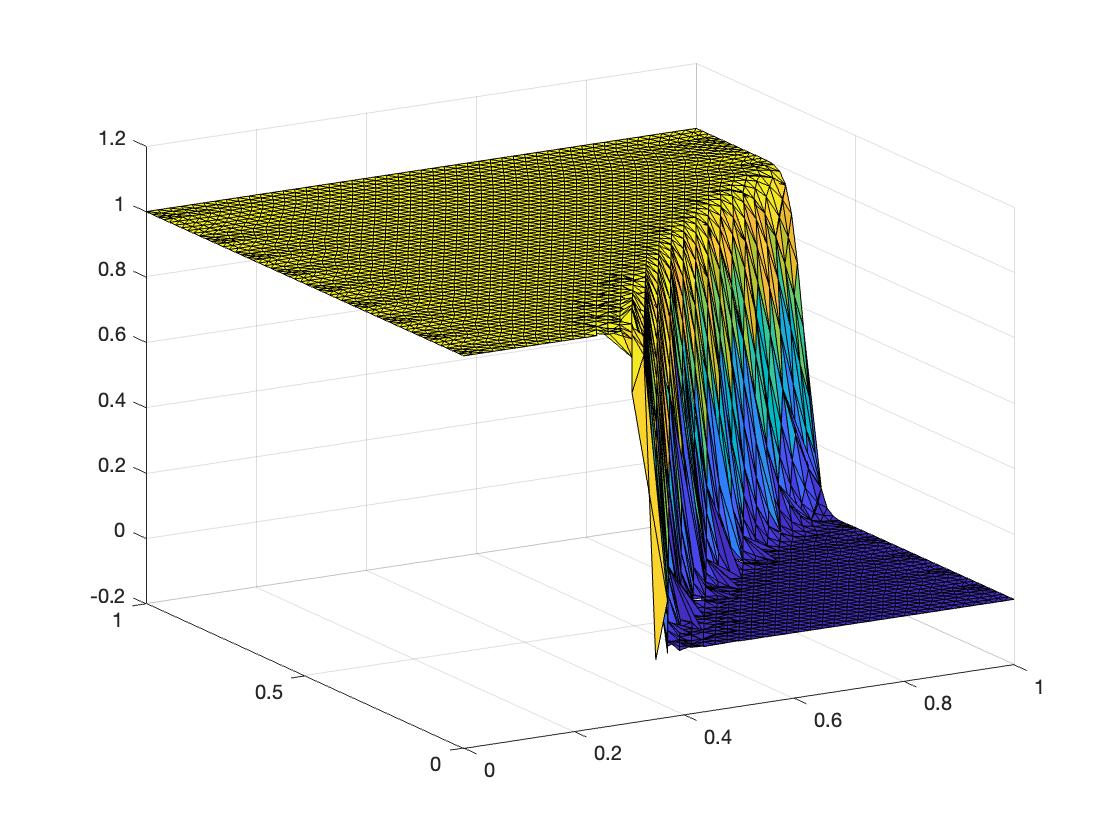}
			\caption{$\nu=10^{-3}$, $\gamma=10^{-1}$}
		\end{subfigure}
		\hfill
		\begin{subfigure}[b]{0.49\textwidth}
			\centering
			\includegraphics[width=\textwidth]{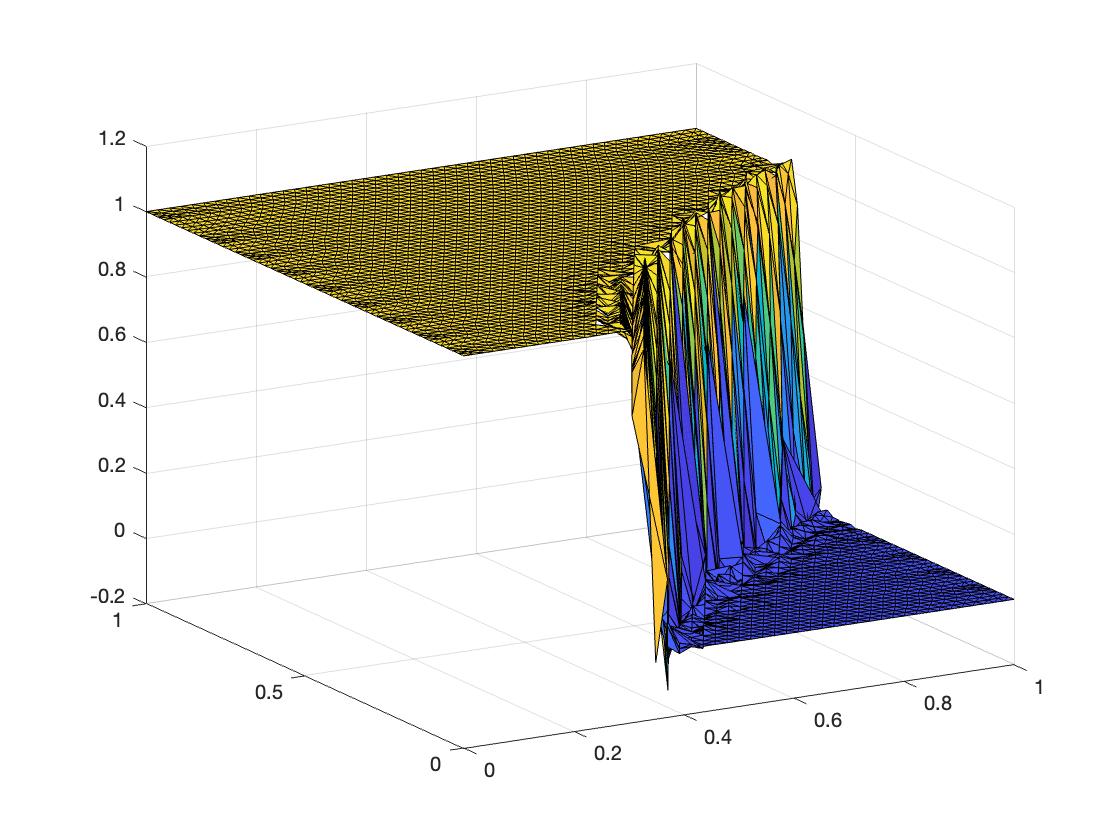}
			\caption{$\nu=10^{-4}$, $\gamma=10^{-2}$}
		\end{subfigure}
		\caption{Approximate solutions of the problem with coefficients \eqref{coeffdom}, obtained using the meshsize $h=2^{-4}$ and the quasi-Trefftz space with polynomial degree $p=3$.}
		\label{advectiondomNeumann}
	\end{figure}
	
	We fix the symmetrization parameter $\epsilon=-1$ and for each value of $\nu=10^{-1},\dots,10^{-4}$ we choose the penalty parameter $\gamma=10,1,10^{-1},10^{-2}$, respectively.
	These choices of the penalty parameter are made taking into consideration the two following aspects. 
	On the one hand, we cannot choose a too small $\gamma$ because 
	from the theory of Section \ref{coercivitydiscrete} we know that, in order to have the discrete coercivity,  the penalty parameter needs to be greater than  $\gamma_{\epsilon}$, defined in \eqref{gammaeps}, which in this case is equal to $3 \nu C_{tr}$, with $C_{tr}$ defined in \eqref{discretetraceinequality}.
	On the other hand, if the penalty parameter is too large we are imposing the continuity across the mesh facets and we obtain similar results to ones of the standard Galerkin method, which, as previously stated, it does not perform well in the advection-dominated regime due to the spurious oscillations.
	We notice that we do not have spurious oscillations spread over all the domain but the flat part of the solution is well approximated and 
	the discontinuity at the boundary and the internal layer are well captured with small oscillations.

	We now consider the same example with coefficients \eqref{coeffdom} but with Dirichlet boundary condition \eqref{DIRICHLETADVECTIONDOM} on the  inflow $\Gamma_{-}$ and homogeneous Dirichlet condition on the outflow $\Gamma_{+}$.
	This is done in order to assess the behaviour of the quasi-Trefftz DG method in a test case not covered by the theory, since the assumption \eqref{gammaDgammaN} is not valid. In this experiment in the advection-dominated regime the exact solution presents a boundary layer on the edge $\{x_2=0\}$ and also an internal layer.
	
	\begin{figure}[h!]
		\centering
		\begin{subfigure}[b]{0.49\textwidth}
			\centering
			\includegraphics[width=\textwidth]{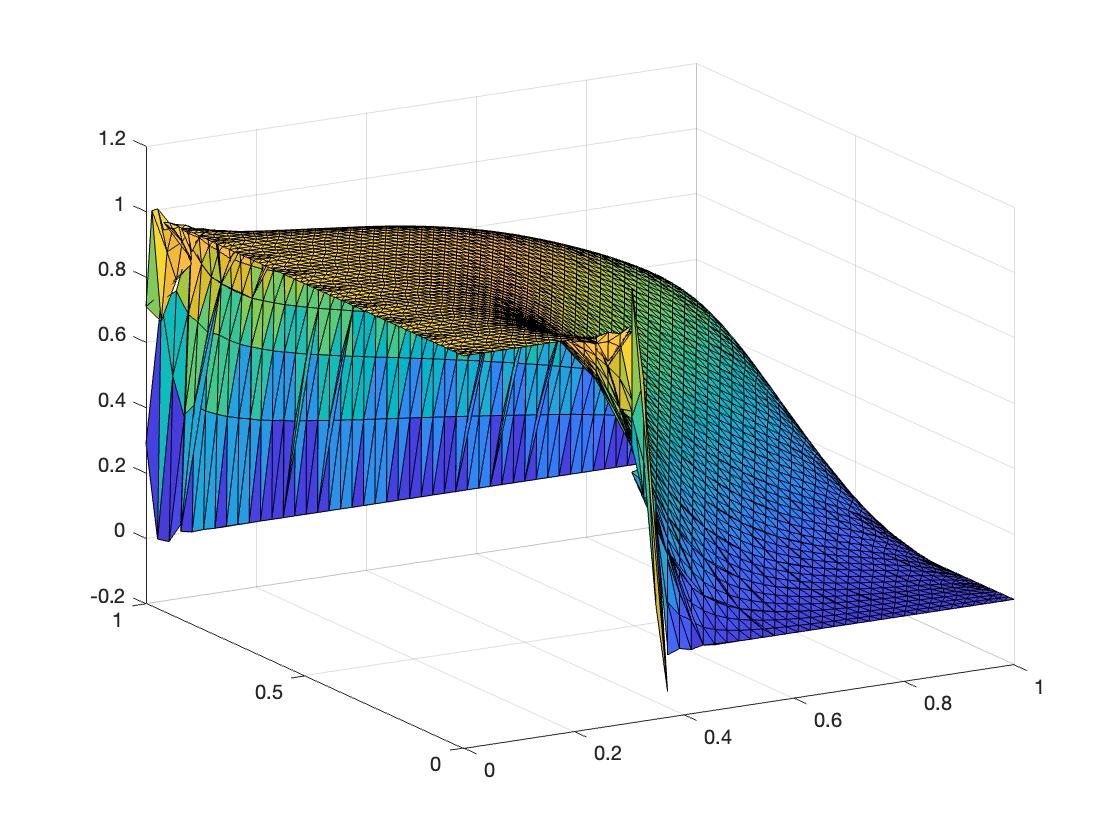}
			\caption{$\nu=10^{-1}$, $\gamma=10$}
		\end{subfigure}
		\hfill
		\begin{subfigure}[b]{0.49\textwidth}
			\centering
			\includegraphics[width=\textwidth]{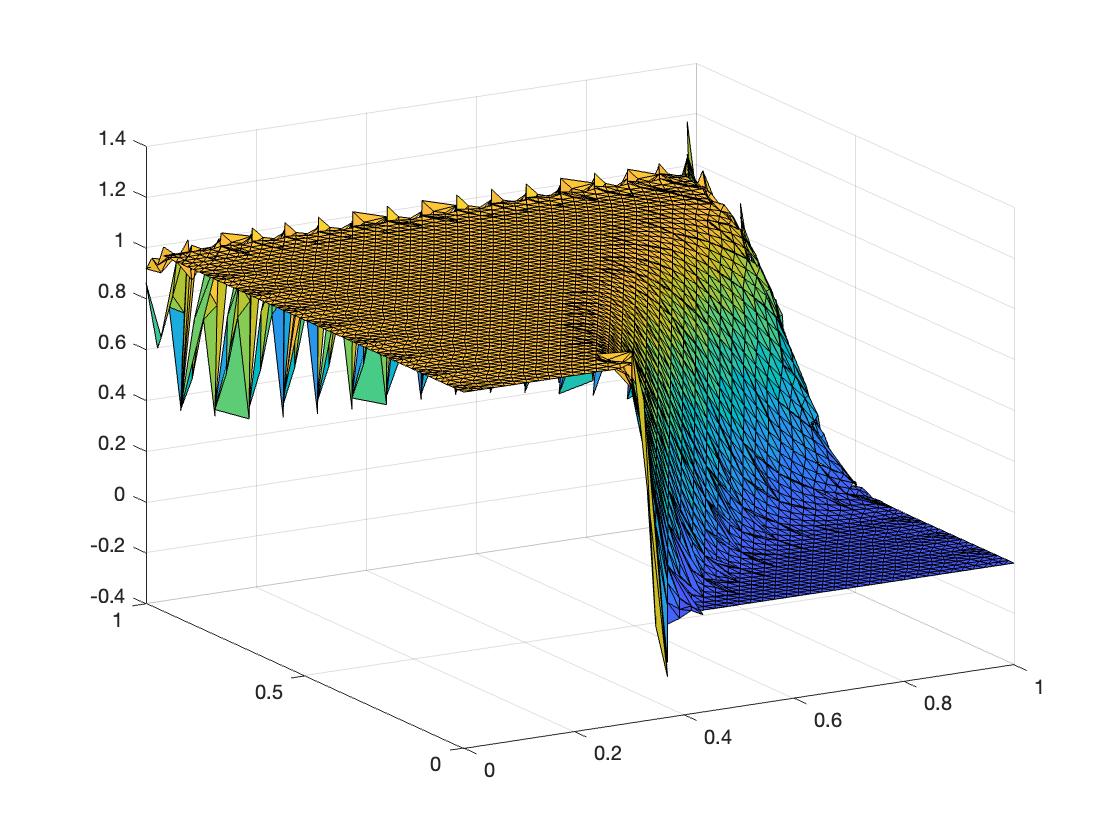}
			\caption{$\nu=10^{-2}$, $\gamma=10^{-1}$}
		\end{subfigure}
		\vfill
		\begin{subfigure}[b]{0.49\textwidth}
			\centering
			\includegraphics[width=\textwidth]{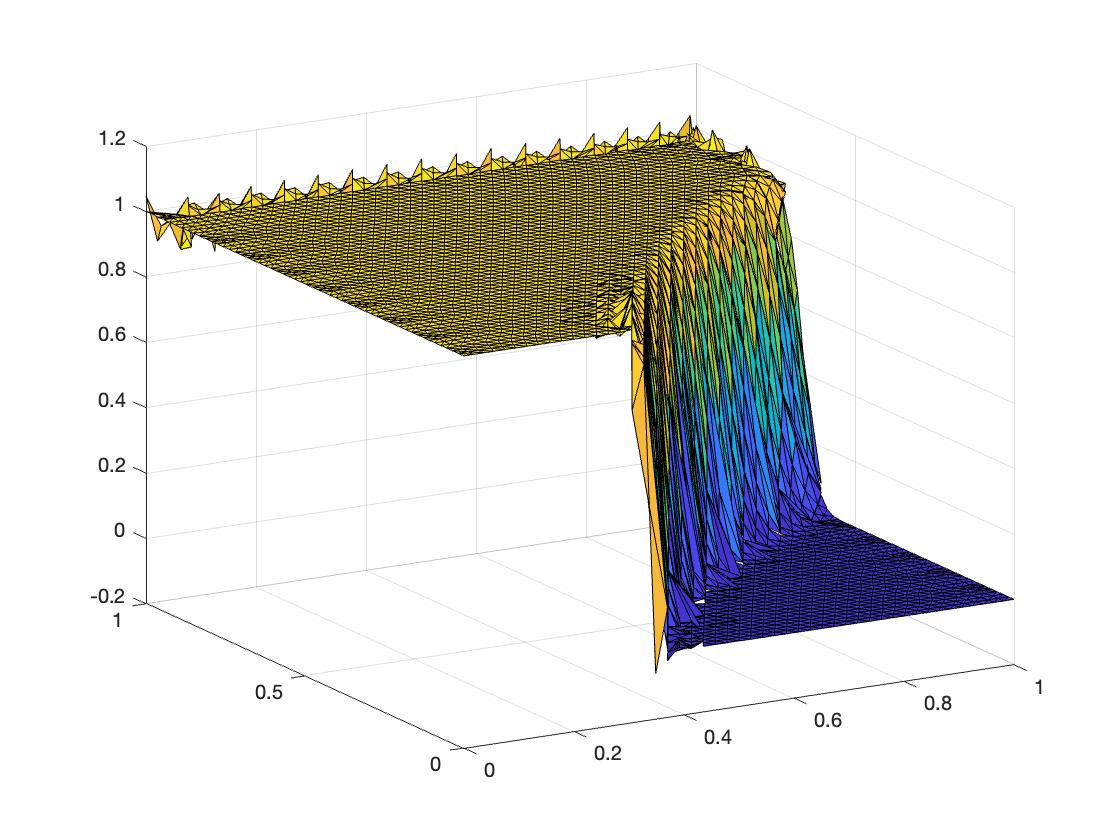}
			\caption{$\nu=10^{-3}$, $\gamma=10^{-2}$}
		\end{subfigure}
		\hfill
		\begin{subfigure}[b]{0.49\textwidth}
			\centering
			\includegraphics[width=\textwidth]{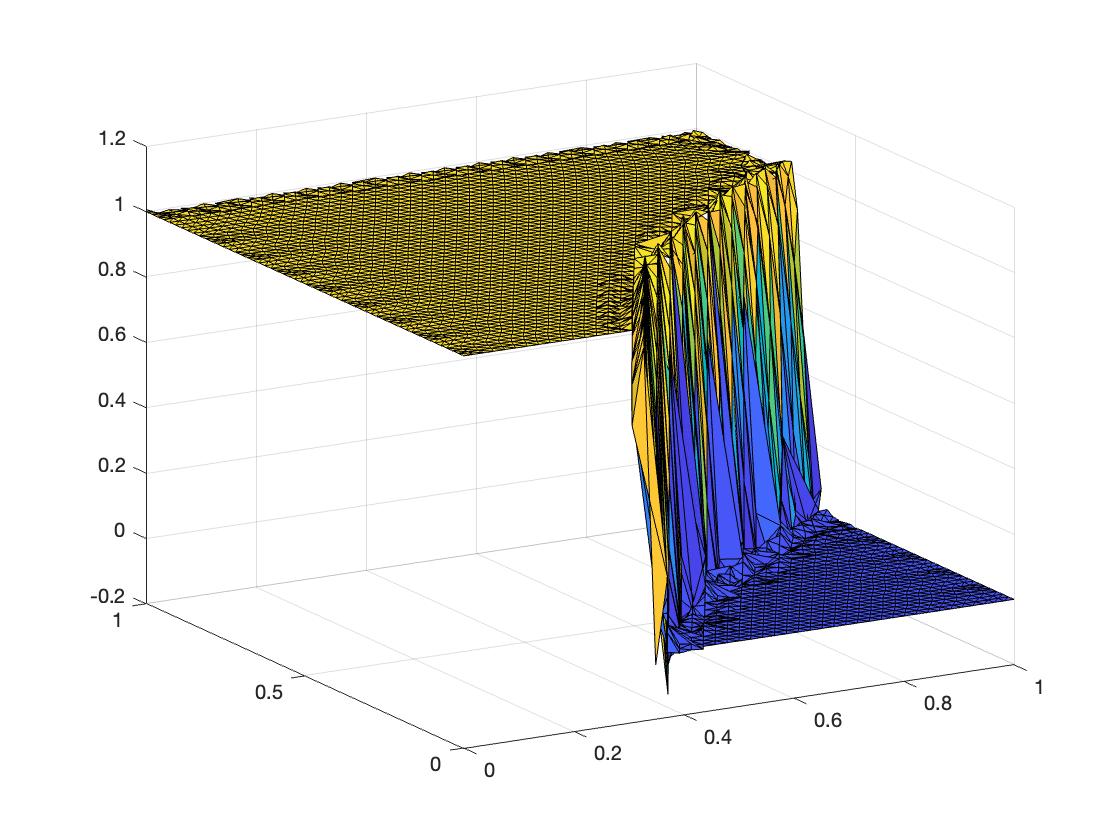}
			\caption{$\nu=10^{-4}$, $\gamma=10^{-3}$}
		\end{subfigure}
		\caption{Approximate solutions of the problem with coefficients \eqref{coeffdom} with only Dirichlet boundary condition, obtained using the meshsize $h=2^{-4}$ and the quasi-Trefftz space with polynomial degree $p=3$.}
		\label{advectiondom}
	\end{figure}
	
	In Figure \ref{advectiondom} we plot the quasi-Trefftz DG approximate solutions obtained using the meshsize $h=2^{-4}$ and the polynomial degree $p=3$.
	We fix  the symmetrization parameter $\epsilon=-1$ and for each value of $\nu=10^{-1},\dots,10^{-4}$ we choose the penalty parameter $\gamma=10,10^{-1},10^{-2},10^{-3}$, respectively.
	The internal layer is well captured with only very small oscillations. The boundary layer, instead, is completely missed in the advection-dominated regime but this is what we expect when using DG methods due to the weak enforcement of boundary conditions \cite{ayuso2009discontinuous}.
	In fact, we obtain very similar results if we use the DG scheme with the classical full polynomial space.
	
	We now consider the problem \eqref{eq}-\eqref{Neumann} with $f=0$ and with coefficients  
	\begin{equation}\label{coeffreac}
		\bk(x_1,x_2)=\nu Id,\qquad \bbeta(x_1,x_2)=\bm{0},\qquad\sigma(x_1,x_2)=x_1+x_2+1,
	\end{equation}
	for different values of the diffusion parameter
	$\nu=10^{-j}$ for $j=1,\dots,4$.
	We assign only Dirichlet boundary condition  $g_D=1$ on $\partial \Omega$.
	In this case, as the value of the diffusion parameter $\nu$ decreases, the reaction term $\sigma$ becomes dominant and for $0 < \nu \ll 1$ the exact solution presents boundary layers on $\partial \Omega$.
	In Figure \ref{advectiondom} we plot the quasi-Trefftz DG approximate solutions obtained using the meshsize $h=2^{-4}$ and the polynomial degree $p=3$.
	We fix the symmetrization parameter $\epsilon=-1$ and for each value of $\nu=10^{-1},\dots,10^{-4}$ we choose the penalty parameter $\gamma=10,1,10^{-1},10^{-2}$, respectively.
	As in the examples before, the flat part of the solution is well approximated and the boundary layers are captured with small oscillations.
	\begin{figure}[h!]
		\centering
		\begin{subfigure}[b]{0.49\textwidth}
			\centering
			\includegraphics[width=\textwidth]{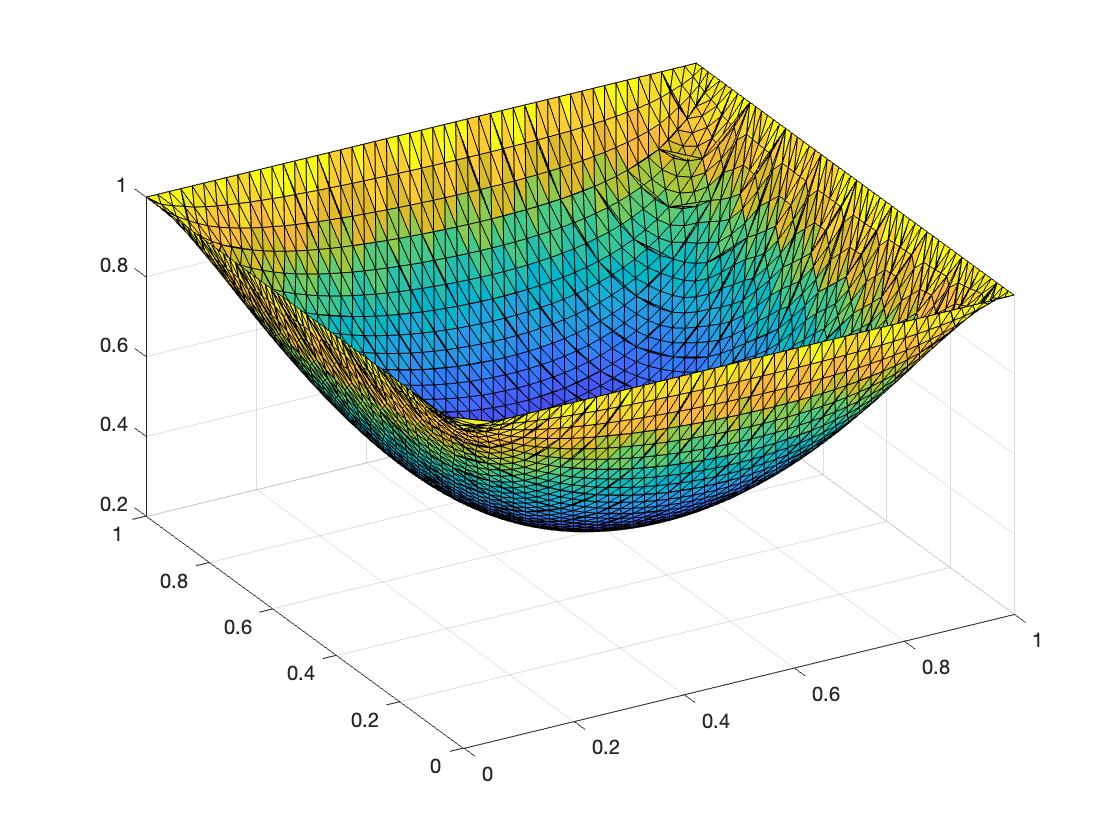}
			\caption{$\nu=10^{-1}$, $\gamma=10$}
		\end{subfigure}
		\hfill
		\begin{subfigure}[b]{0.49\textwidth}
			\centering
			\includegraphics[width=\textwidth]{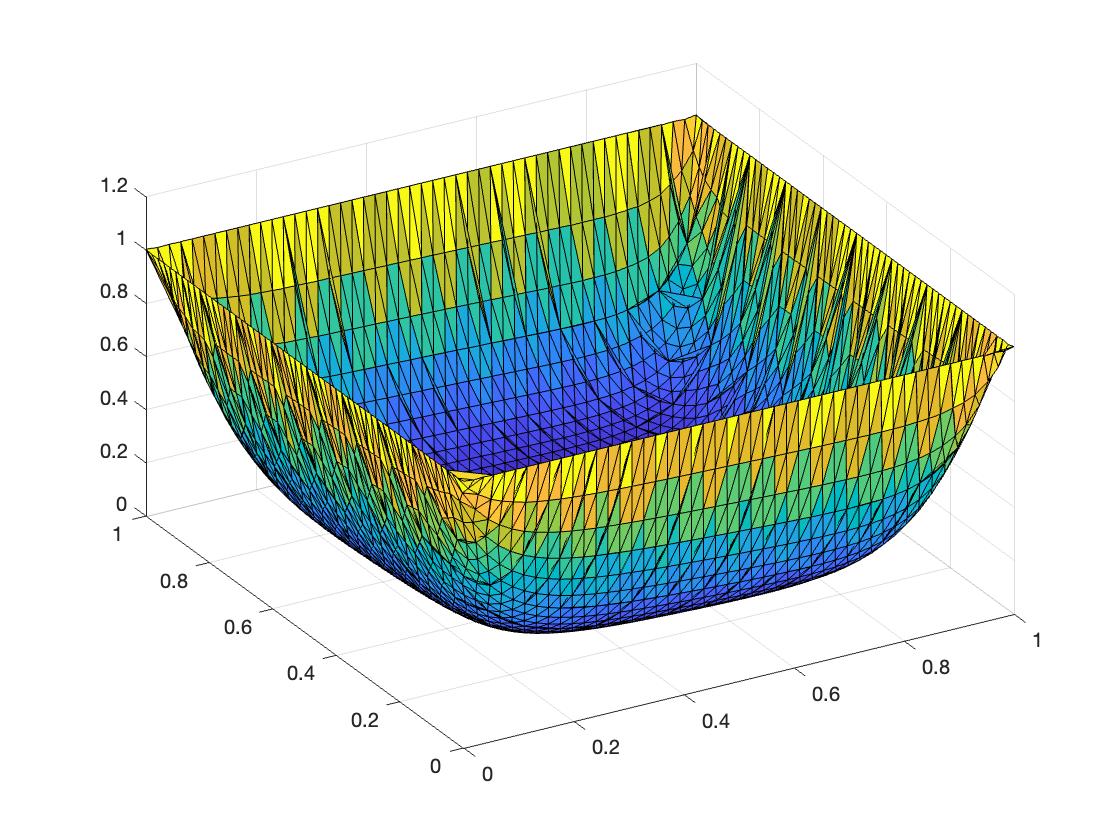}
			\caption{$\nu=10^{-2}$, $\gamma=1$}
		\end{subfigure}
		\vfill
		\begin{subfigure}[b]{0.49\textwidth}
			\centering
			\includegraphics[width=\textwidth]{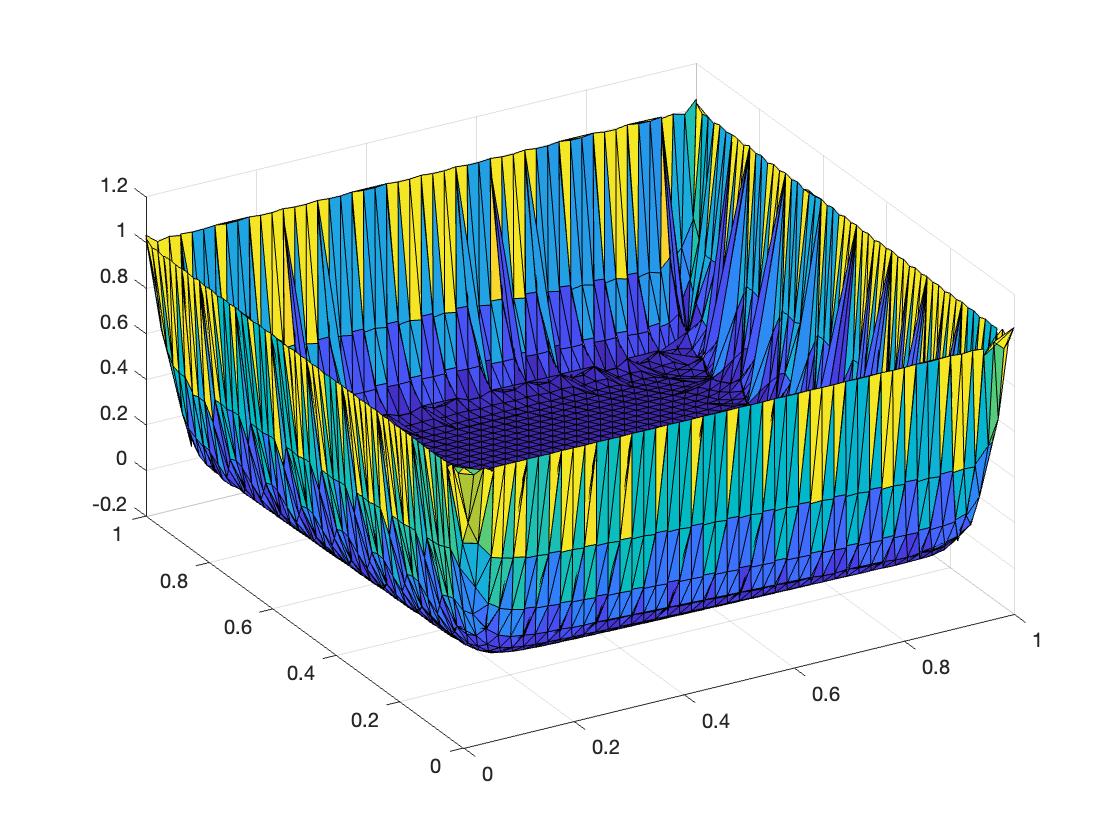}
			\caption{$\nu=10^{-3}$, $\gamma=10^{-1}$}
		\end{subfigure}
		\hfill
		\begin{subfigure}[b]{0.49\textwidth}
			\centering		\includegraphics[width=\textwidth]{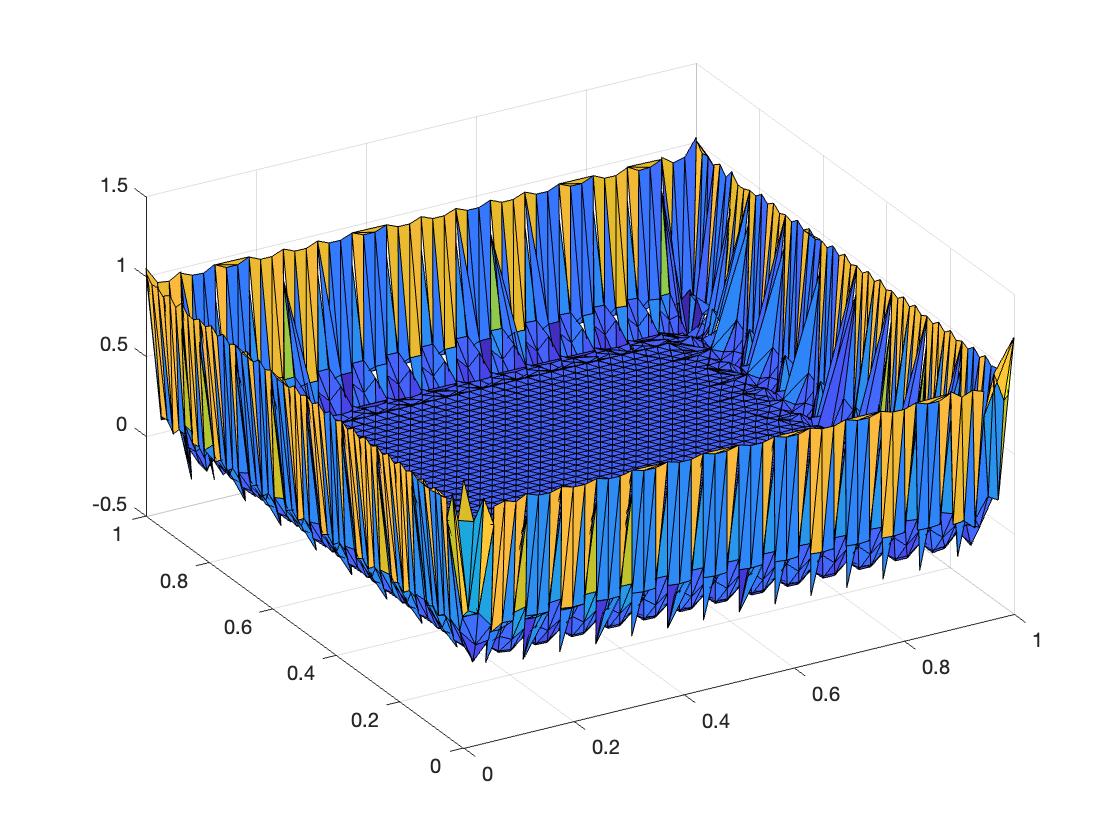}
			\caption{$\nu=10^{-4}$, $\gamma=10^{-2}$}
		\end{subfigure}
		\caption{Approximate solutions of the problem with coefficients \eqref{coeffreac}, obtained using the meshsize $h=2^{-4}$ and the quasi-Trefftz space with polynomial degree $p=3$.}
		\label{reactiondom}
	\end{figure}

	\chapter{Conclusions and future developments}\label{Chapter7}
	We have introduced a quasi-Trefftz discontinuous Galerkin method for approximating the homogeneous diffusion-advection-reaction equation with varying coefficients.
	The DG variational formulation, its well-posedness and the quasi-optimality in a mesh-dependent norm have been analysed.
	We have presented the polynomial quasi-Trefftz space for a general homogeneous linear partial differential equation with sufficiently smooth coefficients and proved that it reaches the same approximation orders with respect to the mesh size
	as the full polynomial space but with fewer degrees of freedom.
	For the homogeneous diffusion-advection-reaction equation, we have  proved optimal $h$-convergence of the quasi-Trefftz DG scheme 
	and described an algorithm for the construction of the quasi-Trefftz basis functions in dimension $d\in\mathbb{N}$, $d\ge1$.
	We have presented some numerical experiments in two dimensions that validate the accuracy of the method.
	
	We now consider some possible improvements of the method and future research directions.
	
	To better assess the computational performance of the method, the method should be implemented 
	in the three-dimensional case and in a high-performance open-source software such as NGSolve, where the Trefftz method and quasi-Trefftz method for the wave equation have already been implemented.
	
	The algorithm for the construction of the quasi-Trefftz basis functions is initialized assigning two polynomial bases. Those conditions could be optimized, through a more accurate study of the properties of the basis functions, in order to improve conditioning, computing time and accuracy.
	
	The construction of non-polynomials quasi-Trefftz functions might also be analysed; they could be useful, for example, to efficiently approximate solutions that present boundary layers, such as in the case of advection-dominated or reaction-dominated problems.
	
	We have proved optimal convergence rate with respect to the mesh size (\textit{h}-convergence) and a challenging extension is the analysis of the approximation properties for increasing polynomial degrees (\textit{p}-convergence). 
	This is a difficult task since in the context of Trefftz schemes has been achieved for time-harmonic equations (Helmholtz \cite{hiptmair2011plane} and Maxwell \cite{hiptmair2013error}) but not yet for the wave equation and neither in the context of the quasi-Trefftz methods.
	
	The analysis of the DG scheme considered includes the source term while the quasi-Trefftz space is designed for the homogenous equation.
	A significant improvement of the method is the extension to the inhomogeneous advection-diffusion-reaction equation.
	We need to construct a particular approximate local solution, considering a variant of the algorithm presented, and then apply the quasi-Trefftz DG method in an adequate way to solve for the difference.
	
	Another interesting extension is the combination of the method presented and the quasi-Trefftz DG method already implemented for the wave equation with piecewise-smooth coefficients \cite{imbert2023space}, in order to approximate PDEs whose nature changes in the domain, such as the Euler-Tricomi equation ($\partial^2_{x}u +x \partial^2_{y}u= 0$), useful in the study of transonic flow.
	Other similar equations are used in plasma physics, for example, $-\Delta u+\sigma u=0$, with $\sigma$ that changes sign,
	which is a Helmholtz equation in a part of the domain and a diffusion-reaction equation in the other  \cite{imbert2013analyse}.
	
	\nocite{*}
	\printbibliography[heading=bibintoc,title={Bibliography}]
\end{document}